\documentclass[12pt]{amsart}
\usepackage{amsmath, amssymb}
\newfont {\cyr} {wncyr10}
\pagestyle{plain} \frenchspacing
\renewcommand{\labelenumi}{{(\roman{enumi})}}

\usepackage{amsfonts}
\usepackage{amsmath}
\usepackage{amssymb}
\usepackage{setspace}
\usepackage{multicol}
\usepackage{color}

%\input diagrams
% macro for optional superscript or subscript
 % adapted from ``Introduction to \TeX'' by Norbert Schwarz

% Macros for Buekenhout diagrams
% Revised version 23 July
%1990  Typical usage is
%$\node\stroke{c}\node_q\arc\node_q\etc\node^n$  Note that
%nodes can have subscripts and
%superscripts, which will be %   properly positioned! (I hope)

\mathchardef\tnode="020E
%temporary node
% Note : this is
%an unfilled node. Change to "020F for filled node.
% Someday, I'll rewrite the macros to allow the user to
%choose! (A character superimposed on anunfilled node %might be useful.)

\def\arc{ % unlabelled stroke for projective plane
 \hbox{\kern -0.15em \vbox{\hrule width 2.5em height 0.6ex depth -0.5 ex} \kern
-0.33em}}

\def\darc{% double arc for GQ
 \rlap{\lower0.2ex\arc}{\raise0.2ex\arc}}

\def\tarc{% triple arc for GQ
 \rlap{\rlap{\lower0.4ex\arc}{\raise0.4ex\arc}}{\arc}}

\def\stroke#1{% labelled stroke for diagrams
 \kern 0.05
\rlap\arc{{\textstyle{#1}}\atop\phantom\arc} \kern -0.22em}

\def\dstroke#1{% probably just for triple cover of Sp(4,2) %quadrangle
 \kern 0.05em
\rlap\darc{{\textstyle{#1}}\atop\phantom\darc} \kern -0.22em}

\def\centerscript#1{% centres text over or under a node
 \setbox0=\hbox{$\tnode$} \hbox to
\wd0{\hss$\scriptstyle{#1}$\hss}}

% macro for optional superscript or subscript
 % adapted from ``Introduction to \TeX'' by Norbert Schwarz

\def\node{%labelled node; usage \node or \node^{label} or
%\node_{label} or \node^{label}_{label} (in either order)
 \def\super{} \def\sub{}
\futurelet\next\dolabellednode}

  \let\sp=^ \let\sb=_

  \def\dolabellednode{%
   \ifx\next\sb\let\next\getsub \else \ifx\next\sp\let\next\getsuper
\else\let\next\donode \fi \fi \next}

  \def\getsub_#1{\def\sub{#1}\futurelet\next\dolabellednode}

\def\getsuper^#1{\def\super{#1}\futurelet\next \dolabellednode}

  \def\donode{%
   \rlap{$\mathop{\phantom\tnode}\limits_{\centerscript{\sub}}
^{\centerscript{\super}}$}\tnode}

\def\varcdn{%vertical arc with node at bottom - use as %subscript to node
 \kern
-0.03em\vbox{\kern -0.5ex \hbox to \wd0{\hss\vrule width 0.04em depth 5.8ex\hss} \kern -0.3ex \hbox{$\tnode$}}}

\def\a3{\node_1\arc\node_2\arc\node_3}

\def\c3{\node_1\arc\node_2\darc\node_3}

\def\m24{\node\arc\node\dstroke{\sim}\node} \def\u43{\node\darc\node\dstroke{\sim}\node}

\newcommand{\varcdnl}[1]{ %vertical arc with node at bottom %- use as subscript to node
\kern -0.03em\vbox{\kern -0.5ex \hbox to \wd0{\hss\vrule width 0.04em depth 5.8ex\hss} \kern -0.3ex
\hbox{$\tnode^{#1}$}}}

%%%%%%%%%%%%% DP 7/4/95 \mathchardef\tnodef="020F %temporary node

\def\nodef{%labelled filled % node; usage \nodef or
%\nodef^{label} or \nodef_{label}  or \nodef^{label}_{label} %(in either order)
\def\super{} \def\sub{} \futurelet\next\dolabellednodef}

  \let\sp=^ \let\sb=_

  \def\dolabellednodef{%
  \ifx\next\sb\let\next\getsubf \else
\ifx\next\sp\let\next\getsuperf \else\let\next\donodef \fi \fi \next}

\def\getsubf_#1{\def\sub{#1}\futurelet\next\dolabellednodef}

\def\getsuperf^#1{\def\super{#1}\futurelet\next \dolabellednodef}

  \def\donodef{%
  \rlap{$\mathop{\phantom\tnodef}\limits_{\centerscript{\sub}}
^{\centerscript{\super}}$}\tnodef}

\def\varcdnf{%vertical arc with filled node at bottom -
%use as subscript to node or nodef
 \kern -0.03em\vbox{\kern -0.5ex \hbox to \wd0{\hss\vrule width 0.04em depth
5.8ex\hss} \kern -0.3ex \hbox{$\tnodef$}}}
%%%%%%%%%%%%%%%%%%%%%%%%%%%%%%
% THEOREM ENVIRONMENTS ETC   %
%%%%%%%%%%%%%%%%%%%%%%%%%%%%%%

\newtheorem{theorem}{Theorem}[section]
\newtheorem{lemma}[theorem]{Lemma}
\newtheorem{proposition}[theorem]{Proposition}

\newtheorem{definition}[theorem]{Definition}

\newcounter{claim}[theorem]

\newcounter{cclaim}[theorem]

%\newenvironment{proof}[1][Proof:]{\begin{trivlist}
%{}
%\newenvironment{definition}[1][Definition]{\begin{trivlist}
%\item[\hskip \labelsep {\bfseries #1}]}{\end{trivlist}}
%\newenvironment{example}[1][Example]{\begin{trivlist}
%\item[\hskip \labelsep {\bfseries #1}]}{\end{trivlist}}
%\newenvironment{remark}[1][Remark]{\begin{trivlist}
%\item[\hskip \labelsep {\bfseries #1}]}{\end{trivlist}}

%\usepackage{showkeys}
%%%%%%%%%%%%%%%%
% NEW COMMANDS %
%%%%%%%%%%%%%%%%
\def \udot {{}^{\textstyle .}}

\newcommand{\E}{\mathrm{E}}\newcommand{\SU}{\mathrm{SU}}
\newcommand{\F}{\mathrm{F}}\newcommand{\B}{\mathrm{B}}
\newcommand{\M}{\mathcal{M}}

\newcommand{\C}{\mathcal{C}}
\newcommand{\Q}{\mathrm{Q}}

\newcommand{\Aut}{\mathrm{Aut}}

\newcommand{\Out}{\mathrm{Out}}

\newcommand{\Syl}{\mathrm{Syl}}\newcommand{\syl}{\mathrm{Syl}}

\newcommand{\GF}{\mathrm{GF}}
\newcommand{\GL}{\mathrm{GL}}
\newcommand{\Sp}{\mathrm{Sp}}
\newcommand{\SL}{\mathrm{SL}}

\newcommand{\PSL}{\mathrm{PSL}}
\newcommand{\PSU}{\mathrm{PSU}}
\newcommand{\Sym}{\mathrm{Sym}}
\newcommand{\Alt}{\mathrm{Alt}}

\newcommand{\U}{\mathrm{U}}
\newcommand{\Stab}{\mathrm{Stab}}

\def \qedc {$\hfill \blacksquare$\newline}

\def \L {\hbox {\rm L}}

\def \OO {\hbox {\rm O}}

\def \syl {\hbox {\rm Syl}}\def \Syl {\hbox {\rm Syl}}

\def \ov {\overline}

\def \wt {\widetilde}

\def \Aut{ \mathrm {Aut}}

\def \Out{\mbox {\rm Out}}

\def \Mat{\mbox {\rm Mat}}

\def \B{\mbox {\rm B}}

\def \M{\mbox {\rm M}}

\def \Co {\mbox {\rm Co}}

\def \POmega {\mbox {\rm P}\Omega}

\def \PSU {\mbox {\rm \PSU}}
\def \GSp {\mbox {\rm GSp}}

\def \diag {\mathrm {diag}}
% SPACING
%\doublespacing

%%%%%%%%%%%%%%%%%%%%%%%%%%%%%%%

\begin{document}
\renewcommand{\labelenumi}{(\roman{enumi})}

\title  {A characterisation of almost simple groups with socle ${}^2\E_6(2)$ or $\M(22)$ }
 \author{Chris Parker}
 \author{M. Reza Salarian}
  \author{Gernot Stroth}
\address{Chris Parker\\
School of Mathematics\\
University of Birmingham\\
Edgbaston\\
Birmingham B15 2TT\\
United Kingdom} \email{c.w.parker@bham.ac.uk}

\address{M. Reza Salarian \\Department of Mathematics\\
Tarbiat Moallem University\\
 University square- The end of Shahid beheshti Avenue\\
31979-37551 Karaj- Iran}

\email {salarian@iasbs.ac.ir}

\address{Gernot Stroth\\
Institut f\"ur Mathematik\\ Universit\"at Halle - Wittenberg\\
Theordor Lieser Str. 5\\ 06099 Halle\\ Germany}
\email{gernot.stroth@mathematik.uni-halle.de}

\date{\today}

\maketitle \pagestyle{myheadings}

\markright{{\sc }} \markleft{{\sc Chris Parker, M. Reza Salarian and Gernot Stroth}}

\begin{abstract} We show that the
sporadic simple group $\M(22)$, the exceptional group of Lie type ${}^2\E_6(2)$ and their automorphism groups are
uniquely determined by the approximate structure of the centralizer of an element of order $3$ together with some
information about the fusion of this element in the group.
\end{abstract}

\section{Introduction}

The aim of this article is to identify the  groups with minimal normal subgroup $\M(22)$, one of the sporadic
simple groups discovered by Fischer,  and the exceptional Lie type group ${}^2\E_6(2)$ from certain  information
about the centralizer of a certain element of order $3$.

The results of this paper and its companions \cite{Parker1,PS1, F4, ParkerRowley2} is to provide identification theorems
for the work in \cite{almostlie} where the following configuration relevant to the classification of groups with
a so-called large $p$-subgroup is considered.  We are given a group $G$, a prime $p$ and a large $p$-subgroup $Q$
(the definition of a large $p$-subgroup is not important for this discussion) and we find ourselves in the
following situation. Containing a Sylow $p$-subgroup $S$ of $G$ there is a group $H$ such that $F^*(H)$ is a
simple group of Lie type. In the typical situation when  one would expect that this group $H$ is in fact
the entire group $G$. However it can exceptionally happen that in fact the normalizer of the large subgroup is
not contained in $Q$. This happens more frequently than one might expect when  $F^*(H)$ is defined over the field
of $2$ or $3$ elements and $N_H(Q)$ is soluble. Indeed in \cite{almostlie}, the authors determine all the cases
when this phenomena appears. This paper fits into the picture when we consider $F^*(H) \cong \Omega_7(3)$. In
$H$, the large subgroup $Q$ is extraspecial of order $3^7$ an $N_{F^*(H)}(Q) \approx 3^{1+6}_+.(SL_2(3)\times \Omega_3(3)).$ In \cite{almostlie} we show that if $N_G(Q)$ is not contained in $H$, then we must have $C_H(Z(Q))$ is a centralizer in a group of type either $\M(22)$ or ${}^2\E_6(2)$ where these centralizers are defined as follows.

\begin{definition} We say that  $X$ is similar to a $3$-centralizer in a group of type  ${}^2\E_6(2)$
provided
\begin{enumerate}
\item $Q=F^*(X)$ is extraspecial of order $3^{1+6}$ and $Z(F^*(X)) =Z(X)$; and
\item $O_{2}(X/Q) \cong \Q_8\times \Q_8\times \Q_8$.
\end{enumerate}
\end{definition}

\begin{definition} We say that  $X$ is similar to a $3$-centralizer in a group of type  $\M(22)$
provided
\begin{enumerate}
\item $Q=F^*(X)$ is extraspecial of order $3^{1+6}$ and $Z(F^*(X)) =Z(X)$; and
\item $O_{2}(X/Q)$ acts on $Q/Z$ as a subgroup of order $2^7$ of $\Q_8\times \Q_8\times \Q_8$, which contains
$Z(\Q_8\times \Q_8\times \Q_8)$.
\end{enumerate}
\end{definition}

 In this paper we will prove the following two theorems

\begin{theorem}\label{Main}
Suppose that $G$ is a group, $H \le G$ is similar to a $3$-centralizer in a group of type ${}^2\E_6(2)$, $Z = Z(F^*(H))$ and $H= C_G(Z)$. If $S \in \Syl_3(G)$ and $Z$ is not weakly closed in $S$ with respect to $G$, then $Z$ is not weakly closed in $O_3(H)$ and $G \cong {}^2\E_6(2)$, ${}^2\E_6(2).2$, ${}^2\E_6(2).3$ or ${}^2\E_6(2).\Sym(3)$.
\end{theorem}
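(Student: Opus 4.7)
The plan proceeds in three stages: first, to show the hypothesis forces a $G$-conjugate of $Z$ into $Q = O_3(H)$; second, to construct a second $3$-local subgroup $M \le G$ of known structure; and third, to identify $G$ via an amalgam-type recognition, adjusting at the end for outer automorphisms.

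For the first stage, pick $g \in G$ with $Z^g \le S$ and $Z^g \ne Z$; if $Z^g \le Q$ there is nothing to show, so assume $Z^g \not\le Q$. Since $O_2(H/Q) \cong \Q_8 \times \Q_8 \times \Q_8$ is a $2$-group, the Sylow $3$-subgroup $S/Q$ of $H/Q$ embeds into $(H/Q)/O_2(H/Q)$, so $|S/Q|$ is small and $S/Q$ acts faithfully on $Q/Z$. Under $O_2(H/Q)$, the module $Q/Z$ decomposes as a sum of three natural $2$-dimensional $\SL_2(3)$-modules, and the image of $Z^g$ in $S/Q$ preserves this decomposition up to the $\SL_2(3)$-action. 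A commutator calculation, using that $Q$ is extraspecial with centre $Z$ and that $[Q, Z^g] \le Q \cap Q^g$, then produces an element of $Q \setminus Z$ which is $G$-conjugate to a generator of $Z$; this gives the first conclusion of the theorem.

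For the second stage, let $E = \langle Z^G \cap Q \rangle$. The action of $H/Q$ on $Q/Z$, together with the fusion produced in stage one, determines $E/Z$ as a distinguished $H$-submodule of $Q/Z$ corresponding to a maximal totally singular subspace, so $E$ is elementary abelian of order $3^4$. Set $M = N_G(E)$. The subgroup $N_H(E)$ controls one part of $M$; the rest is recovered by considering the $G$-normalisers of the individual $G$-conjugates of $Z$ contained in $E$. Identifying $M$ precisely is the technical heart of the proof, with $M$ expected to have the shape of a specific maximal $3$-local subgroup of ${}^2\E_6(2)$; I expect to use the companion identification results \cite{Parker1, PS1, F4, ParkerRowley2} at this point.

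For the third stage, let $G_0 = \langle H, M \rangle$. An amalgam recognition theorem applied to $(H, M, H \cap M)$ yields $G_0 \cong {}^2\E_6(2)$. Since $S \le G_0$, a Frattini argument gives $G = G_0 \cdot N_G(S)$, and $N_G(S)/N_{G_0}(S)$ embeds into $\Out({}^2\E_6(2)) \cong \Sym(3)$, producing the four cases listed. The principal obstacle is the second stage: pinning down $M$ precisely requires delicate $3$-local fusion analysis and a substantial recognition input for a classical group over $\F_2$.
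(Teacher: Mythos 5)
Your stage 1 and your stage 2 are broadly in the spirit of the paper's opening moves: the paper first proves that $Z$ is not weakly closed in $Q$, and then builds a second $3$-local subgroup $M = N_G(J(S))$ with $M/J(S) \cong \Omega_5(3).2$ or $\Omega_5(3).2 \times 2$, where $J(S)$ is the (elementary abelian) Thompson subgroup of $S$ of order $3^5$ or $3^6$ rather than your $\langle Z^G \cap Q\rangle$ of order $3^4$; the identification of $M$ is done directly by exhibiting a generalized quadrangle with parameters $(3,3)$ on the conjugates of $Z$ and $V=ZZ^g$ in $J$, not by citing the companion papers. Be aware, though, that your sketch of stage 1 seriously understates the work: the actual proof is a long contradiction argument which must exclude the possibility that every $G$-conjugate of $Z$ in $S$ lies outside $Q$, and the ``commutator calculation'' you gesture at does not by itself produce a conjugate of $Z$ inside $Q$.

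The genuine gap is stage 3. There is no amalgam recognition theorem that accepts the pair of $3$-local subgroups $(H,M,H\cap M)$ and returns ${}^2\E_6(2)$; no such result appears in the cited companion papers, and proving one would be a theorem on the scale of this paper. The paper's identification is $2$-local, not $3$-local: the second $3$-local is used only as a stepping stone to (a) identify $E(C_G(q_1)) \cong \U_6(2)$ via the companion $\U_6(2)$/$\F_4(2)$ theorem, (b) construct an extraspecial signalizer $E \cong 2^{1+20}_+$ and prove, after transfer reductions and an application of Astill's $\Omega_8^+(2)$ theorem together with a strong $3$-embedding argument, that $C_G(r_1) = N_G(E) \approx 2^{1+20}_+{:}\U_6(2)$, and then (c) assemble four $2$-local subgroups over the normalizer of a Sylow $2$-subgroup into a chamber system of type $\F_4$, invoke Tits' local approach theorem to recognise the associated building, and finish with Holt's theorem to force $G$ itself to be ${}^2\E_6(2)$. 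None of this machinery is recoverable from your stage 3 as written, so the heart of the proof is missing. A secondary but real problem is your final Frattini step: it requires $G_0 = \langle H, M\rangle$ to be normal in $G$, which you have not established. The paper instead deals with the outer automorphism classes at the outset, by transfer arguments producing normal subgroups of index $2$ and of index $3$ that again satisfy the hypotheses, and only then identifies the reduced group.
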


\begin{theorem}\label{Main1}
Suppose that $G$ is a group, $H \le G$ is similar to a $3$-centralizer in a group of type $\M(22)$, $Z =
Z(F^*(H))$ and $H= C_G(Z)$. If $S \in \Syl_3(G)$ and $Z$ is not weakly closed in $S$ with respect to $G$, then $Z$ is not weakly closed in $O_3(H)$ and $G
\cong \M(22)$ or $\Aut(\M(22))$.
\end{theorem}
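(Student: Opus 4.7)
The overall strategy is (i) to sharpen the structure of $H$ to match a known $3$-local of $\M(22)$; (ii) to use failure of weak closure to produce a conjugate of $Z$ inside $Q$ and a second $3$-local subgroup $K$; (iii) to identify the centralizer of a suitable involution; and (iv) to invoke a known recognition theorem for $\M(22)$. The principal difficulty should lie in stage (iii).

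For (i), the quotient $H/Q$ acts on $\bar Q := Q/Z$, a $6$-dimensional nondegenerate symplectic space over $\mathbb{F}_3$. The hypothesis that $O_2(H/Q)$ has order $2^7$ and embeds in $\Q_8\times \Q_8\times \Q_8$ containing its centre forces an $H$-invariant orthogonal decomposition $\bar Q = \bar Q_1 \oplus \bar Q_2 \oplus \bar Q_3$ into three nondegenerate $2$-planes. Coprime action, the structure of $\Aut(\Q_8\times \Q_8\times \Q_8)$, and the assumption $H = C_G(Z)$ then determine $H/Q$ essentially uniquely and match $H$ against the maximal $3$-local $3^{1+6}{:}2^{3+4}{:}3^2{:}2$ of $\M(22)$. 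A byproduct of this analysis is that any Sylow $3$-subgroup $S$ of $G$ containing $Q$ satisfies $Z(S) = Z$.

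For (ii), the non-weak-closure hypothesis yields $g \in G$ with $Z \ne Z^g \le S$; since $Z = Z(S)$, we have $Z^g \not\le Z(S)$. Using $H$-fusion on the set of $3$-central subgroups of $S$ and the decomposition of $\bar Q$ from (i), I would replace $g$ by $gh$ for some $h \in H$ to arrange $Z^g \le Q$, which gives the first conclusion that $Z$ is not weakly closed in $O_3(H) = Q$. Setting $V := \langle Z, Z^g\rangle$, extraspecialness and $[Z,Q] = 1$ force $V$ to be elementary abelian of order $9$. The shape of $K := N_G(V)$ is then pinned down by combining the explicit description of $N_H(V)$ from (i) with the observation that both $Z$ and $Z^g$ have centraliser conjugate to $H$, matching the corresponding $3^{2+4}$-local of $\M(22)$.

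For (iii), choose an involution $t \in H$ whose image in $H/Q$ lies in $Z(O_2(H/Q))$; this maximises $C_Q(t)$ and places $t$ in a conjugate of $K$. Amalgam-style analysis of $\langle C_H(t), C_K(t)\rangle$ combined with pushing up inside $C_G(t)$ should build a large parabolic of $\U_6(2)$, and further fusion inside $C_G(t)$ extends this to the full centraliser, yielding $C_G(t) \cong 2\cdot\U_6(2)$ or $(2\cdot\U_6(2)).2$. A known recognition of $\M(22)$ by the centraliser of an involution then completes the identification $G \cong \M(22)$ or $\Aut(\M(22))$. The main obstacle is precisely this stage: $C_G(t)$ must be assembled from the visible local pieces $C_H(t)$ and $C_K(t)$, and one must rule out any extra $2$-section that would instead force the ${}^2\E_6(2)$ conclusion of Theorem \ref{Main}. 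The numerical fact that $|O_2(H/Q)| = 2^7$ rather than $2^9$ is what separates the two conclusions, and it must be threaded carefully through the whole fusion analysis so that no spurious $2^2$ factor appears inside the involution centraliser.
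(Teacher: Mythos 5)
Your outline has the right global shape (pin down $H$, force a conjugate of $Z$ into $Q$, identify an involution centralizer of type $2\udot\U_6(2)$, invoke a recognition theorem), and indeed the paper follows this arc. But the two steps you treat as routine are precisely where the substance lies, and as described they do not go through. First, you cannot simply ``replace $g$ by $gh$ to arrange $Z^g\le Q$'': a conjugate $Z^g\le S$ may lie above $Q$, and no $H$-fusion argument moves it into $Q$ a priori. The paper devotes a long and delicate argument (Lemma~\ref{ZnotweakQ}) to deriving a contradiction from the assumption that every conjugate of $Z$ in $S$ avoids $Q$, using the action of such a conjugate on the decomposition of $Q$ into $Q_1Q_2Q_3$, quadraticity constraints, and a counting argument inside $\langle Q,Q^g\rangle$; the authors explicitly remark that this ``is not an immediate observation''. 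Your proposal supplies no substitute.

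Second, and more seriously, your stage (iii) --- assembling $C_G(t)$ by ``amalgam-style analysis of $\langle C_H(t),C_K(t)\rangle$ combined with pushing up'' --- is a restatement of the problem rather than a method. The paper instead verifies the hypotheses of the companion identification theorem for $3$-centralizers of type $\U_6(2)$ or $\F_4(2)$ (Theorem~\ref{F4U6Thm}) applied to $C_G(r_1)/\langle r_1\rangle$; the decisive input is that $Z\langle r_1\rangle/\langle r_1\rangle$ is \emph{not} weakly closed there, which is witnessed by an element of order $5$ centralizing $q_1$ and $r_1$. That element exists only because one has first constructed the second $3$-local $N_G(J)$, with $J=J(S)$ the Thompson subgroup, and shown $N_G(J)/J(S)\cong\Omega_5(3).2$ via a generalized quadrangle argument. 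Your outline omits $N_G(J)$ entirely (your $K=N_G(V)$ does not do this job), so you have no source for the non-weak-closure input and hence no way to run any $\U_6(2)$-identification. You also do not address why the extension over $\langle t\rangle$ is non-split (the paper proves $\Omega_1(Z(R))\le\Phi(R)$, so $r_1\in R'$), nor the fact that in the $\Aut(\M(22))$ case the final recognition theorem only applies after a transfer argument (via the $2^{10}{:}\Aut(\M_{22})$ local and Lemma~\ref{autm22}) produces a subgroup of index $2$.
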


A minor observation that is useful to us in our forthcoming work on $\M(23)$ and the Baby Monster $\F_2$ is that the interim statements that we prove in this paper become observations about the structure of $\M(22)$ and ${}^2\E_6(2)$ once the main theorems have been proved.

The paper is organised as follows. In Section 2, we  gather together facts about the 20-dimensional $\GF(2)\U_6(2)$-module, centralizers of involutions in this group and in the spit extension $2^{20}:\U_6(2)$ as well as a transfer theorem for groups of shape $2^{10}.\Aut(\Mat(22))$. We close Section 2 with a collection of theorems and lemmas which will be applied in the proof of our main theorems.

Section 3 contains a proof of the following theorem which we used to determine the structure of the centralizer of an involution in groups satisfying the hypothesis of Theorem~\ref{Main}.

\begin{theorem}\label{Not3embedded} Suppose that $X$ is a group, $O_{2'}(X)= 1$, $H = N_X(A)= AK$  with $H/A \cong K \cong \U_6(2)$ or $\U_6(2):2$, $|A|=2^{20}$ and $A$
a minimal normal subgroup of $H$. Then $H$ is not a strongly $3$-embedded subgroup of $X$.
\end{theorem}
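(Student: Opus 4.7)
The approach is a proof by contradiction. Assuming $H$ is strongly $3$-embedded in $X$, the plan is to apply the transfer theorem for groups of shape $2^{10}.\Aut(\Mat(22))$ assembled in Section~2 to a specific local subgroup of $H$, and thereby produce a normal subgroup of $X$ that violates the minimal normality of $A$ in $H$.

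First I would pin down the $3$-local structure forced by the hypothesis. Let $T\in\Syl_3(H)$. Since $A$ is a $2$-group, $T\cap A=1$, so $T$ projects isomorphically onto a Sylow $3$-subgroup of $K$ and $|T|=3^6$. Strong $3$-embedding then gives $T\in\Syl_3(X)$, $N_X(P)\le H$ for every nontrivial $3$-subgroup $P$ of $H$, $C_X(t)=C_H(t)$ for every $3$-element $t\in H$, and full agreement of $H$- and $X$-fusion on $3$-subgroups of $T$.

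Next, using the orbit description of $K$ on the $20$-dimensional $\GF(2)K$-module $A$ recorded in Section~2, I would locate a nonzero vector $v\in A$ whose stabiliser $C_K(v)$ is isomorphic to $\Mat(22)$ or $\Aut(\Mat(22))$ according to the isomorphism type of $K$; such a $v$ is guaranteed by the classical embedding $\Mat(22)<\U_6(2)$ as the stabiliser of a nonisotropic point of the natural $\GF(4)$-module. Together with the $C_K(v)$-invariant hyperplane of $A$, this yields a subgroup $L\le H$ of shape $2^{10}.\Aut(\Mat(22))$ matching the hypothesis of the transfer theorem. Strong $3$-embedding ensures that the necessary $3$-fusion condition on $L$ already holds inside $N_H(L)$, so the transfer theorem may be applied to the pair $(X,L)$.

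Applying the transfer theorem yields a normal subgroup $N$ of $X$ of index $2$. Since $N$ is normal in $X$ and $H\le X$, the intersection $N\cap A$ is $H$-invariant; by minimal normality of $A$ in $H$, we must have $N\cap A\in\{1,A\}$. The first case is impossible because $|A|=2^{20}$ and $|X:N|=2$ force $|A:N\cap A|\le 2$. The second case forces $H\le N$ (using simplicity of $\U_6(2)$ when $K\cong\U_6(2)$, and the unique index-$2$ subgroup $\U_6(2)$ of $\U_6(2){:}2$ together with the location of $L$ inside $H$ in the other case), which gives $L\le N$ and contradicts the conclusion of the transfer theorem.

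The main obstacle is the interlock between the fusion hypothesis of the Section~2 transfer theorem and the consequences of strong embedding: one has to verify that every $X$-conjugation among the $3$-elements of $L$ appealed to by transfer is already an $H$-conjugation, and simultaneously that the chosen $v$ has exactly the stabiliser shape required. A secondary difficulty is treating the two possibilities $K\cong\U_6(2)$ and $K\cong\U_6(2){:}2$ uniformly when pushing $N$ back into $H$ in the final step.
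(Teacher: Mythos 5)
Your proof does not work, and the failure is at the pivotal construction. You want a nonzero $v\in A$ with $C_K(v)\cong \M_{22}$ or $\Aut(\M_{22})$, but Proposition~\ref{Vaction}(ii) lists \emph{all} stabilizers of nonzero vectors of the $20$-dimensional module: they have shapes $2^9{:}\L_3(4).2$, $2^{1+8}.\Sp_4(2).2$, $2^8{:}3^2.\Q_8.2$, $\L_3(4).2.2$ and $3^{1+4}_+.(\Q_8\times \Q_8).2.2$, and none involves $\M_{22}$. The maximal subgroup $\M_{22}$ of $\U_6(2)$ is not the stabilizer of a nonisotropic point of the natural $\GF(4)$-module (that stabilizer is $\U_5(2)$), and it fixes no vector of $A$. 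Nor would a ``$C_K(v)$-invariant hyperplane of $A$'' supply the elementary abelian group of order $2^{10}$ carrying the Todd module that Lemma~\ref{autm22} requires: a hyperplane of $A$ has order $2^{19}$. So the subgroup $L$ of shape $2^{10}.\Aut(\M_{22})$ to which you want to apply transfer does not exist inside $H$. Even granting such an $L$, your endgame does not close: Lemma~\ref{autm22} concludes only that $X$ has a subgroup $N$ of index $2$, and $A\le N$, $L\le N$ contradict nothing --- the hypotheses $O_{2'}(X)=1$, $A$ minimal normal in $H$, and $H$ strongly $3$-embedded are all perfectly compatible with $|X:O^2(X)|=2$. (Indeed, this is exactly how Lemma~\ref{autm22} is used later in the paper: to pass to an index-$2$ subgroup, not to derive a contradiction.)

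The paper's argument is entirely different. It first shows (Lemma~\ref{centinH}) that a Sylow $2$-subgroup $S$ of $H$ is Sylow in $X$, then proves that $A$ is \emph{strongly closed} in $H$ with respect to $X$: if some $v\in H\setminus A$ were fused into $A$, then Lemma~\ref{fusion2} (fusion control coming from strong $3$-embedding) pins down the $H$-class of $v$, and a counting argument with the Thompson subgroup $A_v=J(T)$ of a Sylow $2$-subgroup of $C_X(v)$ and with singular versus non-singular vectors (via Proposition~\ref{Vaction}, Lemma~\ref{centralizerinvs} and Lemma~\ref{contains a 2-central}) produces a numerical contradiction. Goldschmidt's theorem in the form of Lemma~\ref{Gold} then forces $X=N_X(A)=H$, which is incompatible with $H$ being strongly $3$-embedded in $X$. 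To salvage a transfer-style approach you would need both a genuinely existing $2$-local subgroup of the required shape and an actual contradiction flowing from the existence of an index-$2$ subgroup; neither is available here.
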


In Section 3, we set $H= C_G(Z)$ and $Q= O_3(H)$ and start by investigating the possible structure of $H$. Almost immediately from the hypothesis we know that $H/O_3(H)$ embeds into $\Sp_2(3)\wr \Sym(3)$.  Lemma~\ref{ZnotweakQ} shows that $Z$ is not weakly closed in $Q$  and we use this information to build a further $3$-local subgroup $M$. It turns out that $M$ is the normalizer of the Thompson subgroup of a Sylow $3$-subgroup of $G$ contained in $H$  and further Lemma~\ref{N_G(J)} that $O_3(M)$ elementary abelian of order either $3^5$ or $3^6$ and $F^*(M/O_3(M)) \cong \Omega_5(3)$.

Section 5 is devoted to the proof of Theorem~\ref{Main1}. From the information gathered in Section 3 we quickly show that the centralizer of an involution has shape $2\udot \U_6(2)$ or $2\udot\U_6(2).2$. From this we can build a further $2$-local subgroup of shape $2^{10}:\Mat(22)$ or $2^{10}:\Aut(\Mat(22))$ and use Lemma~\ref{autm22} to show that $G$ has a subgroup of index $2$ in the latter case. Finally we apply
\cite[Theorem 31.1]{Asch} to finally prove Theorem~\ref{Main1}.

From Section 7 onwards we may assume that $H$ is a $3$-centralizer in a group of type ${}^2\E_6(2)$. In particular, we have that $O_{2}(H/Q) \cong \Q_8 \times \Q_8 \times \Q_8$ and we let $r_1$ be an involution in $H$ such that $r_1Q$ is contained in the first direct factor.
By the end of Section 7 we know $r_1$ is a $2$-central involution which  contains an extraspecial subgroup of order $E\cong 2^{1+20}_+$ in its centralizer and that $F^*(N_G(E)/E)\cong \U_6(2)$. Our next objective is to control the embedding of $N_G(E)$ in $C_G(r_1)$ so that we can show that $C_G(r_1)= N_G(E)$. To do this we first transfer elements of order $2$ and order $3$ from $G$. The transfer of an element of order 2 is carried out in Section 8 and then the element of order 3 easily follows in Section 9. At this stage we know that $ N_G(E) \approx 2^{1+20}_+.\U_6(2)$, however we still don't know enough about the centralizers of elements of order $3$ in $C_G(r_1)$ to be able to show that $N_G(E)$ is strongly 3-embedded in $C_G(r_1)$. Thus in Section 10, we determine the centralizer of a further element of order $3$ with the help of Astill's Theorem \cite{Astill}. With this we can prove that $N_G(E)$ is indeed strongly $3$-embedded in $ C_G(r_1)$ and conclude from Theorem~\ref{Not3embedded} that $C_G(r_1) = N_G(E)$. At this stage, we could apply Aschbacher's Theorem ~\cite{Aschbacher2E6} to identify $G$, however, partly because some of the background material about the simple connectivity of certain graphs related to geometries to type $\F_4$  has not yet been published and also because we would prefer a uniform building theoretic approach to the classification of the groups such as ${}^2\E_6(2)$, in the penultimate section we identify the ${}^2\E_6(2)$ by showing that the coset geometry constructed from certain $2$-local subgroups containing the normalizers of a Sylow 2-subgroup of $G$ is in fact a chamber system of type $\F_4$. The Tit's Local Approach Theorem yields that the group generated by these $2$-local subgroups is $\F_4(2)$. Finally we apply Holt's Theorem \cite{Ho} to see that $G \cong {}^2\E_6(2)$. Combining this with the transfer arguments presented earlier finally proves Theorem~\ref{Main} the details being presented in our brief final section.

Throughout this article we follow the now standard Atlas \cite{Atlas} notation for group extensions. Thus $X\udot
Y$ denotes a non-split extension of $X$ by $Y$, $X{:}Y$ is a split extension of $X$ by $Y$ and we reserve the
notation $X.Y$ to denote an extension of undesignated type (so it is either unknown, or we do not care). Our
group theoretic notation is mostly standard and follows that in  \cite{GLS2} for example.  For odd primes $p$,
the extraspecial groups of exponent $p$ and order $p^{2n+1}$ are denoted by $p^{1+2n}_+$. The extraspecial
$2$-groups of order $2^{2n+1}$ are denoted by $2^{1+2n}_+$ if the maximal elementary abelian subgroups have order
$2^{1+n}$ and otherwise we write $2^{1+2n}_-$.  The extraspecial group of order $8$ is denoted by $\Q_8$. We
expect our notation for specific groups is self-explanatory. For a subset $X$ of a group $G$, $X^G$ denotes that
set of $G$-conjugates of $X$. If $x, y \in H \le  G$, we  write $x\sim _Hy$ to indicate that $x$ and $y$ are
conjugate in $H$.  Often we shall give suggestive descriptions of groups which indicate the isomorphism type of
certain composition factors. We refer to such descriptions as the \emph{shape} of a group. Groups of the same
shape have normal series with isomorphic sections. We use the symbol $\approx$ to indicate the shape of a group.

\noindent {\bf Acknowledgement.}  The initial  work on this paper was prepared during  a visit of the first and third author
to the Mathematisches Forschungsinstitut Oberwolfach as part of the
Research in Pairs Programme, 30th November--12 December, 2009. The authors are pleased to thank the MFO and its staff for the pleasant and
stimulating research environment that they provided. The first author is also grateful to the DFG for support and the mathematics department in Halle for their hospitality.

\section{Preliminary facts}

Suppose that $X = \U_6(2){:}2$, $Y = \U_6(2)$,  $\ov X=\SU_6(2){:}2$,  $\ov Y= \SU_6(2)$ and $W$ is the natural
$\GF(4)\ov Y$-module. Let $\{w_1, \dots, w_6\}$ be a unitary basis for $W$.  Note that $\ov X$ acts on $W$ with
the outer elements acting as semilinear transformations. Let $\ov M$ be the monomial subgroup of $\ov Y$ of shape
$3^5{:}\Sym(6)$ and $M$ be its image in $Y$. Set $J= O_3(M)$. Then $J$ is elementary abelian of order $3^4$ and
$\ov J$ is elementary abelian of order $3^5$. Note that $M$ contains a Sylow $3$-subgroup of $Y$.  We  let $e_1$,
$e_2$ and $e_3$ be  the images of the diagonal matrices $\diag (\omega, \omega^{-1},1,1,1,1)$,  $\diag (\omega,
\omega,\omega^{-1}, \omega^{-1},1,1)$ and $\diag(\omega, \omega,\omega,\omega^{-1},\omega^{-1},\omega^{-1})$ in $
Y$ respectively. Then $e_1, e_2$ and $e_3$ are representatives of the three conjugacy classes of elements of
order $3$ in $Y$.

\begin{lemma}\label{U62J} Every element of order $3$ in $X$ is $X$-conjugate to an element of $J$ and the centralizers of
elements of order $3$ are as follows.
\begin{enumerate}
\item $C_Y(e_1) \cong 3 \times \SU_4(2)$;\item $C_Y(e_2) \cong 3 \times \Sym(3)\wr 3$ and has order $2^3.3^5$;
and \item  $C_Y(e_3) \cong (\SU_3(2)\circ \SU_3(2)).3 \approx 3^{1+4}_+.(\Q_8\times \Q_8).3$.\end{enumerate}
\end{lemma}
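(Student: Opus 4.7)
The approach is to exploit the explicit diagonal structure provided by the natural $\GF(4)$-module $W$. Since $[X:Y]=2$, every element of order $3$ in $X$ lies in $Y$, so it suffices to treat order-$3$ elements of $Y = \U_6(2)$. Such an element lifts to an element $g \in \ov Y = \SU_6(2)$ with $g^3 \in Z(\ov Y)$, so $g$ has order $3$ or $9$. A lift of order $9$ would be a semisimple element whose eigenvalues are primitive $9$th roots of unity lying in $\GF(64)$; preservation of the Hermitian form forces these eigenvalues to form a single $\mu \mapsto \mu^{-2}$-orbit of size $3$ with equal multiplicities (each multiplicity being $2$), and a direct computation of the determinant then gives $\det g = \omega^{\pm 1} \neq 1$, contradicting $g \in \ov Y$. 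Hence every order-$3$ element of $Y$ is the image of a semisimple order-$3$ element of $\ov Y$, diagonalizable over $\GF(4)$ with eigenvalues in $\{1, \omega, \omega^{-1}\}$; such an element is $\ov Y$-conjugate into the diagonal subgroup $\ov J$, and so its image is $Y$-conjugate into $J$.

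Conjugacy of diagonal order-$3$ matrices in $\GU_6(2)$ is determined by the eigenvalue multiplicity triple $(a,b,c)$ for $(1, \omega, \omega^{-1})$, with the determinant-$1$ condition forcing $b \equiv c \pmod 3$, and the $Z(\ov Y)$-action cycles the three eigenvalues. Enumerating admissible triples modulo this cyclic action, the non-central orbits are exactly three: $(4,1,1)$, $(2,2,2)$ and $(3,3,0)$, represented by $e_1$, $e_2$ and $e_3$ respectively; these have distinct centralizer orders and so are pairwise non-conjugate. For each $e_i$ the eigenspaces are non-degenerate and pairwise orthogonal, so $C_{\GU_6(2)}(e_i) = \prod_k \GU_{d_k}(2)$ where $(d_k)$ is the multiplicity pattern. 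Restricting to $\ov Y$ by the kernel of the determinant and factoring out $Z(\ov Y)$ yields $C_Y(\bar e_i)$. One subtlety arises for $e_2$: the pattern $(2,2,2)$ is fixed under the three-cycle on eigenvalues, so $\omega^k e_2$ is $\ov Y$-conjugate to $e_2$ for each $k$, enlarging $C_Y(\bar e_2)$ by a cyclic factor of $3$ that permutes the three eigenspaces; for $e_1$ and $e_3$ the scalar shifts $\omega^k e_i$ have distinct multiplicity patterns and no such enlargement occurs.

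The stated isomorphism types then follow from standard facts. For $e_1$: $\GU_4(2) = Z(\GU_4(2)) \times \SU_4(2)$ (since $\SU_4(2)$ has trivial centre), giving $C_Y(e_1) \cong \GU_4(2) \cong 3 \times \SU_4(2)$. For $e_2$: using $\GU_2(2) \cong \Z/3 \times \Sym(3)$ (since $\SU_2(2) \cong \Sym(3)$), tracking the quotient by $Z(\ov Y)$ together with the cyclic eigenspace permutation yields $C_Y(e_2) \cong 3 \times (\Sym(3) \wr 3)$, with the first $\Z/3$ factor generated by the image of $e_2$ itself. For $e_3$: the image of $\ker\det \subseteq \GU_3(2)^2$ in $Y$ is a central product $\SU_3(2) \circ \SU_3(2)$ extended by the diagonal $\Z/3$ generated by pairs $(\alpha I, \alpha^{-1} I)$ with $\alpha$ a primitive cube root of unity, and combined with the description $\SU_3(2) \cong 3^{1+2}_+{:}\Q_8$ this gives the shape $3^{1+4}_+.(\Q_8 \times \Q_8).3$. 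The principal obstacle throughout is the bookkeeping around $Z(\ov Y)$: distinguishing conjugacy in $\ov Y$ from conjugacy in $Y$, and tracking how the scalar and block-centre subgroups interact in each case.
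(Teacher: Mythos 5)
Your proposal is correct and follows exactly the route the paper intends: the paper's own proof just declares the lemma ``an easy calculation'' from the explicit diagonal forms of $e_1,e_2,e_3$ (deferring to Aschbacher (23.9)), and you have carried that calculation out in full, including the determinant argument ruling out order-$9$ lifts and the bookkeeping of the $Z(\SU_6(2))$-action on eigenvalue multiplicity patterns (which correctly produces the extra wreathing $3$ only for $e_2$). One small slip in case (iii): a pair $(\alpha I,\alpha^{-1}I)$ with $\alpha^3=1$ has both blocks of determinant $\alpha^3=1$ and so lies in (the centre of) $\SU_3(2)\circ\SU_3(2)$ rather than generating the top quotient of order $3$; that quotient is instead generated by the image of a pair $(A,B)$ with $\det A=\omega=(\det B)^{-1}$, and the stated shape is unaffected since the determinant of the first block still induces a cyclic quotient of order $3$ on $C_{\ov Y}(e_3)$.
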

\begin{proof} Given the descriptions of $e_1$, $e_2$ and $e_3$ above this is an easy calculation. (See also \cite[(23.9)]{Asch} and correct the typographical error.)\end{proof}

We also need to know the centralizers of involutions in $X$.

\begin{lemma}\label{centralizerinvsU6}
 $X$ has five conjugacy classes of involutions and their centralizers have shapes as follows.
\begin{eqnarray*}
C_X(t_1) &\approx& 2^{1+8}_+:\SU_4(2). 2; \\
C_X(t_2) &\approx& 2^{4+8}.(\Sym(3) \times \Sym(3)).2; \\
C_X(t_3) &\approx& 2^{9}.3^2.\Q_8.2 \le 2^9:\L_3(4).2;\\
C_X(t_4) &\approx& 2\times \Sp_6(2); \text{ and}\\
C_X(t_5) &\approx& 2 \times (2^5:\Sp_4(2)).\\
\end{eqnarray*}
The involutions $t_1,t_2$ and $t_3$ are contained in $Y$ and their centralizers in $Y$ are obtained by dropping
the final $2$ in their description in $X$. Furthermore we may suppose that $t_5= t_4t_1$ and $C_X(t_5) \le C_X(t_4)$.
\end{lemma}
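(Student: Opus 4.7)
The plan is to first classify the involutions in $Y = \U_6(2)$ by the commutator space $[t,W]$ of their action on the natural $\GF(4)\ov Y$-module $W$, read off their centralizers from the corresponding geometric stabilizers, and then analyse the outer coset $X \setminus Y$ by exploiting the semilinear action on $W$. The five classes split as three inner classes $t_1, t_2, t_3 \in Y$ and two outer classes $t_4, t_5 \in X \setminus Y$.

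For an involution $t \in Y$, the subspace $[t,W]$ is totally singular of dimension $d \in \{1,2,3\}$. When $d=1$, $t_1$ is a unitary transvection and $C_Y(t_1)$ equals the full stabilizer of the isotropic line $[t_1,W]$, namely the parabolic $2^{1+8}_+{:}\SU_4(2)$. When $d=2$, $C_Y(t_2)$ lies in the parabolic stabilizing the $2$-space $[t_2,W]$, and the action on the quotient $[t_2,W]^\perp/[t_2,W]$ together with the residual action on $[t_2,W]$ produces the shape $2^{4+8}.(\Sym(3)\times \Sym(3))$. When $d=3$, $[t_3,W]$ is maximal totally singular, $t_3$ lies in the unipotent radical $O_2(P)$ of the parabolic $P = 2^9{:}\L_3(4)$ stabilizing it, and a direct computation shows $C_L(t_3) \cong 3^2.\Q_8$ for $L \cong \L_3(4)$ a Levi complement, giving (iii). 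The centralizers $C_X(t_i)$ for $i=1,2,3$ follow by verifying that each inner $Y$-class is stable under the graph-field automorphism, which appends the final ``$.2$''.

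For the outer coset, I would write $X = Y\langle \sigma\rangle$ with $\sigma$ inducing the graph-field automorphism on $Y$, chosen so that $C_Y(\sigma) \cong \Sp_6(2)$. Every outer involution in $X$ is of the form $\sigma y$ with $y^\sigma y = 1$, and a direct analysis produces precisely two $X$-classes: $t_4 := \sigma$ with $C_X(t_4) = \langle t_4\rangle \times \Sp_6(2)$, and $t_5 := t_4 u$ for $u \in \Sp_6(2) = C_Y(t_4)$ a long-root involution, with $C_X(t_5) = \langle t_5\rangle \times C_{\Sp_6(2)}(u) = \langle t_5\rangle \times (2^5{:}\Sp_4(2))$. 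Comparing the Jordan types of $u$ and $t_1$ on $W$ identifies $u$ with a $Y$-conjugate of $t_1$, so $t_5 = t_4 t_1$ and the inclusion $C_X(t_5) \le C_X(t_4)$ is automatic.

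The bulk of the work is bookkeeping with parabolic structures, and the main obstacle is verifying the precise extension types: that the direct products in $C_X(t_4)$ and $C_X(t_5)$ are genuine (requiring explicit commuting complements) and that the inner centralizers genuinely split as indicated, with sections such as $3^2.\Q_8$ and $2^5{:}\Sp_4(2)$ occurring in the stated form. These details can in any case be extracted from Aschbacher's detailed centralizer calculations referenced elsewhere in the paper.
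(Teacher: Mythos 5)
Your outline is essentially correct, but it is worth saying up front that the paper does not prove this lemma at all: its ``proof'' is a bare citation to Aschbacher--Seitz for the inner involutions $t_1,t_2,t_3$ (together with \cite[(23.2)]{Asch}) and to \cite[Proposition~4.9.2]{GLS3} for the outer ones. What you have written is, in effect, a reconstruction of what those references establish: the classification of involutions in a classical group in even characteristic by the dimension of the totally singular commutator space $[t,W]$, with centralizers read off inside the corresponding parabolics (and, for $d=1$, a quick order count confirms that $C_Y(t_1)$ is the \emph{full} point stabilizer $2^{1+8}_+{:}\SU_4(2)$, as you assert), followed by the analysis of the outer coset via $C_Y(\sigma)\cong\Sp_6(2)$. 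Two places in your sketch carry more weight than the prose suggests. First, that the coset $Y\sigma$ contains \emph{precisely} two $X$-classes of involutions is exactly the content of \cite[Prop.~4.9.2]{GLS3}; the reduction to elements $\sigma y$ with $y^\sigma y=1$ and then to involutions of $C_Y(\sigma)$ up to the appropriate twisted conjugacy is a genuine argument, not bookkeeping. Second, the inclusion $C_X(t_5)\le C_X(t_4)$ is not ``automatic'' from $t_5=t_4u$ with $u\in C_Y(t_4)$: a priori $C_X(t_5)$ could contain elements centralizing the product without centralizing the factors. One has the obvious containment $\langle t_4\rangle\times C_{\Sp_6(2)}(u)\le C_X(t_5)$, and equality (hence the claimed inclusion) follows only after the class-size computation bounds $|C_X(t_5)|$ by $2\cdot|2^5{:}\Sp_4(2)|=46080$. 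With those two points made explicit, your route is a legitimate self-contained alternative to the paper's citations, and your identification of $u$ with a conjugate of $t_1$ via $\dim_{\GF(4)}[u,W]=1$ is correct.
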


\begin{proof}  This can be found in \cite{AschSe} for the involution $t_1, t_2$ and $t_3$ (see also \cite[(23.2)]{Asch} and the following discussion). For the involutions
$t_4$ and $t_5$ we refer to \cite[Proposition 4.9.2]{GLS3}.
\end{proof}

We note that the involutions  $t_1$, $t_2$, and $t_3$ are the images in $Y$ of the involutions $\diag(t,I,I)$,
$\diag(t,t,I)$ and $\diag(t,t,t)$ respectively,  where $t = \left(\begin{array}{cc} 0&1\\1&0\end{array}\right)$
and $I$ is the $2\times 2$ identity matrix. The conjugates of $t_1$ are called \emph{unitary transvections}.

\begin{lemma}\label{nofours} There are no fours groups in $X$ all of whose non-trivial elements are unitary
transvections. In particular, if    $t$ is a unitary transvection, then $\langle t\rangle$ is weakly closed in
$O_2(C_X(t))$.
\end{lemma}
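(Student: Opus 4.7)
Plan: The argument splits naturally into the two assertions, and both are handled by parametrizing unitary transvections geometrically. Let $W$ be the natural $6$-dimensional unitary $\GF(4)$-module for $Y = \U_6(2)$, with Hermitian form $f$. Every unitary transvection has the form $T_v : x \mapsto x + f(x,v)\,v$ for some isotropic $v \in W$, and the transvection parameter $\alpha$ must satisfy $\alpha + \alpha^2 = 0$, so $\alpha \in \GF(2)$. Hence each isotropic $\GF(4)$-line carries a unique non-trivial transvection of order $2$, and $T_v = T_w$ if and only if $\langle v\rangle = \langle w\rangle$.

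For the first assertion, the key computation is that $[T_v, T_w] = 1$ iff $f(v,w) = 0$, and when these commute
\[
(T_v T_w - \mathrm{id})(x) \;=\; f(x,v)\,v + f(x,w)\,w,
\]
whose image is $\langle v, w\rangle$. If $\langle v\rangle \neq \langle w\rangle$ this image is $2$-dimensional, so $T_v T_w$ has rank $2$ as an $(\cdot-\mathrm{id})$-map and cannot be a transvection. A fours subgroup of $X$ with all three non-trivial elements transvections would furnish two distinct commuting transvections whose product is a third transvection, contradicting this. The outer involutions of $X = Y.2$ are of class $t_4$ or $t_5$ by Lemma~\ref{centralizerinvsU6}, so every unitary transvection of $X$ lies in $Y$ and the conclusion extends.

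For the ``in particular'' assertion, set $t = T_v$ and $U := O_2(C_X(t))$. By Lemma~\ref{centralizerinvsU6} we have $|U| = 2^9$ and $\langle t\rangle = Z(U)$, which identifies $U = O_2(C_Y(t))$ with the unipotent radical of the parabolic $C_Y(t) = \Stab_Y(\langle v\rangle)$. In particular, every element of $U$ preserves the flag $0 \subset \langle v\rangle \subset v^\perp \subset W$ and acts trivially on each quotient. Now suppose that $T_u \in U$ is a transvection. The image of $T_u - \mathrm{id}$ is $\langle u\rangle$, and triviality on $W/v^\perp$ forces $\langle u\rangle \leq v^\perp$, so $u \in v^\perp$. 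Triviality on $v^\perp/\langle v\rangle$ then demands $f(x,u)\,u \in \langle v\rangle$ for every $x \in v^\perp$; since $f$ is non-degenerate on $v^\perp/\langle v\rangle$, we may choose $x \in v^\perp$ with $f(x,u) \neq 0$ unless $u \in (v^\perp)^\perp = \langle v\rangle$. Hence $u \in \langle v\rangle$, so $T_u = t$. Thus $t$ is the unique $X$-conjugate of $t$ lying in $U$, and $\langle t\rangle$ is weakly closed in $O_2(C_X(t))$.

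The only mildly delicate step is the identification of $U$ with the full unipotent radical of the line-stabilizer parabolic, which is forced by combining the centralizer structure in Lemma~\ref{centralizerinvsU6} with $\langle t\rangle = Z(U)$; once transvections are parameterized by their isotropic axes, both parts reduce to short rank calculations on $W$.
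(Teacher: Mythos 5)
Your proof is correct, and the first assertion is handled in essentially the same spirit as the paper: both arguments work inside the natural module $W$ and exploit the fact that $\dim[W,x]=1$ for a transvection $x$, so that the product of two distinct commuting transvections moves a $2$-space and cannot itself be a transvection. (The paper phrases this slightly differently, deriving that $[W,x_1]=[W,x_2]$ would be normalized by $X$, but the underlying rank computation is the same; your explicit parametrization $T_v\colon x\mapsto x+f(x,v)v$ by isotropic points is if anything cleaner.) Where you genuinely diverge is in the ``in particular'' clause. The paper's argument is purely group-theoretic: since $O_2(C_X(t))\cong 2^{1+8}_+$ is extraspecial with centre $\langle t\rangle$, any second transvection $s$ in it is conjugate under $O_2(C_X(t))$ to $st$, so all of $\langle s,t\rangle^{\#}$ consists of transvections and the first part applies. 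You instead identify $O_2(C_X(t))$ with the unipotent radical of the point stabilizer $\Stab_Y(\langle v\rangle)$ and show by a direct flag computation that the only transvection it contains is $T_v$ itself. Both are valid; the paper's trick is shorter once part one is in hand, while your version is self-contained (it does not even use the first assertion) and makes the geometric reason for weak closure transparent. The one step you should make fully explicit is the equality $C_Y(T_v)=\Stab_Y(\langle v\rangle)$; this follows immediately from $T_v^{\,g}=T_{gv}$ together with the fact that $v\mapsto T_v$ is a bijection from isotropic points to transvections, rather than from the order of the centralizer alone.
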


\begin{proof} Suppose that $F$ is a fours group in $X$ and that all the non-trivial elements of $F$ are unitary
transvections. Let $x_1,x_2$ and $x_3$ be the non-trivial elements of $F$. Since $C_X(x_1)$ is a maximal subgroup
of $X$ and $Z(C_X(x_1))=\langle x_1\rangle$, $X= \langle C_X(x_1),C_X(x_2)\rangle$. Therefore, $C_W(x_1) \not =
C_W(x_2)$. Let $v\in W \setminus C_W(x_1)$ and $w \in C_X(x_2)\setminus C_W(x_1)$. Then $[v,x_3] =[v,x_2]$ and
$[w,x_3] = [w,x_2]$. Hence, as $\dim [W,x_3]=1$, $[W,x_1]=[W,x_2]$ is normalized by $X$, which is a
contradiction. If $O_2(C_G(t))$  containes a unitary transvection $s$ with $t\neq s$, then conjugation in
$O_2(C_G(t))$  reveals that all elements of $\langle s,t\rangle$ are unitary transvections and this is impossible
as we have just seen. Thus $\langle t \rangle$ is weakly closed in $O_2(C_X(t))$.
\end{proof}

Let $P_1$ and $P_2$ be the connected parabolic subgroups of $Y$ containing a fixed Borel subgroup where notation
is chosen so that $$P_1\approx  2^{1+8}_+{:}\SU_4(2)$$ and $$P_2\approx  2^9{:}\L_3(4).$$

\begin{lemma}\label{unique}  Suppose that $Y \cong \U_6(2)$ and that $V$ is an irreducible $20$-dimensional
$\GF(2)Y$-module. Then $V\otimes \GF(4)$ is the exterior cube of $W$. In particular, $\dim C_V(O_2(P_2))=1$ and
$\dim C_V(e_3)=2$.
\end{lemma}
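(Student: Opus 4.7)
The plan is to identify the unique $20$-dimensional irreducible $\GF(2)Y$-module explicitly as the descent of $\wedge^3 W$, and then read the fixed-space dimensions off directly. First I would form $\wedge^3_{\GF(4)} W$, which is a $20$-dimensional $\GF(4)\ov Y$-module on which the scalar centre of $\ov Y$ (of order $3$) acts by cubing and hence trivially, so it descends to a $\GF(4) Y$-module. The Hermitian form on $W$ gives an $\ov Y$-equivariant isomorphism $W^\sigma \cong W^*$, where $\sigma$ is the non-trivial Galois automorphism of $\GF(4)/\GF(2)$, while the natural wedge pairing $\wedge^3 W \otimes \wedge^3 W \to \wedge^6 W \cong \GF(4)$ is non-degenerate and identifies $\wedge^3 W$ with its own dual. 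Combining these gives $(\wedge^3 W)^\sigma \cong \wedge^3 W$, so $\wedge^3 W$ is Galois-stable and descends further to a $20$-dimensional $\GF(2) Y$-module $V_0$ with $V_0 \otimes_{\GF(2)} \GF(4) \cong \wedge^3 W$.

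Next I would verify that $V_0$ is irreducible and is the unique such module up to isomorphism. Irreducibility of $\wedge^3 W$ as a $\GF(4)Y$-module follows from standard weight theory: $w_1 \wedge w_2 \wedge w_3$ is a highest-weight vector of weight $\omega_3$, and the Weyl module $V(\omega_3)$ for $\SL_6$ remains simple in characteristic $2$. Irreducibility of $V_0$ over $\GF(2)$ is then automatic. The step I expect to be the main obstacle is the uniqueness assertion, which I would deduce from the $2$-modular Brauer character table of $\U_6(2)$ in the modular \ATLAS (dimension $20$ occurs exactly once); alternatively one may argue that a $20$-dimensional irreducible $\GF(2) Y$-module must, upon extension to $\GF(4)$, either remain irreducible (hence equal $\wedge^3 W$ by uniqueness of the $\GF(4) Y$-irreducible of that dimension) or split as a Galois-conjugate pair of $10$-dimensional irreducibles, which do not exist.

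With the identification $V \otimes \GF(4) \cong \wedge^3 W$ in hand the fixed-space computations are short, since extending or restricting scalars preserves fixed-space dimensions. Let $U = \langle w_1, w_2, w_3\rangle$, a totally isotropic $3$-subspace of $W$ stabilized by $P_2$; the unipotent radical $O_2(P_2)$, elementary abelian of order $2^9$ and parametrized by Hermitian $3\times 3$ matrices over $\GF(4)$, fixes both $U$ and $W/U$ pointwise. Filtering $\wedge^3 W$ by powers of $U$ yields graded pieces of dimensions $1, 9, 9, 1$ (from the bottom), and a direct calculation with a generic element of $O_2(P_2)$ shows that the only $O_2(P_2)$-fixed vectors lie in the bottom piece $\wedge^3 U$, giving $\dim C_V(O_2(P_2)) = 1$. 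For $e_3 = \diag(\omega, \omega, \omega, \omega^{-1}, \omega^{-1}, \omega^{-1})$, the eigenvalue of $e_3$ on $w_{i_1}\wedge w_{i_2}\wedge w_{i_3}$ with $k$ indices in $\{1,2,3\}$ is $\omega^{2k-3}$, which equals $1$ only for $k=0$ and $k=3$; the $1$-eigenspace is therefore spanned by $w_1\wedge w_2 \wedge w_3$ and $w_4\wedge w_5 \wedge w_6$, giving $\dim C_V(e_3) = 2$.
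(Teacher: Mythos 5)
Your proposal is correct, but it runs in the opposite direction to the paper's argument. You construct $\wedge^3 W$ explicitly, descend it to $\GF(2)$ via the Hermitian and wedge pairings, prove irreducibility from the minuscule weight $\omega_3$, and then settle uniqueness by appealing to the $2$-modular Brauer data for $\U_6(2)$ (or the absence of a $10$-dimensional $\GF(4)$-irreducible); once the identification is made, the two fixed-space dimensions fall out of an explicit eigenvalue/filtration computation. The paper instead starts from an \emph{arbitrary} irreducible $20$-dimensional $V$ and extracts the two dimension statements first: $\dim C_V(e_3)=2$ comes from the fact that a faithful characteristic-$2$ representation of $O_3(C_Y(e_3))\langle t\rangle\approx 3^{1+4}_+{:}2$ needs at least $18$ dimensions, and $\dim C_V(O_2(P_2))=1$ comes from Smith's Theorem together with a contradiction argument playing off $t_1$, $e_1$ and $e_3$; only at the very end is \cite[5.5]{Aschbacher2E6} invoked to identify $V\otimes\GF(4)$ with the exterior cube. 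So the external inputs are genuinely different: you lean on knowledge of the full list of small $2$-modular irreducibles, while the paper leans on Smith's Theorem and Aschbacher's identification lemma but never needs uniqueness of the $20$-dimensional module as a hypothesis-free fact. Your route buys cleaner, self-contained computations of the fixed spaces (and in fact, once you have Smith's Theorem, the line $\wedge^3 U$ being a trivial $P_2$-submodule of the irreducible module $C_V(O_2(P_2))$ already forces $\dim C_V(O_2(P_2))=1$ without the generic-element calculation); the paper's route buys independence from the modular \ATLAS. One small notational caution: the paper's basis $\{w_1,\dots,w_6\}$ is unitary (orthonormal), which is what makes $e_3$ diagonal, so $\langle w_1,w_2,w_3\rangle$ is \emph{not} totally isotropic in that basis; for the $O_2(P_2)$ computation you need to switch to a hyperbolic basis rather than reuse the same symbols.
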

\begin{proof} First consider the restriction of $V$ to $O_3(C_Y(e_3))$. This group has no faithful characteristic $2$-representation
of dimension less than $9$ and as $e_3$ is inverted by a conjugate $t$ of $t_3$, we see that any characteristic~$2$ representation of $O_3(C_Y(e_3))\langle t\rangle$ has dimension at least $18$. It follows that $\dim
C_V(e_3) = 2$ and that $V$ is absolutely irreducible.  By Smith's Theorem \cite{Smith}, we now have, for $i=1,2$,
$C_V(O_2(P_i))$ are irreducible $P_i$-modules.  Suppose that $\dim C_V(O_2(P_2)) >1$. Then, as $P_2/O_2(P_2)\cong
\L_3(4)$ contains an elementary abelian subgroup of order 9 all of whose subgroups of order 3 are conjugate, we
have $\dim C_V(O_2(P_2)) \ge 8$. Since $t_1 \in O_2(P_2)$  and since there exists $x\in P_1$ such that $P_1=
\langle O_2(P_2),O_2(P_2)^x \rangle$, we either have $\dim C_V(t_1) \ge 15$ or $\dim C_V(P_1) \ge 2$. The latter
possibility violates Smith's Theorem. Hence $\dim C_V(t_1) \ge 15$. Thus $V/C_V(t_1)$ has dimension at most $5$.
Since $P_1/O_2(P_1)\cong \SU_4(2)$ has Sylow $3$-subgroups of order $3^4$, we have $[V,P_1] \le C_V(t_1)$ and so
$t_1$ is a transvection by Smith's Theorem. Since $t_1$ inverts $e_1$, we now have $\dim C_V(e_1) \ge 18$ and
taking a suitable product of three conjugates of $e_1$ we obtain a conjugate of $e_3$ centralizing a $14$-space
rather than a $2$-space. At which stage we conclude $\dim C_V(O_2(P_1))= 1$. Finally, using
\cite[5.5]{Aschbacher2E6} we obtain the statement of the lemma. \end{proof}

We note that the $20$-dimensional $\GF(2)Y$-module in Lemma~\ref{unique} extends to an action of $X$ (as can be
seen in the group ${}^2\E_6(2).2$). Our next gual is to determine the action of elements of $X$ on  $V$ described
in Lemma~\ref{unique}. We recall that $P_1/O_2(P_1) \cong\SU_4(2)$. We call the $4$-dimensional $\GF(4)\SU_4(2)$
viewed as an $8$-dimensional $\GF(2)$-module the \emph{unitary module} for $\SU_4(2)$ and the $6$-dimensional
$\GF(2)\SU_4(2)$-module which can be seen as the exterior square of the unitary module is called the
\emph{orthogonal module} for $\SU_4(2)$. We will also meet the \emph{symplectic module} for $C_X(t_4)/\langle
t_4\rangle \cong \Sp_6(2)$ as well as the \emph{spin module} which has dimension $8$ and this is the unique
$8$-dimensional irreducible $\Sp_6(2)$-module (see \cite[5.4]{Aschbacher2E6}). Finally, from
Lemma~\ref{centralizerinvs} we have that $C_Y(t_2)/O_2(C_Y(t_2)) \cong \mathrm \Omega_4^+(2)$ and so this group
has an orthogonal module.

\begin{proposition}\label{Vaction} Suppose that $X = \U_6(2):2$ and $V$ is the irreducible $\GF(2)X$-module of dimension $20$.
\begin{enumerate}
\item The following hold:
\begin{enumerate}
\item $\dim C_V(t_1) = 14$, $[V,t_1]$ is the orthogonal module and $C_V(t_1)/[V,t_1]$ is
the unitary module for $C_X(t_1)/O_2(C_X(t_1)) \cong \SU_4(2)$; \item  $\dim C_V(t_2) = 12$, $C_V(t_2)/[V,t_2]$ is the  orthogonal module for  $C_X(t_2)/O_2(C_X(t_2)) \cong \Omega_4^+(2)$;
\item $\dim C_V(t_4) = 14$,
 $[V,t_4]$ is the symplectic module and $C_V(t_4)/[V,t_4]$ is the spin module for  $C_X(t_4)/O_2(C_X(t_4)) \cong \Sp_6(2)$;
\item  $\dim C_V(t_3) = \dim C_V(t_5)= 10$;
\end{enumerate}
    \item The stabilizers of non-zero vectors in $V$ are as follows:
\begin{eqnarray*}
\Stab_X(v_1)&\approx&2^9:\L_3(4).2;\\
\Stab_X(v_2)&\approx&2^{1+8}.\Sp_4(2).2;\\
\Stab_X(v_3)&\approx&2^8:3^2.\Q_8.2;\\
\Stab_X(v_4)&\approx&\L_3(4).2.2; \text{ and }\\
\Stab_X(v_5)&\approx&3^{1+4}_+.(\Q_8\times \Q_8).2.2.
\end{eqnarray*}
Here $v_1,v_2,v_3$ are the singular vectors.
\end{enumerate}
\end{proposition}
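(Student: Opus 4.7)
The plan is to use the identification $V \otimes \GF(4) \cong \Lambda^3 W$ from Lemma~\ref{unique} to translate both parts of the proposition into exterior-algebra computations on the natural $\GF(4)\SU_6(2)$-module $W$. Since $V$ is absolutely irreducible, for inner $t \in Y$ we have $\dim_{\GF(2)} C_V(t) = \dim_{\GF(4)} C_{\Lambda^3 W}(t)$ and similarly for commutator subspaces, so dimensions can be read off the exterior cube directly; the outer involutions $t_4,t_5$ require a further semilinear correction which we handle separately.

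For part (i), I would fix the matrix representatives $t_1 = \diag(t,I,I)$, $t_2 = \diag(t,t,I)$, $t_3 = \diag(t,t,t)$ and decompose $\Lambda^3 W$ along the flag $0 \le [W,t_i] \le C_W(t_i) \le W$. For $t_1$, with $\dim C_W(t_1) = 5$ and $v \notin C_W(t_1)$ chosen so that $t_1(v)-v = h$ generates $[W,t_1]$, the splitting $\Lambda^3 W = \Lambda^3 C_W(t_1) \oplus \Lambda^2 C_W(t_1) \wedge v$ shows the first summand is pointwise fixed (dimension $\binom{5}{3}=10$), while on the second $\omega \wedge v \mapsto \omega \wedge v + \omega \wedge h$ whose fixed space is the $4$-dimensional annihilator of $h$ in $\Lambda^2 C_W(t_1)$, giving $\dim C_V(t_1) = 14$. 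The analogous flag for $t_2$ has four graded pieces (the flag has $\GF(4)$-dimensions $2, 4, 6$) and the action mixes them, so the direct-sum kernels must be corrected by an additional $2$-dimensional contribution coming from cancellation across pieces, producing $\dim C_V(t_2) = 12$; a parallel computation for $t_3$, with $\dim C_W(t_3) = 3 = \dim [W,t_3]$, yields $\dim C_V(t_3) = 10$. For $t_4, t_5 \in X \setminus Y$, the outer generator of $X$ acts on $\Lambda^3 W$ via its semilinear action on $W$; writing $t_5 = t_4 t_1$ and using $C_X(t_5) \le C_X(t_4)$ from Lemma~\ref{centralizerinvsU6}, the same technique yields $\dim C_V(t_4) = 14$ and $\dim C_V(t_5) = 10$. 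The module structures are then forced by uniqueness: $[V,t_1]$ is a $6$-dimensional irreducible $\GF(2)$-module for $C_X(t_1)/O_2(C_X(t_1)) \cong \SU_4(2)$, and $\SU_4(2)$ admits a unique such irreducible, the orthogonal module, so the $8$-dimensional quotient $C_V(t_1)/[V,t_1]$ must be the unitary module. The same uniqueness arguments applied through the quotients $\Omega_4^+(2)$ for $t_2$ and $\Sp_6(2)$ for $t_4$ pin down the remaining orthogonal, symplectic and spin identifications.

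For part (ii), I would enumerate the $X$-orbits on $V \setminus \{0\}$, starting with the three singular orbits via the exterior-cube model. Over $\GF(4)$, the nonzero decomposable tri-vectors in $\Lambda^3 W$ stratify by the isometry type of the underlying $3$-subspace of $W$ under the hermitian form, namely non-degenerate, degenerate but not totally isotropic, and totally isotropic, giving three $\SU_6(2)$-orbits; extending by the outer involution and matching stabilizer shapes against the parabolic and transvection-centralizer structures of Lemma~\ref{centralizerinvsU6} produces the listed $2^9{:}\L_3(4).2$, $2^{1+8}.\Sp_4(2).2$ and $2^8{:}3^2.\Q_8.2$. For the non-singular orbits I would use Lemma~\ref{unique} to note $\dim C_V(e_3) = 2$, so the stabilizer of $v_5$ sits inside $N_X(\langle e_3 \rangle)$ whose shape is given by Lemma~\ref{U62J}; an orbit-stabilizer count inside $N_X(\langle e_3 \rangle)$ pins down $\Stab_X(v_5) \approx 3^{1+4}_+.(\Q_8 \times \Q_8).2.2$. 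The stabilizer of $v_4$ then lies inside a conjugate of $C_X(t_4) \approx 2 \times \Sp_6(2)$, and the known orbit structure of $\Sp_6(2)$ on its spin module identified in part (i)(c), combined with the outer automorphism action, gives $\Stab_X(v_4) \approx \L_3(4).2.2$.

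The main obstacle will be the careful module-theoretic bookkeeping in part (i), especially distinguishing the intended modules from Frobenius or duality twists in the uniqueness arguments, and correctly tracking the mixing-of-pieces corrections for $t_2$ and $t_3$. Part (ii) then reduces largely to consistency checks between orbit lengths, the centralizer structures of Lemmas~\ref{centralizerinvsU6} and \ref{U62J}, and the shapes listed in the proposition, but verifying the full $.2.2$ tails in $\Stab_X(v_4)$ and $\Stab_X(v_5)$ requires careful attention to which outer involutions of $X$ are preserved by each stabilizer.
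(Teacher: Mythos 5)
Your part (i) computations for $t_1,t_2,t_3$ in $\Lambda^3W$ are essentially correct and are in fact more self\nobreakdash-contained than the paper, which simply quotes $\dim[V,t_i]$ from Aschbacher and only resorts to explicit wedge calculations to identify the $t_2$-module structure. But part (ii) contains two concrete errors. First, the proposed stratification of the singular vectors by the isometry type of a $3$-subspace of $W$ cannot work: there are four isometry types (radical of dimension $0,1,2,3$), not three, and the non-degenerate stratum yields \emph{non-singular} vectors --- the paper's own treatment of $v_5$ shows that $C_V(e_3)$, which lies in $\Lambda^3\langle w_1,w_2,w_3\rangle\oplus\Lambda^3\langle w_4,w_5,w_6\rangle$ with both summands decomposable on non-degenerate $3$-spaces, consists entirely of non-singular vectors. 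Moreover the singular vectors cannot even all be decomposable: since $V\cap\omega V=0$, each $\GF(4)$-line of $\Lambda^3W$ meets $V$ in at most one non-zero vector, so $V$ contains at most $376805$ decomposable vectors (one per $3$-subspace of $W$), while the three singular orbits have lengths $891+24948+498960=524799$. Concretely, $\Stab_X(v_2)\approx 2^{1+8}.\Sp_4(2).2$ is not the stabilizer of any $3$-subspace of $W$; only the orbit of $v_1$, of length $891$, matches the totally isotropic $3$-spaces.

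Second, the claim that $\Stab_X(v_4)$ lies in a conjugate of $C_X(t_4)\approx 2\times\Sp_6(2)$ is false: $\L_3(4)$ is not a subgroup of $\Sp_6(2)$ (the only subgroups of $\Sp_6(2)$ of order $20160$ are copies of $\Alt(8)\cong\L_4(2)\not\cong\L_3(4)$), so a group of shape $\L_3(4).2.2$ cannot embed in $2\times\Sp_6(2)$, and the spin-module orbit stabilizers ($\mathrm G_2(2)$ and $2^6{:}\L_3(2)$) are not of the required shape anyway. The paper instead takes the Levi complement $L\cong\L_3(4)$ of the stabilizer of an isotropic $3$-space $I$, notes that $L$ also fixes the opposite isotropic space $J$ and hence centralizes $\langle i_1\wedge i_2\wedge i_3,\,j_1\wedge j_2\wedge j_3\rangle$, so $\dim C_V(L)=2$ by Lemma~\ref{unique}, and finishes with the maximality of $\L_3(4){:}2$ in $\U_6(2)$ and an orbit count. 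Two smaller soft spots: for $t_4,t_5$ the fixed space of a semilinear involution of $\Lambda^3W$ is a $20$-dimensional $\GF(2)$-space, so ``the same technique'' does not apply verbatim and genuine extra input is needed (the paper uses the fixed-point-free action of $\Sp_4(2)$ and a $\Sym(3)\times\U_4(2)$ decomposition of $V$); and for (i)(b), a $4$-dimensional module for $\Omega_4^+(2)\cong\Sym(3)\times\Sym(3)$ without trivial composition factors is not unique (e.g.\ the sum of the two $2$-dimensional modules), which is exactly why the paper verifies $C_V(T)=0$ for a full Sylow $3$-subgroup $T$ of $C_X(t_2)$.
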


\begin{proof} For the involutions $t_i$, $i=1,2,3$, $\dim [V,t_i]$ is given in \cite[7.4
(1)]{Aschbacher2E6}. In particular (i) (c) holds and the dimension statements in (i)(a) and (i)(b) hold.

The remaining parts of (i)(a) can be deduced from \cite[(5.6)]{Aschbacher2E6}.

The involution $t_2$ centralizes the image in $X$ of $\langle a,b\rangle$ where $a=\diag
(\omega,\omega,\omega^{-1},\omega^{-1},\omega,\omega^{-1})$ and $b=\diag(
\omega^{-1},\omega^{-1},\omega,\omega,\omega,\omega^{-1})$, Thus the Sylow $3$-subgroup $T$ of $C_X(t_2)$
contains two conjugates of $\langle e_3\rangle$, a conjugate of $\langle e_1\rangle$ and a conjugate of $\langle
e_2\rangle$. Now $C_V(a) =\langle w_1\wedge w_2\wedge w_5, w_3\wedge w_4\wedge w_6\rangle$ and $C_V(b) =\langle
w_1\wedge w_2\wedge w_6, w_3\wedge w_4\wedge w_5\rangle$ and so $C_V(T)=0$. It follows that $C_V(t_2)/[V,t_2]$
admits $C_X(t_2)$ as described in (i)(b).

There is a conjugate of $t_4$ which centralizes a subgroup isomorphic to $\Sp_4(2)$ in $C_X(t_1)/O_2(C_X(t_1))$.
By part (i)(a) $C_X(t_1)$ acts as  $\OO^-_6(2)$ on $[V,t_1]$ and $V/C_V(t_1)$ and naturally as $\SU_4(2)$ on
$C_V(t_1)/[V,t_1]$. Since $t_4$ is not a unitary transvection of $C_X(t_1)/O_2(C_X(t_1))$, we see that $\dim
[V,t_4] \geq 6$ and $[C_X(t_4), [V,t_4]]\not= 1$. Furthermore $\Sp_4(2)$ acts fixed point freely on
$C_W(t_4)/[W,t_4]$ for all $\U_4(2)$ sections in $V$. Therefore
 $\Sp_4(2)$ acts fixed point freely on $C_V(t_4)/[V,t_4]$. In particular  $|C_V(t_4)/[V,t_4]| = 2^{4x}$ where $x$ is some positive integer.
This shows that this module must be the 8-dimensional $\Sp_6(2)$-module and then we deduce $\dim C_V(t_4) = 14$.

We have $t_5 = t_4t_1$, and $C_X(t_5) \leq C_X(t_4)$.  As seen before we have that there is $U = \Sym(3) \times
\U_4(2)$ in $X$ such that as an $U$-module $V$ is a direct sum of the unitary module $V_2$ with a tensor product
of the 2-dimensional $\Sym(3)$- module with the $\OO^-_6(2)$-module. We may assume that $t_1 \in \Sym(3)$ and
$t_5$ and $t_4$ induce an outer automorphism on $\U_4(2)$. As $C_X(t_5)$ does not contain $\Sym(6) \times
\Sym(3)$, we see that $t_5$ acts faithfully on the normal $\Sym(3)$, while $t_4$ centralizes this group. We have
that $C_{V_2}(t_5)$ is of order 16. As $t_5$ inverts an element of order three in $\Sym(3)$, which acts fixed
point freely on $V_1$, we get that $C_{V_1}(t_5)$ is of order 64.
 Hence we have that $\dim C_V(t_5)= 10$.

For part (ii) we refer to Aschbacher \cite[7.5 (4)]{Aschbacher2E6} for centralizers of singular vectors in $V$.
This gives the centralizers of $v_1$, $v_2$ and  $v_3$.

 Let $Z= \langle e_3\rangle$, $Q= O_3(C_X(Z))\cong 3^{1+4}_+$ and set $U= C_V(Z)$. Then $\dim U= 2$ and $\dim [V,Z]=18$.
  Since $Q \le C_X(Z)'$, we have that $Q$ centralizes $U$. As none of
the singular vectors have such a subgroup centralizing them, we infer that the non-trivial elements of $U$ are
all non-singular. Now $U$ is normalized by $N_X(Z)$ and so we have that $C_X(U)$ has index at most $6$ in
$N_X(Z)$. By Lemma~\ref{U62J}, there is a conjugate $Y$ of $Z$ in $C_X(Z)$ which is not contained in $Q$. If
$[Y,U] = 1$, then $U = C_V(Y) = C_V(Z)$ and so $Y$ is conjugate to $Z$ in $N_X(U)$, which is not the case. Hence
$Y$ acts transitively on $U^\sharp$. This shows that $C_X(v_5)$ is as stated.

Let  $L\cong \L_3(4)$ be the Levi complement of the parabolic subgroup of $X$ which is the image of the
stabilizer of an isotropic $3$-space $I$ of the unitary space $W$ .Then $L$ also stabilizes an isotropic subspace
$J$ with $I \cap J=0$ and in fact $I$ and $J$ are the only such subspaces normalized by $L$. Now $L$ centralizes
$\langle i_1\wedge i_2\wedge i_3, j_1\wedge j_2\wedge j_3\rangle$ where $\{i_1,i_2,i_3\}$ and $\{j_1,j_2,j_3\}$
are bases for $I$ and $J$ respectively.

 Thus
by \ref{unique} $\dim C_V(L)= 2$ and this space is normalized by $\L_3(4):2$. It follows that this group
centralizes  at least one non-zero vector and this vector must be non-singular as none of the singular vectors
have such a stabilizer.  By \cite{Atlas} we have that $\L_3(4):2$ is a maximal subgroup in $F^\ast(X)$. Thus we
have at least two orbits of non-singular vectors and
 summing the lengths of these orbits we see that we have accounted for all the orbits of $X$ on $V$.
\end{proof}

\begin{lemma}\label{centralizerinvs}
Assume that $X \cong \U_6(2):2$ and that $V$ is a $20$-dimensional $\GF(2)X$-module. Let $Y$ be the semidirect product of $V$ and $X$. Then for $j$ an involution in $Y\setminus V$ we have one of the following:
\begin{enumerate}
\item $Vj$ is a $2$-central involution in $Y'/V$, $|C_V(j)|=2^{14}$ and
\begin{enumerate}
\item $C_{Y'}(j) \approx 2^{14}.2^{1+8}_+.\U_4(2)$;
\item $C_{Y'}(j) \approx 2^{14}.2^{1+8}_+.2^{1+4}.\Sym(3)$;
\item $C_{Y'}(j) \approx 2^{14}.2^{1+8}_+.3^{1+2}_+.\Q_8$;
\end{enumerate}
\item $Vj$ is not $2$-central in $Y'/V$ and $C_{Y'/V}(Vj) = 2^{4+8}.(\Sym(3)\times \Sym(3)) $,  $|C_V(j)|= 2^{12}$ and
\begin{enumerate}
\item $C_{Y'}(j) \approx 2^{12}.2^{4+8}.(\Sym(3)\times \Sym(3))$;
\item $C_{Y'}(j) \approx 2^{12}.2^{4+8}.\Sym(3)$;
\item $C_{Y'}(j) \approx 2^{12}.2^{4+8}.2^2$;
\end{enumerate}
\item $Vj$ is not $2$-central in $Y'/V$, $|C_V(j)|= 2^{10}$ and $C_{Y'}(j) \approx 2^{10}.2^{9}.3^2:\Q_8$;
\item  $j \in Y\setminus Y'$, $|C_V(j)|= 2^{14}$ and
\begin{enumerate}
\item $C_Y(j) \approx 2^{14}.(2 \times\Sp_6(2))$;
\item $C_Y(j) \approx 2^{14}.(2 \times 2^6.\L_3(2))$;
\item $C_Y(j) \approx 2^{14}.(2 \times \mathrm G_2(2))$; and
\end{enumerate}
\item  $j \in Y\setminus Y'$, $C_Y(j)\approx 2^{10}.(2 \times 2^5 .\Sym(6))$.
\end{enumerate}
\end{lemma}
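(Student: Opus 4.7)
The plan is to work in the semidirect product by writing each $j \in Y \setminus V$ uniquely as $j = vx$ with $v \in V$ and $x \in X$, where $X$ is a fixed complement to $V$ in $Y$. Since $j^2 = v \cdot v^x \cdot x^2$ and $V$ is abelian of exponent $2$, $j$ is an involution precisely when $x$ is an involution of $X$ and $v \in C_V(x)$. Conjugation by $u \in V$ sends $vx$ to $(v+[u,x])x$, so above a fixed involution $x$ the $V$-classes of involutions biject with $C_V(x)/[V,x]$, and hence the $Y$-classes biject with the $C_X(x)$-orbits on that quotient. A direct calculation gives $C_Y(j)\cap V = C_V(x)$ and $C_Y(j)/C_V(x) \cong \mathrm{Stab}_{C_X(x)}(v+[V,x])$, so $|C_V(j)| = |C_V(x)|$, matching the values $2^{14}, 2^{12}, 2^{10}, 2^{14}, 2^{10}$ read off from Proposition~\ref{Vaction}(i) for $x$ conjugate to $t_1,\ldots,t_5$ respectively.

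By Lemma~\ref{centralizerinvsU6} the involutions $t_1,t_2,t_3$ lie in $X'\cong \U_6(2)$ and $t_4, t_5 \in X\setminus X'$, with $t_1$ the unique $2$-central class in $X'$. Hence $Vj$ is $2$-central in $Y'/V$ iff $x\sim t_1$ (case (i)), it is non-central in $Y'/V$ iff $x\sim t_2, t_3$ (cases (ii), (iii)), and $j \in Y\setminus Y'$ iff $x\sim t_4, t_5$ (cases (iv), (v)). For any involution $t$ acting on $V$ over $\GF(2)$, $\dim [V,t] = \dim V - \dim C_V(t)$, so Proposition~\ref{Vaction}(i) yields $\dim [V,t_3] = 10 = \dim C_V(t_3)$ and $\dim [V,t_5] = 10 = \dim C_V(t_5)$. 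In both cases $C_V(x) = [V,x]$, the quotient is trivial, and there is a single class of involutions above $Vx$ with centralizer $C_V(x).C_X(x)$, producing (iii) and (v).

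The three remaining subcases require orbit analysis on a non-trivial quotient $C_V(x)/[V,x]$. For $x = t_1$ the quotient is the natural $4$-dimensional unitary module for $C_{X'}(t_1)/O_2 \cong \SU_4(2)$ (Proposition~\ref{Vaction}(i)(a)); Witt's lemma splits the $255$ non-zero vectors into $135$ isotropic and $120$ non-isotropic vectors with stabilizers $2^{1+4}.\Sym(3)$ (of index $3$ in the parabolic $P_1$) and $\SU_3(2) \cong 3^{1+2}_+.\Q_8$ respectively; extending by $O_2(C_X(t_1))\cong 2^{1+8}_+$ and then by $C_V(t_1)$ yields (i)(a)--(c). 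For $x = t_2$ the quotient is the $4$-dimensional orthogonal module for $\Omega_4^+(2)\cong \Sym(3)\times \Sym(3)$, which via the tensor factorisation $\GF(2)^2\otimes \GF(2)^2$ has orbit structure $1 + 9 + 6$: the $9$ non-zero pure tensors (singular, stabilizer $2\times 2$), the $6$ rank-$2$ tensors (non-singular, stabilizer $\Sym(3)$), and $\{0\}$, producing (ii)(a)--(c). For $x = t_4$ the quotient is the $8$-dimensional spin module for $\Sp_6(2) \cong \Omega_7(2)$; viewed via $\Omega_7(2)\hookrightarrow \Omega_8^+(2)$, the module has orbit structure $1+135+120$ with stabilizers $\Sp_6(2)$, $2^6{:}\L_3(2)$ (the parabolic $P_3$) and $\mathrm G_2(2)$ respectively, producing (iv)(a)--(c).

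The main obstacle is the verification of these orbit structures and stabilizer shapes. In each case the orbit lengths are forced by the arithmetic $|\mathrm{orbit}| = [\Gamma : \mathrm{Stab}]$ together with the identities $1+135+120 = 2^8$ (twice) and $1+6+9 = 2^4$, while the stabilizer shapes follow from the Borel--Tits analysis of parabolic subgroups of $\SU_4(2)$, $\Omega_4^+(2)$ and $\Omega_7(2)$, and from the standard fact that the stabilizer of an anisotropic vector in a classical-form space is the corresponding classical group on the orthogonal complement. Once these are established, assembling $C_V(x).\mathrm{Stab}_{C_X(x)}(v+[V,x])$ in each subcase yields precisely the shapes listed in (i)--(v).
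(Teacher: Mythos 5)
Your proposal is correct and takes essentially the same route as the paper: the paper likewise reduces each case to the $C_X(x)$-orbit structure on $C_V(x)/[V,x]$ supplied by Proposition~\ref{Vaction} (collapsing cases (iii) and (v) because there $C_V(x)=[V,x]$, and reading off the orbits $1+135+120$ on the unitary and spin modules and $1+9+6$ on the $\Omega_4^+(2)$-module). You have merely made explicit the semidirect-product bookkeeping and the vector-stabilizer identifications that the paper leaves implicit.
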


\begin{proof} If $|C_V(j)| = 2^{10}$, then all involutions in $Vj$ are conjugate. Hence (iii) and (v) hold with Proposition \ref{Vaction}.

Let $j$ be 2-central. Then $C_V(j)/[V,j]$ is the $\U_4(2)$-module by Proposition \ref{Vaction} In particular we have three orbits of lengths 1,135, 120, which gives (i) (a) - (c).

If $j$ is as in (iv), then by Proposition \ref{Vaction} $C_X(j)$ induces on $C_V(j)/[V,j]$ the spin module and we have again orbits of lengths 1, 135 and 120, which gives (iv) (a) - (c).

Let finally $j$ be as in (ii). Then $|[V,j]| = 2^8$ and by Proposition \ref{Vaction} $C_V(j)/[V,j]$ is the $\OO^+_4(2)$-module for $C_X(j)$. Hence we have three orbits of lengths 1,6,9, which gives (ii) (a) - (c).
\end{proof}

\begin{lemma} \label{Vfacts} Suppose that $X \cong \U_6(2){:}2$ and that $V$ is an irreducible
$20$-dimensional $\GF(2)X$-module. Then $V$ is not a failure of factorization module.
\end{lemma}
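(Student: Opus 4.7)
My plan is to argue by contradiction. Suppose there exists a non-trivial elementary abelian $2$-subgroup $A \le X$ satisfying $|V/C_V(A)| \le |A|$; by Timmesfeld's replacement theorem we may assume $A$ is quadratic on $V$, so $[V,A] \le C_V(A)$. Write $r = \log_2|A|$ and $d = \dim[V,A] = \dim V/C_V(A)$, so the offender condition reads $d \le r$. Proposition~\ref{Vaction}(i) gives $\dim[V,t] \ge 6$ for every involution $t \in X$, and since $[V,t] \le [V,A]$ by quadratic action, $d \ge 6$, whence $r \ge 6$.

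Next I would carry out a case analysis on the involution classes represented in $A$. For any involution $t \in A$ we have $A \le C_X(t)$ and, by quadratic action, $A \le C_X([V,t])$; hence $A$ lies in the kernel of the action of $C_X(t)$ on $[V,t]$. If $t$ is a unitary transvection (class $t_1$), then $C_X(t)/O_2(C_X(t)) \cong \SU_4(2){.}2$ acts faithfully on the $6$-dimensional orthogonal module $[V,t]$, so this kernel is $O_2(C_X(t)) \cong 2^{1+8}_+$; but the latter has $2$-rank $5 < 6$, contradicting $r \ge 6$. If $t$ is of class $t_4$, then $C_X(t) \cong 2 \times \Sp_6(2)$ acts on $[V,t]$ through $\Sp_6(2)$ faithfully on its symplectic module, so the kernel is $\langle t\rangle$ and $A = \langle t\rangle$, again contradicting $r \ge 6$. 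Hence all involutions of $A$ belong to the classes $t_2, t_3, t_5$, forcing $d \ge 8$ and $r \ge 8$.

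Since $A$ is a $2$-group acting on the non-zero $\GF(2)$-module $V$, $C_V(A)$ contains a non-zero vector $v$, so $A \le \Stab_X(v)$. From Proposition~\ref{Vaction}(ii), only $\Stab_X(v_1) \approx 2^9{:}\L_3(4){.}2$ can accommodate an elementary abelian $2$-subgroup of rank $\ge 8$ acting quadratically on $V$ (a short $2$-rank inspection of the other four stabilisers rules them out). So $A$ lies in the elementary abelian radical $E := O_2(\Stab_X(v_1))$, a conjugate of $O_2(P_2)$. Lemma~\ref{unique} gives $\dim C_V(E) = 1$, ruling out $A = E$; for proper $A < E$ of rank $\ge 8$, the identification $V \otimes \GF(4) = \wedge^3 W$ (with $E$ stabilising a maximal isotropic $3$-space of $W$) yields an $E$-invariant $\GF(2)$-filtration of $V$ with layer dimensions $1, 9, 9, 1$ whose middle layers are irreducible $\L_3(4)$-modules, and a dimension count on this filtration forces $\dim C_V(A) < 20 - r$, the required contradiction.

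The main obstacle is this final dimension count for proper subgroups $A < E$ of rank $\ge 8$; it can be carried out either by explicit computation in $\wedge^3 W$ exploiting the $\L_3(4)$-equivariant cocycle maps between adjacent layers of the $E$-filtration, or by invoking the detailed analysis of this module in \cite[\S 5--7]{Aschbacher2E6}.
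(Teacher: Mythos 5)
Your first half is correct and takes a genuinely different (and arguably cleaner) route than the paper: since quadratic action forces $A$ to centralize $[V,t]$ for each $t\in A$, the kernel of $C_X(t)$ on $[V,t]$ must contain $A$, and for $t\sim t_1$ this kernel is $O_2(C_X(t_1))\cong 2^{1+8}_+$ of $2$-rank $5$, while for $t\sim t_4$ it is $\langle t_4\rangle$; this excludes classes $t_1$ and $t_4$ from $A$ and gives $r\ge 8$. In fact, since the $2$-rank of $X$ is $9$ and $\dim[V,t_3]=\dim[V,t_5]=10$, every involution of $A$ must lie in class $t_2$. (The paper argues quite differently: it works inside $P_1$, keeps class $t_1$ in play, uses Lemma~\ref{nofours} to produce $b\in A\cap Q_1$ of class $t_2$, and forces $A\le Q_1$ via $C_X(C_V(Q_1))=Q_1$, contradicting extraspeciality of $Q_1$.)

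The second half, however, has genuine gaps. First, the ``short $2$-rank inspection'' of the vector stabilizers is false as stated: $\Stab_X(v_3)\approx 2^8{:}3^2.\Q_8.2$ has an elementary abelian normal subgroup of order $2^8$ (a hyperplane of $O_2(P_2)$), and $\Stab_X(v_2)\approx 2^{1+8}.\Sp_4(2).2$ has radical of $2$-rank $5$ and Levi of $2$-rank at least $3$, so neither is eliminated by rank alone. Note also that if your claim were true it would already finish the proof, since $A$ stabilizes every vector of $C_V(A)$ and $\dim C_V(A)\ge 11$ while there are only $891$ vectors of type $v_1$; the fact that you continue signals that the logic here is not tight. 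Second, even granting $A\le\Stab_X(v_1)\approx 2^9{:}\L_3(4).2$, the deduction $A\le E=O_2(\Stab_X(v_1))$ does not follow, because $\L_3(4).2$ has $2$-rank at least $4$ and a rank-$8$ elementary abelian subgroup need not lie in the radical. Third, and decisively, the concluding dimension count on the $\Lambda^3W$ filtration is not carried out at all --- you explicitly defer it to ``explicit computation'' or to Aschbacher --- and this is exactly the step that must do the work. I would add that if you do succeed in placing $A$ in $E$ with $|E:A|\le 2$, the filtration argument is unnecessary: $E=O_2(P_2)$ contains the $21$ unitary transvections corresponding to the points of the stabilized isotropic $3$-space, these generate the irreducible $\L_3(4)$-module $E$, and a hyperplane avoiding all of them would equal the nonzero proper invariant subgroup generated by their pairwise products; hence $A$ contains a conjugate of $t_1$, contradicting your own first step. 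As written, the proposal is not yet a proof.
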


\begin{proof} Suppose that $A \le P_1$ is an elementary abelian $2$-subgroup of $X$,  $|V:C_V(A)|\le |A|$ and
$[V,A,A]=0$. Then Lemma~\ref{nofours} and Proposition~\ref{Vaction}(i) imply that $$2^{8} \le |V:C_V(A)| \le |A|
\le 2^9$$ as the $2$-rank of $X$ is $9$. In particular, Proposition~\ref{Vaction} implies that all the
non-trivial elements of $A$ are conjugate to either $t_1$ or $t_2$. As the $2$-rank of $P_1/Q_1$ is $4$, $|A\cap
Q_1| \ge 2^4$. Since $t_1$ is weakly closed in $Q_1$ by Lemma~\ref{nofours}, there exist $b \in A\cap Q_1$
conjugate to $t_2$. Hence  $C_V(A)= C_V(b)\ge C_V(Q_1)$. Now $C_X(C_V(Q_1)) = Q_1$ by Proposition~\ref{Vaction}
and so $A \le Q_1$ which is absurd as $Q_1$ is extraspecial of order $2^9$.
\end{proof}

\begin{lemma}\label{contains a 2-central} Suppose that $X= \U_6(2){:}2$ and that $j\sim_Xt_2$.  Then every  normal subgroup of order
$8$ in  a Sylow $2$-subgroup of $C_X(j)$  contains a unitary transvection.
\end{lemma}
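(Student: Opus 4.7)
The plan is to identify the elementary abelian subgroup $A := Z(Q)$, with $Q := O_{2}(C_{X}(j))$, as the long root subgroup of a maximal parabolic of $\U_{6}(2)$, show by a direct linear-algebra calculation that every $3$-dimensional subspace of $A$ contains a unitary transvection, and then reduce a general normal subgroup $N$ of $T$ of order $8$ to the case $N \le A$.

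Since $j$ is $X$-conjugate to $t_{2} = \diag(t,t,I_{2})$, the subspace $U := [W,j]$ is a totally isotropic $2$-subspace of the natural unitary module $W$, and $C_{X}(j)$ is contained in the maximal parabolic $P_{U}$ stabilising $U$. The long root subgroup of $P_{U}$ has order $2^{4}$, is elementary abelian and normal in $P_{U}$, and coincides with $A$. Under the standard identification of this long root subgroup with the $\GF(2)$-space of Hermitian $2 \times 2$ matrices over $\GF(4)$, the unitary transvections in $A$ correspond to the rank-one Hermitian matrices; there are exactly five of these, while the remaining ten non-identity elements of $A$ are $X$-conjugates of $j$.

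Writing the five transvections as vectors in $\GF(2)^{4}$, for instance as $(1,0,0,0)$, $(0,1,0,0)$, $(1,1,1,0)$, $(1,1,0,1)$ and $(1,1,1,1)$ in suitable coordinates $(a,d,b_{0},b_{1})$, I would verify directly that the simultaneous conditions $\phi(H_{i}) = 1$ for $i = 1,\ldots,5$, defining a linear functional $\phi$ on $A$ whose kernel avoids all five transvections, are inconsistent: the first four equations force $\phi$ to take the value $1$ on each standard basis vector, giving $\phi(H_{5}) = 0$ rather than $1$. Hence every hyperplane of $A$ meets the transvection set, and in particular every subgroup of $A$ of order $8$ contains a unitary transvection.

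Finally, let $N \lhd T$ with $|N| = 8$. Since $C_{X}(j)$ is $2$-constrained, $Z(T) \le Z(Q) = A$, so $N \cap A \ne 1$. Assuming $N \not\le A$, the image $NA/A$ is a non-trivial $T/A$-invariant subgroup; a module-theoretic analysis of $Q/A$ as a $\GF(2)[T/Q]$-module, using the Levi description of $Q/A$ coming from $P_{U}$, forces any such configuration to lift to a subgroup of $T$ of order strictly greater than $8$. Thus $N \le A$, and the preceding step supplies a unitary transvection in $N$. I expect the main obstacle to be this final reduction, which requires an explicit understanding of the submodule lattice of $Q/A$ as a $\GF(2)[T/Q]$-module; the linear-algebra computation inside $A$ itself is immediate, but ruling out putative normal subgroups of order $8$ escaping $A$ relies on the parabolic structure of $C_{X}(j)$ in an essential way.
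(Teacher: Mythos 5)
Your computation inside $A=Z(O_2(C_X(j)))$ is correct and is a nice observation that does not appear in the paper: the five unitary transvections in $A$ are exactly the rank-one Hermitian $2\times 2$ matrices, and in your coordinates they form a frame of $\mathrm{PG}(3,2)$ (four linearly independent vectors together with their sum), so no linear functional can take the value $1$ on all five and hence every subgroup of $A$ of order $8$ meets the transvection set. If the problem were only about normal subgroups of $T\in\Syl_2(C_X(j))$ that happen to lie in $A$, this would be a complete and rather elegant proof.

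The gap is the final reduction, and it is the whole difficulty. You need every normal subgroup $N$ of $T$ of order $8$ to lie in $A$, but $|T|=2^{15}$ while $|A|=2^4$, so this is a substantial structural claim, and you offer nothing beyond the assertion that a ``module-theoretic analysis of $Q/A$'' would force any escaping configuration to grow past order $8$. That is not an argument. Since $N\cap Z_i(T)$ strictly increases with $i$ until it reaches $N$, every normal subgroup of order $8$ lies in $Z_3(T)$, so what you would actually have to prove is (at least) $Z_3(T)\le A$; this requires controlling the commutator map $Q/A\times Q/A\to A$ (showing $[x,Q]$ is too large to sit inside $Z(T)$ or $Z_2(T)$ for $x\in Q\setminus A$), identifying $Z(T)$ and $Z_2(T)$ as submodules of $A$ under the Levi $2$-group, and separately excluding elements of $T\setminus Q$. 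None of this is present, and you flag it yourself as the main obstacle, so the proposal is incomplete precisely where the work lies. The paper sidesteps all of this by arguing in a different overgroup: it places $T$ inside $P_1=C_X(t_1)\approx 2^{1+8}_+{:}\SU_4(2).2$ with $j\in Q_1=O_2(P_1)$, uses a quadratic-action bound from \cite{PS1} to show that $N\cap C_{Q_1}(j)>\langle j\rangle$ (otherwise every nontrivial element of $NQ_1/Q_1$ would act as a transvection on $Q_1/\langle t_1\rangle$, forcing $|N|\le 4$), and then commutation with $C_{Q_1}(j)$ inside the extraspecial group $Q_1$ places $t_1=Z(Q_1)$, a unitary transvection, in $N$. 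Note also that the paper's proof is carried out under the additional hypothesis $j\in N$ (which is what its application needs); your approach, if the reduction could be completed, would give the literal statement for all normal subgroups of order $8$, but as written the key step is missing.
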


\begin{proof} By Lemma \ref{centralizerinvs}  we may assume that $P_1$ contains a Sylow 2-subgroup $T$
of $C_X(j)$  and $j \in Q_1$. Suppose that $A$ is a normal subgroup of  $T$  of order $8$ with $j\in A$. If $A
\cap C_{Q_1}(j) = \langle j \rangle$, then $[A, C_{Q_1}(j)] \leq \langle j \rangle$ and every non-trivial element
of $AQ_1/Q_1$ acts as a unitary transvection on $Q_1/\langle t_1 \rangle$. From \cite[Proposition~2.12
(viii)]{PS1}, we have $|A Q_1/Q_1|\le 2$ which means that $|A|\le 4$, a contradiction. Thus $A \cap C_{Q_1}(j)
\not \leq \langle j \rangle$. Since $C_{Q_1}(j)$ normalizes $A$ and $|Q_1:C_{Q_1}(j)|=2$, we now get $t_1 \in A$
and we are done.
\end{proof}

In the next lemma we present some results about the 10-dimensional Todd module for $\M_{22}$. A description of
this module may be found in \cite[Section 22]{Asch}. This module is seen to admit the action of $\Aut(\M_{22})$
and we continue to call this module the Todd module. We note that it is a quotient of the natural 22-dimensional
permutation module for $\Aut(\M_{22})$ (see \cite[(22.3)]{Asch}) and that the module is uniquely determined by this
property. The Todd module for $H=\L_3(4)$ is obtained as an irreducible 9-dimensional quotient
$\GF(2)$-permutation module obtained from the action of $H$ on the 21 points of the projective plane. Once
tensored with $\GF(4)$,  it can also be identified with the tensor product $N \otimes N^\sigma$ where $N$ is the
natural $\SL_3(4)$-module and $\sigma$ is the Frobenius automorphism. In particular, if $H_1$ and $H_2$ are the
two parabolic subgroups of $H$ containing a fixed Borel subgroup of $H$, then, without loss of generality, $H_1$
fixes a $1$-space and $O_2(H_2)$ centralizes a $4$-space one which $H_2/O_2(H_2)$ acts as an orthogonal module.

\begin{lemma}\label{M22} Let $X = \Aut(\M_{22})$, $Y= X'$ and  $V$ be the irreducible 10-dimensional Todd module for $X$ over $\GF(2)$.

\begin{enumerate}
\item If $x \in Y$ is an involution, then $|C_V(x)| = 2^6$.
\item Assume that $M \le X$ with $M \approx 2^4.\Sym(5)$ and $L= O_2(M)$, then  $L$ is elementary abelian of order 16 and $|C_V(L)| = 4$.
 \item Assume that $M \le X$ with $M \approx 2^4.\Alt(6)$ and $L= O_2(M)$, then  $L$ is elementary abelian of order 16, and $|C_V(L)| = 2^5$.
\item If $x \in X\setminus Y$ centralizes $M \approx 2^3.\L_3(2)$, then $|C_V(x)| = 2^7$ and involves two nontrivial $\L_3(2)$-modules.
\end{enumerate}
\end{lemma}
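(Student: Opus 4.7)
The plan is to realize $V$ explicitly as a sub-quotient of the $22$-dimensional $\GF(2)$-permutation module $P = \GF(2)^\Omega$, where $\Omega$ is the point-set of the Steiner system $S(3,6,22)$, using the $11$-dimensional hexad code $\mathcal{C}\le P$ (which contains $\mathbf{1}$). Each part then reduces to extracting $\dim C_P(\cdot)$ from the action on $\Omega$ and correcting by the overlap with $\mathcal{C}$ and $\langle \mathbf{1}\rangle$. This is the cleanest way to use the paper's stipulation that $V$ is the unique irreducible $10$-dimensional quotient of the $22$-dimensional permutation module.

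For (i), $\M_{22}$ has a unique class of involutions, of cycle type $1^2\,2^{10}$ on $\Omega$. Hence $\dim C_P(x) = 12$; a count of the hexads fixed setwise by $x$ pins down $\dim(\mathcal{C} \cap C_P(x))$, and the resulting arithmetic (after intersecting with the even-weight submodule and passing through $\langle \mathbf{1}\rangle$) gives $\dim C_V(x) = 6$, so $|C_V(x)|=2^6$.

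For (ii) and (iii), the subgroups $M\approx 2^4{:}\Sym(5)$ and $M\approx 2^4{:}\Alt(6)$ are the duad and hexad stabilizers in $\M_{22}$ respectively; in each case $L = O_2(M)$ is elementary abelian of order $16$ by the \ATLAS. The $L$-orbit decomposition of $\Omega$ is $\{1^6,\,16\}$ for the hexad stabilizer, with $L$ regular on the sixteen points off the fixed hexad, and of type $\{1^2,\,4,\,16\}$ for the duad stabilizer (with $L$ acting regularly on a block of sixteen, transitively on one quartet, and trivially on the duad). Intersecting the corresponding $C_P(L)$ with $\mathcal{C}$ and quotienting to $V$ produces $|C_V(L)|=2^5$ in the $\Alt(6)$-case and $|C_V(L)|=4$ in the $\Sym(5)$-case.

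For (iv), the outer involution $x$ centralizing $2^3{:}\L_3(2)$ can be chosen to act on $\Omega$ with cycle type $2^{11}$, so $\dim C_P(x)=11$. The $\L_3(2)$-action on $C_V(x)$ is inherited from the action preserving the Steiner substructure on the associated heptad, and a composition-factor analysis forces the non-trivial $\L_3(2)$-factors of $C_V(x)$ to be the natural $3$-dimensional module and its dual, with a total dimension summing to $7$, giving $|C_V(x)|=2^7$ and the claimed module structure. The central technical obstacle in each part is the bookkeeping of which codewords of $\mathcal{C}$ appear in the relevant fixed subspace; once that combinatorial data is identified, each claim reduces to a short dimension count.
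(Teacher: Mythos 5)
Your strategy---realising $V$ as a subquotient of the $22$-point permutation module $P$ and doing fixed-point bookkeeping against the code---is legitimate in principle and genuinely different from the paper's proof, which instead restricts $V$ to the point stabiliser $\L_3(4)$, uses the fact that $V/\langle v\rangle$ is the $9$-dimensional Todd module for $\L_3(4)$, and finishes with elements of order $5$ and $7$. However, the combinatorial data on which your whole computation is supposed to rest is wrong in three of the four parts. First, an inner involution of $\M_{22}$ has cycle type $1^6\,2^8$ on $\Omega$, not $1^2\,2^{10}$: it extends to a $1^8\,2^8$ involution of $\M_{24}$ fixing the two deleted points (equivalently, the degree-$21$ constituent of the permutation character takes the value $5$ on class $2A$, so there are six fixed points). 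Second, $O_2$ of the duad stabiliser $2^4{:}\Sym(5)$ is the unipotent radical of a \emph{point}-stabiliser parabolic of $\L_3(4)$, i.e.\ the group of transvections of $\mathrm{PG}(2,4)$ with a common centre; its orbits on $\Omega$ are $1^2\,4^5$, not $1^2,4,16$ (it is the hexad stabiliser's $O_2$ that has a regular orbit of length $16$). Third, you have interchanged the two outer classes: the outer involution centralising $2^3{:}\L_3(2)$ has cycle type $1^8\,2^7$, its eight fixed points forming an octad of $S(5,8,24)$ on which $2^3{:}\L_3(2)$ acts as the affine group, whereas the fixed-point-free class of type $2^{11}$ is the one inverting elements of order $11$ and has centraliser $2\times 2^4{:}(5{:}4)$. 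With these inputs corrected, the arithmetic you would have to do is quite different from what you describe.

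There is also a methodological gap that is not merely hypothetical. Since $V$ is a \emph{quotient} of the even-weight submodule $P_0$ by the code $\mathcal{C}$, one only has the exact sequence $0\to C_{\mathcal{C}}(L)\to C_{P_0}(L)\to C_{V}(L)\to H^1(L,\mathcal{C})$; hence $\dim C_V(L)$ is bounded below by, but need not equal, $\dim C_{P_0}(L)-\dim C_{\mathcal{C}}(L)$, because fixed vectors can be created in the quotient. For the hexad stabiliser in (iii) this defect actually occurs: $L$ fixes the six hexad points and is regular on the remaining sixteen, so $\dim C_{P_0}(L)=6$, while the only elements of $\mathcal{C}$ constant on the $L$-orbits are spanned by the hexad and its complement, so $\dim C_{\mathcal{C}}(L)=2$; the naive count therefore yields $2^4$, whereas the correct answer is $2^5$. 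So a genuinely non-trivial correction term must be identified and computed (for a single involution $x$ the exact formula $\dim[V,x]=\dim[P_0,x]-\dim([P_0,x]\cap\mathcal{C})$ is available, but for the rank-four subgroups $L$ you need either the relevant cocycle or a structural argument such as the paper's use of the second parabolic of $\L_3(4)$ acting on $V/\langle v\rangle$). Finally, none of the promised ``short dimension counts'' is actually carried out; each numerical conclusion is asserted rather than derived, so even where the framework is sound the proof is not yet there.
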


\begin{proof} From the   \cite[Table 5.3 c]{GLS3}, we have that  there is just one class of involutions in $Y=\M_{22}$.
Let $v$ be some vector in $V$ such that $|v^{X}| = 22$. Then $v$ is centralized by a subgroup $H \cong \L_3(4)$
and $V/\langle v \rangle$ is the Todd module \cite[(22.2) and (22.3.1)]{Asch}. Hence, by \cite[(22.2.1)]{Asch},
there is a parabolic subgroup  $H_1 \le H$ fixing a $1$-space in $V/\langle v \rangle$ such that, setting $E=
O_2(H_1)$, we have $H_1/E\cong \SL_2(4)$ and $E$ is elementary abelian of order $2^4$ admitting $H_1/E$ as
$\SL_2(4)$. It follows that $|C_V(E)| = 4$. Choose an involution $x \in H_1\setminus E$, then $x$ inverts some
element $\omega$ of order 5 with $|[V,\omega]| = 2^8$. Further $[C_V(\omega),x] = 1$. This shows $|C_V(x)| = 2^6$
and proves (i).

Let $H_2\le H$ be the companion parabolic subgroup to $H_1$, then, setting $E_2= O_2(H_2)$, we have $C_{V/\langle
v \rangle}(E_2)$ has dimension $4$. and it follows that $C_V(E_2)$ has dimension $5$.

 In $Y$ there is a subgroup $M \approx 2^4.\Alt(6)$ with $L=O_2(M)$ elementary abelian of order 16. As the
orbits of $Y$ on $V$ have length $22$, $231$ and $770$, we see that $M$ has no fixed point on $V$. Hence $E$ is
not normalized by $M$. Hence $N_X(E)\approx 2^4:\Sym(5)$ and we have (ii). Furthermore $E_1$ is
normalized by $M$ and so $E_1$ has to centralize the preimage of $C_{V/\langle v \rangle}(E_1)$ and we have
(iii).

Now let $x \in X\setminus Y$ be an involution, which centralizes $U \approx 2^3.\L_3(2)$ in $Y$. As
just elements from the orbit $v^{Y}$ are centralized by an element $\nu$ of
order 7, we see that $|C_V(\nu)| = 2$ and so $V$ involves three nontrivial $\L_3(2)$-modules. As $U$ is not a subgroup of
$\L_3(4)$, we see that $C_V(U) = 1$. In particular $\L_3(2)$ acts nontrivially on $[V,x]$. This now shows that $|[V,x]| = 8$ or $16$. In the second case we have that $|C_V(x)/[V,x]| = 4$ and so is centralized by an element of order 7, a contradiction. This
 shows  (iv).
\end{proof}

Our next lemma of this section requires the following transfer theorem.

\begin{theorem}\label{transfer} Let $M$ be a subgroup of a finite group $G$ with $G = O^2(G)$,
$|G:M|$ odd and $M > O^2(M)M^\prime$. Suppose that $E$ is an elementary abelian subgroup of a Sylow $2$-subgroup
$T$ of $M$ such that $E$ is weakly closed in $T$ and $N_G(E) \leq M$. Let $T_1$ be a maximal subgroup of $T$ with
$|M : O^2(M)T_1| = 2$. Then there exists $g \in G\setminus M$ such that $|E^g : E^g \cap M| \leq 2$ and $E^g \cap
M \not\leq O^2(M)T_1$.
\end{theorem}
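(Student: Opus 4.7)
The proof is a transfer argument exploiting $G = O^2(G)$. Since $|G\colon M|$ is odd, $T$ is a Sylow $2$-subgroup of $G$. Let $\phi \colon M \twoheadrightarrow M/O^2(M)T_1 \cong \mathbb{Z}/2$ denote the natural surjection and $V \colon G \to \mathbb{Z}/2$ the induced transfer homomorphism; since $G = O^2(G)$, $V$ is identically trivial.

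I argue by contradiction: suppose no $g \in G\setminus M$ satisfies both $|E^g\colon E^g\cap M|\le 2$ and $E^g\cap M\not\le O^2(M)T_1$. Pick an involution $e \in E$ with $\phi(e)=1$, i.e.\ $e\notin T_1$; the residual case $E \le T_1$ is degenerate, and the hypothesis $M > O^2(M)M'$ guarantees that $\phi$ is nontrivial so that such an $e$ can be sought. Now compute $V(e)$ by the cycle formula for transfer: since $e^2=1$, cycles of $e$ on the coset space $G/M$ have length $1$ or $2$, and length-$2$ cycles contribute trivially, so
\[
V(e) \;=\; \sum_{xM \in \mathrm{Fix}(e)} \phi(x^{-1}ex) \pmod 2,
\]
where the representative $x$ of each $e$-fixed coset is adjusted by an element of $M$ so that $x^{-1}ex \in T$.

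The set $\mathrm{Fix}(e)$ is preserved by $E$ acting on $G/M$ by left multiplication: for $f\in E$ one has $(fx)^{-1}e(fx) = x^{-1}ex$ by abelianness of $E$. This same calculation shows that $\phi(x^{-1}ex)$ is constant on each $E$-orbit, so an orbit $\mathcal{O}$ contributes $|\mathcal{O}|\cdot\phi(x^{-1}ex) \pmod 2$ to $V(e)$. Weak closure of $E$ in $T$ together with $N_G(E)\le M$ force the only $E$-orbit of size $1$ on $G/M$ to be $\{M\}$: a size-$1$ orbit at $xM$ means $x^{-1}Ex\le M$, so $x^{-1}Ex$ lies in some Sylow $T^{m^{-1}}$ of $M$, and $(xm^{-1})^{-1}E(xm^{-1})\le T$; weak closure yields $xm^{-1}\in N_G(E)\le M$, so $x\in M$.

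For each remaining orbit $\mathcal{O}$ with representative $x\notin M$, by the contradiction hypothesis either $|\mathcal{O}|=|E^x\colon E^x\cap M|\ge 4$, whence the contribution is $|\mathcal{O}|\cdot \phi(x^{-1}ex)\equiv 0 \pmod 2$; or $|\mathcal{O}|=2$ and $E^x\cap M\le O^2(M)T_1$, in which case $e\in E\cap xMx^{-1} = E_{xM}$ gives $x^{-1}ex \in x^{-1}E_{xM}x = E^x\cap M \le O^2(M)T_1$, so $\phi(x^{-1}ex)=0$. Either way $\mathcal{O}$ contributes $0$ to $V(e)$.

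Hence $V(e) = \phi(e) = 1$, contradicting $V \equiv 0$. This contradiction forces the existence of the desired $g$.

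The main obstacle is the orbit analysis in the last step: establishing that size-$2$ orbits under the contradiction hypothesis contribute nothing requires carefully relating the conjugated image $x^{-1}ex$ back to $E^x\cap M$ via the stabilizer $E_{xM}$, using the abelianness of $E$ so that $(fx)^{-1}e(fx)=x^{-1}ex$. The remaining orbits are automatically killed because their sizes are powers of $2$ that are at least $4$, so the contribution vanishes in $\mathbb{Z}/2$ irrespective of the $\phi$-value.
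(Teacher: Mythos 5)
There is a genuine gap, and it is fatal. Your argument hinges on choosing $e \in E$ with $\phi(e)=1$, i.e.\ $e \notin O^2(M)T_1$, and you dismiss the alternative $E \le O^2(M)T_1$ as ``degenerate''. But that alternative is the \emph{only} case that can occur, and your own orbit computation proves it. Every $E$-orbit on $G/M$ has size a power of $2$, so every orbit of size at least $2$ contributes $|\mathcal{O}|\cdot\phi(x^{-1}ex) \equiv 0 \pmod 2$ to $V(e)$ whether or not your contradiction hypothesis holds; notice that your case division between size-$2$ orbits and size-$\ge 4$ orbits never actually uses the hypothesis being contradicted, since both cases give $0$ for the trivial reason that the orbit length is even. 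Hence, for \emph{every} $e \in E$, weak closure together with $N_G(E)\le M$ gives $V(e)=\phi(e)$, and $G=O^2(G)$ then forces $\phi(e)=0$. So the hypotheses of the theorem already imply $E \le O^2(M)T_1$, the element you propose to transfer does not exist, and the ``contradiction'' you reach is one you could reach without assuming anything --- the tell-tale sign that the statement to be contradicted was never engaged. Concretely, in the paper's application (Lemma~\ref{autm22}) one has $E=O_2(M)\le O^2(M)$, so $E\le O^2(M)T_1$ is immediate there as well.

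The missing idea is that one must transfer an element of $T\setminus T_1$ which is \emph{not} in $E$ (so that its $\phi$-value is $1$), and the intersections $E^g\cap M$ enter when one organises $G/M$ into $E$-orbits, or into suitable double cosets, in order to evaluate that transfer: the conjunction $|E^g:E^g\cap M|\le 2$ and $E^g\cap M\not\le O^2(M)T_1$ is exactly the condition needed to make an odd number of cosets carry local $\phi$-value $1$, and its failure for all $g\notin M$ is what yields $V\ne 0$ and hence the contradiction with $G=O^2(G)$. Note also that the paper does not prove this statement itself; it quotes it as Theorem 2.11(i) of Solomon and Wong, so there is no in-paper argument to compare with, but the proof required is substantially more delicate than transferring an element of $E$.
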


\begin{proof} This is \cite[Theorem 2.11 (i)]{SoWo}.
\end{proof}

\begin{lemma}\label{autm22} Suppose that $G$ is a group, $M$ is a $2$-local subgroup of $G$ with
$F^*(M)=O_2(M)$. Assume that
 $M/O_2(M) \cong \Aut(\M_{22})$, $O_2(M)$ is elementary abelian of order $2^{10}$ and $O_2(M)$ is the Todd module for
 $M/O_2(M)$.
  Then
  \begin{enumerate}
\item For involutions $x$ in $M \setminus O^2(M)$, the $2$-rank of $C_{M}(x)$ is at most $ 8$; and
\item $G$ has a subgroup of index $2$.
\end{enumerate}
\end{lemma}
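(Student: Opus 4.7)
The plan is to first prove (i) via a direct bound on elementary abelian subgroups of $C_M(x)$, then deduce (ii) by applying Theorem~\ref{transfer} to $V := O_2(M)$ and using (i) to force a contradiction with $G = O^2(G)$.

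For (i), given an involution $x \in M \setminus O^2(M)$, its image $\bar x \in \bar M := M/V$ is an outer involution in $\Aut(\M_{22})$, so Lemma~\ref{M22}(iv) yields $|C_V(\bar x)| = 2^7$ and describes the $\L_3(2)$-module structure on $C_V(\bar x)$. For any elementary abelian $E \le C_M(x)$ I may assume $x \in E$ and set $\bar E := EV/V \le C_{\bar M}(\bar x)$; since $E$ is abelian, $E \cap V \le \bigcap_{\bar y \in \bar E} C_V(\bar y)$. If $\bar E = \langle \bar x \rangle$ then $\rk E \le 1 + 7 = 8$. If $\rk \bar E \ge 2$, then because $\bar x$ is outer the image of $\bar E$ in $\bar M/\M_{22} \cong \Z_2$ is surjective, so $\bar E \cap \M_{22}$ has index $2$ in $\bar E$ and contains an inner involution $\bar z$; Lemma~\ref{M22}(i) gives $\rk C_V(\bar z) = 6$ and so $\rk(E \cap V) \le 6$, yielding $\rk E \le 8$ at rank~$2$. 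For $\rk \bar E = 3$, the subgroup $\bar E \cap \M_{22}$ is a fours group inside the $\L_3(2)$ furnished by Lemma~\ref{M22}(iv), and since a fours group in $\L_3(2)$ fixes only a line on each natural $3$-dimensional composition factor I get $\rk(E \cap V) \le 3$ and $\rk E \le 6$. As the $2$-rank of $C_{\bar M}(\bar x)$ is at most $3$, this finishes (i).

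For (ii), I assume for contradiction that $G = O^2(G)$ and apply Theorem~\ref{transfer} with $E = V$, $T$ a Sylow $2$-subgroup of $M$, and $T_1 := T \cap O^2(M)$, which has index $2$ in $T$ since $M/V \cong \Aut(\M_{22})$ contains $\M_{22}$ with index $2$. Two hypotheses must be checked. First, $N_G(V) \le M$ follows because $F^*(M) = V = O_2(M)$ is self-centralizing, giving $M = N_G(V)$. Second, for weak closure of $V$ in $T$, suppose $V^g \le T$ with $V^g \ne V$ and set $\bar U := V^g V/V \ne 1$; since $V^g$ is abelian, $V \cap V^g \le C_V(\bar U)$, and $\rk\bar U = 10 - \rk(V \cap V^g) \ge 3$ because $\rk C_V(\bar y) \le 7$ for every $\bar y \in \bar M$ nontrivial. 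A short case analysis using Lemma~\ref{M22}(i)--(iii), distinguishing whether $\bar U$ meets the outer coset (forcing an inner involution with $\rk C_V = 6$) or lies in $\M_{22}$ (where the bounds on $C_V$ for rank-$3$, $4$ elementary abelians apply), rules out every possibility and yields $V^g = V$.

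Theorem~\ref{transfer} then produces $g \in G \setminus M$ with $|V^g : V^g \cap M| \le 2$ and $V^g \cap M \not\le O^2(M) T_1 = O^2(M)$. Hence $V^g \cap M$ is an elementary abelian subgroup of rank at least $9$ and contains some $v \in M \setminus O^2(M)$; since all its elements commute with~$v$, $V^g \cap M \le C_M(v)$, making the $2$-rank of $C_M(v)$ at least $9$ and contradicting (i). Therefore $G \ne O^2(G)$, and since $G/O^2(G)$ is a nontrivial $2$-group, $G$ has a subgroup of index~$2$. The main obstacle I anticipate is the rank-$3$ analysis in (i), which really needs the $\L_3(2)$-module structure of $C_V(\bar x)$ supplied by Lemma~\ref{M22}(iv); the weak-closure verification in the transfer step is similarly delicate but reduces to the same numerical bounds on $|C_V(\bar y)|$.
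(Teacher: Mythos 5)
Your part (ii) follows the paper's route exactly (weak closure of $V=O_2(M)$ in a Sylow $2$-subgroup, $M=N_G(V)$ of odd index, Theorem~\ref{transfer}, contradiction with part (i)), and that portion is fine modulo the routine check that $M>O^2(M)M'$. The problem is part (i), where your argument has a genuine gap in two places.

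First, you treat only one of the two classes of outer involutions. $\Aut(\M_{22})$ has two classes of involutions outside $\M_{22}$: one with centralizer $2\times 2^3{:}\L_3(2)$ (the case covered by Lemma~\ref{M22}(iv), with $|C_V(\bar x)|=2^7$) and one with centralizer $2\times 2^4{:}(5{:}4)$, which inverts an element of order $11$ and for which $|C_V(\bar x)|=2^5$. Your opening sentence assigns $|C_V(\bar x)|=2^7$ to every outer involution via Lemma~\ref{M22}(iv), which is false for the second class, and the second class requires its own analysis (in the paper this is where the bound $|C_E(L)|\le 2^2$ from Lemma~\ref{M22}(ii) is brought to bear).

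Second, and more seriously, your case analysis terminates at $\rk\,\bar E=3$ on the grounds that the $2$-rank of $C_{\bar M}(\bar x)$ is at most $3$. That is not true: $2\times 2^3{:}\L_3(2)$ contains elementary abelian subgroups of order $2^5$ (take the natural module $2^3$ together with the axis-stabilising fours group of transvections, times the central $2$), consistent with the $2$-rank of $\Aut(\M_{22})$ being $5$, as the paper uses. So the cases $\rk\,\bar E=4,5$ must be handled, and these are exactly the dangerous ones: if $\bar E\cap \M_{22}$ is a rank-$3$ or rank-$4$ subgroup lying in the $2^4$ of $\Alt(6)$-type, Lemma~\ref{M22}(iii) only gives $|C_V(\bar E\cap\M_{22})|\le 2^5$, and the naive estimate $\rk E\le \rk\bar E+\rk(E\cap V)$ then yields $9$ or $10$, not $8$. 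Ruling this out requires the finer arguments of the paper: identifying which $2^4$ can sit inside an elementary abelian $2^5$ (forcing the $\Sym(5)$-type one with $|C_V|=4$), exploiting the two nontrivial $\L_3(2)$-composition factors inside $C_V(\bar x)$ to bound $|C_{C_V(\bar x)}(e)|$, and the fusion argument showing certain cosets of $L=O_2(C_Y(\bar x))$ contain involutions of the forbidden ($11$-inverting) class. Relatedly, your rank-$3$ step assumes $\bar E\cap\M_{22}$ injects into the $\L_3(2)$ complement of $C_Y(\bar x)$, but it may meet $O_2(C_Y(\bar x))\cong 2^3$ nontrivially, so the "fixes only a line on each $3$-dimensional factor" bound does not apply as stated. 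As written, part (i) is not established.
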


\begin{proof} Let $E= O_2(M)$, $X= M/E$  and $Y= X'$. From \cite[Table 5.3 c]{GLS3} we see that  $X$  has exactly two conjugacy classes of involutions not in $Y$ one with centralizer of shape $2 \times  2^3:\L_3(2)$ and the other with centralizer $2\times 2^4:(5:4)$. Also by \cite[Table 5.3 c]{GLS3}, the normalizer of a Sylow $11$-subgroup of  $Y$ has order $55$. Hence one class of involutions in $X\setminus Y$ contains elements which normalize, and consequently invert,  a Sylow $11$-subgroup. Furthermore, such an involution commutes with an element of order $5$.

Aiming for a contradiction, let $x \in N_G(E)$ with $Ex\not\in X$   and  $F \leq C_{M}(x)$ with $F$ is
elementary abelian of order at least $2^9$. Since the $2$-rank of $X$ is $5$, we have $|C_E(F)|\ge 2^4$.

If  $Ex$  inverts an element of order 11 in $X$, then
 $|C_E(x)| = 2^5$  and $C_X(Ex) \cong  2\times (2^4:(5:4))$.  Let $L= O_2(C_Y(Ex))$.
  By Lemma \ref{M22} (ii),  we have that $|C_E(L)| \leq 2^2$.  Since the involutions which invert an element of order $5$ in $C_X(Ex)$   can only centralize $2^3$ in $C_E(x)$, we infer that $FE/E \leq L$.
    If $F$ centralizes $C_E(x)$ then the normal closure of  $FE/E$ in  $C_{M/E}(Ex)$ also is abelian and so we may assume that $FE/E = L$ in this case. On the other hand, if $F$ does not centralize $C_E(x)$, then $|FE/E|\ge 2^5$ and we also have $FE/E = L$.
Hence in any case $FE/E = L$. However this implies that  $|F| \leq 2^7$ as $|C_E(L)| \leq 4$ and is a contradiction. Hence $F$ contains no such involutions.

So we have  $C_X( Ex) \cong 2 \times 2^3:\L_3(2)$. Let $L=O_2(C_Y( Ex))$
and $L_1 \le C_X(Ex)$  be such that $L_1 \cong \L_3(2)$. Let $e \in L_1$ be an involution. Then
$Le$ contains  representatives of two $LL_1$-conjugacy classes of involutions. As $x$ is not 2-central in $X$, we have that $x \sim_X x\ell$ for
some $1 \not= \ell \in L$. It follows that all the involutions in $Lx$ are conjugate to $x$ in $X$. Hence
we see that the coset $Lex$ contains an involution  which is not conjugate to $x$ in $X$.

Assume  that $(F \cap T)E/E \not\leq L$. Let $e \in FE/E \cap L_1L \setminus L$.
 If $|(FE/E) \cap L| > 2$ then $(FE/E \cap L)ex$ is the set of involutions in $Lex$. But this coset contains an
 involution which inverts an element of order 11 and we have already seen that such elements cannot be in $F$.
 So  $|(FE/E) \cap L| \leq 2$ and consequently  $|FE/E| \leq 16$. By Lemma \ref{M22} (iv),  $|C_E(x)| = 2^7$ and, for $e \in FE/E \setminus L\langle Ex\rangle$, as $C_E(x)$ has two non-trivial $3$-dimensional composition factors for $L_1$,
$|C_{E}(x) : C_{C_E(x)}(e)| \geq 4$.
Therefore  $|C_E(F)| =
2^5$ and $|FE/E| = 2^4$.  In $L_1$ there are two conjugacy classes of fours groups. One which is contained in an elementary
abelian group of order $2^5$ in $M/E$ and one which is contained in a conjugate of $O_2(C_{M/E}(x))$.
If
$FE/E$ is contained in an elementary abelian group $F_1$ of order $2^5$ in $\Aut(\M_{22})$, then, as $|C_E(F)| = 2^5$,
we get that $|C_E(F_1)| \geq 2^3$, which contradicts Lemma \ref{M22} (ii). Therefore  $FE/E$ is uniquely determined and
is conjugate to $\langle L, Ex\rangle$ in $M/E$. In particular $|C_E(\langle L, Ex\rangle)| =
2^5$.  But then $L_1$ cannot induce two non-trivial irreducible modules in $C_E(x)$, which contradicts Lemma
\ref{M22}(iv).

Suppose that $w \in Lx$ and let $L_w= O_2(C_Y(w))$.  We have that $C_{LL_1}(w)/L$ is a parabolic subgroup of $LL_1/L$. Therefore
$LL_w$ has order $2^5$ and consequently $L \cap L_w$ has order $2$.  Now we have  $(F\cap Y)E/E \cap L \cap L_w$ which means that  $|FE/E| \le 2^2$ and $|C_E(F)|\ge 2^7$. Using Lemma~\ref{M22} , for $f \in O^2(M) \setminus E$, we have
that $|C_E(f)| = 2^6$. Hence $|FE/E|=2$ and $|C_E(F)|=2^8$ contrary to Lemma~\ref{M22} (iv). This proves (i).

We recall that $V$ is not a failure of factorization module for $X$. Thus, for $S \in \syl_2(M)$, $E= J(S)$ and hence $E$ is weakly closed in $S$ with respect to $G$. In particular, as $M= N_G(E)$, $S \in \Syl_2(G)$ and  $M$ has odd index in $G$. Therefore (ii) follows from Theorem~\ref{transfer} and part (i).
\end{proof}

\begin{lemma}\label{Fusion}
Suppose that $G$ is a group, $E$ is an extraspecial subgroup of $G$,  $H=N_G(E)= N_G(Z(E))$, $C_G(E)= Z(E)$ and $S\in \syl_p(H) \subseteq \syl_p(G)$. Assume that if $g \in G$ and  $Z^g \le E$ then every element of  $Z^gZ$ is conjugate to an element of $Z$ and assume that no element of $S\setminus E$  centralizes a subgroup of index $p$ in $E$. Then, for all $d \in E$ with $d^G \cap Z = \emptyset$,  $\Syl_p(C_H(d)) \subseteq \syl_p(C_G(d))$  and  $d^G \cap E = d^H$.
\end{lemma}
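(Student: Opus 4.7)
The plan is to derive both conclusions from the single key claim: every Sylow $p$-subgroup $T$ of $C_G(d)$ containing $Z$ lies inside $H$. Granted this claim, the Sylow statement is immediate: a Sylow $p$-subgroup $P$ of $C_H(d)$ contains $Z$ (since $Z \trianglelefteq C_H(d)$ has order $p$, so $Z \le Z(P)$) and extends to a Sylow $p$-subgroup $T$ of $C_G(d)$ still containing $Z$; the key claim places $T$ inside $C_H(d)$, forcing $T = P$ by maximality. For the fusion statement, given $d^g = d' \in E$, the two subgroups $Z$ and $Z^g$ both centralize $d'$, so by Sylow's theorem in $C_G(d')$ one may adjust $g$ by an element of $C_G(d')$ to arrange that $Z$ and $Z^g$ lie in a common Sylow $p$-subgroup $T$ of $C_G(d')$. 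Since $d' \in E \setminus Z^G$, the key claim gives $T \le H$, hence $Z^g \le H$; a repetition of the case analysis from the key claim's proof, now with $g$ playing the role of the element $x$, forces $Z^g = Z$, so $g \in N_G(Z) = H$ and $d' \in d^H$.

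To prove the key claim, suppose that $T \in \Syl_p(C_G(d))$ contains $Z$ but $T \not\le H$. Choose $T$ so that it also contains the $p$-subgroup $C_E(d)$; then $T \cap E = C_E(d)$. Since $T \not\le H$ the normal subgroup $Z$ of order $p$ in the $p$-group $T$ fails to be central, so there is a $p$-element $x \in T$ with $Z^x \ne Z$, and both $Z$ and $Z^x$ lie in $T$. After a Sylow conjugation inside $H$, I may assume $T \le S$. Two subcases arise. If $Z^x \le E$, then $Z^x \le C_E(d)$, and the first hypothesis forces all elements of $ZZ^x$ into $Z^G$; I would combine this with the fact that $C_S(C_E(d)) = \langle d\rangle Z$ (a direct consequence of the second hypothesis together with the symplectic commutator form on $E/Z$) to push $x$ into $\langle d\rangle Z \le E \le H$, contrary to $x \notin H$. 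If $Z^x \not\le E$, then for a non-trivial $z \in Z$ the element $z^x \in T \setminus E \subseteq S \setminus E$ centralizes $E^x$; using that $T \cap E^x = (T \cap E)^x = C_E(d)^x$ shares a subgroup of large order with $T \cap E = C_E(d)$ inside the $p$-group $T$, I would show that $E \cap E^x$ contains an index-$p$ subgroup of $E$, so $z^x$ centralizes an index-$p$ subgroup of $E$, contradicting the second hypothesis.

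The main obstacle, as I expect it, is the first subcase of the key claim: turning the statement $ZZ^x \subseteq Z^G \cup \{1\}$ into a definite contradiction. The identity $C_S(C_E(d)) = \langle d\rangle Z$ is the decisive input, as it forces any $p$-element of $S$ centralizing the hyperplane $C_E(d)$ to lie inside $E$; combined with the structural behaviour of $x$ on the two index-$p$ subgroups $C_E(d)$ and $C_E(d)^x$ of $T$, and the fact that $d \notin Z^G$ restricts which elements of $C_E(d)$ can be $G$-conjugate to elements of $Z$, this should locate $x$ inside $\langle d\rangle Z$. A subsidiary but delicate verification is the index estimate $|E:E\cap E^x|\le p$ in the second subcase, which I would establish by the product-intersection identity applied to the two index-$p$ subgroups of $T$, together with the fact that $\langle d\rangle Z \le E \cap E^x$ pins the intersection down from below.
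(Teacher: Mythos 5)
Your plan of reducing everything to a single claim about Sylow $p$-subgroups of $C_G(d)$ is reasonable in spirit, but the proof of the key claim does not go through, and the two places you yourself flag as the obstacles are exactly where it breaks. First, the reduction ``after a Sylow conjugation inside $H$, I may assume $T\le S$'' is impossible in the very situation you are analysing: you have assumed $T\not\le H$, and $S\le H$, so $T$ cannot lie in $S$ and no $H$-conjugation fixes this. Since the second hypothesis is a statement about elements of $S\setminus E$, it can only be applied to a $p$-element $y$ after one has first shown $y\in H$, so that $y$ together with the index-$p$ subgroup of $E$ it centralizes can be conjugated into $S$ by an element of $H$ (which preserves $E$). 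This is available for elements of $C_G(C_E(d))$, because $Z=Z(E)\le C_E(d)$ forces $C_G(C_E(d))\le C_G(Z)\le H$ --- but it is not available for your element $x$, nor for $z^x$ in your second subcase. Second, in the first subcase there is no reason why $x$ should centralize $C_E(d)$, so the identity $C_S(C_E(d))=\langle d\rangle Z$ says nothing about $x$; that subcase is left genuinely open. Third, the index estimate $|E:E\cap E^x|\le p$ cannot be extracted from the product--intersection identity: $C_E(d)$ and $C_E(d)^x$ are subgroups of $T$ of index $|T:C_E(d)|$, so all you get is $|C_E(d)\cap C_E(d)^x|\ge |C_E(d)|^2/|T|$, which is worthless without control of $|T|$; moreover nothing forces $Z\le E^x$, so the claimed lower bound $\langle d\rangle Z\le E\cap E^x$ is also unjustified.

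The paper sidesteps all of this by working with $Z(T)$ for $T\in\Syl_p(C_H(d))$ containing $C_E(d)$, rather than with an arbitrary element of an arbitrary Sylow subgroup of $C_G(d)$: $Z(T)$ automatically centralizes $C_E(d)$, hence lies in $C_G(C_E(d))\le H$, hence (by the second hypothesis, after conjugating into $S$ inside $H$) lies in $E$, forcing $Z(T)=Z(C_E(d))=\langle d\rangle Z$. The first hypothesis together with $d^G\cap Z=\emptyset$ then shows that $Z$ is the \emph{unique} $G$-conjugate of $Z$ in $\langle d\rangle Z$ (otherwise $Z^gZ=\langle d\rangle Z$ would put $d$ into $Z^G$), whence $N_G(T)\le N_G(Z)=H$ and $T$ is already a Sylow $p$-subgroup of $C_G(d)$. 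The fusion statement then drops out by conjugating Sylow subgroups of the centralizers and matching their centres, using the same uniqueness of $Z$ in $\langle d\rangle Z$. If you want to salvage your outline, replace the roving element $x$ by $Z(T)$ throughout; the statement you actually need, and can prove, concerns $Z(T)$ and not every $p$-element that moves $Z$.
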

\begin{proof}  Assume that $d \in E$ is not $G$-conjugate to an element of $Z$. Let $T \in \Syl_p(C_G(d))$. Then $Z(T)$ centralizes $C_E(d)$ which has index $p$ in $E$. Thus $Z(T) \le E$  and so $Z(T) = Z(C_E(d)) = \langle d \rangle Z$. In particular, $Z$ is the unique $G$-conjugate of $Z$ contained in $\langle d \rangle Z$. Therefore $N_G(T)  \le H$ and consequently $T \in \Syl_p(C_G(d))$.

Now assume that $e=d^g \in d^G \cap E$ and let $R \in \Syl_p(C_H(e))$. Then, as $T^g \in \Syl_p(C_G(e))$, there exists $h \in C_G(e)$ such that $T^{gw}= R$. But then $Z\langle s \rangle ^{gw} = Z\langle e \rangle$ and as $Z$ is the unique conjugate of $Z$ in $Z\langle e \rangle$ we conclude that $Z^{gw}= Z$.  Thus $gw \in H$ and $d^{gw}= e^w=e$.  Thus $d^G \cap E = d^H$ as claimed.
\end{proof}

\begin{lemma}\label{ptransfer} Suppose that $p$ is a prime, $G$ is a group and $P \in \Syl_p(G)$.
Assume that $J=J(P)$ is the Thompson subgroup of $P$. Assume that $J$ is elementary abelian. Then
\begin{enumerate}
\item $N_G(J)$ controls $G$-fusion in $J$; and
 \item if  $J \not \le
N_G(J)'$, then $J \not \le G'$.
\end{enumerate}
\end{lemma}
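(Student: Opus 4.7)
The key observation is that, since $J=J(P)$ is itself abelian, it is the unique abelian subgroup of $P$ of maximal order: any abelian $A\leq P$ of maximal order satisfies $A\leq J$ by the definition of the Thompson subgroup and $|A|\leq|J|$ because $J$ is abelian, forcing $A=J$. Consequently, whenever a $p$-subgroup $R$ of $G$ contains $J$, embedding $R$ into a Sylow $p$-subgroup of $G$ shows that $J=J(R)$; in particular $J$ is characteristic in every $p$-subgroup of $G$ containing it, and so $J$ is weakly closed in $P$ with respect to $G$.

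Part~(i) then follows from a Sylow argument inside a centraliser. Given subgroups $A, B\leq J$ with $B=A^g$ for some $g\in G$, both $J$ and $J^{g^{-1}}$ are abelian groups containing $A$, hence $p$-subgroups of $C_G(A)$. Sylow's theorem in $C_G(A)$ yields $c\in C_G(A)$ such that $J$ and $J^{g^{-1}c}$ lie in a common Sylow $p$-subgroup of $C_G(A)$, and therefore in a common Sylow $p$-subgroup $T$ of $G$; weak closure of $J$ then forces $J^{g^{-1}c}=J$, so $n:=(g^{-1}c)^{-1}\in N_G(J)$. Since $c$ centralises $A$, $A^g = A^{cn^{-1}} = A^{n^{-1}}$, realising the required fusion inside $N_G(J)$.

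For~(ii) I would argue the contrapositive: assuming $J\leq G'$, deduce $J\leq N_G(J)'$ by a transfer computation. Apply the transfer $V\colon G\to P^{\mathrm{ab}}$ and, for $a\in J$, use~(i) to rewrite each orbit contribution $(a^{|O|})^{g_O}$ in the transfer formula as $a^{|O|}$ multiplied, modulo $P'$, by an element of $[J, N_G(J)]$; this yields $V(a)\equiv a^{[G:P]}\pmod{[J, N_G(J)]\,P'}$. Since $J\leq G'\subseteq \ker V$, one has $V(a)=0$, and as $[G:P]$ is coprime to $p$ this forces $a\in [J, N_G(J)]\,P'\leq N_G(J)'$ (the final inclusion using $[J, N_G(J)]\leq N_G(J)'$ and $P\leq N_G(J)$, so $P'\leq N_G(J)'$). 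Hence $J\leq N_G(J)'$, contradicting the hypothesis. The main technical obstacle is justifying that each orbit contribution reduces as claimed: individual $G$-conjugates of elements of $J$ need not themselves lie in $J$, so the application of~(i) requires some bookkeeping, effectively an instance of Alperin's fusion theorem tailored to the weakly closed abelian subgroup $J$.
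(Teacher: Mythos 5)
Your argument for part (i) is correct and is exactly the standard one: since $J=J(P)$ is abelian it is the unique abelian subgroup of $P$ of maximal order, hence equals $J(R)$ for every $p$-subgroup $R\ge J$, hence is weakly closed, and Burnside's argument inside $C_G(A)$ then realises any fusion between subgroups of $J$ inside $N_G(J)$. (There is a harmless inverse slip at the end: with $n=(g^{-1}c)^{-1}=c^{-1}g$ one has $g=cn$ and $A^g=A^{cn}=A^n$, not $A^{cn^{-1}}$.) This is precisely the content behind the paper's citation of \cite[37.6]{Asch}, so for (i) you and the paper agree.

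For part (ii) the "main technical obstacle" you flag at the end is not bookkeeping — it is the entire content of the statement, and as sketched your argument does not close. In the transfer evaluation $V(a)=\prod_i\bigl((a^{n_i})^{g_i}\bigr)P'$ the orbit contributions $(a^{n_i})^{g_i}$ lie in $P\cap J^{g_i}$, and they lie in $J$ only if $J^{g_i}\le P$ (whence $J^{g_i}=J$ by weak closure). In general $J^{g_i}\cap P$ is a proper subgroup of $J^{g_i}$ not contained in $J$; since $J$ is normal in $P$, such a contribution is not even $P$-conjugate into $J$, so part (i) gives no information about it. Reducing $(a^{n_i})^{g_i}$ to $a^{n_i}$ modulo $[J,N_G(J)]P'$ would amount to knowing that $G$-conjugate elements of $P$, one of which lies in $J$, are congruent modulo $P\cap N_G(J)'$ — in effect that $J$ is \emph{strongly} closed, or that $N_G(J)$ controls $p$-transfer, neither of which follows formally from weak closure or from (i). Alperin's fusion theorem does not repair this: it factors the conjugation $a^{n_i}\mapsto (a^{n_i})^{g_i}$ through normalisers of tame intersections that bear no relation to $J$, and the intermediate elements need not lie in $J$ either, so the individual factors are not seen to lie in $[J,N_G(J)]P'$. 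The congruence $V(a)\equiv a^{[G:P]}$ modulo $[J,N_G(J)]P'$ that you want is essentially the assertion that $N_G(J)$ controls $p$-transfer; for the odd prime to which the lemma is applied in this paper this follows from Glauberman's theorem that $N_G(Z(J(P)))$ controls $p$-transfer (note $Z(J(P))=J$ here since $J$ is abelian), but that is a genuinely nontrivial theorem, not a corollary of (i). The paper itself does not prove (ii) at all: it cites \cite[Lemma 2.2(iii)]{PS1}, so to complete your write-up you would need either to invoke such a control-of-transfer result or to reproduce the argument from that reference.
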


\begin{proof} Part (i) is well-known see \cite[37.6]{Asch}. Part (ii) is proved in \cite[Lemma 2.2(iii)]{PS1}.\end{proof}

The next lemma is a straightforward consequence of Goldschmidt's Theorem on groups with a strongly closed abelian
subgroup \cite{Goldschmidt}. Recall that for subgroups $A \le H \le G$, we say that $A$ is \emph{weakly closed} in $H$ with respect to $G$ provided that
for $g \in G$, $A^g \le H$ implies that $A^g= A$. We say that $A$ is \emph{strongly closed} in $H$ with respect to $G$ so long as, for all
$g \in G$,  $A^g \cap H \le A$.

\begin{lemma}\label{Gold} Suppose that $K$ is a group, $O_{2'}(K)=1$,  $E$ is an abelian $2$-subgroup of $K$ and $E$ is strongly closed in $N_K(E)$. Assume that $F^*(N_K(E)/C_K(E))$ is a non-abelian simple group.
 Then  $K= N_K(E)$.
\end{lemma}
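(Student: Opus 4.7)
\emph{Plan.} Set $\tilde K := \langle E^K \rangle$, the normal closure of $E$ in $K$. It suffices to show $\tilde K = E$, since then $E \triangleleft K$ and $K = N_K(E)$. My strategy is to apply Goldschmidt's classification \cite{Goldschmidt} of finite groups containing a strongly closed abelian $2$-subgroup to $\tilde K$, and then exploit the hypothesis that $F^*(N_K(E)/C_K(E))$ is non-abelian simple to eliminate every possible component.

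First I would verify that $E$ is strongly closed, with respect to $K$, in some Sylow $2$-subgroup $S$ of $K$ containing $E$. For abelian $E$ this upgrade from strong closure in $N_K(E)$ is routine, and can be handled by Alperin's fusion theorem or by a direct induction along the iterated normaliser chain of $E$ in $S$. Goldschmidt's theorem, combined with $O_{2'}(K)=1$ and $\tilde K \triangleleft K$ (which together force $O(\tilde K)=1$), then produces a decomposition
$$\tilde K = O_2(\tilde K)\, L_1 \cdots L_n,$$
where each $L_i$ is a quasisimple subnormal subgroup with $L_i/Z(L_i)$ on the Goldschmidt list --- namely $\L_2(2^a)$, $\Sz(2^a)$, $\U_3(2^a)$, $\L_2(q)$ with $q \equiv \pm 3 \pmod 8$, or $\J_1$ --- and each $E \cap L_i$ is a Sylow $2$-subgroup of $L_i$.

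The core of the argument is to show that $n = 0$. For every simple group on the Goldschmidt list both $\Out(L_i/Z(L_i))$ and the normaliser quotient $N_{L_i}(E \cap L_i)/(E \cap L_i)C_{L_i}(E \cap L_i)$ are solvable (cyclic, Frobenius complement, or $7{:}3$ in the $\J_1$ case). Consequently the image of $N_K(E)/C_K(E)$ in $\Aut(E)$ is an extension of a solvable normal subgroup (coming from the normaliser quotients and outer automorphism groups of the components) by a subgroup of $\Sym(n)$ permuting isomorphic components. The hypothesis that $F^*(N_K(E)/C_K(E))$ is a single non-abelian simple group --- and in particular the unique minimal normal subgroup of $N_K(E)/C_K(E)$ --- then forces $n = 0$ after a short case analysis.

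With $n = 0$ in hand, $\tilde K = O_2(\tilde K)$ is a $2$-group normal in $K$, hence contained in $S$. Therefore for every $g \in K$ we have $E^g \le \tilde K^g = \tilde K \le S$, and strong closure of $E$ in $S$ yields $E^g \le E$, whence $E^g = E$. This shows $E \triangleleft K$, giving $K = N_K(E)$. The main obstacle is the elimination of components in the third step: when $n \ge 5$ the symmetric group $\Sym(n)$ contains $\Alt(n)$ as a non-abelian simple normal subgroup, and one must verify that no combination of permutation of components with automorphisms induced on individual $L_i$ can realise a socle matching the given non-abelian simple $F^*(N_K(E)/C_K(E))$.
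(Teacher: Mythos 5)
Your overall strategy --- pass to a Sylow $2$-subgroup, apply Goldschmidt's theorem to $\widetilde K=\langle E^K\rangle$, kill $O(\widetilde K)$ using $O_{2'}(K)=1$, and then use the hypothesis on $F^*(N_K(E)/C_K(E))$ to rule out components --- is the right one (the paper itself only cites \cite[Lemma 2.15]{F4}, whose argument runs along exactly these lines), and your first, second and fourth steps are fine. The gap is in the third step, and you have in effect flagged it yourself. Two things go wrong. First, the structural claim that the image of $N_K(E)/C_K(E)$ in $\Aut(E)$ is (soluble)-by-$\Sym(n)$ is not justified: writing $\widetilde K=O_2(\widetilde K)L_1\cdots L_n$, the group $N_K(E)$ also acts on $O_2(\widetilde K)$, which lies in $E$, and nothing about the components constrains that action, so the image need not have the shape you describe. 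Second, even on the part of $E$ lying in the components, your analysis leaves open the possibility that $F^*(N_K(E)/C_K(E))\cong\Alt(n)$ arises from the permutation of the components, and you give no argument to exclude it; as stated, the proof does not close.

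The repair is to use the hypothesis differently, and more cheaply. Set $X=N_K(E)/C_K(E)$. Since $F^*(X)$ is non-abelian simple, $F(X)\le C_X(F^*(X))\le Z(F^*(X))=1$, and therefore $X$ has \emph{no} non-trivial soluble normal subgroup (the last non-trivial term of the derived series of such a subgroup would be a non-trivial abelian normal subgroup of $X$, hence inside $F(X)$). Now $N_{\widetilde K}(E)=\widetilde K\cap N_K(E)$ is normal in $N_K(E)$ because $\widetilde K$ is normal in $K$, and it is soluble: the projection onto each $L_i$ sends it into the normalizer of the Sylow $2$-subgroup $E\cap L_i$, which is soluble for every group on Goldschmidt's list, and the kernel of the product of these projections is a $2$-group. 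Hence the image of $N_{\widetilde K}(E)$ in $X$ is a soluble normal subgroup, so $N_{\widetilde K}(E)\le C_K(E)$. If $n\ge 1$ this is absurd, because $N_{L_1}(E\cap L_1)$ normalizes $E$ (it centralizes $O_2(\widetilde K)$ and the other factors $E\cap L_j$) but does not centralize $E\cap L_1$: the normalizer of a Sylow $2$-subgroup of $\L_2(2^a)$, $\Sz(2^a)$, $\U_3(2^a)$, $\L_2(q)$ with $q\equiv\pm 3\pmod 8$, or $\J_1$ always induces a non-trivial group of automorphisms on it. Therefore $n=0$, and your final step finishes the proof. In short: right tool and right reduction, but the decisive elimination of components rests on an incorrect structural claim and, by your own admission, does not handle the $\Alt(n)$ case; it should be replaced by the soluble-normal-subgroup argument above.
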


\begin{proof} See \cite[Lemma~2.15]{F4}. \end{proof}
 We will also need the following statement of Holt's Theorem \cite{Ho}.

\begin{lemma}\label{Holt}
Suppose that $K$ is a simple group,  $P$ is a  proper subgroup of $K$ and  $r$ is a $2$-central element of $K$.
If $r^K\cap P= r^P$ and $C_K(r)\le P$, then  $K\cong \PSL_2(2^a)$ ($a \ge 2$), $\mathrm{PSU}_3(2^a)$ ($a \ge 2$),
${}^2\B_2(2^a)$ ($a\ge 3$ and odd) or $\Alt(n)$ where
in the first three cases $P$ is a Borel subgroup of $K$ and in the last case $P \cong \Alt(n-1)$.
\end{lemma}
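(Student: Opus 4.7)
The plan is to reduce the lemma to Holt's theorem \cite{Ho} on transitive permutation groups in which a $2$-central involution has a unique fixed point, via a standard fixed-point count. Consider the transitive action of $K$ on the coset space $\Omega=K/P$ by left multiplication. Since $K$ is simple and $P$ is proper, this action is faithful. Because $C_K(r)\le P$ we have $r\in P$, so the coset $P$ is fixed by $r$. A coset $gP\in\Omega$ is fixed by $r$ precisely when $g^{-1}rg\in P$, and the usual counting gives
\[
|\mathrm{Fix}_\Omega(r)|=\frac{|C_K(r)|\cdot |r^K\cap P|}{|P|}.
\]
The inclusion $C_K(r)\le P$ yields $|r^P|=|P{:}C_K(r)|$, so the fusion hypothesis $r^K\cap P=r^P$ combined with the displayed identity forces $|\mathrm{Fix}_\Omega(r)|=1$. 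Thus the $2$-central involution $r$ fixes exactly one point of $\Omega$.

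Since $K$ is non-abelian simple it admits no regular normal subgroup on $\Omega$, so Holt's theorem applies and identifies $K$ with one of $\PSL_2(2^a)$ ($a\ge 2$), $\mathrm{PSU}_3(2^a)$ ($a\ge 2$), ${}^2\B_2(2^a)$ ($a\ge 3$ odd), or $\Alt(n)$. Holt's conclusion also records, up to conjugacy, the stabilizer of a point in the associated $2$-transitive representation: in the $\Alt(n)$ case this is $\Alt(n-1)$, and in each of the three Lie-type families it is a Borel subgroup of $K$. Since $\Omega=K/P$, the point stabilizer of the coset $P$ is $P$ itself, and the claimed identifications of $P$ follow at once.

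The genuine content is Holt's theorem, which we invoke as a black box; the reduction above consists only of a translation of the hypotheses and a standard fixed-point count, so there is no significant independent obstacle. The only minor checks are that the permutation action is faithful (immediate from simplicity of $K$ and properness of $P$) and that no regular normal subgroup exists (impossible in a non-abelian simple group).
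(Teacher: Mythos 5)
Your proof is correct. The paper itself gives no argument here --- it simply cites \cite[Lemma 2.16]{F4}, which is exactly this translation of Holt's theorem --- so your derivation supplies the standard reduction that the citation encapsulates: the fixed-point count $|\mathrm{Fix}_\Omega(r)|=|C_K(r)|\,|r^K\cap P|/|P|$, combined with $C_P(r)=C_K(r)$ (so that $|r^P|=|P:C_K(r)|$) and the fusion hypothesis, forces a unique fixed point, and simplicity of $K$ rules out both unfaithfulness and a regular normal subgroup before invoking \cite{Ho}. The only implicit point worth making explicit is that $r$ is an involution (the intended reading of ``$2$-central element''), since Holt's theorem is stated for involutions central in a Sylow $2$-subgroup; with that understood there is no gap.
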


\begin{proof} This is \cite[Lemma 2.16]{F4}.
\end{proof}

\begin{definition} \label{U6F4def}We say that  $X$ is similar to a $3$-centralizer in a group of type  $\U_6(2)$ or $\F_4(2)$
provided the following conditions hold.
\begin{enumerate}
\item $Q=F^*(X)$ is extraspecial of order $3^5$; and
\item $X/Q$ contains a normal subgroup isomorphic to $\Q_8 \times \Q_8$.
\end{enumerate}
\end{definition}

The main theorems  of \cite{PS1, F4} combine to give the following result which is also recorded  in \cite{F4}.

\begin{theorem}\label{F4U6Thm} Suppose that $G$ is a group, $Z \le G$ has order $3$ and set $M = C_G(Z)$.
If $M$ is similar to a $3$-centralizer of a  group of type $\U_6(2)$ or $\F_4(2)$ and $Z$ is not weakly closed in
a Sylow $3$-subgroup of $G$  with respect to $G$, then  either $F^*(G) \cong \U_6(2)$ or  $F^*(G) \cong \F_4(2)$. Furthermore, if $F^*(G) \cong \U_6(2)$, then $Z$ is weakly closed in $O_3(M)$ with respect to $G$ and if $F^*(G) \cong \F_4(2)$, then $Z$ is not weakly closed in $O_3(M)$ with respect to $G$.
\end{theorem}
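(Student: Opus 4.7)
The plan is to observe that the conclusion is essentially a repackaging of the two main theorems of \cite{PS1} and \cite{F4}: the weak closure behaviour of $Z$ in $O_3(M)$ selects which of the two results applies, while the similarity hypothesis of Definition~\ref{U6F4def} supplies the shared input data on the centralizer $M=C_G(Z)$. Consequently the proof amounts to a short case distinction together with a careful verification that the hypotheses match those in the cited sources.

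First I would split according to whether or not $Z$ is weakly closed in $O_3(M)$ with respect to $G$. Suppose first that $Z$ is weakly closed in $O_3(M)$. Then the hypotheses in force---that $M$ is similar to a $3$-centralizer in a group of type $\U_6(2)$ or $\F_4(2)$, and that $Z$ fails to be weakly closed in a Sylow $3$-subgroup of $G$---are precisely those of the main theorem of \cite{PS1}, and I would invoke that theorem to conclude $F^*(G)\cong \U_6(2)$. If instead $Z$ is not weakly closed in $O_3(M)$, then the same similarity hypothesis combined with this stronger failure of weak closure matches the running setup of \cite{F4}, and its main theorem yields $F^*(G)\cong \F_4(2)$.

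The ``furthermore'' clauses then follow automatically from this case split by construction: the conclusion $F^*(G)\cong \U_6(2)$ arises exactly in the case in which $Z$ is weakly closed in $O_3(M)$, and the conclusion $F^*(G)\cong \F_4(2)$ arises exactly in the case in which it is not. The only real work in the argument is a bookkeeping check that Definition~\ref{U6F4def} of similarity matches the similarity hypothesis used in \cite{PS1, F4} and that the weak closure dichotomy adopted here coincides with the one fixed in those papers. This is the main (though mild) obstacle, and once it is dispensed with no further group-theoretic input is required.
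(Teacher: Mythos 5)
Your proposal matches the paper exactly: the paper offers no independent argument for Theorem~\ref{F4U6Thm}, stating only that the main theorems of \cite{PS1} and \cite{F4} combine to give the result (which is also recorded in \cite{F4}), with the dichotomy on whether $Z$ is weakly closed in $O_3(M)$ selecting which of the two identification theorems applies. Your case split and the observation that the ``furthermore'' clauses hold by construction are precisely the intended reading, so the proposal is correct and takes essentially the same approach.
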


\begin{definition}\label{O8def}
We say that  $X$ is similar to a $3$-centralizer in a group of type  $\Aut(\Omega_8^+(2))$
provided the following conditions hold.
\begin{enumerate}
\item $Q=F^*(X)$ is extraspecial of order $3^5$;
\item $X/Q \cong \SL_2(3)$ or $\SL_2(3) \times 2$;
\item $[Q,O_{3,2}(X)]$ has order $27$.
\end{enumerate}
\end{definition}

\begin{theorem}[Astill \cite{Astill}]\label{Astill} Suppose that $G$ is a group, $Z \le G$ has order $3$ and set $M = C_G(Z)$.
If $M$ is similar to a $3$-centralizer of a  group of type $\Aut(\Omega_8^+(2))$  and $Z$ is not weakly closed in $O_3(C_G(Z))$  with respect to $G$, then  either $G \cong \mathrm \Omega_8^+(2):3$ or  $F^*(G) \cong \Aut(\Omega_8^+(2))$.
\end{theorem}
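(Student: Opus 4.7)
The plan is to imitate the strategy used for the other $3$-centralizer identification theorems in this paper series (for example, Theorem~\ref{F4U6Thm}): exploit the non-weak-closure hypothesis to build up a second $3$-local subgroup, enlarge to the normalizer of a maximal elementary abelian $3$-subgroup of a Sylow $3$-subgroup of $G$, then reconstruct an involution centralizer, and finally invoke a standard recognition theorem for $\Omega_8^+(2)$.

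First I would analyse the $M$-action on $Q/Z$. Since $Q\cong 3^{1+4}$ and $M/Q$ has shape $\SL_2(3)$ or $\SL_2(3)\times 2$, the quotient $Q/Z$ is a four-dimensional $\GF(3)$-module carrying the symplectic commutator form. The assumption $|[Q,O_{3,2}(M)]|=27$ forces $O_2(M/Q)$ to centralise a two-dimensional subspace of $Q/Z$, which pulls back to a canonical elementary abelian $A\le Q$ of order $3^3$ containing $Z$. This $A$ will be the key subgroup for constructing further $3$-local subgroups. At the same time one reads off the $M$-orbits on $Q^\sharp$ (as in Section~3 of this paper), so that the allowed locations of a foreign conjugate $Z^g$ are controlled.

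Now invoking the hypothesis that $Z$ is not weakly closed in $O_3(M)=Q$, I choose $g\in G$ with $Z^g\le Q$ and $Z^g\ne Z$. Fusion analysis restricts $Z^g$ to $A$, and $N_G(\langle Z,Z^g\rangle)$ is then strictly larger than the corresponding normalizer inside $M$. Playing the new conjugates off one another inside $A$, I expect $N_G(A)/A$ to contain a $\Sym(3)$- or $\Omega_4^-(3)$-type extension beyond what $M$ sees. Pushing upward to $J=J(S)$ for $S\in\Syl_3(G)$ (which should be elementary abelian of rank $4$), I would identify $F^*(N_G(J)/J)$ as an orthogonal group, using Lemma~\ref{ptransfer} to handle $3$-transfer and weak-closure to ensure $N_G(J)$ controls $G$-fusion in $J$. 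This is the main obstacle: the $\SL_2(3)$ quotient in $M/Q$ is rather thin, so recovering a full Weyl-group-type action on $J$ (the triality-twisted $D_4$ pattern that underlies $\Omega_8^+(2){:}3$) requires combining several fusion conjugates and a careful bookkeeping of $3$-classes.

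With the enlarged $3$-local geometry in hand, I turn to involutions. Picking an involution $t\in M$ projecting to a central involution of the $\SL_2(3)$-factor of $M/Q$, the structure of $C_M(t)$ combined with the newly-established $3$-local subgroups allows one to reconstruct $C_G(t)$ by fusion and transfer arguments (Lemma~\ref{Fusion}, Lemma~\ref{Gold}, Theorem~\ref{transfer}). The target shape is $C_G(t)\approx 2^{1+8}_+\udot\Omega_6^+(2)$, possibly extended by a diagonal or graph automorphism, matching a $2$-central involution centralizer in $\Omega_8^+(2)$. Once this centralizer is pinned down, a standard characterisation of $\Omega_8^+(2)$ by this involution centralizer, together with Lemma~\ref{Holt} to control fusion of the $2$-central involution in $G$, yields $F^*(G)\cong\Omega_8^+(2)$. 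The final separation of $\Omega_8^+(2){:}3$ from $\Aut(\Omega_8^+(2))$ is then a direct reading of which outer automorphisms of $\Omega_8^+(2)$ are detected in $M/Q$, the presence of the triality $3$ being already forced by the initial hypothesis on $M$.
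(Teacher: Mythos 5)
First, a structural point: the paper does not prove this statement. It is quoted, with attribution, from Astill's article \cite{Astill} and is used purely as a black box (in the proof of Lemma~\ref{Sarah}). So there is no internal argument to compare your proposal against; you have attempted a blind reconstruction of a substantial external paper, and it has to be judged on its own terms.

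On those terms there is a concrete error at the linchpin of your strategy. The involution $t$ you select, projecting to the central involution of the $\SL_2(3)$ factor of $M/Q$, is indeed the $2$-central involution of the target groups, but its centralizer there has shape $2^{1+8}_+{:}(\Sym(3)\times\Sym(3)\times\Sym(3))$, extended by the outer $3$ and possibly a further $2$. This is visible inside the present paper: in the proof of Lemma~\ref{q23sigs} one reads $C_{C_G(q_2q_3^{-1})}(r_1)\approx (2^{1+8}_+.(\Sym(3)\times\Sym(3)\times\Sym(3)).3)\times 3$, where $C_G(q_2q_3^{-1})/\langle q_2q_3^{-1}\rangle\cong\Omega_8^+(2){:}3$. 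Your proposed target $2^{1+8}_+\udot\Omega_6^+(2)$ is impossible on order grounds alone: it has $2$-part $2^{15}$, while $|\Omega_8^+(2)|_2=2^{12}$. (You are probably conflating the two maximal parabolics of the $D_4(2)$ diagram: the end-node parabolic $2^6{:}\L_4(2)$ has Levi factor $\Omega_6^+(2)\cong\L_4(2)$, but the $2$-central involution is the root involution of the branch node, whose Levi factor is only $\SL_2(2)^3$.) This error is not cosmetic, because your endgame rests on ``a standard characterisation of $\Omega_8^+(2)$ by this involution centralizer'': with the correct, much smaller centralizer $2^{1+8}_+{:}(\Sym(3))^3$ there is no off-the-shelf recognition theorem to invoke, and Lemma~\ref{Holt} is not one --- it presupposes a proper subgroup $P$ already containing and controlling the fusion of $C_K(r)$, i.e.\ it is the tool for showing that $G$ coincides with an already-constructed subgroup, not for recognising $\Omega_8^+(2)$ from centralizer data. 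The earlier stages of your outline (locating $Z^g$ inside the canonical $A\le Q$ of order $3^3$, building $N_G(J(S))$ with $J(S)$ of rank $4$, controlling fusion via Lemma~\ref{ptransfer}) are plausible and do mirror the template of Sections~3--4 of this paper, but they are asserted rather than carried out; the honest course here is to cite Astill, as the authors do.
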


\section{Strong closure}

The main result of this section will be used in the final determination of the centralizer of an involution in
${}^2\E_6(2)$. Remember that for a prime $p$ and a group $X$  a subgroup $Y$ of order divisible by $p$ is \emph{strongly $p$-embedded} in $X$ so long as $Y\cap Y^g $ has order coprime to $p$ for all $g \in X \setminus Y$.

\begin{lemma}\label{fusion2} Suppose that $p$ is a prime, $X$ is a group and $H$ is strongly $p$-embedded in $X$.
If $x\in H$,  $y \in x^X\cap H$ and $p$ divides both  $|C_H(x)|$ and $|C_H(y)|$, then $y \in x^H$.
\end{lemma}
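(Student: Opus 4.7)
The plan is to exploit the key consequence of strong $p$-embedding: if $P$ is any nontrivial $p$-subgroup of $H$, then $N_X(P) \le H$, since for $n \in N_X(P)$ the intersection $H \cap H^n$ contains $P$ and hence is of order divisible by $p$, forcing $n \in H$. This gives us tight control over where Sylow $p$-subgroups of centralizers of $p$-singular elements can live.

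First I would pick $P \in \Syl_p(C_H(x))$ and $Q \in \Syl_p(C_H(y))$; by hypothesis both are nontrivial. The first real step is to upgrade these to Sylow subgroups of $C_X(x)$ and $C_X(y)$ respectively. Indeed, if $\tilde{P}$ is a Sylow $p$-subgroup of $C_X(x)$ containing $P$, then $N_{\tilde{P}}(P)$ sits inside $N_X(P) \le H$ (by strong embedding), and also inside $C_X(x)$, hence in $C_H(x)$; maximality of $P$ in $C_H(x)$ forces $N_{\tilde{P}}(P) = P$, so $\tilde{P} = P$. The same argument applied to $y$ yields $Q \in \Syl_p(C_X(y))$.

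Now write $y = x^g$ for some $g \in X$. Then $P^g \in \Syl_p(C_X(y))$, so by Sylow's theorem in $C_X(y)$ there exists $c \in C_X(y)$ with $P^{gc} = Q$. Setting $h := gc$, we have $x^h = y$ and $Q \le H \cap H^h$: indeed $Q \le H$ by construction, and $Q = P^h \le H^h$ since $P \le H$. Because $Q \ne 1$, strong $p$-embedding gives $h \in H$, and therefore $y = x^h \in x^H$, as required.

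The argument is short and the only subtlety is the Sylow-upgrading step; the main obstacle is simply to make sure one invokes strong embedding in the correct form $N_X(P) \le H$ rather than the literal definition, but this is standard.
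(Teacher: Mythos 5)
Your proof is correct. Both you and the paper begin by promoting a Sylow $p$-subgroup of $C_H(x)$ to a Sylow $p$-subgroup of $C_X(x)$ (you spell out the normalizer argument via $N_X(P)\le H$, which the paper leaves implicit), and both then conjugate a $p$-object of $C_H(y)$ into position and use strong $p$-embedding to force the conjugating element into $H$. The difference is in the last step: the paper takes a single element $d$ of order $p$ in $C_H(y)$, moves $d^g$ into $P$ inside $C_X(x)$, and then invokes the fact that a strongly $p$-embedded subgroup controls $p$-fusion (citing \cite[Prop.~17.11]{GLS2}) to produce an $H$-conjugation returning $d$ to itself, concluding via $C_X(d)\le H$. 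You instead upgrade \emph{both} centralizers to Sylow subgroups of $C_X(x)$ and $C_X(y)$ and apply Sylow's theorem directly in $C_X(y)$, so that the element $h$ carrying $x$ to $y$ satisfies $1\neq Q\le H\cap H^h$ and strong embedding finishes immediately. Your route is more self-contained, replacing the external control-of-fusion citation with an elementary Sylow conjugacy argument; the paper's route is marginally shorter on the page because it outsources that work. Both are sound.
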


\begin{proof}
Since $H$ is strongly $p$-embedded in $X$ and $p$ divides $|C_H(x)|$, $C_{H}(x)$ contains a Sylow $p$-subgroup $P$ of
$C_X(x)$. Let $g \in X$ be such that $y^g = x$. Since $p$ divides $|C_H(y)|$ there is an element  $d \in C_H(y)$  of
order $p$. Then $d^{g}$ is a $p$-element of $C_H(x)$ and hence there exists an element  $w\in C_G(x)$ such that
$d^{gw} \in P$. Then, as $H$ controls $p$-fusion in $X$ (\cite[Prop. 17.11]{GLS2}), there exists  $h\in H$ such that $d= d^{gwh}$.  As $H$ is strongly $p$-embedded in $G$, we now have $gwh \in C_X(d) \le H$. Hence $gw \in H$, and $$y^{gw}= x^{w} = x$$ as claimed.
\end{proof}

\begin{lemma}\label{centinH} Suppose that $X$ is a group, $H = N_X(A) $  with $H/A \cong   \U_6(2)$ or $\U_6(2):2$, $|A|=2^{20}$ and $A$
a minimal normal subgroup of $H$. Then $C_H(x)$ contains a Sylow $2$-subgroup of $C_X(x)$ for all {$x \in A$}.
\end{lemma}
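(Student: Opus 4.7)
The plan is to show that $A$ coincides with the Thompson subgroup $J(S)$ of any Sylow $2$-subgroup $S$ of $H$, and then exploit the resulting uniqueness of $A$ as a maximal elementary abelian subgroup of $S$ to slide arbitrary Sylow $2$-subgroups of $C_X(x)$ back inside $H$.

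First I would prove $J(S)=A$. Let $B\le S$ be elementary abelian of maximal order; since $A$ itself is such a subgroup, $|B|\ge|A|$. Writing $\bar B=BA/A\le H/A$, the identity $|B|=|B\cap A|\cdot|\bar B|$ together with $B\cap A\le C_A(\bar B)=C_V(\bar B)$ (where $V=A$ is the $H/A$-module) yields $|V:C_V(\bar B)|\le|\bar B|$, so $\bar B$ is an offender on $V$. If $\bar B\ne 1$, Thompson's replacement theorem produces an elementary abelian quadratic offender $\bar B^*$ in $H/A$, contradicting Lemma~\ref{Vfacts}. Hence $\bar B=1$, so $B\le A$ and therefore $B=A$. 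In particular, $A$ is the unique elementary abelian subgroup of $S$ of order $|A|$, and $J(S)=A$.

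Next I would upgrade $S$ to a Sylow $2$-subgroup of $X$. Since $J(S)=A$ is characteristic in $S$ and $A\trianglelefteq H$, we get $N_X(S)\le N_X(A)=H$. If $S\le T\in\Syl_2(X)$ with $S\ne T$, then $N_T(S)$ properly contains $S$ (normalisers grow in $2$-groups) and lies in $H$; but $S\in\Syl_2(H)$, so $S$ is self-normalising inside any $2$-subgroup of $H$, forcing $N_T(S)=S$ and hence $T=S$. Thus $S\in\Syl_2(X)$, and in particular $|X:H|$ is odd.

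Finally, fix $x\in A$. As $A$ is abelian we have $A\le C_X(x)$, so Sylow's theorem supplies $T\in\Syl_2(C_X(x))$ with $A\le T$, and then $T\le S^g$ for some $g\in X$. Because $J$ commutes with conjugation, $J(S^g)=A^g$ is the unique elementary abelian subgroup of $S^g$ of order $|A|$; but $A\le T\le S^g$ is also such a subgroup, so $A=A^g$, giving $g\in N_X(A)=H$ and $S^g\le H$. Therefore $T\le S^g\cap C_X(x)\le C_H(x)$, so $C_H(x)$ contains the Sylow $2$-subgroup $T$ of $C_X(x)$, as required.

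The only non-routine step is the first: translating the statement of Lemma~\ref{Vfacts} (that $V$ is not a failure of factorisation module) into the assertion $J(S)=A$, which needs Thompson's replacement theorem to convert an arbitrary offender into a quadratic one. Everything afterwards is a standard uniqueness-plus-conjugation argument.
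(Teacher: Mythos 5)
Your proof is correct and follows essentially the same route as the paper: both arguments use Lemma~\ref{Vfacts} (that the module is not a failure of factorization module) to identify $A$ with the Thompson subgroup of any $2$-subgroup containing it, and then push a Sylow $2$-subgroup of $C_X(x)$ into $H$ via a normalizer/conjugation argument. The only difference is presentational: you derive $J(S)=A$ by hand via the offender computation and replacement theorem where the paper simply cites \cite[Lemma 26.7]{GLS2}, and you route through $S\in\Syl_2(X)$ where the paper applies the ``normalizers grow'' argument directly to $N_S(S\cap H)$.
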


\begin{proof} Let $S \in \Syl_2(C_X(x))$ with $S \cap H \in \Syl_2(C_H(x))$. As,
by Proposition~\ref{Vfacts} (i), $A$ is not a failure of factorization module for $H/A$, we have $A = J(S\cap H)$
{from \cite[Lemma 26.7]{GLS2}}.  In particular, we have $N_S(S\cap H) \le N_G(J(S\cap H)) = H$. Hence $S =S\cap H$.
\end{proof}

We can now prove Theorem~\ref{Not3embedded} which we restate for the convenience of the reader.

\begin{theorem}\label{Not3embedded1} Suppose that $X$ is a group, $O_{2'}(X)= 1$, $H = N_X(A)= AK$  with $H/A \cong K \cong \U_6(2)$ or $\U_6(2):2$, $|A|=2^{20}$ and $A$
a minimal normal subgroup of $H$. Then $H$ is not a strongly $3$-embedded subgroup of $X$.
\end{theorem}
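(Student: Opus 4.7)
The plan is to argue by contradiction: suppose $H$ is a proper strongly $3$-embedded subgroup of $X$. The strategy is to show that $A$ is strongly closed in a Sylow $2$-subgroup $S$ of $X$ with respect to $X$, and then apply Goldschmidt's theorem (Lemma~\ref{Gold}) to deduce $X = N_X(A) = H$, which is the desired contradiction.

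Fix $S \in \Syl_2(H)$. By Lemma~\ref{Vfacts} and the argument in the proof of Lemma~\ref{centinH}, one has $A = J(S)$ and $S \in \Syl_2(X)$. For any non-identity $a \in A$, Proposition~\ref{Vaction}(ii) classifies $\mathrm{Stab}_K(a)$ as one of the five listed vector stabilizers, each of which has order divisible by $3$; hence $3 \mid |C_H(a)|$. Now suppose $g \in X$ with $b := a^g \in S \subseteq H$. If $3 \mid |C_H(b)|$, then Lemma~\ref{fusion2} applied to $a$ and $b$ directly gives $b \in a^H \subseteq A$, as desired. By inspection of Lemma~\ref{centralizerinvs}, the only class of involutions in $H \setminus A$ for which $|C_H(b)|$ is coprime to $3$ is case~(ii)(c), where $b$ projects to the $t_2$-class in $K$ and corresponds to a non-singular vector of the $\Omega_4^+(2)$-module $C_A(t_2)/[A,t_2]$, yielding $|C_H(b)| = 2^{26}$. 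For such a $b$, the $X$-conjugacy $a \sim_X b$ forces $|C_X(b)|_3 = |C_X(a)|_3 \geq 3^2$, so there must exist a $3$-element $f \in C_X(b) \setminus H$. Choosing $h \in X$ with $f^h \in H$, strong $3$-embedding gives $C_X(f^h) \leq N_X(\langle f^h\rangle) \leq H$, so $b^h \in H$ and $f^h \in C_H(b^h)$, whence $3 \mid |C_H(b^h)|$. Applying Lemma~\ref{fusion2} to the pair $a, b^h$ gives $b^h \in a^H \subseteq A$, and therefore $b \in A^{h^{-1}}$ with $h^{-1} \notin H$. Since $A^{h^{-1}} \leq C_X(b)$ is elementary abelian of rank $20$, the intersection $A^{h^{-1}} \cap H$ is an elementary abelian subgroup of $C_H(b)$; comparing its rank against the $2$-rank of the explicit $2$-group $C_H(b)$ of order $2^{26}$, together with the $2$-part constraint from Lemma~\ref{centinH}, yields the required contradiction.

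Once $A$ is shown to be strongly closed in $S$ with respect to $X$, Sylow $2$-conjugacy in $H$ promotes this immediately to strong closure of $A$ in $N_X(A) = H$: indeed any $2$-element of $A^g \cap H$ lies in an $H$-conjugate of $S$, and the $H$-conjugate can be moved back into $S$ using an element of $N_X(A) = H$. Since $O_{2'}(X) = 1$ by hypothesis, $A$ is abelian, $C_X(A) = C_H(A) = A$ by faithfulness of the $K$-action on the irreducible module $A$, and $F^*(H/A) \cong \U_6(2)$ is a non-abelian simple group, Lemma~\ref{Gold} applied to $X$ with $E = A$ yields $X = N_X(A) = H$, contradicting the assumption that $H$ is a proper subgroup of $X$.

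The main obstacle is completing the rank-based contradiction in case~(ii)(c): exhibiting precisely how the elementary abelian subgroup $A^{h^{-1}} \cap H$ of $C_H(b)$ overshoots the $2$-rank available in the $2$-group $C_H(b)$ of order $2^{26}$, while simultaneously controlling the $2$-part of $H \cap H^{h^{-1}}$ using strong $3$-embedding. All other cases of Lemma~\ref{centralizerinvs} are handled uniformly by the direct application of Lemma~\ref{fusion2} outlined above.
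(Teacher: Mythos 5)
Your overall architecture matches the paper's: assume $H$ is strongly $3$-embedded, prove $A$ is strongly closed, and invoke Lemma~\ref{Gold} to force $X=H$. You also correctly isolate the single dangerous class, namely case (ii)(c) of Lemma~\ref{centralizerinvs}, since every nontrivial vector stabilizer in Proposition~\ref{Vaction}(ii) has order divisible by $3$ and Lemma~\ref{fusion2} disposes of every other class of $v\in H\setminus A$. But the proof has a genuine gap exactly where you flag an ``obstacle'': the contradiction for the (ii)(c) involution is not supplied, and the rank comparison you sketch does not work. First, the middle manoeuvre with $f$ and $h$ buys you nothing: since $b=a^g$ with $a\in A$, you already have $b\in A^{g}$... more precisely $v\in A^{g^{-1}}$ in the paper's notation, i.e.\ $b$ already lies in an elementary abelian conjugate of $A$ of order $2^{20}$. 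Second, the quantity you would need to bound, the rank of $A^{h^{-1}}\cap C_H(b)$, has no a priori lower bound from what you have established, while $C_H(b)$ (a $2$-group of order $2^{26}$) contains $C_A(b)\langle b\rangle$, elementary abelian of rank $13$, and possibly larger elementary abelian subgroups; so there is no immediate overshoot to exploit.

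What the paper actually does at this point is considerably more delicate. It takes $T\in\Syl_2(C_X(v))$ containing both $C_S(v)$ and a conjugate of $A$ through $v$, sets $A_v=J(T)$, and first shows $C_A(v)\not\le A_v$ by a Thompson-subgroup conjugation argument. It then bounds $|(A_v\cap C_S(v))A/A|\le 4$ using Lemma~\ref{contains a 2-central} (every normal subgroup of order $8$ of a Sylow $2$-subgroup of $C_X(t_2)$ contains a unitary transvection, and transvections centralize $2^{14}$ in $A$, so cannot be conjugate to $v$), deduces $|A_v\cap C_S(v)|\le 2^{13}$ and $|T:A_vC_S(v)|\le 4$, and compares with the Sylow $2$-orders of the five vector stabilizers to conclude that $v$ must be conjugate in $A_v$ to the singular vector $v_1$ or $v_2$. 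Each of these is then killed by counting: the $v_2$ case because a subgroup of $A_v$ of order $2^{13}$ is generated by non-singular vectors, and the $v_1$ case by the estimate that $A_v$ contains only $891$ conjugates of $v$ against a forced $|(A_v\cap C_S(v))\setminus A|=1536$. None of this is recoverable from a bare comparison of $2$-ranks, so the final step of your argument needs to be replaced by an argument of this kind. (A further minor slip: the orbit of length $9$ giving stabilizer $2^2$ in case (ii)(c) consists of the \emph{singular} vectors of the $\Omega_4^+(2)$-module, not the non-singular ones, though this does not affect the identification of the exceptional class.)
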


\begin{proof}  Suppose that $H$ is strongly $3$-embedded in $X$. Let $S \in \Syl_2(H)$. Then  Lemma \ref{centinH} yields $S \in \Syl_2(X)$.
 We now claim that $A$ is strongly closed in $H$ with respect to $X$. Assume that, on the contrary, there
is $u \in A$, $g \in X$ and $v \in H\setminus A$ with $v^g = u$. If $3$ divides both  $|C_H(u)|$ and $|C_H(v)|$,
then $u$ and $v$ are $H$-conjugate by Lemma~\ref{fusion2}. Since $A$ is normal in $H$, this is impossible.
{Therefore, as $H= AK$ is a split extension, Proposition~\ref{Vaction} and Lemma~\ref{centralizerinvs} together, imply that there is} a unique
possibility for the conjugacy class of $v$ in $H$ and $C_{S}(v)A/A$ has index $2$ in $S/A$. In addition, we  have
$|C_{A}( v)|= 2^{12}$.

Since $v \in A^{g^{-1}}$, there exists a Sylow $2$-subgroup $T$ of $C_X(v) $ which contains both $C_S(v)$ and a
conjugate of $A$ which contains $v$. Let $A_v = J(T)$. If $C_A(v) \le A_v$, then, as $[A,v]\le C_A(v)$,
$\langle A,A_v\rangle$ normalizes $\langle v,A\cap A_v\rangle$. Because $A$ is the Thompson subgroup of any $2$-group
which contains $A$,  $A$ and $A_v$ are conjugate in $\langle A,A_v\rangle$. But $A$ does not centralize $\langle v, A_v \cap A \rangle$ while $A_v$ does,
 which is  a contradiction. Thus $C_A(v) \not \le A_v$.

We  have $(A_v \cap C_S(v))A/A$ is an elementary abelian normal subgroup of $C_S(v)A/A$ and, as $(A_v \cap
C_S(v))A/A$ only contains elements which are conjugate to $Av$, we have $|(A_v \cap C_S(v))A/A|\leq 4$ from
Lemma~\ref{contains a 2-central}.  Combining this with  the fact that $A_v \cap C_S(v) \cap A < C_A(v)$, we
deduce that $|A_v\cap C_S(v)|\le 2^{13}$. In particular we have that $|T : A_vC_S(v)| \le 4$. {Now using Lemma
\ref{centinH} and Proposition~\ref{Vaction}    we see that $v$ is $H^{g^{-1}}$-conjugate to an element in $A_v$
in class $v_1$ or $v_2$ (using the notation as in Proposition \ref{Vaction}). Furthermore,} $v$ is a singular
element. Suppose that $v$ is conjugate to $v_2$. Then  $|T : A_vC_S(v)| = 4$ and so $|A_v \cap C_S(v)| = 2^{13}$.
But any subgroup of $A_v$ of order $2^{13}$ is generated by non-singular vectors, and {as we have seen such
elements are not conjugate} to elements in $H \setminus A$, a contradiction. So we have that $v$ {is conjugate
to} $v_1$. Now let  $T$ be a Sylow 2-subgroup of $C_X(v)$, which contains $A_vC_S(v)$. Then $T \in \Syl_2(X)$ {by
Lemma~\ref{centinH}}. {Once again, as $A_v \cap C_S(v)$ is not generated by non-singular vectors,} we get that
$|A_v \cap C_S(v)| \le 2^{12}$ and so $|T : A_vC_S(v)| \le 2$. Further we have $|C_S(v) \cap A_v| \ge 2^{11}$.
Therefore, {as there are only $891$ conjugates of $v$ in $A_v$}, $|(A_v \cap C_S(v))\setminus A| \le 891$. It
follows that $|A \cap A_v| \le 2^9$. Since $|(C_S(v) \cap A_v)A/A|\le 2^2$, we get  $|A \cap A_v| = 2^9$ and
$|C_S(v) \cap A_v| = 2^{11}$. But then $891 \ge |(A_v \cap C_S(v))\setminus A| = 1536$ which is a contradiction.
Hence $A$ is strongly closed in $H$.

Since $A$ is strongly closed in $H$ and $O_{2'}(X)=1$, we now have that $X= H$ by Lemma~\ref{Gold} and this is
impossible as $H$ is strongly $3$-embedded. This completes the proof of the theorem.
\end{proof}

\section{The Structure of $H$}

From here on we assume that $G$ satisfies the hypothesis of Theorem~\ref{Main} or Theorem~\ref{Main1}. We let
$H \le G$ be a subgroup of $G$ which is similar to the $3$-centralizer in a group of type ${}^2\E_6(2)$ or $\M(22)$. We let $Z= Z(O_3(F^*(H)))$ and assume that $H = C_G(Z)$.

We will use the following notation  $Q= O_3(H)$, $S \in \Syl_3(H)$ and $Z =\langle z\rangle= Z(S)$. We select $R
\in \Syl_2(O_{3,2}(H))$ such that  $S=N_S(R)Q$. Then $R$ is isomorphic to a subgroup of $\Q_8\times \Q_8\times
\Q_8$ containing the centre of this group and of order  $2^7$ when $H$ has type $\M(22)$ and order $2^9$ when
$H$ has type ${}^2\E_6(2)$. Note that $\Omega_1(R) $ is elementary abelian of order $2^3$.  For $i=1,2,3$, let
$\langle r_i\rangle \le \Omega_1(R)$ be chosen so that $C_Q(r_i)$ is extraspecial of order $3^5$. {We  set, for
$i=1,2,3$, $Q_i= [Q,r_i]$ and note that $Q_i$ is extraspecial of order $3^3$.}

{If $|R|= 2^9$, we let $R_1$, $R_2$ and $R_3$ be the three normal subgroups of $R$ which are isomorphic to
$\Q_8$ such that $[R_i,Q] = Q_i$. Notice  that we have  $Z(R_i) = \langle r_i\rangle$ in this case.}
% and $Q_i = [Q,R_i]$.
Further we set $B = C_S(\Omega_1(Z(R)))$.

\begin{lemma} We have $Q_1 \cong Q_2 \cong Q_3 \cong 3^{1+2}_+$ and that pairwise these subgroups commute. \end{lemma}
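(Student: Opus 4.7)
The plan is to prove the lemma in two stages: first show each $Q_i$ is isomorphic to $3^{1+2}_+$, then establish pairwise commutativity from the joint eigenspace decomposition of $\overline{Q} := Q/Z$ under the commuting involutions $r_1, r_2, r_3$.

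For the first stage, I view $\overline{Q}$ as a $6$-dimensional $\GF(3)$-vector space on which $r_i$ acts as an involution, giving the eigenspace decomposition $\overline{Q} = C_{\overline{Q}}(r_i) \oplus [\overline{Q}, r_i]$. The assumption $|C_Q(r_i)| = 3^5$, together with $Z \leq C_Q(r_i)$, forces $V_i := [\overline{Q}, r_i]$ to be $2$-dimensional. The commutator map $Q \times Q \to Z$ descends to a non-degenerate symplectic form $\omega$ on $\overline{Q}$ valued in $Z \cong \GF(3)$, which is preserved by $r_i$ (as $r_i$ centralises $Z$). Invariance under the involution $r_i$ forces its $\pm 1$-eigenspaces to be mutually orthogonal (since $\omega(\bar v, \bar w) = \omega(r_i \bar v, r_i \bar w) = -\omega(\bar v, \bar w)$ for $\bar v \in V_i$ and $\bar w \in C_{\overline{Q}}(r_i)$), hence both non-degenerate. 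So $V_i$ is a non-degenerate symplectic plane, and there exist $x, y \in Q_i = [Q, r_i]$ with $[x, y] \ne 1$. Therefore $Q_i' \leq Q' = Z$ is non-trivial, giving $Q_i' = Z \leq Q_i$ and $|Q_i| = 3^3$, so $Q_i$ is extraspecial of order~$27$. For exponent~$3$: any $q \in Q_i$ with image in $V_i$ satisfies $q^{r_i} = q^{-1}z$ for some $z \in Z$, whence, using that $r_i$ centralises $Z$, $q^3 = (q^{r_i})^3 = (q^{-1}z)^3 = q^{-3}$, forcing $q^3 = 1$. Since elements of $Z$ also have order dividing~$3$, the whole of $Q_i$ has exponent~$3$ and so $Q_i \cong 3^{1+2}_+$.

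For the second stage, since $r_1, r_2, r_3$ commute (they lie in the elementary abelian $\Omega_1(R)$), I diagonalise simultaneously to write $\overline{Q} = \bigoplus_{\epsilon \in \{\pm\}^3} U_\epsilon$, where $U_\epsilon$ is the joint $\epsilon$-eigenspace. The crucial claim is that $U_\epsilon = 0$ whenever $\epsilon$ has at least two minus entries. To see this, let $I = \{i : \epsilon_i = -\}$ and suppose $|I| \geq 2$ and $U_\epsilon \ne 0$. The similarity hypothesis provides an embedding of the image of $O_2(H/Q)$ in $\GL(\overline{Q})$ into a copy of $\Q_8 \times \Q_8 \times \Q_8$ with $\bar r_i$ the central involution of the $i$-th factor. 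The three $\Q_8$ factors pairwise commute and preserve each joint eigenspace $U_\epsilon$. For $i \in I$, the $i$-th factor acts faithfully on $U_\epsilon$ since its centre acts as $-I$ and $\Q_8$ has no non-trivial normal subgroup disjoint from its centre. Hence $\Q_8^{|I|}$ embeds in $\GL(U_\epsilon)$. But $U_\epsilon \leq V_i$ for any $i \in I$ forces $\dim U_\epsilon \leq 2$, so $|\GL(U_\epsilon)| \leq |\GL_2(\GF(3))| = 48 < 64 \leq |\Q_8|^{|I|}$, a contradiction. The remaining combinatorics ($\sum \dim U_\epsilon = 6$ and $\dim V_i = 2$) then pin down $\dim U_{-++} = \dim U_{+-+} = \dim U_{++-} = 2$ and $U_{+++} = 0$, so $\overline{Q} = V_1 \oplus V_2 \oplus V_3$.

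Pairwise orthogonality $V_i \perp V_j$ for $i \ne j$ is now immediate from $r_i$-invariance of $\omega$ exactly as in the first stage, and this translates to $[Q_i, Q_j] = 1$. The main obstacle is the representation-theoretic step in the second stage ruling out joint eigenspaces with two or more negative signs: this relies on the embedding of $O_2(H/Q)$'s action into $\Q_8 \times \Q_8 \times \Q_8$ (supplied by the similarity hypothesis in both the ${}^2\E_6(2)$ and $\M(22)$ cases), followed by a tight order count that just excludes commuting faithful copies of $\Q_8$ inside $\GL_2(\GF(3))$.
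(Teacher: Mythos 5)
Your overall strategy is sound and in fact supplies far more detail than the paper, whose entire proof is the sentence ``this follows from the Three Subgroup Lemma and the definitions''. (The intended shortcut there is presumably: $[\langle r_i\rangle, C_Q(r_i), Q]=1$ and $[C_Q(r_i),Q,r_i]\le[Z,r_i]=1$ give $[Q,r_i,C_Q(r_i)]=1$ by the Three Subgroup Lemma, so $Q_i$ centralises $C_Q(r_i)$; one then still needs $Q_j\le C_Q(r_i)$ for $j\ne i$, which is exactly the module-theoretic content you work out explicitly.) Your first stage is correct: the coprime eigenspace decomposition, the orthogonality of the $\pm1$-eigenspaces of $r_i$ with respect to the commutator form, and the inversion trick giving exponent $3$ are all fine.

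There is, however, one false step in your second stage. You assert that because each factor ($i\in I$) of $\Q_8\times\Q_8\times\Q_8$ acts faithfully on $U_\epsilon$, the direct product $\Q_8^{|I|}$ embeds in $\GL(U_\epsilon)$. It does not: for $i,j\in I$ both $\bar r_i$ and $\bar r_j$ act as $-1$ on $U_\epsilon$, so $\bar r_i\bar r_j$ acts trivially and lies in the kernel; faithfulness of each factor separately only shows that the image of the product of two factors is the central product $\Q_8\circ\Q_8$, of order $32$. Fortunately your contradiction survives the correction, since $32$ does not divide $|\GL_2(3)|=48$ (a Sylow $2$-subgroup of $\GL_2(3)$ has order $16$), so no group of order $32$ acts faithfully on a space of dimension at most $2$ over $\GF(3)$; equivalently, a faithful $2$-dimensional $\GF(3)$-representation of $\Q_8$ is absolutely irreducible, so by Schur's Lemma its centraliser in $\GL_2(3)$ consists of scalars and cannot contain a second commuting $\Q_8$. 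With that one-line repair the joint eigenspaces with two or more minus signs vanish, $\overline{Q}=V_1\oplus V_2\oplus V_3$, and the rest of your argument goes through. A smaller presentational point: you take from the similarity hypothesis that $\bar r_i$ is the central involution of the $i$-th $\Q_8$-factor rather than a product of two or three of them; this is true (and is asserted without proof in the paper's set-up), but it is ultimately justified by the same eigenspace computation, since only the three ``pure'' central involutions satisfy $|C_Q(r_i)|=3^5$, so it merits at least a remark.
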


{\begin{proof}  This follows from the Three Subgroup Lemma and the definitions of $r_i$ and $Q_i$.
\end{proof}
}

Since each $Q_i$ has exponent $3$, $Q$ has exponent $3$ and so $\Out(Q) \cong \GSp_6(3)$. For later calculations,
for each $i= 1, 2,3 $, we select $q_i, \wt q_i \in Q_i$ such that  $[q_i,S] \le Z$
 \begin{center}$q_i^{r_i}= q_i^{-1}$, $\wt q_i^{r_i}= \wt q_i^{-1}$ and $[q_i,\wt q_i]=z$.
\end{center}

%We also let $B = C_S(r_1) \cap C_S(r_2) \cap C_S(r_3)$.

%Since $\Out(Q) \cong \GSp_6(3)$ and $\Omega_1(R)$ centralizes $Z$ $\Omega_1(R)$ maps to a maximal elementary
%abelian $2$-subgroup of $\Sp_6(3)$.

 We set $\ov H = H/Q$. Then the following lemma  follows from
the structure of $\GSp_6(3)$ and the definition of the $3$-centralizers in groups of type $\M(22)$ and
${}^2\E_6(2)$.

\begin{lemma}\label{H/Q struct}
 We have $\ov R $ is normal in $ \ov H$ and, in particular, $\ov H$ is isomorphic to a subgroup of
$\Sp_2(3)\wr \Sym(3)$ preserving the symplectic form.% containing the normal subgroup isomorphic to $\Q_8\times \Q_8\times \Q_8$.

\end{lemma}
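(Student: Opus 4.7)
The plan is to locate $\ov H$ inside $\GSp_6(3)$ and then use the normal subgroup $\ov R$ to force the embedding to land in the wreath subgroup.

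The first step is to identify the ambient automorphism group. Since each $Q_i$ has exponent $3$ and $Q = Q_1Q_2Q_3$ is their central product, $Q$ has exponent $3$, the commutator pairing defines a non-degenerate alternating form on $Q/Z \cong \mathbb{F}_3^6$, and $\Out(Q) \cong \GSp_6(3)$. From $F^*(H) = Q$ one gets $C_H(Q) = Z(Q) = Z$, so $\ov H = H/Q$ injects into $\Out(Q)$; because $H = C_G(Z)$ fixes $Z$ pointwise, the image actually lies in the stabilizer of the form itself, namely $\Sp_6(3)$.

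Second, $\ov R$ is normal in $\ov H$: since $R$ is a Sylow $2$-subgroup of $O_{3,2}(H)$ and $O_{3,2}(H)/Q$ is already the $2$-group $O_2(\ov H)$, one has $\ov R = O_2(\ov H)$.

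The decisive step is to characterise the orthogonal decomposition $Q/Z = Q_1/Z \oplus Q_2/Z \oplus Q_3/Z$ intrinsically from $\ov R$. Since every involution of $\Q_8$ equals its unique central element $-1$, every involution of $\ov R \le \Q_8 \times \Q_8 \times \Q_8$ already lies in $Z(\Q_8^3) = \langle r_1, r_2, r_3 \rangle$, irrespective of whether $|\ov R| = 2^7$ or $2^9$. Among the seven non-trivial involutions of this central group, exactly $r_1, r_2, r_3$ act with a $2$-dimensional $-1$-eigenspace on $Q/Z$, those eigenspaces being $Q_1/Z, Q_2/Z, Q_3/Z$ respectively (the remaining four involutions have $4$- or $6$-dimensional $-1$-eigenspaces). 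Since $\ov H$ normalises $\ov R$, it must permute the distinguished triple $\{r_1, r_2, r_3\}$, and hence permute $\{Q_1/Z, Q_2/Z, Q_3/Z\}$. The kernel of the resulting map $\ov H \to \Sym(3)$ preserves each summand and so embeds diagonally into $\Sp(Q_1/Z) \times \Sp(Q_2/Z) \times \Sp(Q_3/Z) = \Sp_2(3)^3$, yielding $\ov H \le \Sp_2(3) \wr \Sym(3)$ as a subgroup of $\Sp_6(3)$.

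The only genuine subtlety is ensuring that this eigenspace characterisation picks out the same three involutions in both the $\M(22)$ and ${}^2\E_6(2)$ cases; this is handled uniformly by the observation that $\Q_8$ has a unique involution, so the involutions of $\ov R$ coincide with those of $Z(\Q_8^3)$ as soon as $\ov R$ contains the central $\mathbb{F}_2^3$, which it does by hypothesis. Beyond this point the argument is pure linear algebra inside $\GSp_6(3)$.
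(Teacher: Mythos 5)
Your proof is correct and takes essentially the same route as the paper, whose own proof is just the two-line assertion that the claim ``follows from the definition of $H$'' because $\ov H$ preserves the perpendicular decomposition of $Q$ into $Q_1$, $Q_2$, $Q_3$; your characterisation of $\{r_1,r_2,r_3\}$ as the involutions of $\Omega_1(\ov R)$ with $2$-dimensional $(-1)$-eigenspace on $Q/Z$ supplies exactly the detail the paper leaves implicit. (One small wording quibble: the kernel of $\ov H\to\Sym(3)$ embeds \emph{componentwise}, via the product of its restrictions to the $Q_i/Z$, into $\Sp_2(3)\times\Sp_2(3)\times\Sp_2(3)$ --- ``diagonally'' suggests something else.)
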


\begin{proof} {This follows from the definition of $H$. Note also that $\ov H$ preserves the ``perpendicular" decomposition of
$Q$ as the central product of $Q_1$, $Q_2$ and $Q_3$.}
\end{proof}

If the Sylow $3$-subgroup $S$ of $H$ equal $ Q$, then, as $Z$ is not weakly closed in $S$ by hypothesis, there
exists $g \in G$ such that $Z^g \le S=Q$ and $Z\neq Z^g$. Now $C_S(Z^g) \cong 3 \times 3^{1+4}_+$ and so
$C_Q(Z^g)'= Z$. However, $C_G(Z^g)$ is $3$-closed with Sylow $3$-subgroup $Q^g$ and derived subgroup $Z^g$.
Therefore we have

\begin{lemma}\label{S>Q} $S > Q$.
\end{lemma}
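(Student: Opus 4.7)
The plan is a proof by contradiction. I would assume $S=Q$ and exploit the hypothesis that $Z$ is not weakly closed in $S$ to produce a second $G$-conjugate of $Z$ inside $Q$; then I would show that the derived structure of its centralizer in $Q$ is incompatible with the structure forced by a conjugate copy of $H$.

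Under the assumption $S=Q$, the hypothesis $S\in\Syl_3(G)$ tells me that every Sylow $3$-subgroup of $G$ is extraspecial of order $3^7$ with derived subgroup of order $3$. The failure of weak closure then yields $g\in G$ with $Z^g\le S$ and $Z^g\ne Z$. To compute $C_Q(Z^g)$, I would view $Q/Z$ as a non-degenerate $6$-dimensional symplectic $\GF(3)$-space, using that $Q$ has exponent $3$ so that the commutator induces the form. The image of $Z^g$ in $Q/Z$ is a non-zero vector, and $C_Q(Z^g)/Z$ is its perpendicular hyperplane, of dimension $5$, whose radical is $\langle z, z^g\rangle/Z$; the quotient by this radical is a non-degenerate symplectic space of dimension $4$. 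Hence $C_Q(Z^g)\cong 3\times 3^{1+4}_+$, and in particular $C_Q(Z^g)'=Z$.

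For the contradiction I would switch viewpoints to $H^g=C_G(Z^g)$. Since $Z^g=Z(S^g)$ and $S^g\in\Syl_3(G)$, we have $S^g\in\Syl_3(H^g)$, which under the assumption $S=Q$ reads $Q^g\in\Syl_3(H^g)$. Because $Q^g=O_3(H^g)$ is extraspecial of order $3^7$ with $(Q^g)'=Z^g$, every Sylow $3$-subgroup of $H^g$ is extraspecial of class $2$ with derived subgroup $Z^g$. Now $C_Q(Z^g)\le H^g$ sits inside some such Sylow subgroup $P$, so $C_Q(Z^g)'\le P'=Z^g$. Combining this with $C_Q(Z^g)'=Z$ forces $Z=Z^g$, contradicting the choice of $g$, so $S>Q$.

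There is no serious obstacle: the argument reduces to a short calculation inside an extraspecial group together with one Sylow observation. The one point to be careful about is precisely that last step: one must verify that the assumption $S=Q$ really does promote $Q^g$ to a full Sylow $3$-subgroup of $H^g$, so that the derived subgroup of every Sylow $3$-subgroup of $H^g$ is $Z^g$ and the two incompatible computations of $C_Q(Z^g)'$, one from inside $H$ and one from inside $H^g$, can be put side by side.
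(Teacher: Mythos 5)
Your proof is correct and follows essentially the same route as the paper: assume $S=Q$, use the failure of weak closure to find $Z^g\le Q$ with $Z^g\ne Z$, compute $C_Q(Z^g)\cong 3\times 3^{1+4}_+$ so that $C_Q(Z^g)'=Z$, and contrast this with the fact that $H^g=C_G(Z^g)$ is then $3$-closed with (unique) Sylow $3$-subgroup $Q^g$ whose derived subgroup is $Z^g$. The symplectic-form computation of $C_Q(Z^g)$ and the final Sylow observation are exactly the paper's two steps, just spelled out in more detail.
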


We draw further information about the structure of $\ov S$ from Lemma~\ref{H/Q struct}.

{
\begin{lemma}\label{added} The following hold:
\begin{enumerate}
\item $\ov S$ is isomorphic to a subgroup of $3\wr 3$ and $|S:BQ|\le 3$;
\item if $x \in S\setminus BQ$ has order $3$, then $|C_{Q/Z}(x)|= 9$, $|[Q/Z,x]| = 3^4$  and the preimage of
$C_{Q/Z}(x)$ is equal to the centre of $[Q,x]$;
\item if $x\in  BQ$, then $|C_{Q/Z}(x)| \ge 3^3$;
\item if $\ov S$ contains $\ov E$ of order $9$ with $\ov S= \ov E \ov B$, then $|C_{Q/Z}(\ov E)| = 3$; and
\item if $\ov F \le \ov S$ is elementary abelian of order $27$, then $\ov F = \ov B$.
\end{enumerate}
\end{lemma}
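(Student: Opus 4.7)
My plan is to exploit the embedding $\ov H \hookrightarrow \Sp_2(3)\wr\Sym(3)$ of Lemma~\ref{H/Q struct} together with the decomposition $Q/Z = Q_1/Z \oplus Q_2/Z \oplus Q_3/Z$, on which the base group $\Sp_2(3)^3$ acts componentwise and $\Sym(3)$ permutes the three summands. Writing $V$ for the common $2$-dimensional summand, so $Q/Z\cong V^3$, all of the assertions will follow from wreath-product calculations, with one exception that I highlight below. For (i), since $\Sp_2(3)=\SL_2(3)$ has Sylow $3$-subgroup of order $3$, a Sylow $3$-subgroup of $\Sp_2(3)\wr\Sym(3)$ is isomorphic to $3\wr 3$, and $\ov S$ embeds in this; the bound $|S:BQ|\le 3$ is then immediate as $\ov S/\ov B$ embeds into the top quotient $\Sym(3)$. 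Part (iii) is also quick: for $x\in BQ$, the element $x$ centralises each $\langle r_i\rangle$ and hence normalises each $Q_i$, inducing on each $Q_i/Z\cong V$ an element of $\SL_2(3)$ of order $1$ or $3$ whose fixed space has dimension at least $1$; summing over $i$ gives $|C_{Q/Z}(x)|\ge 3^3$.

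For (ii), $x\notin BQ$ forces $\ov x$ to induce a $3$-cycle on $\{r_1,r_2,r_3\}$ and hence to cycle $Q_1,Q_2,Q_3$; reading off the action on $V^3$ gives $|C_{Q/Z}(x)|=9$ (the diagonal $\{(v,v,v)\}$) and $|[Q/Z,x]|=3^4$ (the hyperplane $\{(v_1,v_2,v_3):v_1+v_2+v_3=0\}$). The substantive point---and the main obstacle of the proof---is the identification of the preimage of $C_{Q/Z}(x)$ in $Q$ with $Z([Q,x])$. I would first establish $Z\le [Q,x]$ by exhibiting non-trivial elements of $[[Q,x],[Q,x]]\subseteq Z$: for well-chosen $q_1\in Q_1$ and $q_2\in Q_2$ with $q_1^x\in Q_2$ not commuting with $q_2$, the commutator $[[q_1,x],[q_2,x]]$ reduces via the pairwise commutation of the $Q_i$ to $[q_1^x,q_2]\in Z\setminus\{1\}$. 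Hence $|[Q,x]|=3^5$. A direct pairing computation, using non-degeneracy of the form on each summand $V$, then shows that the radical of the induced symplectic form on $[Q,x]/Z$ is exactly the diagonal; pulling back to $Q$ yields the claimed identification and the order $27$.

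For (iv), I would split according to whether $\ov E$ is elementary abelian or cyclic. In the elementary abelian case, write $\ov E=\langle x,y\rangle$ with $x$ as in (ii) and $y\in \ov E\cap\ov B$; the commutation $[x,y]=1$ forces $y$ to act as a single $B\in\SL_2(3)$ on each summand $Q_i/Z$, and hence on the diagonal $C_{Q/Z}(x)$ as $B$ itself. Since the action of $\ov H$ on $Q/Z$ is faithful (as $\ov H\hookrightarrow\GSp_6(3)$), $y\ne 1$ implies $B\ne 1$, so $|C_V(B)|=3$. In the cyclic case $\ov E=\langle x\rangle$ of order $9$, I would write $x=(A_1,A_2,A_3)\sigma$ in the wreath product; the condition $x^3\ne 1$ becomes $A_1A_2A_3\ne 1$, and a short calculation gives $|C_{V^3}(x)|=|C_V(A_1A_2A_3)|=3$.

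Finally, for (v), the plan is to show that the only elementary abelian subgroup of $3\wr 3$ of order $27$ is the base group $B_0=(\Z/3)^3$: if $\ov F$ has order $27$ and is not contained in $B_0$, then $\ov F\cap B_0$ has order $9$ and is centralised by a non-trivial coset representative of the top quotient $\Z/3$, yet the centraliser in $B_0$ of such a representative is the diagonal, of order only $3$. Hence $\ov F=B_0$ sits inside the base group $\Sp_2(3)^3$ of the full wreath product, whence $\ov F\le\ov B$; combined with the bound $|\ov B|\le 27$ (as $\ov B$ is a $3$-subgroup of $\Sp_2(3)^3$), this delivers $\ov F=\ov B$.
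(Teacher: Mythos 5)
Your proposal is correct, and for parts (i), (iii), (iv) and (v) it runs along essentially the same lines as the paper: everything is read off from the embedding of $\ov S$ into a Sylow $3$-subgroup $3\wr 3$ of $\Sp_2(3)\wr \Sym(3)$ acting on $Q/Z = Q_1/Z\oplus Q_2/Z\oplus Q_3/Z$, with $\ov B$ being the intersection of $\ov S$ with the base group. The one place where you take a genuinely different route is the identification, in (ii), of the preimage $J$ of $C_{Q/Z}(x)$ with $Z([Q,x])$. The paper gets this almost for free from the Three Subgroup Lemma: $[J,x,Q]=1$ and $[J,Q,x]=1$ (since $[J,x]\le Z$ and $[J,Q]\le Q'=Z$), whence $[Q,x,J]=1$, so $J\le Z([Q,x])$, with equality then deduced from $Q$ being extraspecial. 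You instead first force $Z\le [Q,x]$ by exhibiting the non-trivial commutator $[[q_1,x],[q_2,x]]=[q_1^x,q_2^{-1}]$ between the displaced components, and then compute the radical of the symplectic form restricted to $[Q,x]/Z$, finding it to be exactly $C_{Q/Z}(x)$. Both arguments are sound; yours is longer but more self-contained, makes the containment $Z\le[Q,x]$ explicit (the paper leaves it implicit), and the radical computation is a cleanly reusable fact about subgroups of extraspecial groups containing the centre. One presentational caveat for a final write-up: your explicit descriptions of $C_{Q/Z}(x)$ as the diagonal and of $[Q/Z,x]$ as the sum-zero hyperplane are literally valid only after conjugating $x=(A_1,A_2,A_3)\sigma$ to the pure permutation $\sigma$ by a base-group element; this normalisation is legitimate precisely because $x$ has order $3$, which gives $A_1A_2A_3=1$, and the conjugating element lies in $\Sp_2(3)^3$ and so preserves the form, but that sentence should appear.
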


\begin{proof} Lemma~\ref{H/Q struct} (i) implies  that $\ov S$ is isomorphic to a subgroup of  the wreath product $3\wr
3$  and, as by design,  $\ov B$ is the intersection of $\ov S$ with  the base group of this group,  (i) holds.

Assume that $x \in S\setminus BQ$. Since  $x\not\in BQ$, $x$ permutes the set $\{Q_1,Q_2,Q_3\}$ transitively  and
therefore $Q/Z$ is a sum of two regular representations of $\langle x\rangle$. It follows that  $[Q/Z,x]$ has
order $81$, $|C_{Q/Z}(x)|$ has order $9$ and $C_{Q/Z}(x)= [Q/Z,x,x]$. Let $J$ be the preimage of $C_{Q/Z}(x)$.
Then $[J,x,Q]=1$ and $[J,Q,x]=1$. Hence the Three Subgroup Lemma implies that $J\le Z([Q,x])$ and as $Q$ is
extraspecial, equality follows.

Part(iii) follows from the fact that $BQ$ normalizes each $Q_i$, $1\le i \le 3$.

For part (iv), we have $\ov E$ contains an element which acts nontrivially on each of $Q_i$, $i = 1,2,3$, and a
further element which permutes the $Q_i$ transitively. So the result follows.

Finally (v) follows from (i) as $3\wr 3$ contains a unique elementary abelian subgroup of order $27$.

\end{proof}
}

The next lemma shows that $Z$ is not weakly closed in $Q$. As we will see this is not an immediate observation.

\begin{lemma}\label{ZnotweakQ} $Z$ is not weakly closed in $Q$ with respect to $G$.
\end{lemma}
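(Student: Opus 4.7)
The approach is by contradiction. Suppose $Z$ is weakly closed in $Q$ with respect to $G$. Since $Z$ is not weakly closed in $S$ by hypothesis, there exists $g \in G$ with $W := Z^g \le S$ and $W \ne Z$; then our supposition forces $W \not\le Q$, so a generator $w$ of $W$ lies in $S \setminus Q$.

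The first step is to establish a symmetric configuration for $H$ and $H^g$. Since $Z$ is central in $H$ and $w \in H$, we have $Z \le C_G(w) = H^g$. After replacing $g$ by $gk$ for a suitable $k \in H^g$, we may arrange $C_S(W) \le S^g$, and in particular $Z \le S^g$. If $Z \le Q^g$, then $Z^{g^{-1}} \le Q$ and our weak-closure assumption forces $Z^{g^{-1}} = Z$, i.e.\ $g \in N_G(Z) = H$, whence $W = Z^g = Z$, a contradiction. Hence $Z \not\le Q^g$, and $w \in S \setminus Q$ and $z \in S^g \setminus Q^g$ play entirely symmetric roles.

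Applying Lemma~\ref{added} I split into the two cases $\ov w \in \ov B$ and $\ov w \notin \ov B$. If $\ov w \notin \ov B$, then Lemma~\ref{added}(ii) gives $|C_Q(w)| = 27$ and $C_Q(w) = Z([Q,w])$; if $\ov w \in \ov B$, Lemma~\ref{added}(iii) gives $|C_Q(w)| \ge 3^4$. The analogous statements hold for the action of $z$ on $Q^g$. Note that $C_Q(w) = Q \cap H^g$ and $C_{Q^g}(Z) = Q^g \cap H$, and $Q \cap Q^g = C_Q(w) \cap C_{Q^g}(Z)$ contains neither $Z$ (as $Z \not\le Q^g$) nor $W$ (as $W \not\le Q$). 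Since $C_Q(w) \le H \cap H^g$ is a $3$-group, after a further adjustment of $g$ inside $H^g$ we may place $C_Q(w) \le S^g$; by symmetry $C_{Q^g}(Z) \le S$.

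The contradiction is extracted by comparing the structure of $C_Q(w)$ as a $3$-subgroup of $S^g$ with the restrictions imposed by Lemma~\ref{added} applied in $H^g$. In the case $\ov w \notin \ov B$, the subgroup $C_Q(w) \times \langle w\rangle$ is abelian of order $3^4$ lying inside $S^g$, and examining its image in $\ov{S^g}$ using Lemma~\ref{added}(iv)--(v) (which pins down the unique elementary abelian subgroup of order $27$ in $\ov{S^g}$ and the possible non-$\ov B$ positions) forces a nontrivial element of $C_Q(w) \cap Q^g$ of order $3$ to be $H^g$-conjugate to $W$; this gives a $G$-conjugate of $Z$ inside $Q \cap Q^g \subseteq Q$ distinct from $Z$, contradicting weak closure. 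In the case $\ov w \in \ov B$, the larger centralizer $C_Q(w)$ and the parallel analysis of $C_{Q^g}(z) \le S$ produce the same conclusion by a comparable but more delicate argument. The main obstacle is the $\ov w \in \ov B$ case, where one must carefully exploit the full detailed structure of $\ov S$ inside $3\wr 3$ from Lemma~\ref{added}(i) together with the extraspecial commutation in $Q$ and $Q^g$ to isolate the offending conjugate of $Z$ inside $Q$.
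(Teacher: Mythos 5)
Your setup is sound and matches the opening of the paper's argument: assume weak closure in $Q$, take $Y=Z^g\le S$ with $Y\not\le Q$, observe $Z\not\le Q^g$, and split according to whether $\ov{Y}\le\ov{B}$ or not (the paper's claim that $Y\le BQ$ is exactly the elimination of your first case). But from that point on the proposal asserts the contradiction rather than deriving it, and in both cases the substance is missing.

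In the case $\ov w\notin\ov B$, your claimed mechanism --- that a nontrivial element of $C_Q(w)\cap Q^g$ must be $G$-conjugate to $Z$, giving a second conjugate of $Z$ inside $Q$ --- does not work as stated: under exactly these hypotheses one can show (as the paper does) that every element of the coset $[Q,Y,Y]z^g$ is conjugate to $z$, which forces $Q\cap Q^g=1$, so there is no such element to find. The actual contradiction is structural: since $Q\cap Q^g=1$, the image $\ov{C_{Q^g}(Z)}$ in $H/Q$ has order at least $3^3$, yet it is centralized by $\ov Y$, and an element of $\ov S\le 3\wr 3$ lying outside the base group cannot centralize a subgroup that large (Lemma~\ref{added}(ii),(v)). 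Your invocation of Lemma~\ref{added}(iv)--(v) gestures at the right tools but the step ``forces a nontrivial element \dots to be conjugate to $W$'' is both unjustified and, given $Q\cap Q^g=1$, vacuous.

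The case $\ov w\in\ov B$ is the heart of the lemma and is where essentially all the work lies; dismissing it as ``a comparable but more delicate argument'' leaves the proof unwritten. The paper needs a chain of subsidiary claims to pin down $C_Q(Y)$ as elementary abelian of order $3^4$ (ruling out $3^2\times 3^{1+2}_+$ and $3\times 3^{1+4}_+$ configurations, and showing not all subgroups of order $3$ in $YZ$ are conjugate to $Z$), then Thompson's $A\times B$ Lemma to embed $\Omega_1(R)$ in $\GL_3(3)$ and conclude $|Q\cap Q^g|=3^3$, and finally a counting argument in the elementary abelian group $U=Z(Q\cap Q^g)Y$ of order $3^5$: the number of $P$-conjugates of $Z$ in $U$ must be $28$, $55$ or $82$, and since $7$ and $41$ do not divide $|\GL_5(3)|$ it is $55$, after which a pigeonhole count of conjugates in a translate $(Z(Q\cap Q^g))^u$ yields the contradiction. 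None of this is foreshadowed by your sketch, and it is not a routine variation of the first case. As it stands the proposal is an outline of a strategy, not a proof.
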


\begin{proof} Assume that $Z$ is weakly closed in $Q$.
By hypothesis we have that $Z$ is not weakly closed in $S$ with respect to $G$.
 Hence {there exists $g\in G$ such that}  $Y=Z^g \le S$ and $Y\not\le Q$.\\

\begin{claim}\label{weak1} We have $Y \le BQ$.
\end{claim}
\\

Suppose that $Y  \not \le BQ$. Then, by Lemma~\ref{H/Q struct}, $\ov Y $ permutes the set $\{Q_1,Q_2,Q_3\}$
transitively and $\ov Y$ centralizes $\ov f = \ov {r_1r_2r_3}$ which has order $2$. Furthermore by
Lemma~\ref{added} (ii), $[Q/Z,Y]/C_{Q/Z}(Y)$ and  $C_{Q/Z}(Y)$ have order $9$.  In particular, every element of
order $3$ in $Qz^g$ is conjugate to an element of $Zz^g$. Therefore, as $Z$ normalizes $R$, we may assume that
$Y$ normalizes $R$ and so we  can further assume that $f=r_1r_2r_3 \in C_R(Y)$.

{Let $J$ be the preimage of $C_{Q/Z}(Y)$ and set $E = [J,f]$. Then, as $J$ is abelian by Lemma~\ref{added} (ii),
$E$ has order $9$ and is centralized by $Y$. Hence $J= C_Q(Y)=ZE$. Furthermore, Lemma~\ref{added} (ii) shows that
$[Q,Y] = C_Q(E)$. Since $[Y,f]=1$ and $[C_Q(E),f]=[Q,Y,f]=[Q,Y]$, the Three Subgroup Lemma (to get the second
equality) implies}
$$[Q,Y,Y] = [C_Q(E),f, Y]=[C_Q(E),Y,f]=[Q,Y,Y,f]=E.$$
In particular, if $y = z^g$, then every element of the coset $Ey$ is conjugate to $z$. Hence $Ey \cap Q^g
\subseteq  \{y,y^{-1}\}$ as $y^G \cap Q^g \subseteq \{y, y^{-1}\}$. Thus $E \cap Q^g= 1$. As $f$ inverts $Q \cap
Q^g$ we have that $Q \cap Q^g \leq E$ and so $Q \cap Q^g = 1$. Since $ZE\le C_G(Y)$, we now have $ZEQ^g/Q^g$ is
elementary abelian of order $3^3$. It follows from Lemma~\ref{added} (v) that $Z$ centralizes
$\Omega_1(R^g)Q^g/Q^g$. Hence $|C_{Q^g/Y}(Z)| \ge 3^3$ {by Lemma~\ref{added}(iii)}. Now we have that
$|\ov{C_{Q^g}(Z)}| \geq 3^3$. Since $\ov Y$ centralizes $\ov{C_{Q^g}(Z)}$ this is impossible. Hence
 \ref{weak1} holds.\qedc

%We have that $[Y/Q/C_Q(E)] = 1$ and $[E,Y] = 1$, so $C_Q(E) \leq [Y,Q]$. Then $[Y,C_Q(E)]\le EZ$. Then we have
%$[C_Q(E),Y,f]\le E$ and so by the Three Subgroup Lemma $[C_Q(E),f,Y] = [C_Q(E),Y] \leq E$. As $|[C_Q(E),Y]| \geq
%3^2$, we get $[C_Q(E),Y] = E$.

Reiterating the statement of  \ref{weak1}, we have $z^G \cap H \subseteq BRQ$.

\begin{claim}\label{cent} We have that $C_Q(Y)$ does not contain a subgroup $F$ isomorphic to $3^2 \times 3^{1+2}_+$.
\end{claim}
\medskip

Suppose false and assume that $F$ is such a subgroup. As
 $Z \not \le Q^g$, we have that $FQ^g/Q^g$ is isomorphic to $3^{1+2}_+$. Since $F$ centralizes $F \cap Q^g$ which
 has order $9$, we have a contradiction to the fact that $|C_{Q^g/Y}(F)|=3$, see Lemma \ref{added} (iv).
 \qedc

\begin{claim}\label{weak2} For $\{i,j\} \subset\{1,2,3\}$ with $i \neq j$,   $[Y,Q_iQ_j] \not \le Z$.
\end{claim}

\medskip

 Assume that $[Y,Q_iQ_j]  \le Z$. Then $C_{Q/Z}(Y)$ has order $3^5$ and, letting $E_1$ be its preimage, we have
 $E_1 \cong 3 \times 3^{1+4}_+$. If $E_1$ is centralized by $Y$, then $E_1Q^g/Q^g$ must be elementary abelian and we have
 $Z \le Q^g$ which is a contradiction. So suppose that $[Y,E_1]=Z$. Then $E_2=C_{E_1}(Y)\cong 3^2 \times 3^{1+2}_+$. But this contradicts \ref{cent}.
\qedc

\begin{claim}\label{weak3} If $E \le C_Q(Y)$ with $|E|= 27$,  then the non-trivial cyclic subgroups contained in  $EY$
but not in $E$ are not all conjugate to $Z$.
\end{claim}\\

\medskip
Suppose that every non-trivial cyclic subgroup $EY$ not contained in $E$ is conjugate to $Z$. Then
$E \cap Q^g = 1$ for otherwise $(E \cap Q^g)Y\le Q^g$ contains a conjugate of $Z$. Thus \ref{weak1} implies that
$EY \le B^{gh} Q^g$ for some appropriate $h \in H^g$. But then there is a subgroup $U \le EY$, $U \not= Y$ such that $U$
is $G$-conjugate to $Z$ and such that $U$ centralizes $(Q_1Q_2)^{gh}/Y$. This violates \ref{weak2}.
\qedc

\begin{claim}\label{weak4} There are non-trivial cyclic subgroups of $YZ$ which are not conjugate to $Z$.
 In particular, $C_{Q}(Y)/Z = C_{Q/Z}(Y)$.
\end{claim}

\medskip

Suppose that statement is false. Let the subgroups of order $3$ in $YZ$ be $Y_1$, $Y_2$, $ Y$ and $Z$. Then by assumption all these groups are $G$-conjugate to $Z$. Let $E= [Q,Y]Z$. Then the cyclic subgroups of $EY$ not contained in $E$ are $Y_1^Q\cup Y_2^Q \cup Y^Q$. Since $|E| \ge 27$ by \ref{weak1} and \ref{weak2}
we have a contradiction to \ref{weak3}. Let $C$ be the preimage of $C_{Q/Z}(Y)$. Then, as $Y$ and $Z$ are the only
$G$-conjugates of $Z$ in $YZ$, $C$ centralizes $Y$ and have $C= C_Q(Y)$.\qedc

\begin{claim}\label{weak5} $C_Q(Y)$ is elementary abelian of order $81$. In particular, for $i=1,2,3$, $[Q_i,Y] \not\le Z$.
\end{claim}
\medskip

Otherwise $Y$ centralizes $Q_1/Z$ say and then $C_{Q}(Y) \cong 3^2\times 3^{1+2}_+$ by \ref{weak4}. Now
\ref{cent} gives a contradiction. \qedc

{ Since $[Q,Y] = C_Q(Y)$, every subgroup of $[Q,Y]Y$ order $9$ containing $Z$ is $Q$-conjugate to $YZ$.  As
$[Q,Y]Y = C_{QY}(C_Q(Y))$ is normalized by $\Omega_1(R)$,  we may suppose that $[\Omega_1(R),ZY] = 1$.
 From \ref{weak5} we have $|C_{Q^g}(Z)/Y|=3^3$  and so Thompson's $A\times B$ Lemma \cite[Lemma 11.7]{GLS2} implies that $\Omega_1(R)$ is isomorphic to a subgroup of $\GL_3(3)$. Since
 all elementary abelian subgroups of order $2^3$ in $\GL_3(3)$ contain the centre of $\GL_3(3)$, there exists $x \in \Omega_1(R)$ such that $C_{Q^g}(Z)/Y$ is inverted by $x$.  Hence $C_{Q^g}(Z)= Y[C_{Q^g}(Z),x]$. Because  $\ov{C_{Q^g}(Z)}$ normalizes, and is normalized by, $\Omega_1(R)$, we have $$Q \ge [C_{Q^g}(Z),\Omega_1(R)]=[C_{Q^g}(Z),x].$$
Therefore $C_{Q^g}(Z)Q= YQ$ and $|C_{Q^g}(Z)\cap Q|=  |Q\cap Q^g|=3^3$.}

% Set $D_1 = [D,\Omega_1(R)]$ And Note That $D_1$ Has Order $3^3$.
% Suppose That $D_1\Not\Le Q^G$. We Have That $D_1=[D_1,\Omega_1(R)]$ Is Normalized By
% $\Omega_1(R) \Le C_G(Y)$ And $Zq^G/Q^G$ Is Centralized By $\Omega_1(R)$. Applying \Ref{Weak5} To $Zq^G/Q^G$, We
% Have $C_Q(Z)$ Is Elementary Abelian Of Order $81$.
%
%
% Hence We Have That $|Zd_1 : Zd_1 \Cap Q^G| \Geq 9$. Suppose That $|Zd_1 : Zd_1 \Cap Q| = 27$.
% Then $Zd_1Q^G = (Qb)^G$. But There Is No Elementary Abelian Group Of Order 8 In $H/Q$, Whose Centralizer In $B/Q$ Is Of Order 3 And Commutator Is Of Order 9. So
% We See That $|D_1Q^G/Q^G|=3$. Since $Zd_1Y$ Contains At Least $28$ Conjugates Of $Z$ And $Zd_1Y \Cap Q^G$ Contains Just One, \Ref{Weak5} Implies That All These Conjugates Are In $(Zd_1Y \Cap Q^G)Z$ And This Contradicts \Ref{Weak3} Applied With The Roles Of $Y$ And $Z$ Reversed.
%

 %
% Note that $zdy$, $z^2dy$  or $dy$ is conjugate to $z$ for all non-trivial elements $d \in D_1$.  If $\Omega_1(R)Q^g/Q^g \neq \Omega_1(R^g)Q^g/Q^g$, then as $\Omega_1(R)$ centralizes $Y$, we get that $D_1 = [D_1,\Omega_1(R)]$ projects as a group of order $3$ which is inverted by $\Omega_1(R)$. In particular, we have that $D_1Q^g/Q^g$ is not a diagonal element in $B^g Q^g/Q^g$. Then, for $d \in D_1$ $dy$ is not conjugate to $z$. Therefore, $zdQ^g$ and $zd^{-1}Q^g$ are diagonal in  $B^gQ^g/Q^g$ and this is impossible because of  the structure of $B^gQ^g/Q^g$.
%
%If $\Omega_1(R)Q^g/Q^g = \Omega_1(R_1^g)Q^g/Q^g$, then $D_1= [D_1,\Omega_1(R)] \le Q^g$.

{Set $D= Q\cap Q^g$ and  $U= ZDY$.  Then $U$ is elementary abelian of order $3^5$.} Let $P= \langle Q,
Q^g\rangle$ and note that $P$ normalizes $U$. Since $Z$ is the only $G$-conjugate of $Z$ in $DZ$ and $P$ does not
normalize $Z$, we see that there are $P$-conjugates of $Z$ which are not contained in $DZ$. Now conjugating by
$Q$, we see that there are $28$, $55$ or $82$ $P$-conjugates of $Z$ in $U$. Since $7$ and $41$ do not divide
$|\GL_5(3)|$, we have that there are exactly $55$  $P$-conjugates of $Z$ in $U$. Similarly, there are $55$
$P$-conjugates of $Y$ and so we infer that $Z$ and $Y$ are $P$-conjugate. Since $DZ$ and $DY$ each only have one
$G$-conjugate of $Z$, we have that $U\setminus( DZ\cup DY)$ contains at most two elements which are not conjugate
into $Z$.  Since $Q$ does not normalize $Y$ and does normalize $DZ$, there is a $u \in P$ with $(ZD)^{u} \not
\subseteq DZ \cup DY$ .
 Set $D_1 = D \cap (DZ)^{u}$.
Then $|D_1| \geq 9$. Choose $x \in (DZ)^{u} \setminus (ZD \cup DY)$. Then in { $\langle D_1, x \rangle$ there are
nine subgroups} of order three not in $ZD \cup DY$, in particular at least eight of them are conjugate to $Z$,
which is not possible as $Z^{u}$ is the only conjugate of $Z$ in $(ZD)^{u}$. This contradiction finally proves
that $Z$ is not weakly closed in $Q$ with respect to $G$.

%
%have
%
%
%
%Then $N_G(U) \ge \langle Q,Q^g\rangle$. Assume that $d \in D_1^\#$. Then $C_Q(d) \cong 3 \times 3^{1+4}_+$ and $C_S(d)' \cap Z(C_S(D)) \le \langle z,d \rangle$. Since $Z$ is the unique conjugate of $Z$ in $\langle Z, d\rangle$, we have that $C_S(d) \in \Syl_3(C_G(d))$. Therefore  $H$ controls fusion of elements of $D_1$ in $Q$. Since the elements of $Q_1/Z$, $C_Q(R_2)/Z$ $C_Q(S)/Z$ are not conjugate in $H/Z$, we have that $D$ contains three conjugacy classes of subgroups of order $3$.  Since $Z$ is not conjugate into $D$, we have that $U_0=\langle Z,D\rangle$ contains at least $4$ $G$-classes of cyclic subgroups. On the other hand $Q$ has exactly three orbits on the cyclic subgroups of $U$ outside of $U_0$ as one of these classes contains $Y$, we see that at least one $N_G(U)$-class of cyclic subgroups of $D$ is contained in $U_0$. Therefore $d^{N_G(U)} \le U_0$, since $z$ is conjugate to $dz$, we have that $z^{N_G(U)} \subset U_0$, but then $Z$ is normalized by $N_G(U)$ and this contradicts $Q^g\le N_G(U)$.
%
%We conclude that $Z$ is not weakly closed in $Q$.
\end{proof}

Because of Lemma~\ref{ZnotweakQ} we may and do assume that for some $g \in G$ we have $Y = Z^g \le Q$ with $Y \neq Z$. Set $V = ZY$ and assume that $Y$ is chosen so that $C_{Q^g}(Z) \le S$.
Set $P= \langle Q,Q^g\rangle$ and $W= C_Q(Y)C_{Q^g}(Z)$.
\begin{lemma}\label{Pfacts1} The following hold:
\begin{enumerate}
\item $V \le Q \cap Q^g$;
\item  $Q \cap Q^g$ is normal in $P$ and is elementary abelian;
\item $[Q\cap Q^g,P]= V$;
\item $P/C_P(V) \cong \SL_2(3)$ and there are exactly $4$ conjugates of $Z$ in $V$; and
\item $|N_G(Z):H|=2$.
\end{enumerate}
\end{lemma}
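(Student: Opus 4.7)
The plan is to derive the five assertions from the extraspecial commutator calculus in $Q$ and $Q^g$, combined with the action of $P$ on the two-dimensional plane $V$ and the standard transvection-generation theorem for $\SL_2(3)$.

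For (i), the containment $Y \le Q$ is given and $Y = Z(Q^g) \le Q^g$ is automatic. The subtle point is $Z \le Q^g$. I would argue that by refining the choice of $g$ within the coset $Hg$, one may assume $g^2 \in H$; in that case $Z^{g^{-1}} = Z^g = Y \le Q$, which is equivalent to $Z \le Q^g$. The auxiliary hypothesis $C_{Q^g}(Z) \le S$ fixes a common Sylow $3$-subgroup $S$ containing both $Z$ and $C_{Q^g}(Z)$, and together with the weak closure hypothesis applied symmetrically to $Y$ in $Q^g$ (the weak closure property is a conjugacy invariant), it makes the required adjustment of $g$ possible. Granting this, $V = ZY \le Q \cap Q^g$.

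For (ii), $Q \cap Q^g$ is normalized by each of $Q$ and $Q^g$, hence normal in $P = \langle Q,Q^g \rangle$. For the abelian property, any commutator of elements in $Q \cap Q^g$ lies in $[Q,Q] \cap [Q^g,Q^g] = Z \cap Y$, which is trivial since $Z \ne Y$ are distinct subgroups of order $3$. As $Q$ has exponent $3$, so does $Q \cap Q^g$, and this gives elementary abelian. For (iii), on the one hand $[Q \cap Q^g, Q] \le [Q,Q] = Z$ and $[Q \cap Q^g, Q^g] \le Y$, so $[Q \cap Q^g, P] \le ZY = V$. On the other hand, $Y \le Q \cap Q^g$ is non-central in $Q$, so extraspeciality yields $[Y, Q] = Z$, whence $Z \le [Q \cap Q^g, P]$; symmetrically $Y \le [Q \cap Q^g, P]$, and equality with $V$ follows.

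For (iv), $P$ acts $\GF(3)$-linearly on the two-dimensional plane $V$ with kernel $C_P(V)$. Because $Z = Z(Q)$ is centralized by $Q$ while $[Q,Y] = Z$, the image of $Q$ in $\GL(V)$ is precisely the transvection subgroup of order $3$ with axis $Z$; by symmetry $Q^g$ maps onto the transvection subgroup with axis $Y$. Two transvection subgroups of $\GL_2(3)$ with distinct axes lie in $\SL_2(3)$ and together generate it, so $P/C_P(V) \cong \SL_2(3)$. The induced quotient $\PSL_2(3) \cong \Alt(4)$ acts transitively on the four one-dimensional subspaces of $V$; since $Z$ is among them, all four are $P$-conjugate, hence $G$-conjugate to $Z$. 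For (v), lift the central element $-I \in \SL_2(3) = P/C_P(V)$ to some $t \in P$. Then $t$ inverts every subgroup of order $3$ in $V$; in particular $t$ inverts $Z$. Thus $t \in N_G(Z) \setminus C_G(Z) = N_G(Z) \setminus H$, and since $|N_G(Z)/C_G(Z)|$ divides $|\Aut(Z)| = 2$, we conclude $|N_G(Z):H| = 2$.

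The main obstacle is (i): establishing $Z \le Q^g$ from the chosen $g$. Once this is in hand, parts (ii)--(v) fall out quickly from the extraspecial commutator calculus and the transvection generation of $\SL_2(3)$ on $V$.
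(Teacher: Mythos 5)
Parts (ii)--(v) of your argument are correct and essentially the paper's own: normality and elementary abelianness of $Q\cap Q^g$ via $\Phi(Q\cap Q^g)\le Z\cap Y=1$, the commutator computation giving $[Q\cap Q^g,P]=V$, the two transvection subgroups with distinct axes generating $\SL_2(3)$ on $V$, and the lift of $-I$ inverting $Z$ for (v). The problem is part (i), on which everything else depends (without $Z\le Q^g$ you cannot even conclude that $Q^g$ normalizes $V$, so (iii) and (iv) would not get off the ground). Your proposed reduction --- adjust $g$ within its coset so that $g^2\in H$, whence $Z=Y^g\le Q^g$ --- is not justified. Producing $g'$ with $Z^{g'}=Y$ and $(g')^2\in N_G(Z)$ is exactly the same as producing an element of $G$ that interchanges $Z$ and $Y$, which is essentially the content of parts (iv) and (v); assuming it at the outset is circular. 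The appeal to ``weak closure is a conjugacy invariant'' only yields that $Y$ is not weakly closed in $Q^g$, i.e.\ that \emph{some} $G$-conjugate of $Z$ other than $Y$ lies in $Q^g$ --- it does not produce $Z$ itself --- and the normalization $C_{Q^g}(Z)\le S$ does not repair this.

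The paper closes this gap with a structural argument that is absent from your proposal. Since $Y\le Q$ is noncentral, $C_Q(Y)\cong 3\times 3^{1+4}_+$, and $C_Q(Y)\le C_G(Y)=H^g$. By Lemma~\ref{added}(i) a Sylow $3$-subgroup of $H^g/Q^g$ is isomorphic to a subgroup of $3\wr 3$ and so has order at most $3^4$; hence the extraspecial direct factor $E\cong 3^{1+4}_+$ of $C_Q(Y)$, which has order $3^5$, must meet $Q^g$ nontrivially. Since $Q^g\trianglelefteq H^g$, the intersection $E\cap Q^g$ is a nontrivial normal subgroup of the extraspecial group $E$ and therefore contains $Z(E)=C_Q(Y)'=Z$. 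This gives $Z\le Q^g$ and hence (i). Some argument of this kind is needed; the symmetric choice of $g$ cannot be arranged a priori.
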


\begin{proof} We have $C_Q(Y) \cong 3\times 3^{1+4}_+$ and so, as $C_Q(Y) \le H^g$, the structure of $\ov S$ {given in  Lemma~\ref{added} (i)} implies that $Z=C_{Q}(Y)'\le Q^g$. Hence (i) holds.
 {Since $[Q\cap Q^g, Q]= Z \le V$ and  $[Q\cap Q^g,Q^g] = Y \le V$, the first part of (ii) and (iii) hold.} Of course $\Phi(Q\cap Q^g)\le Z \cap Y=1$. Hence the second part of (ii) holds as well.
Since $|V|= 3^2$,  $[V,Q]= Z$ and $[V, Q^g]= Y$, we get (iv). Finally there is an element in $P$ which inverts $V$, and so we have $|N_G(Z)/H|=2$.
\end{proof}

\begin{lemma}\label{Pfacts2}
\begin{enumerate}
\item $W$ is a normal subgroup of $P$, $P/W \cong \SL_2(3)$ and $W= C_P(V)$;
\item {$Q\cap Q^g$ is a maximal abelian subgroup of $Q$,} and $W/(Q\cap Q^g)$ is elementary abelian of
order $3^4$ which,  as a $P/C_P(V)$-module, is a direct sum of two natural $\SL_2(3)$-modules;
\item  {$WQ\not \le BQ$},   $\ov W$ has order $9$ and does not act quadratically on $Q/Z$;
\item $V$ is the second centre of $S$;
\item $S = WQ$ or $\ov{S}$ is extraspecial. {Furthermore, if $|R| = 2^7$, then  $S = WQ$; and}
\item $\ov{W}$ is inverted by an involution $t\in N_P(Z)\cap N_G(S)$  which inverts $Z$.
\end{enumerate}
\end{lemma}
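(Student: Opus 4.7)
The plan is to establish parts~(i)--(vi) in sequence, with the key numerical step being the determination of $|Q\cap Q^g|$. First I would check that $W=C_Q(Y)C_{Q^g}(Z)$ is actually a subgroup: since $C_Q(Y)\le C_G(Y)=H^g$ normalizes $O_3(H^g)=Q^g$ and centralizes $Z$, it normalizes $C_{Q^g}(Z)$, and symmetrically $C_{Q^g}(Z)$ normalizes $C_Q(Y)$. Because $Q$ is extraspecial with $Y\not\le Z$, we have $C_Q(Y)=C_Q(V)$ of index $3$ in $Q$, and symmetrically $C_{Q^g}(Z)=C_{Q^g}(V)$; the intersection $C_Q(V)\cap C_{Q^g}(V)$ equals $Q\cap Q^g$, so $|W|=3^{12}/|Q\cap Q^g|$ and $|\ov{W}|=3^6/|Q\cap Q^g|$.

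The crux is to show $|Q\cap Q^g|=3^4$, which simultaneously gives $|W|=3^8$ and the first assertion of~(iii). The upper bound is immediate from $Q\cap Q^g$ being elementary abelian (Lemma~\ref{Pfacts1}(ii)) inside $C_Q(Y)\cong 3\times 3^{1+4}_+$, whose largest elementary abelian subgroup has order $3^4$. For the lower bound I would use Lemma~\ref{added}: $\ov{W}\le\ov{S}$ sits inside $3\wr 3$ and so has order at most $27$; if $|\ov{W}|\ge 27$ then $\ov{W}$ would contain the unique elementary abelian subgroup $\ov{B}$ of order $27$ by Lemma~\ref{added}(v), forcing $\ov{W}$ to act on $Q/Z$ preserving the decomposition $Q_1/Z\oplus Q_2/Z\oplus Q_3/Z$; analysing $C_{Q^g}(Z)$'s action via its extraspecial structure $3^{1+4}_+$ modulo $Y$ would then contradict this preservation. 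This yields $|\ov{W}|=9$ and $|Q\cap Q^g|=3^4$.

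With $|W|=3^8$ in hand, (i) follows from $W\le C_P(V)$ together with an order count using $|P/C_P(V)|=24$ to give equality $W=C_P(V)$, hence $W\trianglelefteq P$ and $P/W\cong\SL_2(3)$. Part~(ii) is the direct-sum decomposition $W/(Q\cap Q^g)=C_Q(V)/(Q\cap Q^g)\oplus C_{Q^g}(V)/(Q\cap Q^g)$, each summand of order $9$ carrying a natural $\SL_2(3)$-action (the Sylow $3$-subgroup of $P/W$ coming from $Q^g$ acts unipotently on the first summand and trivially on the second, and symmetrically for $Q$). For~(iii), non-quadraticity of $\ov{W}$ on $Q/Z$ would come from picking $w\in C_{Q^g}(Z)\setminus Y(Q\cap Q^g)$ and verifying $[Q,w,w]\not\le Z$ using the extraspecial structure of $C_{Q^g}(Z)/Y$; then $WQ\not\le BQ$ follows because $\ov{B}$ preserves each $Q_i$ and hence acts quadratically. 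Part~(iv) reduces to $Z_2(S)/Z=C_{Q/Z}(\ov{S})$ (using $F^*(H)=Q$), and Lemma~\ref{added}(ii),(iv) force this fixed space to be exactly $V/Z$.

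For~(v), $|\ov{S}|\in\{9,27\}$ by Lemma~\ref{added}(i): if $|\ov{S}|=9$ then $\ov{S}=\ov{W}$ and $S=WQ$; if $|\ov{S}|=27$ and elementary abelian then $\ov{S}=\ov{B}$ by Lemma~\ref{added}(v), contradicting~(iii); so $\ov{S}$ is extraspecial. When $|R|=2^7$, the $\M(22)$-type shape of $\ov{H}$ inside $\Sp_2(3)\wr\Sym(3)$ lacks the outer $\Sym(3)$-transposition needed to support the extraspecial $\ov{S}$, leaving $S=WQ$. Finally for~(vi), since $|W|=3^8$ is coprime to $2$, Schur--Zassenhaus gives $P=W\rtimes K$ with $K\cong\SL_2(3)$; the unique involution of $K$ is an involution $t\in P$ whose image in $P/W$ is the central $-I$, so $t$ inverts $V$ and in particular $Z$. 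Adjusting $t$ within its $W$-coset to normalize a Sylow $3$-subgroup of $P$ and then extending by the Sylow argument places $t\in N_G(S)$, and the inversion of $\ov{W}$ by $t$ reads off from $t$ acting as $-I$ on the natural $\SL_2(3)$-module $C_{Q^g}(V)/(Q\cap Q^g)$ from~(ii), which projects isomorphically onto $\ov{W}$. The main obstacles I anticipate are the lower-bound argument showing $|\ov{W}|\le 9$ (requiring careful exploitation of Lemma~\ref{added}) and the simultaneous coordination of the conditions on $t$ in~(vi).
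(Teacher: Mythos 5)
Your overall strategy --- reduce everything to showing $|Q\cap Q^g|=3^4$, equivalently $|\ov W|=9$ --- matches the paper's, but the steps you yourself flag as the crux are exactly where the argument is missing. To rule out $|\ov W|=27$ it is not enough to note that $\ov W=\ov B$ would preserve the decomposition of $Q/Z$ into $Q_1/Z$, $Q_2/Z$, $Q_3/Z$ and then appeal to the extraspecial structure of $C_{Q^g}(Z)$: the quotient $C_{Q^g}(Z)/(Q\cap Q^g)$ is elementary abelian of order $27$ and embeds perfectly well into the order-$3^3$ Sylow $3$-subgroup of the decomposition stabiliser, so no contradiction arises at that level. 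The paper's actual contradiction is a commutator count: if $|\ov W|=3^3$ then $W/(Q\cap Q^g)$ is a sum of three natural $\SL_2(3)$-modules, whence $|[Q/Z,W]|\ge 3^4$ (using $V\le [Q,W]\cap Q\cap Q^g$), while $[Q/Z,B]\le [Q_1/Z,B][Q_2/Z,B][Q_3/Z,B]$ has order at most $3^3$. The same problem recurs in your part (iii): proving that $\ov W$ does not act quadratically (which is \emph{equivalent} to $WQ\not\le BQ$, not a stepping stone to it) is the hardest part of the lemma, and ``verifying $[Q,w,w]\not\le Z$ using the extraspecial structure of $C_{Q^g}(Z)/Y$'' is not an argument; the paper needs a substantial contradiction argument here, analysing the four $P$-invariant subgroups of order $9$ in $[W,t]/V$ and showing each would have to centralise one of the $Q_i/Z$.

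Two further concrete errors. In (i) the ``order count using $|P/C_P(V)|=24$'' is circular: you do not know $|P|$ or $|C_P(V)|$ independently of what you are trying to prove. The paper first proves $W\trianglelefteq P$ via the Three Subgroup Lemma ($[Q,C_{Q^g}(Z)]\le C_Q(Y)\le W$), then shows $P/W$ is a central extension of $P/C_P(V)\cong\SL_2(3)$ and kills the centre using the triviality of the Schur multiplier of a quaternion group; the equality $W=C_P(V)$ falls out of that. In (ii) your proposed decomposition of $W/(Q\cap Q^g)$ as $C_Q(V)/(Q\cap Q^g)\oplus C_{Q^g}(V)/(Q\cap Q^g)$ cannot be a decomposition into natural $\SL_2(3)$-modules: as you yourself observe, each of these subgroups is centralised \emph{in its entirety} by one of the two Sylow $3$-subgroups of $P/W$, and no natural module has that property; in fact neither subgroup is $P$-invariant. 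The correct route is to show $C_{W/(Q\cap Q^g)}(Q)=C_Q(Y)/(Q\cap Q^g)$ (using the transitivity of $P$ on the order-$3$ subgroups of $V$), deduce $C_{W/(Q\cap Q^g)}(P)=1$, and invoke the quadratic action of $Q$ on $W/(Q\cap Q^g)$.
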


\begin{proof}
Since $C_Q(Y)$ normalizes $C_{Q^g}(Z)$, $W$ is a subgroup of $G$. We have that $[Q, Y,
C_{Q^g}(Z)]=[Z,C_{Q^g}(Z)]=1$ and $[Y, C_{Q^g}(Z), Q]=1$ and so $[Q, C_{Q^g}(Z), Y]= 1$ by the Three Subgroup
Lemma. Thus $[Q,C_{Q^g}(Z)] \le C_Q(Y) \le W$. Hence $[W,Q] \le W$ and similarly $[W,Q^g]\le W$. So $W$ is a
normal subgroup of $P$. Furthermore, $[C_P(V),Q]\le C_Q(Y) \le W$ and $[C_P(V),Q^g]\le C_{Q^g}(Z) \le W$ and so
$P/W$ is a central extension of $P/C_P(V)$. Let $T$ be a Sylow $2$-subgroup of $O^3(P)$. Then as $O^3(P)/W$ is
nilpotent, $Q$ normalizes and does not centralize $T$. It follows that $P= WTQ$ and then the action of $Q$ on $T$
and the fact that $T/C_T(V)\cong\Q_8$ implies that $T \cong \Q_8$ and that $P/W \cong \SL_2(3)$, as by \cite[Satz
V.25.3]{Hu} the Schur multiplier of a quaternion group is trivial. This proves  (i).

Since $WQ = C_{Q^g}(Y)Q$ and $Y \le Q$, we have $\ov W$ is elementary abelian. Furthermore, as $Q$ is
extraspecial and as $Q\cap Q^g$ is elementary abelian by Lemma~\ref{Pfacts1} (iii), $Q\cap Q^g$ has index at
least $3^3$ in $Q^g$. Because $C_{Q^g}(Y)$ has index $3$ in $Q^g$, there is an integer $a$ such that
$$3^2 \le |\ov W|=  |WQ^g/Q^g|=3^a \le 3^3.$$ Furthermore, we have that $W/(Q\cap Q^g)=C_Q(Y)C_{Q^g}(Z)/(Q\cap Q^g) $ has order $3^{2a}$ and is elementary abelian.
If $C_{W/(Q\cap Q^g)}(Q)> C_{Q}(Y)/(Q\cap Q^g)$, then $C_{W/(Q\cap Q^g)}(Q) \cap C_{Q^g}(Z)/(Q\cap Q^g) > 1$ and
is centralized by $P$. As $P$ acts transitively on the subgroups of $V$ of order $3$, we get  $$C_{W/(Q\cap
Q^g)}(Q) \cap C_{Q^g}(Z)/(Q\cap Q^g)  \le Q/(Q\cap Q^g)$$ which is absurd. Hence $C_{W/(Q\cap
Q^g)}(Q)=C_{Q}(Y)/(Q\cap Q^g)$. In particular, $C_{W/(Q\cap Q^g)}(P)=1$ and $[W,Q](Q\cap Q^g)/(Q\cap Q^g)$ has
order $3^a$. Since $Q$ acts quadratically on $W/(Q \cap Q^g)$, as a $P/W$-module, we have that $W/(Q\cap Q^g)$ is
a direct sum of $a$ natural $\SL_2(3)$-modules.

Assume that $|\ov{W}| = 3^3$. Then $WQ = BQ$ {and so $|[Q/Z,W]|= |[Q/Z,B]| \le 3^3$. Since
  $[W, Q\cap Q^g] \ge [Q\cap Q^g, C_{Q^g}(Y)]= Y$ and $|[W/(Q\cap Q^g),Q]|= 3^a=27$, }we infer
that $3^3=|[Q/Z,W]| \ge 3^4$ which is a contradiction. This proves  (ii).

{Suppose that $WQ \le BQ$  (which is equivalent to $W$ acting quadratically on $Q/Z$). }Then $[Q,W]V/V \le
Z(W/V)$ and as $(Q\cap Q^g)/V \le Z(W/V)$, we infer that $C_Q(Y)/V \le Z(W/V)$ and this means that $W/V$ is
abelian. Since $W$ is generated by elements of order $3$, we then have that $W/V$ is elementary abelian. Letting
$t$ be an involution in $P$, we now have that $W_1=[W,t]$   has order $3^6$, is abelian and is normal in $P$. Now
by (ii) $W_1/V$ is a direct sum of two natural $P/W$-modules and so there are exactly four normal subgroups of
$P$ in $W_1/V$ of order $3^2$. Let $U$ be such a subgroup. Then $[U,Q\cap Q^g]\le V$. By (ii) we have $C_Q(Q \cap
Q^g) = Q \cap Q^g$ and so $[U,Q\cap Q^g] \not= 1$. As $[U,Q\cap Q^g]$ is normal in $P$ we get $[U,Q\cap Q^g]= V$.
Therefore $|[U,Q]/Z|= 3^2$.  {Now, as $WQ\le BQ$, $WQ$ normalizes $Q_1$, $Q_2$ and $Q_3$, so , as
$|[U,Q]/Z|=3^2$, $UQ$ centralizes exactly one of $Q_1/Z$, $Q_2/Z$ and $Q_3/Z$. This is true for all four
possibilities for  $U$. Hence there exists two candidates for $U$ centralizing $Q_1/Z$ (say). Thus  $\ov W$
centralizes $Q_1/Z$ and we get $[Q/Z,W]=[Q_2Q_3/Z,W]$ has order $3^2$. Since $|[Q/Z,W]|=3^3$, this is a
contradiction.
  Hence $W \not\le BQ$  and $W$ does not act quadratically on $Q/Z$.}  This proves (iii).

Since $W \not \le BQ$ and $|\ov W \cap \ov B|\neq 1$, we see that $C_{Q/Z}(W) = V/Z$ by using Lemma~\ref{added}
(iv). This then gives (iv).

{Note that, by (iv),  $S=C_S(Y)Q$ and so $WQ$ is normalized by $S$. Since, by Lemma~\ref{added} (i),  $\ov S$ is
isomorphic to a subgroup of $3\wr 3$ with $\ov B$ being the subgroup of $\ov S$ meeting the base group of the
wreath product, the possibilities for $\ov S$ now follow  as $\ov W$ is normalized by $\ov S$. In the case when
$|R| = 2^7$, we have that $|R/Z(R)| = 2^4$ and so does not admit an extraspecial group of order 27. Hence in this
case we get $\ov S= \ov W$ has order $9$. This proves  (v).}

Finally we note that the involution $t$ in a Sylow $2$-subgroup of $P$ inverts $Z$, normalizes $S$ and also
inverts $\ov W$. So (vi) holds.
\end{proof}

\begin{lemma}\label{Horder} One of the following holds:
\begin{enumerate}
\item $|R|= 2^9$, $S=WQ$ and  either $|H|=2^9\cdot 3^9$, $H=WRQ$ and  $$\ov H \approx (\Q_8 \times \Q_8 \times \Q_8).3^2$$ or $|H|=2^{10}\cdot3^9$,   $H/BRQ \cong \Sym(3)$  and  $$\ov H \approx (\Q_8 \times \Q_8 \times \Q_8).3. \Sym(3);$$
\item  $|R|= 2^9$, $\ov S$ is extraspecial and either $|H|=2^9\cdot 3^{10}$, $H=SR$
$$\ov H \approx (\Q_8 \times \Q_8 \times \Q_8).3^{1+2}_+$$
 or $|H|=2^{10}\cdot 3^{10}$, $H/BRQ \cong \Sym(3)$ and $$\ov H \approx (\Q_8 \times \Q_8 \times \Q_8).3^{1+2}_+ .2;$$ or
\item $|R|= 2^7$, $S=WQ$ and  either $|H|=2^7\cdot 3^9$,  $H=QRW$ and
$$\ov H \approx 2^7.3^2$$
or $|H|=2^{8}\cdot3^9$, $H/BRQ \cong \Sym(3)$ and
$$\ov H \approx 2^7.3.\Sym(3).$$
\end{enumerate}
\end{lemma}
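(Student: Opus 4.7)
The plan is to consolidate Lemmas~\ref{H/Q struct} and \ref{Pfacts2} into a full structural description of $H$, parameterised by three binary choices: the size of $R$ (already fixed by the definition of $H$), whether $S=WQ$ or $\overline{S}$ is extraspecial, and whether $H$ contains an outer element permuting $\{Q_1,Q_2,Q_3\}$ as a transposition. By Lemma~\ref{H/Q struct}, $\overline{H}=H/Q$ embeds into $\Sp_2(3)\wr\Sym(3)$ preserving the symplectic form on $Q/Z$, and $\overline{R}$ sits inside the base group $\Q_8\times\Q_8\times\Q_8$ (indeed $\overline{R}=\overline{H}\cap(\Q_8\times\Q_8\times\Q_8)$, since $\overline{R}=O_2(\overline{H})$); because $R\cap Q=1$, $|\overline{R}|=|R|$, which equals $2^7$ in the $\M(22)$ case and $2^9$ in the ${}^2\E_6(2)$ case by the definitions given at the start of Section~4.

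Next, Lemma~\ref{Pfacts2}(v) supplies the second binary choice: either $S=WQ$, in which case Lemma~\ref{Pfacts2}(ii) gives $|\overline{S}|=|\overline{W}|=9$ (and $\overline{W}$ is elementary abelian of order $9$, as it is a quotient of the elementary abelian group $W/(Q\cap Q^g)$), or else $\overline{S}$ is extraspecial of order $27$. The same lemma forces $S=WQ$ whenever $|R|=2^7$. Combining these two dichotomies produces exactly the three principal cases (i), (ii), (iii) of the statement, with $|S|=3^9,\,3^{10},\,3^9$ respectively.

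For the top structure, I pass to the quotient
$$\overline{H}/\overline{R}\;\hookrightarrow\;\bigl(\Sp_2(3)\wr\Sym(3)\bigr)/(\Q_8\times\Q_8\times\Q_8)\;\cong\;3\wr\Sym(3),$$
a group of order $162$ whose Sylow 3-subgroup has order $3^4$ and is complemented by $\Sym(3)/3\cong C_2$. It follows immediately that $\overline{H}/\overline{R}\overline{S}$ is a 2-group of order at most $2$. When $|\overline{H}/\overline{R}\overline{S}|=1$ we recover the (a) subcases ($H=SR$, with $H=WRQ$ whenever $S=WQ$); when $|\overline{H}/\overline{R}\overline{S}|=2$, the extra involution is the image of a transposition from the top $\Sym(3)$ of the wreath product, and hence inverts the cyclic group $\overline{S}/\overline{B}$ of order $3$, so that $\overline{H}/\overline{B}\overline{R}\cong\Sym(3)$, which is the assertion $H/BRQ\cong\Sym(3)$ of the (b) subcases. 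Multiplying through by $|Q|=3^7$ then reads off the claimed orders, and the displayed shapes follow.

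The only mildly delicate point is the bound $|\overline{H}/\overline{R}\overline{S}|\le 2$, but this is a purely structural observation inside the ambient wreath product and is made available by the form-preservation constraint from Lemma~\ref{H/Q struct}; everything else is bookkeeping.
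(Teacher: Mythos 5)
Your proposal is correct and follows essentially the same route as the paper, whose own proof is just the one-line remark that the lemma summarises Lemma~\ref{Pfacts2} together with the embedding of $\ov H$ into $\Sp_2(3)\wr\Sym(3)$. You have simply made explicit the bookkeeping the paper leaves implicit: the dichotomy $S=WQ$ versus $\ov S$ extraspecial from Lemma~\ref{Pfacts2}(v) (with $|R|=2^7$ forcing the former), and the bound $|\ov H:\ov R\ov S|\le 2$ obtained by passing to $\ov H/\ov R\hookrightarrow 3\wr\Sym(3)$, which yields $H/BRQ\cong\Sym(3)$ in the index-two cases.
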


\begin{proof} This is a summary of things we have learnt in Lemma~\ref{Pfacts2}
combined with the fact that $\ov H$ embeds into $\Sp_2(3) \wr \Sym(3)$.
\end{proof}

{We may now fill in the details of the structure of  $N_G(Z)$ and while doing so establish some further notation
which will be used throughout the remainder of the paper.}

 By
Lemma~\ref{Pfacts2} (i), $\ov W$ does not act quadratically on $Q/Z$. Thus $W \not \le QB$. It follows that
$N_{S}(R)$ contains an element $w$ which permutes $\{Q_1, Q_2,Q_3\}$ transitively ($w$ is a wreathing element).
Furthermore, as $\ov W$ is abelian, $\ov W\cap \ov B$ contains an a cyclic subgroup which is centralized by $wQ$.
We let $x_{123}$ be the corresponding element in $N_S(R)$ (here the notation should remind the readers (and the
authors) that $x_{123}$ acts non-trivially on $Q_1/Z$, $Q_2/Z$ and $Q_3/Z$ and on $R_1/\langle r_1\rangle$,
$R_2/\langle r_2\rangle$, $R_3/\langle r_3\rangle$. Since $x_{123}$ centralizes $r_1r_2r_3$, it normalizes
$\langle q_1,q_2,q_3\rangle$ and consequently $$[x_{123}, \langle q_1,q_2,q_3\rangle ]\le \langle
q_1,q_2,q_3\rangle \cap Z=1.$$ Hence $x_{123} \in C_{S}(\langle q_1,q_2,q_3\rangle)$.

If $S>QW$, then $|\ov B|$ has order $9$ and is normalized by $w$. Thus $N_S(R)$ contains an element
$x_2x_3^{-1}$, which as with $x_{123}$  centralizes $\langle q_1,q_2,q_3,Z\rangle$. Note that at this stage it
may be that $x_{123}$ and $x_2x_3^{-1}$ do not commute. We continue our investigations under the assumption that
if $S= WQ$, then $x_2x_3^{-1}$ is the identity element and $J=J_0$.

Set $A = [Q,B]= \langle Z, q_1,q_2,q_3\rangle$, $$J = C_{QW}(A)= \langle A, x_{123} \rangle$$ and $$J_0 =
C_S(A)=\langle A,x_{123},x_2x_3^{-1}\rangle.$$

\begin{lemma}\label{J} \begin{enumerate}
\item $J= J(W)$ is the Thompson subgroup of $W$, $ (Q\cap Q^g)J/(Q\cap Q^g)$ is a non-central $P$-chief factor
and $A \neq Q\cap Q^g$;
\item if $S>QW$ then $J_0$ is elementary abelian and $\ov{J_0}= \ov B$;
\item $x_{123}$ has order $3$ and, if $S> QW$, $x_{2}x_3^{-1}$ also has order $3$ and commutes with $x_{123}$;
\item if $S=WQ$, then $J= J(S)$ and, if $S>WQ$, then $J_0= J(S)$; and
\item if $S > QW$, then $|J_0|= 3^6$ and $S=QWJ_0$.
\end{enumerate}
\end{lemma}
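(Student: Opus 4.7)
The plan is to treat parts (i)--(v) in sequence, with two underlying computations: (a) that $W$ has exponent $3$, which forces the elements we meet to have order $3$, and (b) the size of the centralizer of $A$ in $\overline W$ and $\overline S$, which bounds the order of any elementary abelian subgroup of $W$ or $S$.

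By Lemma~\ref{Pfacts1}(ii) and Lemma~\ref{Pfacts2}(ii), both $Q\cap Q^g$ and $W/(Q\cap Q^g)$ are elementary abelian, so $W$ has nilpotency class at most $2$; in a class-$2$ $3$-group the cube map is a homomorphism, and $W=C_Q(Y)C_{Q^g}(Z)$ is generated by elements of order $3$ (subgroups of extraspecial $3$-groups of exponent $3$), so $W$ has exponent $3$. Next, $A=\langle z,q_1,q_2,q_3\rangle$ is elementary abelian of order $3^4$ (the $q_i$ lie in pairwise commuting $Q_i$, all commuting with $z$), hence a maximal abelian subgroup of the extraspecial group $Q$ with $C_Q(A)=A$. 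With this, part (iii) follows: since $\overline{x_{123}}\in\overline W$ and $W$ has exponent $3$, we may lift $\overline{x_{123}}$ to an element of $W$ of order $3$, and similarly for $\overline{x_2x_3^{-1}}$ when $S>WQ$. Both elements lie in $C_W(A)$, so $[x_{123},x_2x_3^{-1}]\in A\cap[W,W]\le A\cap(Q\cap Q^g)$; in the wreath-product picture of $\overline S$ both lifts lie in the abelian base $\overline B$, so $[\overline{x_{123}},\overline{x_2x_3^{-1}}]=1$, and hence $[x_{123},x_2x_3^{-1}]=1$ because $A(Q\cap Q^g)/(Q\cap Q^g)$ injects into $\overline S$. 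This also proves (ii): $J_0=A\langle x_{123},x_2x_3^{-1}\rangle$ is abelian of exponent $3$ and order $3^6$, and $\overline{J_0}=\overline B$ since $\overline{x_{123}}$ and $\overline{x_2x_3^{-1}}$ generate $\overline B$ (elementary abelian of order $9$ by Lemma~\ref{added}(i)).

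For the Thompson-subgroup assertions in (i) and (iv), the key is the computation that $C_{\overline W}(A)=\langle\overline{x_{123}}\rangle$ has order $3$ and $C_{\overline S}(A)$ has order $3$ or $9$ (according as $S=WQ$ or $S>WQ$). Under the decomposition of $W/(Q\cap Q^g)$ as a sum of two natural $\SL_2(3)$-modules (Lemma~\ref{Pfacts2}(ii)), only the ``diagonal'' element $\overline{x_{123}}$ acts trivially on the image of $A$, giving the claim for $\overline W$; adjoining $\overline{x_2x_3^{-1}}$ when $S>WQ$ gives the claim for $\overline S$. Given this, for any elementary abelian $E\le W$ we have $|E\cap Q|\le 3^4$ (since $Q$ is extraspecial), and conjugating within $W$ if necessary we may arrange $E\cap Q=A$; then $E/(E\cap Q)$ embeds in $C_{\overline W}(A)$, forcing $|E|\le 3^5$ and hence $J=J(W)$. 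The parallel argument in $S$ gives (iv). The non-central $P$-chief factor claim reads $\overline J=(Q\cap Q^g)J/(Q\cap Q^g)$ as one of the two natural $\SL_2(3)$-summands of $W/(Q\cap Q^g)$: it is nontrivial since $x_{123}\notin Q\cap Q^g$, and has order $3^2$ once we verify $|A\cap(Q\cap Q^g)|=3^3$. The inequality $A\neq Q\cap Q^g$ follows from the same count, and (v) is immediate from (ii) combined with $|S:QW|=3$ (Lemma~\ref{Pfacts2}(v)) and $\overline{J_0}=\overline B\not\le\overline W$ (which holds because $|\overline B|=|\overline W|=9$ while $\overline B\cap\overline W$ has order $3$).

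The main obstacle is verifying $|A\cap(Q\cap Q^g)|=3^3$, needed for both the chief-factor claim and for $A\neq Q\cap Q^g$; this amounts to computing the intersection of two specific maximal isotropic subspaces of the symplectic space $Q/Z$ over $\GF(3)$, and requires carefully relating $A$ (the fixed subspace of $B$ acting on $Q/Z$) to $Q\cap Q^g$ (a maximal isotropic subspace containing $V=ZY$).
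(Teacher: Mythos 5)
Your proposal has genuine gaps, and the central one is the exponent-$3$ claim for $W$. From the facts that $Q\cap Q^g$ and $W/(Q\cap Q^g)$ are elementary abelian you may only conclude $W'\le Q\cap Q^g$; this does not give nilpotency class at most $2$. (The wreath product $3\wr 3$ --- precisely the ambient group for $\ov S$ --- is an extension of an elementary abelian group by an elementary abelian group, has class $3$, is generated by elements of order $3$, and contains elements of order $9$.) Since $[W,Q\cap Q^g]=V\neq 1$, the class of $W$ could well be $3$, and then the cube map is not a homomorphism, so your derivation of ``$W$ has exponent $3$'' collapses and with it your proof that $x_{123}$ has order $3$. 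The paper avoids this entirely: it shows $J$ is abelian with $J=[J,Q][J,Q^g](J\cap Q\cap Q^g)$, each factor lying in a group of exponent $3$, whence $J$ is elementary abelian and $x_{123}^3=1$. A second, independent error: when $S>QW$ the element $x_2x_3^{-1}$ does \emph{not} lie in $W$ (its image lies in $\ov B\setminus\ov W$, since $\ov B\cap\ov W=\ov{\langle x_{123}\rangle}$ has order $3$), so it cannot be ``lifted to an element of $W$ of order $3$'', and your commutator computation only places $[x_{123},x_2x_3^{-1}]$ inside $C_Q(A)=A$, not at the identity. Establishing that $J_0$ is abelian genuinely requires the $P$-action: the paper shows $J_0$ is normal in $PS$ and that $J=\langle A^P\rangle\le Z(J_0)$, and only then deduces that $J_0$ is elementary abelian, that $x_2x_3^{-1}$ has order $3$, and that it commutes with $x_{123}$.

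There are further unclosed steps. The assertion that $(Q\cap Q^g)J/(Q\cap Q^g)$ is a $P$-chief factor presupposes that $(Q\cap Q^g)J$ is normal in $P$, which you never prove; the paper obtains this by producing a $P$-invariant subgroup $W_0$ with $(Q\cap Q^g)\le W_0\le W$ and $\ov{W_0}=\ov{\langle x_{123}\rangle}$ (using the decomposition of $W/(Q\cap Q^g)$ into two natural $\SL_2(3)$-modules) and showing $J=J(W_0)=C_{W_0}(A)$ is characteristic in $W_0$. The same computation $W_0\cap Q=A(Q\cap Q^g)$, of index $3$ over $Q\cap Q^g$, immediately yields $|A\cap Q\cap Q^g|=3^3$ --- the point you flag as your ``main obstacle'' and leave unresolved. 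Finally, in your Thompson-subgroup argument the reduction ``conjugating within $W$ we may arrange $E\cap Q=A$'' is unjustified: $C_Q(Y)$ contains maximal elementary abelian subgroups of order $3^4$ not conjugate to $A$ in $W$ (for instance $Q\cap Q^g$ itself, which is normal in $P$), so you must argue, as the paper does via the dichotomy $\wt AQ\le BQ$ or not and the bounds on $|C_{Q/Z}(\wt A)|$ from Lemma~\ref{added}, that any abelian subgroup of $QW$ of order at least $3^5$ actually meets $Q$ in $A$.
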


\begin{proof} Because $A$ has index $3$ in $J$, $J$ is abelian.
As $J$ centralizes $V$ and $J \le QW$, $J \le C_{QW}(V)= W$. {As, by Lemma~\ref{Pfacts2} (ii)}, $W/(Q\cap Q^g)$
is a direct sum of two natural $\SL_2(3)$-modules, there is a normal subgroup $W_0$  of $P$ such that $(Q\cap
Q^g) \le W_0 \le W$ and $$\ov W_0 = \ov{\langle x_{123}\rangle}\le \ov B.$$

We have $|W_0 \cap Q:Q\cap Q^g|=3$. Thus, as  $Q \cap Q^g$ is a maximal abelian subgroup of $Q$ by Lemma~\ref{Pfacts2} (ii),  $Z(W_0 \cap Q)$ has index $3$ in $Q \cap Q^g$ and contains $V$. Hence $Z(W_0 \cap Q)$ is normal in $P$ by Lemma~\ref{Pfacts1} (iii) and this means that $Z(W_0) = Z(W_0)\cap Q$.  From the definition of $A$ and of $W_0$, we have $[A,W_0]\le Z$. On the other hand, $Z(W_0) \le C_Q(W_0) \le A$. Thus $W_0$ centralizes a subgroup of $A$ of index $3$. It follows that $W_0$ induces a group of order $3$ on $A$. Hence $C_{W_0}(A) = J$ and $W_0 = (Q \cap Q^g)J$. As  $[W_0,Q \cap Q^g] = V$, $W_0$ is
not abelian and hence $J$ is a maximal abelian subgroup of $W_0$.

If $J^*\le W_0$ is abelian with $|J^*|= |J|$
and $J\neq J^*$, then $W_0= JJ^*$ and $Z(W_0)\ge J\cap J^*$. Since $Q\cap Q^g \not \le Z(W_0)$ and $W_0/(Q\cap
Q^g)$ is a $P$-chief factor, we get $W_0= Z(W_0)(Q\cap Q^g)$ which means that $W_0$ is abelian and is a
contradiction. Hence $J= J(W_0)$ is normal in $P$ and, as $J = [J,Q][J,Q^g]$ is generated by elements of order
$3$, $J$ is elementary abelian.

Since $J$ contains a $P$-chief factor, we have $C_P(J) = C_W(J)= J$. Assume that $\wt A$ is an abelian subgroup
of $QW$ with $|\wt A|\ge |J|=3^5$. If $\wt AQ\not \le BQ$, then $|C_{Q/Z}(\wt A)| \le 3^2$ by
Lemma~\ref{added} (ii). Hence $|\wt A \cap Q| \le 3^3$ which means that $\wt A Q= WQ$ and so we have $|C_{Q/Z}(\wt
A)|=3$  by Lemma~\ref{added} (iv). But then $\ov W$ has order greater than $9$, a contradiction. So $\wt A \le
W_0Q$ and $|\wt A \cap Q|= 3^4$, it follows that $\wt A \cap Q= A$ and $\wt A \le J$. Thus $J = J(WQ)$ and if $S
= QW$ we even have $J = J(S)$.
 This completes the proof of (i) and shows that $x_{123}$  has order $3$.
 Since $J$ does not centralize $Q \cap Q^g$, $A \neq Q\cap Q^g$.

%
% Let $\wt A$ be an $F$-module offender on
%$J$ in $QW$. If $\wt A \cap Q/\wt A \cap A \not= 1$, we see that $|J : C_J(\wt A \cap Q)| \geq 3|\wt A \cap Q/\wt
%A \cap A|$ as $x_{123}$ does not centralize any element in $\wt A \cap Q \setminus J$. Hence we may assume that
%$w \in \wt A$. Then we have that $|J : C_J(\wt A)| \geq 3^2$. Hence $|\wt A : \wt A \cap J| \geq 3^2$ and so $\wt
%A \cap Q \not= \wt A \cap A$. Now $C_J(\wt A) \leq Q$ and so $|\wt A : \wt A \cap J| \geq 3^3$. But there is no
%elementary abelian group of order $3^3$ in $WQ/J$, which are not in $QJ/J$. Hence  there are no offenders  in
%$QW$ on $J$, and so we have that $J=J(W)$

Now we consider $J_0$ and suppose that $S > QW$. Then $S= J_0QW$. Because $A$ is normalized by $S$, $J_0$ is a normal subgroup of
$S$ and $x_2x_3^{-1}\in J_0 \setminus J$.   Set $A_1= A \cap Q\cap Q^g$. Then, as $W_0 \cap Q= A(Q\cap Q^g)$, we have $A_1$ has order $3^3$ and is centralized by $W_0J_0$.  It follows that $W_0J_0= C_S(A_1)$.
 Since $A_1$ is normalized by $P$ by Lemma~\ref{Pfacts1}(iii) and $C_{PS}(A_1) \le O_3(PS)$, we have $J_0W_0$ is normalized by $PS$ and that $J_0W_0/J$ is centralized by $O^3(P)$. As $J_0$ is normalized by $S$, we have that $J_0$ is a normal subgroup of
$PS$. Employing the fact that  $A \le Z(J_0)$, yields $J=\langle A^P\rangle \le Z(J_0)$. Hence $J_0$ is abelian. As $J$ is
elementary abelian, $\Phi(J_0)$ has order at most $3$ and as $P$ does not normalize $Z$ we have  $J_0$ is
elementary abelian. This then implies that  { $x_2x_3^{-1}$ has order $3$ and $[x_{123},x_2x_3^{-1}]=1$}. Since $|J_0|= 3^6$, we also
have that $J_0 = J(S)$ in this case.
\end{proof}

The next lemma just reiterates what  we have discovered in Lemma~\ref{J} (iii).

\begin{lemma}\label{BEA} $B= \langle x_{123},x_2x_3^{-1},z\rangle$ is elementary abelian.\qed
\end{lemma}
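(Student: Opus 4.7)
The plan is to assemble existing results; essentially no new work is needed beyond what Lemma~\ref{J} has already supplied. The crucial ingredient, stated in Lemma~\ref{J}(iii), is that $x_{123}$ and $x_2x_3^{-1}$ (the latter being trivial in the case $S=WQ$ by the standing convention) are commuting elements of order $3$. Since $z\in Z=Z(S)$ is central, it commutes with both $x_{123}$ and $x_2x_3^{-1}$ and has order $3$. Therefore $\langle z,x_{123},x_2x_3^{-1}\rangle$ is an abelian group of exponent $3$, hence elementary abelian. This disposes of the structural half of the lemma; the remaining task is to identify this subgroup with $B=C_S(\Omega_1(Z(R)))$.

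First I would compute $B\cap Q$. In both of the possibilities for $R$ given by Lemma~\ref{Horder} we have $\Omega_1(Z(R))=\langle r_1,r_2,r_3\rangle$. Since $C_Q(r_i)$ is extraspecial of order $3^5$ by the definition of $r_i$, it must coincide with the central product $Q_jQ_k$ (where $\{i,j,k\}=\{1,2,3\}$), as both have order $3^5$ and $Q_jQ_k\le C_Q(r_i)$. Viewing $Q/Z=Q_1/Z\oplus Q_2/Z\oplus Q_3/Z$, the element $r_i$ acts as $-1$ on the $i$-th summand and trivially on the other two, so intersecting yields $B\cap Q=\bigcap_{i=1}^3 C_Q(r_i)=Z$.

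Second I would identify $\bar B=BQ/Q$ inside $\bar S$. By Lemma~\ref{added}(i), $\bar S$ embeds into $3\wr 3$ preserving the wreath structure that $S$ induces on $R$; the base of $3\wr 3$ consists exactly of those elements preserving each factor $R_i$, and such elements act on $R_i$ as outer automorphisms of order $3$ that centralize $Z(R_i)=\langle r_i\rangle$, while elements outside the base permute the $R_i$'s nontrivially and so move the $r_i$'s. Therefore $\bar B=\bar S\cap(\text{base})$. When $S=WQ$, Lemma~\ref{Pfacts2}(iii) gives $\bar W\not\le\bar B$, and since $|\bar S|=|\bar W|=9$ while $\bar{x_{123}}\in\bar W\cap\bar B$, an order count forces $\bar B=\langle\bar{x_{123}}\rangle$. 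When $S>WQ$, Lemma~\ref{J}(ii) directly supplies $\bar B=\bar{J_0}$, and the equality $J_0=\langle A,x_{123},x_2x_3^{-1}\rangle$ with $A\le Q$ then gives $\bar B=\langle\bar{x_{123}},\bar{x_2x_3^{-1}}\rangle$.

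Combining the previous two paragraphs yields $B=\langle z,x_{123},x_2x_3^{-1}\rangle$, and the elementary-abelian conclusion follows from the first paragraph. There is no substantial obstacle here: the entire content of the lemma has already been secured by Lemmas~\ref{added}, \ref{Pfacts2}, and \ref{J}, and what remains is purely bookkeeping.
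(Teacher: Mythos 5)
Your proof is correct and matches the paper's: the paper offers no argument beyond the remark that the lemma ``just reiterates'' Lemma~\ref{J}(iii), which is exactly your first paragraph (commuting elements of order $3$ together with the central $z$). The additional bookkeeping you supply --- computing $B\cap Q=\bigcap_i C_Q(r_i)=Z$ and identifying $\ov B$ with $\langle \ov{x_{123}},\ov{x_2x_3^{-1}}\rangle$ in the two cases --- is sound and merely makes explicit the identification of $B$ with the stated generating set, which the paper treats as part of the notational setup preceding Lemma~\ref{J}.
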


\begin{lemma}\label{controlfus}
The subgroup $N_G(J(S))$ controls $G$-fusion of elements in $J(S)$.
\end{lemma}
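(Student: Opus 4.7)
The plan is to read this off directly from the machinery already in hand, since the real content is contained in Lemma~\ref{J} and Lemma~\ref{ptransfer}.

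First I would note that, by the hypothesis of both Theorem~\ref{Main} and Theorem~\ref{Main1}, we are working in the situation $S \in \Syl_3(G)$, so $J(S)$ is the Thompson subgroup of a full Sylow $3$-subgroup of $G$. Next I would invoke Lemma~\ref{J}(iv), which tells us that $J(S) = J$ when $S = WQ$ and $J(S) = J_0$ when $S > WQ$; in either case Lemma~\ref{J}(i)--(ii) guarantees that $J(S)$ is elementary abelian. This verifies the two hypotheses of Lemma~\ref{ptransfer}(i) for the prime $p = 3$.

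Having set this up, the conclusion is immediate: Lemma~\ref{ptransfer}(i) applied with $P = S$ and $J = J(S)$ yields that $N_G(J(S))$ controls $G$-fusion of elements of $J(S)$, which is exactly what we want.

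There is really no obstacle here: both required inputs are already in place, and the statement is a direct application of the standard fusion result of Burnside-type that is packaged as Lemma~\ref{ptransfer}(i). The lemma is stated precisely to be used in this way, and the only work done so far was verifying that $J(S)$ is elementary abelian, which Lemma~\ref{J} has already accomplished through the analysis of $W$, $Q\cap Q^g$ and the action of $P$ on $W/(Q\cap Q^g)$.
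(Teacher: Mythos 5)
Your proof is correct and is essentially identical to the paper's, which likewise simply cites Lemma~\ref{ptransfer}(i) on the grounds that $J(S)$ is elementary abelian. The extra detail you supply (that $S\in\Syl_3(G)$ and that Lemma~\ref{J} identifies $J(S)$ as $J$ or $J_0$, each elementary abelian) is exactly the justification the paper leaves implicit.
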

\begin{proof} This follows from lemma~\ref{ptransfer} (i) as $J(S)$ is elementary abelian.  \end{proof}

\begin{lemma}\label{fusion1} $N_G(Z)$ controls $G$-fusion of elements of order $3$ in $Q$ which are not conjugate to $z$.
In particular, $q_1$, $q_1q_2$ and $z$ represent  distinct $G$-conjugacy classes of elements of $Q$.
\end{lemma}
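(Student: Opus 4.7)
The plan is to apply Lemma~\ref{Fusion} with $p=3$ and extraspecial subgroup $E=Q$, letting $N_G(Z)$ play the role of the ``$H$'' in that lemma. Once the conclusion $d^G\cap Q = d^H$ is obtained for $d\in Q$ with $d^G\cap Z=\emptyset$, the main assertion of the lemma follows immediately because $H\le N_G(Z)$.

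To verify the hypotheses of Lemma~\ref{Fusion}: $Z=Z(Q)$ is characteristic in $Q$, $Q$ is characteristic in $H$, and $|N_G(Z):H|=2$ by Lemma~\ref{Pfacts1}(v); hence $N_G(Q)=N_G(Z)$. Also $C_G(Q)=Z$ because $F^*(H)=Q$ forces $C_H(Q)=Z$, while an element of $N_G(Z)\setminus H$ inverts $Z$ by Lemma~\ref{Pfacts2}(vi) and so cannot centralize $Q$. Since $Z=Z(S)$ is characteristic in $S$, $S\in\Syl_3(G)$. The hypothesis that every element of $Z^gZ$ is $G$-conjugate to a generator of $Z$ whenever $Z^g\le Q$ is precisely Lemma~\ref{Pfacts1}(iv).

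The last remaining hypothesis---that no element of $S\setminus Q$ centralizes an index-$3$ subgroup of $Q$---splits into cases. For $x\in S\setminus BQ$, Lemma~\ref{added}(ii) gives $|C_{Q/Z}(x)|=9<3^5$. For $x\in BQ\setminus Q$, I use that $\ov B=\langle\ov{x_{123}},\ov{x_2 x_3^{-1}}\rangle$ is abelian (Lemma~\ref{BEA}) and acts on the perpendicular decomposition $Q/Z=Q_1/Z\oplus Q_2/Z\oplus Q_3/Z$; since $\ov{x_{123}}$ acts nontrivially on all three summands while $\ov{x_2 x_3^{-1}}$ acts nontrivially on exactly $Q_2/Z$ and $Q_3/Z$, a short check shows every nontrivial element of $\ov B$ acts nontrivially on at least two summands, so $|C_{Q/Z}(x)|\le 3^4$. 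Lemma~\ref{Fusion} then delivers the main claim.

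For the ``in particular'' part, $q_1$, $q_1 q_2$, and $z$ visibly lie in three distinct $H$-orbits on $Q$ because their images in $Q/Z$ have distinct supports (namely $\{1\}$, $\{1,2\}$, and $\emptyset$) on the $\ov H$-invariant perpendicular decomposition. Given the main claim, distinctness of $G$-classes follows once neither $q_1$ nor $q_1 q_2$ is $G$-conjugate to $z$; this I expect to be the main obstacle. It is handled by locating a conjugate $y$ of $z$ inside $Q$: by Lemma~\ref{Pfacts1}(ii) $Y\le Q\cap Q^g$, by Lemma~\ref{J}(i) $Q\cap Q^g$ is a maximal elementary abelian of $Q$ distinct from $A$ and with $|A\cap(Q\cap Q^g)|=3^3$, and the resulting position of $y$ forces it into the ``diagonal'' $H$-orbit (that of $q_1 q_2 q_3$), thereby excluding $q_1$ and $q_1 q_2$ from $z^G$.
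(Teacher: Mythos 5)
Your treatment of the main assertion is sound and is essentially the paper's own route: both apply Lemma~\ref{Fusion} with $E=Q$ and $N_G(Z)$ in the role of that lemma's ``$H$'', citing Lemma~\ref{Pfacts1}(iv) for the hypothesis that all of $ZZ^g$ is fused into $Z$. Your case analysis verifying that no element of $S\setminus Q$ centralizes an index-$3$ subgroup of $Q$ is a legitimate, more explicit unpacking of the paper's terse appeal to Lemma~\ref{Pfacts2}(iii) and (v); in particular your observation that every nontrivial element of $\ov B$ acts nontrivially on at least two of $Q_1/Z,Q_2/Z,Q_3/Z$ is correct (it is exactly what forces the second generator of $\ov B$ to be of ``$x_2x_3^{-1}$'' rather than ``$x_2x_3$'' type when $\ov S$ is extraspecial), and $|C_{Q/Z}(x)|\le 3^4$ then rules out centralizing an index-$3$ subgroup since any such subgroup contains $Z=\Phi(Q)$.

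The gap is in the ``in particular'' clause. First, the facts you cite do not force a conjugate of $z$ lying in $Q$ into the diagonal orbit: by Lemma~\ref{QQg}(iv) the subgroup $Q\cap Q^g$ contains $q_1q_2^{-1}=(q_1q_2)^{r_2}$, an $H$-conjugate of $q_1q_2$, so membership in $Q\cap Q^g$ (even together with its maximality and $|A\cap Q\cap Q^g|=3^3$) is perfectly compatible with support $\{1,2\}$ and excludes nothing. Second, you must rule out \emph{every} $h\in G$ with $z^h\in Q$ landing on $q_1$ or $q_1q_2$, not just the particular $Y=Z^g$ fixed in the standing notation; the control-of-fusion statement you have just proved cannot help, since it explicitly excludes elements conjugate to $z$. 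The paper closes both holes at once with Lemma~\ref{Pfacts2}(iv): for any $Z^h\le Q$ with $Z^h\ne Z$, the analysis of Lemmas~\ref{Pfacts1} and \ref{Pfacts2} applies after replacing $S$ by a suitable $H$-conjugate, so $ZZ^h$ is the second centre of some Sylow $3$-subgroup of $N_G(Z)$, hence an $H$-conjugate of $V=Z\langle q_1q_2q_3\rangle$ by Lemma~\ref{QQg}(i). Every nontrivial element of such a subgroup outside $Z$ has full support on the decomposition $\{Q_1,Q_2,Q_3\}$ preserved (as a set) by $N_G(Z)$, whereas $q_1$ and $q_1q_2$ do not; that is the argument your sketch is missing.
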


\begin{proof}  From Lemma~\ref{Pfacts2} (iii) and (v)  no element of $\ov S$  centralizes a subgroup of index $3$ in $Q$. Furthermore, if $Z^g \le Q$, then all the elements of $ZZ^g$ are $G$-conjugate to elements of $Z$ by Lemma~\ref{Pfacts1} (iv). Hence $N_G(Z)$ controls $G$-fusion of elements of order $3$ in $Q$ which are not conjugate to elements of $z$ by Lemma~\ref{Fusion}.

 By Lemma \ref{Pfacts2}(iv) any conjugate of $z$ in $Q$ is
in the second centre of some Sylow 3-subgroup of $N_G(Z)$ and so $q_1$ and $q_1q_2$ both are not conjugate to $z$
in $G$.
\end{proof}

\begin{lemma}\label{NHJ}
 We have $N_H(J) = \Omega_1(Z(R))N_H(S)$.
\end{lemma}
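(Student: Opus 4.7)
The strategy is to establish both inclusions $N_H(S) \le N_H(J)$ and $\Omega_1(Z(R)) \le N_H(J)$, and then apply a Frattini argument to squeeze $N_H(J)$ into $\Omega_1(Z(R)) N_H(S)$.

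For the first inclusion, Lemma~\ref{J} gives $J = J(QW)$, and Lemma~\ref{Horder} shows that $QW$ is characteristic in $S$ (either $QW = S$, or in the remaining case $\ov W = Z(\ov S)$ with $\ov S$ extraspecial of order $27$). Hence $J$ is characteristic in $S$, and so $N_H(S) \le N_H(J)$. For the second inclusion, each $r_i$ inverts $q_i$ and centralizes $q_j$ for $j \ne i$ by our initial choices, so $r_i$ normalizes $A = \langle z, q_1, q_2, q_3\rangle$. Moreover, by construction $\ov{x_{123}} \in \ov W \cap \ov B$, so $\ov{x_{123}}$ centralizes $\Omega_1(Z(\ov R))$ and consequently $[r_i, x_{123}] \in Q$; since $x_{123}$ also normalizes $R$, we have $[r_i, x_{123}] \in R$, and together these force $[r_i, x_{123}] \in Q \cap R = 1$. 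Thus $r_i$ centralizes $x_{123}$ and, combined with the normalization of $A$, lies in $N_H(J)$.

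For the reverse containment, I would apply the Frattini argument to the normal subgroup $O_{3,2}(H) = QR$ of $H$, obtaining $H = Q \cdot N_H(R)$. Since $Q \le N_H(J)$ (because $Q$ normalizes the characteristic subgroup $J$ of $QW$), intersecting with $N_H(J)$ yields
\[
N_H(J) \;=\; Q \cdot \bigl(N_H(R) \cap N_H(J)\bigr).
\]
As $Q \le S \le N_H(S)$, the task reduces to showing $N_H(R) \cap N_H(J) \le \Omega_1(Z(R))\, N_H(S)$.

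The main obstacle is this final reduction. Here one exploits that $\ov{x_{123}}$ lies in the base group of the wreath product $\Sp_2(3)\wr\Sym(3)$ containing $\ov H$ (Lemma~\ref{H/Q struct}): writing $\ov{x_{123}} = (\ov a_1, \ov a_2, \ov a_3)$ with each $\ov a_i$ an element of order $3$ in $\SL_2(3)$, the component-wise normalizer of $\langle \ov a_i\rangle$ in each $\SL_2(3)$ factor is the cyclic group of order $6$ generated by $\ov a_i$ and $\ov r_i$. The image $\overline{N_H(R)} \cap N_{\ov H}(\ov J)$ is thereby seen to consist of an elementary abelian $2$-part lying in $\Omega_1(Z(\ov R))$ together with elements that permute the three factors; the latter automatically normalize $\ov S$. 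An element of $N_H(R) \cap N_H(J)$ can thus be adjusted by a suitable member of $\Omega_1(Z(R))$ to lie in $N_H(S)$, yielding the required inclusion.
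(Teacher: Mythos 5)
Your overall architecture matches the paper's (whose entire proof is ``by direct calculation in $\ov H$''): establish that $\Omega_1(Z(R))$ and $N_H(S)$ normalize $J$, then push the reverse inclusion into $\ov H$, where $N_{\ov R}(\ov J)=\Omega_1(Z(\ov R))$ because $N_{\Q_8}(\langle a\rangle)=C_{\Q_8}(a)=Z(\Q_8)$ for an element $a$ of order $3$ in $\SL_2(3)$, and finish with a Frattini-type adjustment. The verification that each $r_i$ normalizes $A$ and centralizes $x_{123}$ is correct, and the Frattini decomposition $N_H(J)=Q(N_H(R)\cap N_H(J))$ is legitimate since $J$ is normal in $P\ge Q$.

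There is, however, one concretely false step. You justify $N_H(S)\le N_H(J)$ by claiming $QW$ is characteristic in $S$ because, when $S>QW$, ``$\ov W=Z(\ov S)$ with $\ov S$ extraspecial of order $27$.'' This cannot be: by Lemma~\ref{Pfacts2}(iii) $\ov W$ has order $9$, whereas $Z(3^{1+2}_+)$ has order $3$, so $\ov W$ is merely one of the four maximal subgroups of $\ov S$ and I see no reason for $QW$ to be characteristic in $S$ in this case. The conclusion you want is nevertheless true, but it needs a different route: $N_H(S)$ normalizes $Q=O_3(H)$ and $S$, hence normalizes $Z(\ov S)=\ov W\cap\ov B=\ov{\langle x_{123}\rangle}=\ov J$ (two distinct elementary abelian maximal subgroups of $3^{1+2}_+$ meet in the centre), hence normalizes the preimage $JQ$; since a maximal abelian subgroup of $Q$ has order $3^4$ and $|C_Q(x_{123})|=3^4$, every abelian subgroup of $JQ$ of order at least $3^5$ lies in $J$, so $J=J(JQ)$ is characteristic in $JQ$ and $N_H(S)\le N_H(J)$ follows. (When $S=QW$ your argument is fine, as $J=J(S)$ by Lemma~\ref{J}(iv).) A smaller quibble: in the last paragraph, an element of $N_{\ov H}(\ov J)$ inducing a nontrivial permutation of the three factors does not ``automatically normalize $\ov S$''; the correct statement is that $\ov S\in\Syl_3(N_{\ov H}(\ov J))$ and a Frattini argument over $N_{\ov H}(\ov J)\cap\ov{SR}$, whose Sylow $2$-subgroup is $N_{\ov R}(\ov J)=\Omega_1(Z(\ov R))$, yields $N_{\ov H}(\ov J)=\Omega_1(Z(\ov R))N_{N_{\ov H}(\ov J)}(\ov S)$ --- which is the adjustment you describe, just needing this justification.
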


\begin{proof} We know by direct calculation  that $N_{\ov H}(\ov J)= \ov{\Omega_1(Z(R))}N_{\ov H}(\ov S)$  and so the result follows.

\end{proof}

Recall that, for $i=1,2,3$, $Q_i= \langle q_i,\wt q_i\rangle $ where $[q_i,\wt q_i]=z$ are specifically
defined. In the next lemma we give precise descriptions, some of which we have already seen,  of a number of the
key subgroups of $Q$.

{\begin{lemma}\label{QQg} The following hold:
\begin{enumerate}
\item $V = \langle z, q_1q_2q_3 \rangle$;
\item $C_Q(V) = \langle A,\wt q_1\wt q_2^{-1}, \wt q_1 \wt q_2\wt q_3\rangle$;
\item $A=\langle z,q_1,q_2,q_3\rangle$;
\item $A \cap Q^g= \langle V, q_1q_2^{-1}\rangle= \langle V, q_2q_3^{-1}\rangle$; and
\item $Q\cap Q^g=\langle A\cap Q^g,  \wt q_1 \wt q_2\wt q_3\rangle$.
\end{enumerate}
\end{lemma}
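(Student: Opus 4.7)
The plan is to compute each of the five descriptions using the symplectic structure on $Q/Z$---a $6$-dimensional $\GF(3)$-space with basis $\{q_iZ,\wt q_iZ\}_{i=1,2,3}$ and form $[q,q']=z^{\omega(q,q')}$---together with the faithful action of $S/Q$ as a subgroup of $3\wr 3$ on $Q/Z$ and the wreathing symmetry coming from $P/C_P(V)\cong\SL_2(3)$.

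For (i) and (iii), Lemma~\ref{Pfacts2}(iv) gives $V=Z_2(S)$, and since $\ov H\le\Sp_2(3)\wr\Sym(3)$ acts faithfully on $Q/Z$ we have $V/Z=(Q/Z)^{S/Q}$. The base $\ov B$ fixes exactly $A/Z=\langle q_1Z,q_2Z,q_3Z\rangle$ because each non-trivial element of $\ov B$ acts on $Q_i/Z$ as a unipotent transvection with axis $\langle q_iZ\rangle$ (by the defining choice $[q_i,S]\le Z$), while the wreathing generator cyclically permutes $q_1Z,q_2Z,q_3Z$, leaving only $\langle q_1q_2q_3Z\rangle$ fixed. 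Hence $V=\langle z,q_1q_2q_3\rangle$, proving (i); part (iii) is the definition of $A$ recorded in the notation preceding Lemma~\ref{J}.

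For (ii), since $z$ is central we have $C_Q(V)=C_Q(q_1q_2q_3)$, which by the commutator formula is the preimage of the hyperplane $\{(a_1,b_1,a_2,b_2,a_3,b_3):b_1+b_2+b_3\equiv 0\pmod 3\}$ of $Q/Z$, so $|C_Q(V)|=3^6$. The elements generating $A$ satisfy $b_i=0$, while $\wt q_1\wt q_2^{-1}$ and $\wt q_1\wt q_2\wt q_3$ have $b$-coordinates $(1,-1,0)$ and $(1,1,1)$ respectively; both lie in $C_Q(V)$ and are independent modulo $A$, so $\langle A,\wt q_1\wt q_2^{-1},\wt q_1\wt q_2\wt q_3\rangle$ already has order $3^6$. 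For (iv), let $w\in W$ be the wreathing element cyclically permuting the $Q_i$. Then $w$ centralizes $V$ so $w\in C_G(V)\le H^g=N_G(Q^g)$, and $w$ normalizes $A$, hence normalizes $A\cap Q^g$. The subspace $(A\cap Q^g)/Z$ is $2$-dimensional in $A/Z$ and contains the $w$-fixed line $V/Z$. Since the cyclic action on $A/Z\cong\GF(3)^3$ admits a unique $2$-dimensional invariant subspace containing the fixed line---the ``trace-zero'' hyperplane $\langle q_1q_2q_3Z,q_1q_2^{-1}Z\rangle$---and $q_1q_2^{-1}V=q_2q_3^{-1}V$ in $A/V$, we obtain (iv).

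For (v), the intersection $Q\cap Q^g$ has order $3^4$, and $|(Q\cap Q^g)A|=|Q\cap Q^g|\,|A|/|A\cap Q^g|=3^5$, so $(Q\cap Q^g)A/A$ is a subgroup of order $3$ in the $2$-dimensional space $C_Q(V)/A=\langle\wt q_1\wt q_2^{-1}A,\wt q_1\wt q_2\wt q_3A\rangle$. Computing the induced $w$-action: it fixes $\wt q_1\wt q_2\wt q_3A$ by cyclic symmetry and sends $\wt q_1\wt q_2^{-1}A$ to $\wt q_2\wt q_3^{-1}A$, which a direct coordinate computation identifies with $\wt q_1\wt q_2^{-1}\cdot(\wt q_1\wt q_2\wt q_3)^{-1}A$; thus $w$ acts as a non-trivial unipotent with unique fixed line $\langle\wt q_1\wt q_2\wt q_3A\rangle$. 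Since $w$ normalizes $(Q\cap Q^g)A/A$, this forces $(Q\cap Q^g)A=A\langle\wt q_1\wt q_2\wt q_3\rangle$. The main obstacle is then to promote this to the identification $\wt q_1\wt q_2\wt q_3\in Q\cap Q^g$ itself (rather than merely modulo $A$); this is handled using the uniqueness of $\wt q_i$ forced by the relations $[q_i,\wt q_i]=z$ and $\wt q_i^{r_i}=\wt q_i^{-1}$, together with the action of $r_1r_2r_3\in R$---which inverts $\wt q_1\wt q_2\wt q_3$ and normalizes $Q^g$ since it normalizes $Y$---to pin down the correct representative within the coset $\wt q_1\wt q_2\wt q_3(A\cap Q^g)$, yielding $Q\cap Q^g=\langle A\cap Q^g,\wt q_1\wt q_2\wt q_3\rangle$.
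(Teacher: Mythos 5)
Your treatment of (i)--(iii) and your coordinate set-up are sound and are in the same spirit as the paper, which likewise reads off $V$ as the fixed points of $\ov W=\langle wQ,x_{123}Q\rangle$ on $Q/Z$ and computes $C_Q(V)=C_Q(q_1q_2q_3)$ from the symplectic form. There are, however, two genuine gaps, in (iv) and in (v). In (iv) you simply assert that $(A\cap Q^g)/Z$ is $2$-dimensional. The upper bound is available (if $A\le Q^g$ then $A=Q\cap Q^g$ by orders, contradicting Lemma~\ref{J}(i)), but the lower bound --- that $A\cap Q^g$ is strictly larger than $V$ --- is precisely the substantive content of (iv), and without it your ``unique $w$-invariant $2$-space'' argument proves nothing: $V$ itself is a $w$-invariant subgroup of $A$ containing the fixed line. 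The paper supplies the lower bound from Lemma~\ref{Pfacts2}(ii): since $A\le C_Q(V)\le W$ and $W/(Q\cap Q^g)$ is abelian, $[A,W]=[A,w]\le Q\cap Q^g$, and $[A,w]Z/Z$ is exactly the trace-zero hyperplane $\langle q_1q_2^{-1},q_2q_3^{-1}\rangle Z/Z$.

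In (v) your computation that $(Q\cap Q^g)A=A\langle \wt q_1\wt q_2\wt q_3\rangle$ is correct, but the proposed promotion via $f=r_1r_2r_3$ fails. Writing a general element of the relevant coset as $x=\wt q_1\wt q_2\wt q_3\,q_1^iq_2^jq_3^lz^m$, one computes $x^f=x^{-1}z^{2m-i-j-l}$; so $f$ maps the whole coset $\wt q_1\wt q_2\wt q_3A$ onto $(\wt q_1\wt q_2\wt q_3)^{-1}A$ and the $f$-invariance of $Q\cap Q^g$ imposes no condition on $i+j+l$. It therefore cannot distinguish $\wt q_1\wt q_2\wt q_3$ from, say, $\wt q_1\wt q_2\wt q_3q_1$ modulo $A\cap Q^g$, and these two elements generate genuinely different order-$3^4$ subgroups with $A\cap Q^g$ (note $C_A(\wt q_1\wt q_2\wt q_3a)=A\cap Q^g$ for every $a\in A$, so the ambiguity is not absorbed). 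The tool the paper uses is Lemma~\ref{Pfacts1}(iii): $[Q\cap Q^g,w]\le[Q\cap Q^g,P]=V$, and the condition $[x,w]\in V$ forces $i+j+l\equiv 0\pmod 3$, i.e.\ $q_1^iq_2^jq_3^lz^m\in A\cap Q^g$. Replacing your $f$-argument by this $w$-argument (whose companion fact $[A,w]\le Q\cap Q^g$ also repairs (iv)) closes both gaps.
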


\begin{proof} We have that $V$ is centralized by $W$ and $\ov W = \langle wQ,x_{123}Q\rangle$, hence (i) holds and
(ii) follows from that. Part (iii) is the definition of $A$. Since $A \le C_Q(V)\le W$, $[A,W]= [A,w] \le Q^g$
and this gives (iv). Finally, since  $[Q\cap Q^g, W]= V$ and so we get (v).
\end{proof}}

\begin{lemma}\label{Zconjs1} $A$ contains exactly $13$ conjugates of $Z$ and $A\cap Q^g$ contains exactly $4$ $G$-conjugates of $Z$.
\end{lemma}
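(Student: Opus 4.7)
The strategy is to classify non-identity elements of $A$ by \emph{weight}: the number of nonzero exponents $a_i$ in the unique expression $a = z^\epsilon q_1^{a_1}q_2^{a_2}q_3^{a_3}$ afforded by Lemma~\ref{QQg}(iii), and then to show that the $G$-conjugacy class of $\langle a \rangle$ depends only on this weight. Since $[q_i,\wt q_i]=z$ gives $q_i^{\wt q_i^k} = q_iz^k$ while $\wt q_i$ centralizes $z$ and centralizes $q_j$ for $j\neq i$, conjugating by a suitable power of $\wt q_i$ eliminates the $z^\epsilon$-factor whenever some $a_i\neq 0$. Combining this with the involutions $r_i\in H$ (which invert $q_i$ and centralize $q_j$ for $j\neq i$) and the wreathing element of $H$ that cyclically permutes the three factors $Q_i$, every weight-$1$ cyclic subgroup of $A$ is $H$-conjugate to $\langle q_1\rangle$, every weight-$2$ cyclic subgroup to $\langle q_1q_2\rangle$, and every weight-$3$ element lies, after $H$-conjugation, inside $V$.

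By Lemma~\ref{fusion1}, the subgroups $\langle q_1\rangle$ and $\langle q_1q_2\rangle$ are not $G$-conjugate to $Z$, while Lemma~\ref{Pfacts1}(iv) guarantees that all four subgroups of $V$ of order $3$ are $G$-conjugate to $Z$. Hence $\langle a\rangle$ is $G$-conjugate to $Z$ precisely when $a$ has weight $0$ or $3$. Counting: there are $2$ non-identity weight-$0$ elements (contributing only $Z$) and $3\cdot 2^3 = 24$ weight-$3$ elements in $A$, and each cyclic subgroup accounts for two non-identity elements, producing $1+12=13$ conjugates of $Z$ in $A$.

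For $A\cap Q^g$, Lemma~\ref{QQg}(iv) writes each element uniquely as $z^\epsilon(q_1q_2q_3)^\beta(q_1q_2^{-1})^\gamma$, which in the basis $\{z,q_1,q_2,q_3\}$ becomes $z^\epsilon q_1^{\beta+\gamma}q_2^{\beta-\gamma}q_3^\beta$. Examining the nine pairs $(\beta,\gamma)\in\GF(3)^2$ one sees that whenever $\gamma\neq 0$ at least one of the three exponents $\beta$, $\beta+\gamma$, $\beta-\gamma$ vanishes in $\GF(3)$, so the only weight-$3$ elements of $A\cap Q^g$ are those with $\gamma=0$, which lie in $V$. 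Combined with the classification above, $A\cap Q^g$ contains exactly $Z$ together with the three weight-$3$ cyclic subgroups of $V$, giving $4$ conjugates of $Z$ in all. There is no substantial obstacle once Lemmas~\ref{fusion1}, \ref{Pfacts1}(iv) and \ref{QQg} are available; the only finicky point is the elementary $\GF(3)$-arithmetic check inside $A\cap Q^g$.
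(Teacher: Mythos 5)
Your proof is correct and follows essentially the same route as the paper's: the thirteen candidates the paper lists (the subgroups of order $3$ lying in the four groups $\langle Z, q_1q_2^{\pm 1}q_3^{\pm 1}\rangle$, all conjugate under $\Omega_1(R)Q$) are exactly $Z$ together with your weight-$3$ subgroups, and both arguments rest on Lemma~\ref{Pfacts1}(iv) for $V$ and on Lemma~\ref{fusion1} (equivalently, the second-centre observation of Lemma~\ref{Pfacts2}(iv)) to dispose of the remaining weights. The only hair worth splitting is that the wreathing element need not send $q_1$ to $q_2$ on the nose, but since $R_iQ$ is transitive on $Q_i\setminus Z$ this costs nothing.
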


\begin{proof}
Since the images of $G$-conjugates of $Z$ contained in $Q$ are $3$-central in $N_G(Z)/Z$ by Lemma~\ref{Pfacts2}
(iv), the conjugates of $Z$ in $Q$ are $N_G(Z)$-conjugate to $\langle q_1q_2q_3\rangle$ by Lemma~\ref{fusion1}.
{Therefore, in $A = \langle z, q_1,q_2,q_3\rangle$ we have thirteen candidates  for such subgroups and they are
in the four groups $$\langle Z, q_1q_2q_3\rangle\;, \langle Z, q_1q_2^{-1}q_3\rangle\;,\langle Z,
q_1q_2q_3^{-1}\rangle\text{ and }\langle Z, q_1q_2^{-1}q_3^{-1}\rangle.$$ As all these groups are conjugate in
$\Omega_1(R)Q$, we see that $A$ contains exactly thirteen conjugates of $Z$. Now $A \cap Q^g= \langle
z,q_1q_2^{-1},q_2q_3^{-1}\rangle$ contains four conjugates of $Z$ all of which are contained in $V$.}
\end{proof}

\begin{lemma}\label{conjugates in Z} $J_0$ contains exactly $40$ subgroup which are $G$-conjugate to $Z$ and they are all
contained in $J$. In particular, $N_G(J) \ge N_G(J_0)$ and $|N_G(J)/J_0| = 2^{7+i}\cdot 3^4\cdot 5$ where $i$ is
such that $2^{i+2}= |N_G(S)/S| \le 8$.
\end{lemma}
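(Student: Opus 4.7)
The plan is to enumerate the $G$-conjugates of $Z$ inside $J_0$, verify that there are exactly 40 of them and that they all lie in $J$, and then compute $|N_G(J)/J_0|$ from the orbit--stabiliser equation applied to the fusion-controlling normaliser. From Lemma~\ref{Zconjs1} I already have 13 such conjugates inside $A$; since $A$ is a maximal abelian subgroup of $Q$ (Lemma~\ref{Pfacts2}(ii)) and $J_0$ is abelian, $J_0\cap Q = A$, so these 13 exhaust the $G$-conjugates of $Z$ lying in $Q$.

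To produce conjugates of $Z$ in $J\setminus A$, I will, for each $Z_i = Z^{g_i}\le A$ with $Z_i\neq Z$, work with the pair $(Q,Q^{g_i})$ and the $\SL_2(3)$-type subgroup $P_i = \langle Q,Q^{g_i}\rangle$. Lemmas~\ref{Pfacts1}, \ref{Pfacts2} and~\ref{QQg} describe $P_i$, its action on $Q\cap Q^{g_i}$ and on $V_i = ZZ_i$, and pin down the position of the Thompson subgroup $J^{g_i}$ of $S^{g_i}$ relative to $J_0$. Using these descriptions together with the $H$-action on $A$, one checks that the further conjugates of $Z$ produced this way lie inside $J$ and, after applying Lemma~\ref{fusion1} to avoid double counting, contribute exactly $27$ new cyclic subgroups, bringing the total to $40$.

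Next I must rule out $G$-conjugates of $Z$ in $J_0\setminus J$. This is vacuous when $S=QW$ because $J=J_0$ in that case by Lemma~\ref{J}(iv), so assume $S>QW$ and $|J_0:J|=3$. Any candidate $u\in J_0\setminus J$ has image $\ov u\in\ov B\setminus\overline{\langle A,x_{123}\rangle}$, and Lemma~\ref{added}(iii) then gives $|C_{Q/Z}(u)|\ge 3^3$. This centraliser is too large to fit inside a conjugate of $H$ with $u$ playing the role of $z$: applying Lemma~\ref{fusion1} to a conjugate of $Q$ containing $u$ forces $u$ into the analogue of $A$, contradicting $u\notin J$. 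Hence all 40 conjugates of $Z$ in $J_0$ lie in $J$. Since $J$ is the subgroup of $J_0$ generated by these 40 conjugates, any element of $N_G(J_0)$ normalises $J$, giving $N_G(J)\ge N_G(J_0)$.

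For the numerical conclusion I invoke Lemma~\ref{controlfus}, which says that $N_G(J(S))$ controls $G$-fusion in $J_0$; consequently it acts transitively on the 40 conjugates. The stabiliser of $Z$ in $N_G(J)$ is $N_{N_G(Z)}(J_0)$, whose order is read off from Lemma~\ref{NHJ}, the hypothesis $|N_G(S)/S|=2^{i+2}$, and the structure of $H$ recorded in Lemma~\ref{Horder}. Multiplying this stabiliser order by $40 = 2^3\cdot 5$ and dividing by $|J_0|$ yields $|N_G(J)/J_0| = 2^{7+i}\cdot 3^4\cdot 5$. The main obstacle will be the counting in the second paragraph and the centraliser argument ruling out conjugates in $J_0\setminus J$; both rest on meticulous bookkeeping of the intersections $Q\cap Q^g$ and the rigid $\SL_2(3)$-module structure supplied by $P/W$.
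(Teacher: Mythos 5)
Your overall strategy (count in $A$, extend to $J$, exclude $J_0\setminus J$, then orbit--stabiliser) matches the paper's, but two of the key steps are not actually carried out and one of them, as sketched, would fail.

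First, the count of $40$. Your plan is to ``produce'' new conjugates of $Z$ from the various pairs $(Q,Q^{g_i})$ and assert that they contribute exactly $27$ new subgroups; but nothing in your sketch shows that \emph{every} $G$-conjugate of $Z$ lying in $J$ arises this way, so completeness of the enumeration is unproved. The paper's argument supplies exactly the missing structural fact: since $J/(J\cap Q\cap Q^g)$ is a natural $\SL_2(3)$-module for $P$ (Lemma~\ref{Pfacts2}(ii)), $J$ is the union of the four $P$-conjugates of $J\cap Q=A$, pairwise meeting in $A\cap Q\cap Q^g$. Then Lemma~\ref{Zconjs1} gives $13$ conjugates of $Z$ in each $A^x$ and $4$ in the common intersection, and inclusion--exclusion yields $4\cdot 13-3\cdot 4=40$. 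Without a covering statement of this kind your ``exactly $27$ new'' is an assertion, not a proof.

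Second, and more seriously, your exclusion of conjugates of $Z$ in $J_0\setminus J$ does not work. If $u=z^g\in J_0\setminus J$, then $|C_{Q/Z}(u)|\ge 3^3$ is not ``too large to fit inside a conjugate of $H$'': $C_G(u)=H^g$ has Sylow $3$-subgroups of order $3^{10}$ here, so there is no size obstruction. Moreover Lemma~\ref{fusion1} only controls fusion of elements of $Q$ that are \emph{not} conjugate to $z$, so it cannot be applied to $u$ to ``force $u$ into the analogue of $A$''; and $u\notin Q$ anyway, so the statement has no bearing on $u$. The paper instead shows that each $S$-class of such subgroups has length $27$, so the total number of conjugates of $Z$ in $J_0$ would be $40+27i$ with $1\le i\le 7$; only $i=3$ is compatible with divisibility of $|\GL_6(3)|$, and the resulting element of order $121$ acting on $J_0$ contradicts Lemma~\ref{Zconjs1}. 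Some argument of this calibre is needed; the centraliser-size heuristic you propose is not a substitute. The final orbit--stabiliser computation of $|N_G(J)/J_0|$ is fine once the $40$ and the containment $N_G(J_0)\le N_G(J)$ are established.
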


\begin{proof} By Lemma~\ref{Zconjs1}, we have that $A= J\cap Q$ contains exactly thirteen conjugates of $Z$ and
$J \cap Q^g \cap Q= A \cap Q\cap Q^g= \langle z,q_1q_2^{-1},q_2q_3^{-1}\rangle$ contains exactly four conjugates
of $Z$. We have that both $J$ and $J \cap Q \cap Q^g$ are normal in $P$. As $J/(J \cap Q \cap Q^g)$ is a natural
$P$-module by Lemma \ref{Pfacts2}(ii), we see that $J = \cup_{x \in P}(J \cap Q)^x$ {is a union of four
conjugates of $J\cap Q$ pairwise meeting in $J\cap Q\cap Q^g$. This gives, using the inclusion exclusion
principle and Lemma~\ref{fusion1}, that there are exactly $4 \cdot 13 - 3 \cdot 4 = 40$ conjugates of $Z$ in
$J$.} In particular, $J_0 =\langle Z^g \mid Z^g \le J_0\rangle$.

Suppose that $J_0 >J$.
 Then $|R| = 2^9$ and $\ov S \cong 3^{1+2}_+$. If $N_G(J_0)$ normalizes $J$ then Lemma~\ref{controlfus} delivers the result. So we may
assume that $N_G(J)$ does not normalize $J_0$.
  Suppose that $X$ is a subgroup of $J$ of order $3$ and that $X \not \le
J_0$.  Then $\ov X\le \ov B$ and $\ov X \not= \ov J_0$ is conjugate to $\ov{\langle x_{2}x_3^{-1}\rangle}$ and so
we have that $C_{Q}(X)$ is conjugate to $Q_1A$ which has order $3^5$. Thus $XA$ is normalized by $Q$, $|X^Q|=
3^2$ and, {as $|(XQ)^{S}|=3$,} $|X^S| = 27$.

Hence, taking $X$ to be a conjugate of $Z$, yields that there are $40 + 27i$ conjugates of $Z$ contained in $J_0$
where $1 \le i \le 9$. If there is some non-trivial element of $A$ which has all its $G$-conjugates contained in
some proper subgroup of $J$, then we have that this subgroup is normal in $N_G(J_0) \ge S$ and so contains $Z$.
But then $Z$ is trapped in this subgroup, a contradiction. By Lemma \ref{fusion1} there are at least two
$G$-conjugacy classes  of cyclic subgroups different from $Z$ in $A$ and so
 there are at
least 54 cyclic subgroups of $J_0$ not in  $J$, which are not $G$-conjugate to $Z$. It follows that $i \le 7$.
Now the only non-zero $i$ which has $40 + 27i$ dividing $|\GL_6(3)|$ is $i=3$. This means that there are 121
conjugates of $Z$ in $J_0$ and that $N_G(J_0)$ contains a cyclic group $D$ of order $121$. Let $J_1 \le J$ have
order $3^5$ be normalized by $D$. Then $D$ acts transitively on the cyclic subgroups of $J_1$  and consequently
$J_1\cap Q = J_1 \cap A$ which has order $27$ has only one $G$-class of cyclic subgroups. As $Z \not\leq J_1 \cap
A$, we get that $(J_1 \cap A)Z = A$. Now all elements of  $A$ not in $Z$ are conjugate, which contradicts
Lemma~\ref{Zconjs1}. Now we have that all the $G$-conjugates of $Z$ in $J_0$ are contained in $J$. Thus $N_G(J_0)
\le N_G(J)$.

\end{proof}

\begin{lemma}\label{q1conjs}  There are $36$ conjugates of $\langle q_1\rangle$ in $J$. In particular, $\langle q_1\rangle$ is centralized by an element of order $5$ in $N_G(J_0)$
\end{lemma}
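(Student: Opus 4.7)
Set $V_0 = A \cap Q^g = \langle V, q_1 q_2^{-1}\rangle$ by Lemma~\ref{QQg}, elementary abelian of order $3^3$. My plan is to exhibit $J$ as the union of four $P$-conjugates of $A$ which pairwise meet in $V_0$, count conjugacy class representatives inside $V_0$ and inside each hyperplane, and sum. First, in $A/Z = \langle \bar q_1, \bar q_2, \bar q_3\rangle$ the subgroup $\Omega_1(R)$ acts by sign changes on each coordinate while a wreathing element of $N_H(A)$ cyclically permutes $\bar q_1, \bar q_2, \bar q_3$, so the $13$ subgroups of $A/Z$ of order $3$ split into three $H$-orbits of sizes $4$, $3$, $6$ corresponding to generators with three, one, or two non-zero coordinates respectively. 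Each lifts to three subgroups of $A$, giving (with Lemmas~\ref{Zconjs1} and~\ref{fusion1}) precisely $13$ conjugates of $Z$, $9$ conjugates of $\langle q_1\rangle$, and $18$ conjugates of $\langle q_1 q_2\rangle$ in $A$.

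By Lemma~\ref{Pfacts1}(iv), $P/W \cong \SL_2(3)$ acts transitively on the four order-$3$ subgroups of $V$; let $Q^{(1)} = Q$, $Q^{(2)} = Q^g$, $Q^{(3)}$, $Q^{(4)}$ be the corresponding $P$-conjugates of $Q$ and set $A_i = J \cap Q^{(i)}$, each a hyperplane in the five-dimensional $\GF(3)$-space $J$. By Lemma~\ref{Pfacts1}(iii), $P$ centralizes $(Q \cap Q^g)/V$; since $V$ is $P$-normal and $V_0/V \le (Q\cap Q^g)/V$, $V_0$ is itself $P$-normal, whence $V_0 \le Q^{(i)}$ and therefore $V_0 \le A_i$ for every $i$. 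Since the pairwise intersections $A_i \cap A_j$ ($i \neq j$) all have order $3^3 = |V_0|$, they must equal $V_0$, and inclusion-exclusion closes:
\[
|A_1 \cup A_2 \cup A_3 \cup A_4| = 4 \cdot 3^4 - 6 \cdot 3^3 + 4 \cdot 3^3 - 3^3 = 243 = |J|,
\]
so $J = \bigcup_i A_i$ and the sets $A_i \setminus V_0$ are pairwise disjoint.

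A direct calculation using $V_0 = \langle z, q_1q_2q_3, q_1q_2^{-1}\rangle$ shows every element of $V_0 \setminus V$ reduces modulo $Z$ to a vector with exactly two non-zero $\bar q_i$-coordinates, so the nine cyclic subgroups of $V_0$ outside $V$ all lie in the class of $\langle q_1 q_2\rangle$; combined with the four conjugates of $Z$ in $V$ from Lemma~\ref{Pfacts1}(iv), $V_0$ contains no conjugate of $\langle q_1\rangle$. Each $A_i$ being $P$-conjugate to $A$ has the same statistics, so each $A_i \setminus V_0$ contains $9 - 0 = 9$ conjugates of $\langle q_1\rangle$, yielding $4 \cdot 9 = 36$ conjugates in $J$ in total. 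For the second assertion, $N_G(J)$ acts transitively on these $36$ subgroups by Lemma~\ref{controlfus}, so the stabilizer has index $36$; by Lemma~\ref{conjugates in Z}, $5$ divides $|N_G(J)/J_0|$, and a Sylow $5$-subgroup $P_5$ of order $5$ lies inside $N_G(J_0)$. As $\gcd(5,36) = 1$, $P_5$ fixes some conjugate of $\langle q_1\rangle$; relabeling this conjugate as $\langle q_1\rangle$, $P_5$ normalizes it and, since $|\Aut(\langle q_1\rangle)| = 2$, centralizes $\langle q_1\rangle$.

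The principal delicacy is the $P$-invariance of $V_0$ in the second paragraph, which forces the pairwise intersections $A_i \cap A_j$ to coincide and thereby makes the inclusion-exclusion close exactly to $|J|$. Once that is in hand, the remaining class counts and the Sylow-$5$-orbit argument are routine.
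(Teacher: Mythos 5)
Your proposal is correct and follows essentially the same route as the paper: $J$ is written as the union of the four $P$-conjugates of $A = J\cap Q$ meeting pairwise in $J\cap Q\cap Q^g$, one counts $9$ conjugates of $\langle q_1\rangle$ in each and none in the common intersection (via Lemma~\ref{fusion1} and Lemma~\ref{QQg}(iv)), and then uses $5\mid |N_G(J_0)|$ together with $5\nmid 36$. Your extra details (the explicit coordinate check in $V_0$ and the inclusion--exclusion, which the paper replaces by the natural-module argument already recorded in the proof of Lemma~\ref{conjugates in Z}) are sound but not a different method.
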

\begin{proof} In $J \cap Q$, there are nine $N_H(J)$-conjugates of $\langle q_1\rangle$
(which are already conjugate in $QW$) and in $Q\cap Q^g\cap J$ there are none by Lemmas~\ref{fusion1} and
\ref{QQg} (iv). Again as $J$ is the union of the four $P$-conjugates of $J \cap Q$, we
 have $4\cdot 9$ conjugates of $\langle q_1\rangle$ in $J$.
 Since, by Lemma~\ref{conjugates in Z}, $|N_G(J_0)|$ is divisible by $5$,
 we have that some element of order $5$ in $N_G(J_0)$ centralizes $\langle q_1\rangle$.
\end{proof}

\begin{lemma}\label{N_G(J)}
$N_G(J)/J_0 \cong \Omega_5(3).2$ or $\Omega_5(3).2 \times 2$. In particular, $r_1$ centralizes an element of order $5$ in $N_G(J)$.
\end{lemma}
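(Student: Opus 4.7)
The plan is to recognize $L := N_G(J)/J_0$ as a subgroup of $\GO_5(3)$ acting on $J \cong \GF(3)^5$, and then to pin $L$ down by comparing orders. By Lemma~\ref{conjugates in Z}, $|L| = 2^{7+i}\cdot 3^4\cdot 5$ with $i \in \{0,1\}$, while $|\GO_5(3)| = 2^8\cdot 3^4\cdot 5$ and $|\Omega_5(3).2| = 2^7\cdot 3^4\cdot 5$. First I would verify that the action of $L$ on $J$ is faithful: its kernel is $C_{N_G(J)}(J)/J_0$, and $J_0$ already centralizes $J$. To rule out further centralizing elements, note that any such element centralizes $Z \le J$ and hence lies in $H$; using the embedding $\ov H \hookrightarrow \Sp_2(3) \wr \Sym(3)$ together with the fact that the images of $q_1, q_2, q_3$ span $1$-dimensional subspaces in three distinct $\Sp_2(3)$-factors (pointwise-fixed only by the unipotent radicals), one shows $C_{N_G(J)}(J)$ is a $3$-group, at which point $C_S(J) = J_0$ follows from the explicit structure of $S$ in Lemma~\ref{J}. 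This step, in particular ruling out $2$-elements outside $J_0$ centralizing $J$, is where I expect the main obstacle.

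Next I would exploit the $N_G(J)$-orbit structure on the $121$ one-subspaces of $J$. By Lemma~\ref{controlfus} these orbits coincide with the $G$-classes meeting $J$, of which by Lemma~\ref{fusion1} there are three: the classes of $\langle z\rangle$, $\langle q_1\rangle$ and $\langle q_1q_2\rangle$. Lemma~\ref{conjugates in Z} gives $40$ conjugates of $\langle z\rangle$ in $J$; Lemma~\ref{q1conjs} gives $36$ conjugates of $\langle q_1\rangle$; by subtraction there are $121 - 40 - 36 = 45$ conjugates of $\langle q_1q_2\rangle$. These three orbit sizes exactly match the orbit sizes of $\GO_5(3)$ on the $1$-spaces of a $5$-dimensional non-degenerate orthogonal $\GF(3)$-space: $40$ isotropic, $45$ non-singular of one discriminant type, $36$ of the other. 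Since in dimension $5$ over $\GF(3)$ a non-degenerate quadratic form is determined up to nonzero scalar by its set of isotropic vectors, any subgroup of $\GL_5(3)$ preserving this tripartition lies in $\GO_5(3)$. Hence $L \hookrightarrow \GO_5(3)$.

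Order comparison yields the conclusion. Any subgroup of $\GO_5(3)$ of index at most $2$ contains the simple group $\Omega_5(3)$, so $L \ge \Omega_5(3)$. When $i = 0$, $L$ is an index-$2$ subgroup of $\GO_5(3)$ containing $\Omega_5(3)$, hence $L \cong \Omega_5(3).2$. When $i = 1$, $L = \GO_5(3) = \mathrm{SO}_5(3) \times \langle -I\rangle \cong \Omega_5(3).2 \times 2$, the direct-product splitting arising because $\det(-I) = -1$ in odd dimension $5$.

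For the final claim, the involution $r_1 \in \Omega_1(Z(R))$ normalizes $J$ by Lemma~\ref{NHJ} and acts on $J$ inverting $q_1$ and centralizing $z, q_2, q_3$. A commutator computation in $\ov H \le \Sp_2(3) \wr \Sym(3)$ shows $[\ov{r_1}, \ov{x_{123}}] = 1$, and using $r_1^2 = 1$ one checks that the only possible action of $r_1$ on $x_{123}$ modulo $J$ is translation by a scalar multiple of $q_1$; in either case the $(-1)$-eigenspace of $r_1$ on $J$ is exactly $\langle q_1\rangle$, so $r_1$ acts on $J$ as a reflection. Since the $N_G(J)$-orbit of $\langle q_1\rangle$ has size $36$ (the non-square discriminant type), the form induced on $\langle q_1\rangle^\perp$ has minus type, and so $C_{\Omega_5(3)}(r_1) \supseteq \Omega_4^-(3) \cong \Alt(6)$, a group of order $360$ containing elements of order $5$. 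Any preimage in $N_G(J)$ of such an element of order $5$ commutes with $r_1$, as required.
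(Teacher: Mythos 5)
Your route is genuinely different from the paper's: you try to embed $L=N_G(J)/J_0$ into $\GO_5(3)$ by matching the orbit lengths $40,36,45$ on the $121$ one\nobreakdash-subspaces of $J$ against the orbit lengths of $\GO_5(3)$, and then finish by comparing orders. The paper instead builds a synthetic geometry: it takes $\mathcal P=Z^{N_G(J)}$ as points and $\mathcal L=V^{N_G(J)}$ as lines (where $V=ZZ^g$ carries exactly four conjugates of $Z$), verifies directly from Lemmas~\ref{Pfacts1}, \ref{Zconjs1} and the equality $|J:J\cap Q_X|=3$ that this is a generalized quadrangle with parameters $(3,3)$, and then invokes Payne's uniqueness theorem to identify the induced group with a subgroup of $\Omega_5(3).2$.

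The gap in your version is precisely at the step that the paper's quadrangle construction exists to supply. Knowing that $L\le\GL_5(3)$ has three orbits of sizes $40$, $36$ and $45$ does \emph{not} show that $L$ preserves a quadratic form: the sentence ``any subgroup of $\GL_5(3)$ preserving this tripartition lies in $\GO_5(3)$'' is only valid once you already know that the tripartition \emph{is} the partition into isotropic points and the two discriminant classes of some non-degenerate form, i.e.\ that the $40$ conjugates of $Z$ actually constitute a quadric in $\mathrm{PG}(4,3)$. A coincidence of orbit cardinalities does not produce the form, and the fact that a form is determined by its isotropic set is then irrelevant. Some genuine geometric or group-theoretic input is needed here -- the paper gets it from the incidence structure on $Z^{N_G(J)}$ and $V^{N_G(J)}$ together with Payne's classification of $\mathrm{GQ}(3,3)$; without a substitute your order comparison has nothing to compare against. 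Two smaller points: you assert $i\in\{0,1\}$, whereas Lemma~\ref{conjugates in Z} only gives $2^{i+2}=|N_G(S)/S|\le 8$, so $i=-1$ must be excluded (it can be, e.g.\ once $L\ge\Omega_5(3)$ is known, since $\Omega_5(3)$ already has order $2^6\cdot3^4\cdot5$, but you use $i\ge 0$ before establishing the embedding); and the count of $45$ conjugates of $\langle q_1q_2\rangle$ by subtraction needs the fact, proved inside Lemma~\ref{conjugates in Z}, that $J$ is the union of four $P$-conjugates of $J\cap Q$, so that every subgroup of order $3$ in $J$ is conjugate into $Q$ and Lemma~\ref{fusion1} applies. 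Your closing argument that $r_1$ acts as a reflection with non-perpendicular axis of minus type, so that $C_{\Omega_5(3)}(r_1)$ involves $\Omega_4^-(3)\cong\Alt(6)$, is correct and agrees with the paper.
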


\begin{proof}{ Let $M=  N_G(J)$, $\mathcal P = Z^M$  and $\mathcal L = V^M$. We call the elements of $\mathcal P$
points and those in $\mathcal L$ lines.  For $X\in \mathcal P$ and $Y \in \mathcal L$, declare $X$ and $Y$ to be
incident if and only if $X \le Y$. We claim the this makes $(\mathcal P, \mathcal L)$ into a generalized
quadrangle with parameters $(3,3)$.

For $X =Z^m \in \mathcal P$, $m \in M$, we set $Q_x =O_3(C_G(x))=Q^m$.

By Lemma~\ref{Pfacts1} (iv), we have $4$ points on each line. Suppose that $Z \le V^m\in \mathcal L$. Then either
$Z^m = Z$ or $Z^m \neq Z$ and $Z \le Q^m$. In the first case $m \in H\cap M$ and $V^m \le J\cap Q_Z$ and, in the
second case, we have $Z^m \le Q$ by Lemma~\ref{Pfacts1} (i) and so    $V^m \le Q_Z$ again. Thus, if $X \in
\mathcal P$ is incident to a line $L\in \mathcal L$, then $L \le J\cap Q_X$.

By Lemma~\ref{Zconjs1}   there are twelve $M$-conjugates of $Z$ in $(J \cap Q) \setminus Z$ and each of them
forms a line with $Z$. Thus $Z$ is contained in exactly $4$ lines and, furthermore, any two lines containing $Z$
meet in exactly $Z$ and any two points determine exactly one line.

Now suppose that $L\in \mathcal L$ is a line which is not incident to $X \in \mathcal P$. Then, as $|J:J\cap
Q_X|= 3$, we have $L \cap (J\cap Q_X)$ is a point and this is the unique point of $L$ which is collinear to $X$.
It follows that $(\mathcal P,\mathcal L)$ is a generalized quadrangle with parameters $(3,3)$.} By \cite{Payne1}
there is up to duality a unique such quadrangle. Hence we have that $N_G(J)/J_0$ induces a subgroup of
$\Omega_5(3).2$ on the quadrangle. Using Lemma \ref{conjugates in Z}, we see that the full group is induced. As
there might be some element which inverts $J$ and so acts trivially on $(\mathcal P,\mathcal L)$, we get the two
possibilities as stated.

Finally, as $r_1$ acts as a reflection on $J$, we see that $r_1$ centralizes an element of order 5.\end{proof}

\begin{lemma}\label{alt6} We have $F^*(C_{N_G(J)}(q_1)/J_0) \cong \Alt(6) \cong \Omega_4^-(3)$.\end{lemma}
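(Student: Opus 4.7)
The plan is to identify $C_{N_G(J)}(q_1)/J_0$ as the stabiliser of the vector $q_1$ inside the natural orthogonal module $J$ for $\ov M := N_G(J)/J_0$, and then to read off its structure from the orbit count of Lemma~\ref{q1conjs}.

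First I would note that $q_1 \in A \le J \le J_0$ and that $J_0$ is elementary abelian by Lemma~\ref{J}, so $J_0 \le C_{N_G(J)}(q_1)$ and the quotient $C_{N_G(J)}(q_1)/J_0$ is well-defined. Moreover $J_0$ centralises $J$ (since $J \le J_0$ and $J_0$ is abelian), so $\ov M$ acts on $J$, and $C_{N_G(J)}(q_1)/J_0$ is precisely the pointwise stabiliser of $q_1 \in J$ under this action. Next, recall from the proof of Lemma~\ref{N_G(J)} that the $40$ $G$-conjugates of $Z$ inside $J$ form the point set of a $\mathrm{GQ}(3,3)$ on which $\ov M$ induces the full automorphism group $\Omega_5(3).2 \cong \mathrm{PGSp}_4(3)$, and hence that $J$ is the natural $5$-dimensional orthogonal $\mathbb{F}_3$-module for $\Omega_5(3)$ with singular $1$-spaces corresponding to points of the quadrangle.

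I would then invoke Lemma~\ref{q1conjs}, which gives that the $\ov M$-orbit of $\langle q_1\rangle$ has length $36$. Since neither $\langle q_1\rangle$ nor its $N_G(Z)$-conjugates represent $G$-conjugates of $Z$ (by Lemma~\ref{fusion1}), the $1$-space $\langle q_1\rangle$ is non-singular. In the natural $\Omega_5(3)$-module over $\mathbb{F}_3$ the non-singular $1$-spaces split into two $\Omega_5(3)$-orbits of lengths $36$ and $45$, distinguished by whether the orthogonal complement is of $-$-type or $+$-type respectively (a short count from the equation $Q(v)=\pm 1$ on a $5$-dimensional form of Witt index $2$). Hence $\langle q_1\rangle$ falls in the length-$36$ orbit, $q_1^\perp$ is a non-degenerate $4$-dimensional $\mathbb{F}_3$-space of minus type, and the pointwise stabiliser of $q_1$ in $\Omega_5(3)$ is $\Omega(q_1^\perp) \cong \Omega_4^-(3) \cong \mathrm{PSL}_2(9) \cong \Alt(6)$. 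This simple group is a normal subgroup of $C_{\ov M}(q_1)$, and therefore is a component, contributing $\Alt(6)$ to $E(C_{\ov M}(q_1))$.

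The remaining and most delicate point is to verify that no additional normal $2$-subgroup appears through the outer $.2$ of $\ov M$, so that $F^* = E = \Omega_4^-(3)$. I would handle this by examining how the elements of $\ov M\setminus \Omega_5(3)$ act on $J$: either such an element inverts $J$ (as in the $\Omega_5(3).2\times 2$ case of Lemma~\ref{N_G(J)}), in which case it sends $q_1$ to $q_1^{-1}$ and lies outside the centraliser; or else the outer elements act on $q_1^\perp$ as elements of $O(q_1^\perp)\setminus \SO(q_1^\perp)$, inducing genuine outer automorphisms of $\Omega_4^-(q_1^\perp)\cong\Alt(6)$ rather than central involutions, so $C_{\ov M}(q_1)/\Omega_4^-(q_1^\perp)$ embeds in $\mathrm{Out}(\Alt(6))$ without introducing a central $Z_2$. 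The main obstacle is pinning down the precise embedding of $\ov M$ in $\GL(J)$ to exclude a scalar $-I|_{q_1^\perp}$-type involution lifting into the centraliser; once this is done, $O_2(C_{\ov M}(q_1))=1$, and $F^*(C_{N_G(J)}(q_1)/J_0) = \Omega_4^-(3)\cong\Alt(6)$ as required.
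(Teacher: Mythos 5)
Your argument follows essentially the same route as the paper's, just entering through a different door of Lemma~\ref{q1conjs}. The paper observes that $r_1$ inverts $q_1$ and acts on $J$ as a reflection, so that $C_{N_G(J)}(q_1)/J_0$ sits inside the orthogonal group of the $4$-space $q_1^\perp=C_J(r_1)$, and then uses the element of order $5$ centralizing $q_1$ (the ``in particular'' clause of Lemma~\ref{q1conjs}) to force the type to be minus, since $|\Omega_4^+(3)|=288$ is not divisible by $5$. You instead use the orbit length $36$ from the same lemma to place $\langle q_1\rangle$ in the correct $\Omega_5(3)$-orbit of non-singular points and read off the minus type from $36=25920/720$ as against $45=25920/576$; your count, and your identification of the stabiliser of the vector $q_1$ in $\Omega_5(3)$ as $\Omega_4^-(3)\cong\PSL_2(9)\cong\Alt(6)$, are both correct. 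So the two proofs differ only in which consequence of Lemma~\ref{q1conjs} is used to rule out $\Omega_4^+(3)$, and both are equally short up to the last step.

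The step you leave open --- excluding an involution of $C_{\ov M}(q_1)$ acting as $-I$ on $q_1^\perp$ --- is the one real subtlety, and you are right to flag it: the paper's own proof is silent on exactly this point. Note that $-I_{q_1^\perp}\notin\Omega_4^-(3)$ (indeed $\Omega_4^-(3)\cong\PSL_2(9)$ is centreless), so if $\ov M\cong\Omega_5(3).2\times 2$ with the extra direct factor generated by the element inverting $J$ (the second alternative of Lemma~\ref{N_G(J)}), then the product of that element with the reflection $\ov{r_1}$ lies in $\ov M$, fixes $q_1$, acts as $-I$ on $q_1^\perp$, and hence centralises the component; in that case $F^*(C_{\ov M}(q_1))\cong 2\times\Alt(6)$ rather than $\Alt(6)$. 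Closing your gap therefore requires either eliminating the $\Omega_5(3).2\times 2$ possibility (which has not been done at this stage of the paper) or weakening the conclusion to $E(C_{N_G(J)}(q_1)/J_0)\cong\Alt(6)$. The latter is all that is used downstream --- Lemma~\ref{u62} only needs that $N_{C_G(q_1)}(J)$ involves $\Alt(6)$ --- so your proof, read as establishing the component, is sound and delivers exactly what the paper actually requires.
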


\begin{proof}  Because $q_1$ is inverted by $r_1$ and $r_1$ acts on $J$ as a reflection, we have that  $F^*(C_{N_G(J)}(q_1)/J_0)$ is an orthogonal group in dimension $4$. Since, by Lemma~\ref{q1conjs}, $q_1$ commutes with an element of order $5$, we have $F^*(C_{N_G(J)}(q_1)/J_0)\cong \Omega_4^-(3)\cong \Alt(6)$.
\end{proof}

\section{The Fischer group $\M(22)$ and its automorphism group}

In this section we will assume that $|R| = 2^7$ and determine the isomorphism type of $G$. Set $r = r_1$ and  $K
= C_G(r)$. Recall that $R$ is a subgroup of $R_1\times R_2\times R_3 \cong \Q_8\times Q_8\times\Q_8$ and $R \ge
\langle r_1, r_2, r_3\rangle= \Omega_1(Z(R))$.

\begin{lemma}\label{OmegaR} We have that $\Omega_1(Z(R)) \leq \Phi (R)$.
\end{lemma}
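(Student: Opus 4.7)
The strategy is to prove the stronger statement $R^2 = \Omega_1(Z(R))$, from which the inclusion $\Omega_1(Z(R)) \le \Phi(R)$ is immediate. Since $R$ has order $2^7$ and sits in $T := R_1 \times R_2 \times R_3 \cong \Q_8 \times \Q_8 \times \Q_8$ with $R \ge Z(T)$, we have $\Omega_1(Z(R)) = Z(T) = \langle r_1, r_2, r_3\rangle$, and $\bar R := R/\Omega_1(Z(R))$ embeds as a $4$-dimensional $\GF(2)$-subspace of $V := V_1 \oplus V_2 \oplus V_3$, where $V_i := R_i/\langle r_i\rangle$ is $2$-dimensional. For $x \in R$, the key formula
\[
x^2 = \prod_{i \,\in\, \mathrm{supp}(\bar x)} r_i, \qquad \mathrm{supp}(\bar x) := \{i : \bar x_i \neq 0\},
\]
shows that the squaring map is determined by the support of $\bar x$ across the three summands.

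The second ingredient is the wreathing element $w \in N_S(R)$ produced in Section~4, which cyclically permutes $Q_1, Q_2, Q_3$, and hence also the elements $r_1, r_2, r_3$ and the summands $V_i$, while preserving $\bar R$. Consequently the subspaces $U_i := \bar R \cap V_i$ are cyclically permuted by $w$, so either all three are zero or all three are nonzero.

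The argument now splits into these two cases. If $U_i \neq 0$ for all $i$, then any nonzero element of $U_i$ lifts to some $x \in R$ with $\mathrm{supp}(\bar x) = \{i\}$, so $r_i = x^2 \in R^2$ for each $i$ and hence $R^2 = \Omega_1(Z(R))$. If instead $U_i = 0$ for all $i$, each projection $\bar R \to V_j \oplus V_k$ has trivial kernel and is therefore a $\GF(2)$-isomorphism by dimension count, so $\bar R$ is the graph of some linear map $\phi: V_j \oplus V_k \to V_i$; the vanishing of $\bar R \cap V_j$ and $\bar R \cap V_k$ forces both $\phi|_{V_j}$ and $\phi|_{V_k}$ to be isomorphisms. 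Setting $u_k = 0$ with $u_j \neq 0$ produces an element of $\bar R$ with support $\{i,j\}$, so $r_i r_j \in R^2$ for every pair. Moreover, since $\phi|_{V_j}$ and $\phi|_{V_k}$ are bijections, only three of the nine pairs $(u_j, u_k)$ with both entries nonzero lie in $\ker\phi$, so six such pairs give elements with support $\{1,2,3\}$, whence $r_1 r_2 r_3 \in R^2$. The identity $r_1 r_2 \cdot r_1 r_2 r_3 = r_3$ then yields $R^2 = \Omega_1(Z(R))$.

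The main technical point is the second case: the absence of single-support vectors in $\bar R$ means one cannot realize the $r_i$ individually as squares, and one must instead combine the double-support squares $r_i r_j$ with the triple-support square $r_1 r_2 r_3$ to fill out all of $\Omega_1(Z(R))$.
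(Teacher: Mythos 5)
Your proof is correct, and it takes a genuinely different route from the paper's. You compute $R^2$ directly: using the squaring map $x \mapsto \prod_{i \in \mathrm{supp}(\bar x)} r_i$ on $\Q_8 \times \Q_8 \times \Q_8$ and the $w$-symmetry of the subspaces $U_i = \bar R \cap V_i$, you show in both cases (all $U_i$ nonzero, or all zero and $\bar R$ a graph over $V_j \oplus V_k$) that the squares already generate all of $\langle r_1,r_2,r_3\rangle$, so $\Omega_1(Z(R)) = R^2 \le \Phi(R)$. The paper instead argues by contradiction using commutators: after reducing via $w$-transitivity to the case where $\Phi(R)$ lies in the hyperplane $U = \langle r_1r_2, r_2r_3\rangle$, it observes that $(R_1\times R_2\times R_3)/U$ is extraspecial of order $2^7$, whose abelian subgroups have order at most $2^4$; since $|R/U| = 2^5$, $R/U$ is nonabelian, so $[R,R] \not\le U$, contradicting $\Phi(R) \le U$. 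What your approach buys is the stronger conclusion $R^2 = \Omega_1(Z(R))$ using only squares (no commutator or extraspecial-quotient input), and it bypasses the one delicate point in the paper's argument, namely justifying that a hyperplane containing $\Phi(R)$ can be chosen to avoid $\{r_1,r_2,r_3\}$ (which requires excluding $\Phi(R) = \langle r_1r_2r_3\rangle$, the unique nontrivial $w$-invariant subgroup not contained in $\langle r_1r_2, r_2r_3\rangle$); the paper's argument is shorter on the page but leans on that selection. Both proofs use the same two external inputs: the containment $\langle r_1,r_2,r_3\rangle \le R \le R_1\times R_2\times R_3$ with $|R| = 2^7$, and the wreathing element $w$ normalizing $R$ and cycling the three factors.
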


\begin{proof} Assume that $\Omega_1(Z(R)) \not\leq \Phi (R)$. As $w$ acts transitively on the set $\{r_1,r_2,r_3\}$, we
may assume that $r_i \not\in \Phi(R)$ for $1\le i\le3$.  Let $U$ be a hyperplane in $\Omega_1(Z(R))$ which
contains $\Phi(R)$. Then, as $w$ normalizes $R$, we may assume that $\{r_1,r_2,r_3\} \cap U = \emptyset$. An easy
inspection of the maximal subgroups of $\Omega_1(Z(R))$ yields $U = \langle r_1r_2, r_2r_3 \rangle$. Therefore
$(R_1 \times R_2 \times R_3)/U$ is an extraspecial group of order $2^7$. We have that $R/U$ is of order $2^5$,
hence $R/U$ is not abelian. However $\Phi (R) \not\leq U$, which is a contradiction.  \end{proof}
%
%/\langle r_i \rangle \cong \Q_8 \times \Q_8$, we now get that $R \cong Z_2 \times \Q_8 \times \Q_8$ and
%$[S/Q, R] \cong \Q_8 \times \Q_8$. But then $[S/Q,Z(R)] = 1$, a contradiction to Lemma \ref{Pfacts}(iii).

{Recall from Lemma~\ref{Horder} (iii), either $H =QRW$ or $H/BRQ \cong \Sym(3)$ and in either case $S=WQ$. If
$H/BRQ \cong \Sym(3)$, then there is an element $iRQ$ of order $2$ which permutes $Q_2$ and $Q_3$ and centralizes
$r$. We let $i\in H$ be such an element where for convenience we understand that $i=1$ if $H=QRW$. Thus in any
case $H=QRW\langle i\rangle$. By Lemma~\ref{Pfacts2} (vi), $|N_G(Z):H|=2$ and $\ov W$ is
inverted by an involution $j$
 in $N_G(Z) \cap N_G(S)$. Again, we can choose $j$ to centralize $rQ \in HQ$ and consequently it can be further
 chosen to centralize $r$. Thus we have
$N_K(Z) = Q_2Q_3RC_S(r) \langle i,j \rangle$ and this group has order $3^6\cdot 2^9$.}

\begin{lemma}\label{Cr1Fischer} Suppose that $|R| = 2^7$. Then $K \cong 2\udot \U_6(2) $ or $2\udot \U_6(2).2$.
\end{lemma}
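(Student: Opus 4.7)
The plan is to identify $K$ by passing to the quotient $\wt K = K/\langle r\rangle$ and invoking Theorem~\ref{F4U6Thm}. Since $r = r_1$ lies in $Z(R)$, inverts $Q_1/Z$ and centralises $Q_2$ and $Q_3$, we have $C_Q(r) = Q_2Q_3$, a central product of two copies of $3^{1+2}_+$ isomorphic to $3^{1+4}_+$ with centre $Z$, and this is precisely $O_3(N_K(Z))$. The description $N_K(Z) = Q_2Q_3\,R\,C_S(r)\langle i,j\rangle$ of order $3^6\cdot 2^9$ recorded immediately before the statement then shows, upon factoring out $\langle r\rangle$, that $C_{\wt K}(\wt Z)$ has $F^* \cong 3^{1+4}_+$ and that the image of $R$ contains a normal $\Q_8 \times \Q_8$ (via the components $R_2$ and $R_3$). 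Hence $C_{\wt K}(\wt Z)$ satisfies Definition~\ref{U6F4def}, i.e. it is similar to a $3$-centraliser in a group of type $\U_6(2)$ or $\F_4(2)$.

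Next I would verify that $\wt Z$ is not weakly closed in a Sylow $3$-subgroup of $\wt K$. Lemma~\ref{ZnotweakQ} supplies a conjugate $Y = Z^g \le Q$ with $Y \neq Z$, and by Lemma~\ref{QQg}(iv) the subgroup $A \cap Q^g$ lies inside $C_Q(r) \le K$. Using the transitive action of $P = \langle Q, Q^g\rangle$ on the subgroups of order $3$ in $V$ together with the availability of the involution $j$ (defined just before the statement) which centralises $r$, one can replace $g$ by a $K$-element realising the same non-trivial fusion inside $\wt K$. Theorem~\ref{F4U6Thm} then gives $F^*(\wt K) \cong \U_6(2)$ or $\F_4(2)$. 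The $\F_4(2)$ possibility is eliminated by comparing orders: the precise order $3^6 \cdot 2^8$ (respectively $3^6\cdot 2^9$) that we have for $C_{\wt K}(\wt Z)$ matches a $3$-centraliser in $\U_6(2)$ (respectively $\U_6(2){:}2$) but is strictly smaller than the corresponding $3$-centraliser in $\F_4(2)$; alternatively one can combine the triviality of the Schur multiplier of $\F_4(2)$ with the non-splitness argument in the final paragraph below.

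Finally, $K$ is a central extension of $\U_6(2)$ or $\U_6(2){:}2$ by $\langle r\rangle$. To see the extension is non-split I would invoke Lemma~\ref{OmegaR}, which places $r$ in $\Phi(R)$, together with the fact that $R \le K'$ (the various $R_i/\langle r_i\rangle$ are perfect inside $N_K(Z)/O_3$ and lift via Frattini to generators of $R$). Hence $r \in K'$, which precludes the direct product $2 \times \U_6(2)$ and forces $K$ to contain $2\udot \U_6(2)$. The dichotomy $K \cong 2\udot\U_6(2)$ versus $2\udot\U_6(2){:}2$ corresponds to the two possibilities for the outer part of $H$ recorded in Lemma~\ref{Horder}(iii), according to whether $H/BRQ$ is trivial or isomorphic to $\Sym(3)$. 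The main obstacle is cleanly adjusting the conjugating element $g$ into $K$ to apply Theorem~\ref{F4U6Thm} and establishing $r \in K'$; both rest on the tight numerical control of $N_K(Z)$ and of the commutator structure within $R$ already at our disposal.
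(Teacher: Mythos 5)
Your skeleton is the same as the paper's (recognise $N_K(Z)/\langle r\rangle$ as a $3$-centraliser of type $\U_6(2)$ or $\F_4(2)$, apply Theorem~\ref{F4U6Thm}, then use Lemma~\ref{OmegaR} to force non-splitness), but the step where you verify the weak-closure hypothesis of Theorem~\ref{F4U6Thm} fails. The conjugate $Y=Z^g\le Q$ supplied by Lemma~\ref{ZnotweakQ} cannot be pushed into $K=C_G(r)$: by Lemma~\ref{Pfacts2}(iv) and Lemma~\ref{fusion1} every $G$-conjugate of $Z$ in $Q$ other than $Z$ itself is $N_G(Z)$-conjugate to $\langle q_1q_2q_3\rangle$, so it involves $q_1$ and is moved by $r=r_1$; none lies in $C_Q(r)=Q_2Q_3$. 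Your assertion that $A\cap Q^g\le C_Q(r)$ is false --- $A\cap Q^g=\langle V,q_1q_2^{-1}\rangle$ contains $q_1q_2^{-1}$, which $r_1$ sends to $q_1^{-1}q_2^{-1}$ --- and $P=\langle Q,Q^g\rangle$ does not centralise $r$, so neither the transitivity of $P$ on the subgroups of $V$ nor the involution $j$ produces a $K$-conjugate of $Z$ distinct from $Z$ inside $C_S(r)$. Indeed $Z$ \emph{is} weakly closed in $O_3(C_K(Z))=Q_2Q_3$ with respect to $K$, and the paper needs precisely that fact. The witness to non-weak-closure has to come from outside $Q$: the paper takes the element $f$ of order $5$ in $C_{N_G(J)}(r)$ provided by Lemma~\ref{N_G(J)}, so that $Z^f\le C_J(r)\le C_S(r)$, $Z^f\ne Z$ and $f\in K$. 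That input is absent from your argument and is not replaceable by the data you cite.

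Two further problems. First, your elimination of $\F_4(2)$ is unsafe: the Schur multiplier of $\F_4(2)$ has order $2$, not $1$, so non-splitness does not exclude it, and the order comparison of the two candidate $3$-centralisers is not clearly decisive. The paper instead invokes the ``Furthermore'' clause of Theorem~\ref{F4U6Thm}: since by Lemma~\ref{fusion1} no $G$-conjugate of $Z$ lies in $Q_2Q_3\setminus Z$, $Z$ is weakly closed in $O_3(C_K(Z))$, which selects $\U_6(2)$ over $\F_4(2)$. Second, when $|R|=2^7$ the subgroups $R_2$ and $R_3$ are not defined and are not contained in $R$ (which has index $4$ in $\Q_8\times\Q_8\times\Q_8$ and is $w$-invariant), so the normal $\Q_8\times\Q_8$ must be exhibited as the image of $R/\langle r\rangle$ in its faithful action on $Q_2Q_3$, as in the paper. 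Your final non-splitness step ($r\in\Phi(R)$, $R\le K$) does agree with the paper's.
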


\begin{proof} We have $N_K(Z) = Q_2Q_3RC_S(r) \langle i,j \rangle$. Since $Z(C_S(r)R/\langle r \rangle)$ acts faithfully on $Q_2Q_3$ and centralizes the fours group
$\Omega_1(R)/\langle r\rangle$, we see that $N_K(Z)/\langle r\rangle$ when embedded into $\GSp_4(3)$ preserves
the decomposition of the associated symplectic space into a perpendicular sum of two non-degenerate spaces and
has  $R/\langle r \rangle \cong \Q(8) \times \Q(8)$ as a normal subgroup.   Therefore, as $Q_1Q_2\cong
F^*(N_K(Z)/\langle r \rangle)$ is extraspecial of order $3^5$, we have
  $N_K(Z)/\langle r\rangle$ is similar to a normalizer in a group
of $\U_6(2)$-type. By Lemma~\ref{fusion1}, no conjugate of $Z$ is $G$-conjugate to an element of $Q_1Q_2\setminus
Z$ and so $Z$ is weakly closed in $Q_1Q_2$ with respect to $K$. Since, by Lemma~\ref{N_G(J)}, $C_{N_G(J)}(r)$ has
an element $f$ of order $5$, we have $Z^f \le C_J(r)$ and, of course, $Z^f \neq Z$. It follows that $Z\langle
r\rangle/\langle r\rangle$ is not weakly closed in $C_S(r )\langle r\rangle/\langle r\rangle$ with respect to
$C_G(r)/\langle r\rangle$. Therefore, as $C_S(r)Q_2Q_3/Q_2Q_3$ has order $3$, Theorem~\ref{F4U6Thm} implies
that $C_G(r)/\langle r\rangle \cong \U_6(2)$ or $\U_6(2).2$. Since $R \leq C_G(r)$ and $r \in R^\prime$ by Lemma
\ref{OmegaR},  $F^*(C_G(r))$ does not split over $\langle r\rangle$. It follows that $F^*(C_G(r)) \cong
2\udot\U_6(2)$ or $2\udot \U_6(2).2$ as claimed.
\end{proof}

Let  $K_1= F^*(K) \cong 2\udot \U_6(2)$ and  fix some Sylow 2-subgroup $T$ of $K_1$. In $T/\langle r \rangle$
there is a unique elementary abelian group of order $2^9$ with normalizer of shape $2^9:\PSL_3(4)$ (the
stabilizer of a totally isotropic subspace of dimension $3$). Let $E$ be the preimage of this subgroup. Then
$\PSL_3(4)$ acts irreducibly on $E/\langle r \rangle$ and $|E| = 2^{10}$, we get that $E$ is elementary abelian
of order $2^{10}$ with $N_{K_1}(E)/E \cong \PSL_3(4)$ and $C_G(E)=C_K(E) =E$. By \cite[(23.5 .5)]{Asch}, $E$ is
an indecomposable module for $N_K(E)/E$.

%In particular $r$ is not a square in $K_1$. For this see \ref{fact}\\

\begin{lemma}\label{m22} We have that $N_G(E)/E \cong \M_{22}$ or $\Aut(\M_{22})$.
\end{lemma}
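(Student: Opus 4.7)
The aim is to realise $M := N_G(E)$ as a $2$-local subgroup of shape $2^{10}{:}\mathrm{M}_{22}$ or $2^{10}{:}\mathrm{Aut}(\mathrm{M}_{22})$ with $M/E$ acting on $E$ as the $10$-dimensional Todd module. Since $C_G(E)=E$, we have $F^*(M) = E$, so $M$ is $2$-constrained and contains $M_0 := N_{K_1}(E) \approx 2^{10}{:}\mathrm{PSL}_3(4)$. Moreover, $E/\langle r\rangle$ is the natural $9$-dimensional $\mathrm{PSL}_3(4)$-module, and since $E$ is indecomposable, $E$ is forced to be the restriction of the $\mathrm{M}_{22}$-Todd module to $\mathrm{PSL}_3(4)$, matching the description in Lemma~\ref{M22}.

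The central step is to exhibit exactly $22$ $G$-conjugates of $r$ in $E$, forming a single $M$-orbit. Under the $M_0$-action these should split as $\{r\}$ together with an orbit $\Sigma$ of length $21$ in $E\setminus\langle r\rangle$, whose image in $E/\langle r\rangle$ is the "singular" $\mathrm{PSL}_3(4)$-orbit (corresponding to points of $\mathrm{PG}(2,4)$). I would pick a convenient $s\in\Sigma$ and analyse $C_{K_1}(s)$ using the involution-centraliser data for $\mathrm{U}_6(2)$ provided by Lemma~\ref{centralizerinvsU6}; this should match the centraliser of $r$ up to conjugacy in $K_1$, and combined with the $3$-local fusion control available from Lemma~\ref{fusion1} and the $N_G(J_0)$-structure of Lemma~\ref{N_G(J)}, this forces $s\sim_G r$. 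Adjusting the conjugating element by $C_G(s)$ and $C_G(r)$, and using that $E$ arises as the preimage of the unique elementary abelian subgroup of order $2^9$ normal in a Sylow $2$-subgroup of $K_1$, I would arrange the conjugating element to lie in $N_G(E)$, producing the desired $22$-orbit.

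Granted this transitivity, the stabiliser of $r$ in $M$ is $C_M(r)=M\cap K_1 = M_0$, hence $|M/E|=22\,|\mathrm{PSL}_3(4)|$ or $44\,|\mathrm{PSL}_3(4)|$, the latter exactly in the case $K \neq K_1$, i.e. $K\cong 2\udot\mathrm{U}_6(2).2$. To recognise $M/E$ as $\mathrm{M}_{22}$ or $\mathrm{Aut}(\mathrm{M}_{22})$, I would invoke the classical fact that $\mathrm{PSL}_3(4)$ admits up to isomorphism a unique transitive extension to degree $22$, namely $\mathrm{M}_{22}$, which itself has a unique overgroup of index $2$. The main obstacle will be the fusion step: producing an explicit $g\in G$ with $s^g=r$ and then lifting this to an element of $N_G(E)$. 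This requires a careful analysis of the action of $K_1=2\udot\mathrm{U}_6(2)$ on $E$, together with control of the ambient $G$-fusion among involutions in $E$ that goes beyond what is visible inside $K$.
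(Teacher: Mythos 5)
Your endgame (recognise $N_G(E)/E$ as a transitive extension of $\L_3(4)$ to degree $22$ and quote the uniqueness of such extensions) is exactly the paper's, but the decisive middle step --- that $|r^{N_G(E)}|$ is \emph{exactly} $22$ --- is where the proposal has a genuine gap, in two places. First, even the lower bound is not established: ``analyse $C_{K_1}(s)$ \dots this should match the centraliser of $r$ \dots this forces $s\sim_G r$'' is not an argument (agreement of centraliser shapes does not force conjugacy), and you yourself flag the lifting of a conjugating element into $N_G(E)$ as an unresolved obstacle. The paper obtains the nontrivial fusion far more cheaply: $r=r_1$ is $H$-conjugate to $r_2\in R'\le K_1$, and every involution of $K_1/\langle r\rangle\cong\U_6(2)$ is conjugate into $E/\langle r\rangle$, so $r^{N_G(E)}\ne\{r\}$ without any comparison of centralisers.

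Second, and more seriously, you give no upper bound. Since $r^{N_G(E)}$ is a union of $N_{K_1}(E)$-orbits of lengths taken from $1,21,21,210,210,280,280$, knowing that $r$ fuses with one $21$-orbit does not preclude, say, $|r^{N_G(E)}|=1+21+210+280=512$; in that case the point stabiliser is not $\L_3(4)$ and the whole identification collapses. The paper has to work to exclude the alternatives: it observes that the second generator $t$ of $Z(T)$ is a square in $K_1$ while $r$ is not (so the orbit through $t$ cannot fuse with $r$), uses divisibility of $|N_G(E)/E|$ in $|\GL_{10}(2)|$ to cut the possibilities down to $22$, $512$ and $561$, and then eliminates $561$ by a Sylow $17$ count and $512$ by a Baer--Suzuki argument. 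None of this appears in your proposal, and without it the degree-$22$ triply transitive action is not available to feed into L\"uneburg's theorem. (A smaller slip: $C_M(r)=M\cap K$, not $M\cap K_1$; as written, your stabiliser computation would force $|M/E|=22\,|\L_3(4)|$ in all cases and contradict your own ``or $44\,|\L_3(4)|$'' alternative.)
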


\begin{proof} As $r^H \cap R^\prime \not= \{r\}$ we have that $r^G \cap K_1 \not= \{r\}$.
As all involutions of $\U_6(2)$ are conjugate into $E$ (see \cite[(23.3)]{Asch}), we have that $r^{N_G(E)} \not=
\{r\}$. Recall that $E/\langle r \rangle$ is just the Todd module for $\L_3(4)$ and so $N_K(E)$ has orbits of
length $1$, $21$ ,$21$, $210$, $210$, $280$ and $280$ on $E$ (where some of these lengths may double as $E$ is
indecomposable) {by \cite[(22.2)]{Asch}.}

  Then, as $Z(T)\le E$ has order $4$ by \cite[Table 5.3t]{GLS3},
$N_K(Z(T)) $ has shape $2.2^{1+8}_+.\SU_4(2)$. In particular, we can choose   $t \in Z(T)$ such that $t$ is a
square in $K_1$ and  $Z(T)=\langle r,t\rangle$. Since $r$ is not a square in $K_1$ by \cite[(23.5.3)]{Asch}, we have
 $t$ is not $N_G(E)$-conjugate to $r$.  Now taking in account that
$|N_G(E)/E|$ has to divide  $|\GL_{10}(2)|$, we see that $|r^{N_G(E)}| = 2\cdot 11$, $2^9$ or $561$.  {If $|r^{N_G(E)}|=
561$, then  $|N_G(E)/E| = 2^{a}\cdot 3^3 \cdot 5 \cdot 7 \cdot 11 \cdot 17$, where $a = 6$ or $7$. As the
normalizer of a Sylow $17$-subgroup in $\GL_{10}(2)$ has order $2^4\cdot 3^2\cdot 5 \cdot 17$, Sylow's Theorem
implies that there must be  $2^4\cdot 3\cdot 5 \cdot 7 \cdot 11$ Sylow $17$-subgroups in $N_{G}(E)/E$. In
particular the Sylow $3$-subgroup $D$ of the normalizer of the subgroup of order $17$ has order $9$ and is
elementary abelian. Two of the cyclic subgroups of $D$ are fixed point free on $E$, one has centralizer of order
$4$ and the final one centralizes a subgroup of order $2^8$. As the Sylow $3$-subgroups of $N_G(E)$ have order
$3^3$, at least one of these subgroups is conjugate in to $N_K(E)$ and there we see that such groups all have
centralizer of order $2^4$ in $E$. This shows that this configuration cannot arise.}

So assume that $|r^{N_G(E)}| = 2^9$.  {Then $|N_G(E)/E|=2^{a}\cdot 3^2 \cdot 5 \cdot 7$, $a = 15$ or $16$. Since
some orbit on $E$ is of odd length, we must have an orbit of length $21$, $231$ or $301$ or $511$. As we know
$|N_G(E)|$, we get an orbit of length $21$. From the action of $\L_3(4)$ on this set, we see that no element of
odd order fixes more than $3$ points. Let $T \in \Syl_2(N_G(E)/E)$. Now $\Sym(21)$ has Sylow $2$-subgroups of
order $2^{18}$ and $\Sym(8)$ has Sylow $2$-subgroups of order $2^6$. Hence, as $|T| \ge 2^{15}$, there is an
involution $j \in T$ which fixes at least $13$ points and the product of two such involutions fixes at least $5$
points. It follows that $\langle j,j^x\rangle$ is a $2$-group for all $x \in N_G(E)/E$. Hence $O_2(N_G(E)) > E$
by the Baer-Suzuki Theorem and this contradicts the fact that $N_G(E)$ acts irreducibly on $E$ and $C_G(E)=E$.

 So
we have that $|r^{N_G(E)}| = 22$. In particular we have that $N_G(E)/E$ acts triply transitive on 22 points with
point stabilizer $\L_3(4)$ or $\L_3(4):2$. Using, for example \cite{Lun},  get that $N_G(E)/E$ is isomorphic to
$\M_{22}$ or Aut($\M_{22})$, the assertion.}\end{proof}

\begin{proof}[Proof of Theorerm~\ref{Main1}] If $K= K_1$, then, as $r$ is not weakly closed in a Sylow $2$-subgroup of $G$ (its conjugate to
$r_2$ for example) we have $G \cong \M(22)$ by \cite[Theorem 31.1]{Asch}. If  $K> K_1$, then also $N_G(E)/E \cong
\Aut(\M_{22})$ and Lemma \ref{autm22} (ii) implies that $G$ has a subgroup $G_1$ of index $2$. We have $K_1 = K
\cap G_1$ and $G_1 \cong \M(22)$ by \cite[Theorem 31.1]{Asch}.
\end{proof}

\section{Some notation}

From here on we may suppose that $|R|= 2^9$. In this brief section we are going to reinforce some of our earlier notation in preparation for determining the centralizers of various elements in the coming sections.

We begin by recalling our basic notation which has already been established. We have $R_1$, $R_2$, $R_3$ are the
normal quaternion groups of $R$ and $Q_i = [Q,R_i]$ extraspecial of order 27. We have defined $Z(R_i) = \langle
r_i \rangle$ so that $Z(R) = \Omega_1(R)=\langle r_1,r_2,r_3 \rangle$. We have for $B = C_S(Z(R))$ and that $B =
\langle Z,x_{123}, x_2x_3^{-1} \rangle$, where the last element is non-trivial just  when  $WQ < S$.  By
Lemma~\ref{BEA} $B$ is elementary abelian. Further we have some $w \in N_H(R)$ with $Q_1^w = Q_2$, $Q_2^w = Q_3$
and $Q_3^w = Q_1$.

From Lemma~\ref{Horder} (ii) and (iii)  we have
 $|H| = 2^{9+a}\cdot 3^{10}$ or $2^{9+a}\cdot 3^9$ where $a= 0, 1$. When $a=1$, just as in the case when
 $|R|=2^7$,
there exists a further involution $i \in N_H(S)$.  This involution can be chosen to centralize $Z$ and normalize
$R$. Since, by Lemma~\ref{Horder}, $\ov H$ is isomorphic to a  subgroup of $\Sp_{2}(3) \wr \Sym(3)$, we see that
$i$ can be selected so that $Q_1$ is centralized by $i$, and so that $Q_2^i= Q_3$.

We take the involution $t \in N_P(Z) \cap N_G(S)$ from Lemma~\ref{Pfacts2} (vi). Since $t$ normalizes $QR$ and $Q
\le P$, we may assume that $t$ normalizes $R$.  Since $t$ inverts $\ov W$, $t$ inverts $wQ$ and so $t$ permutes
$R_1$, $R_2$ and $R_3$ as a $2$-cycle. Thus we may suppose that $t$ normalizes $R_1$ and exchanges $R_2$ and
$R_3$. In particular, $t$ centralizes $r_1$ and acts on $Q_1$ inverting $Z$. Since $W/(Q\cap Q^g)$ is inverted by
$t$, we see, using Lemma~\ref{QQg} (iv),  that $q_1(Q\cap Q^g)$ is inverted by $t$.  Similarly $\wt {q_1} W$ is
centralized by $t$. It follows that $[Q_1, t]= Z\langle q_1\rangle$ and that $t$ inverts $q_1$.

\begin{lemma}\label{calc1} With the notation just established, we have $N_{N_G(Z)}(R) = R\langle z,x_{123}, x_2x_3^{-1}, w\rangle\langle i,t\rangle$. Furthermore,
\begin{enumerate}
\item $q_1^t = q_1^{-1}$.
\item $t$ inverts $\langle z,x_{123},w\rangle$ which is abelian and $t$ centralizes $x_{2}x_3^{-1}$.
\item $w^i= w^{-1}$ and $(x_{2}x_3^{-1})^i=(x_{2}x_3^{-1})^{-1}$.
\end{enumerate}
\end{lemma}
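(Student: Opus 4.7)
The plan is to separate the lemma into its structural assertion (the display for $N_{N_G(Z)}(R)$) and the three calculational parts, all of which reduce to understanding how $t$ and $i$ act on the chosen elements of $S$ through the permutation action on $\{Q_1,Q_2,Q_3\}$.

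For the displayed equation, I would first apply Lemma~\ref{Pfacts1}(v) to obtain $|N_G(Z):H|=2$ and, using that $t \in N_G(Z)\setminus H$ normalizes $R$ by construction, conclude $N_{N_G(Z)}(R) = N_H(R)\langle t\rangle$. To identify $N_H(R)$ I would compute $N_H(R)\cap Q = C_Q(R)$: since each $R_i$ acts faithfully on $Q_i$ with centralizer $Q_jQ_k$ in $Q$, the intersection over $i=1,2,3$ gives $C_Q(R)=Z$. Combined with the Frattini factorization $H = N_H(R)Q$ and the list of generators of $\ov H/\ov R$ read off from Lemma~\ref{Horder} (namely $\ov z$, $\ov{x_{123}}$, $\ov w$, and if present $\ov{x_2x_3^{-1}}$ and $\ov i$), this gives $N_H(R) = R\langle z,x_{123},w,x_2x_3^{-1},i\rangle$, whence the displayed formula follows after transferring $i$ into the last factor together with $t$.

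Part (i) is a reprise of the last sentence of the paragraph preceding the statement: the inversion of $W/(Q\cap Q^g)$ by $t$ in Lemma~\ref{Pfacts2}(vi), combined with the description of $C_Q(V)$ in Lemma~\ref{QQg}(ii) and the fact that $t$ normalizes $R_1$, forces $[Q_1,t] = Z\langle q_1\rangle$ with $t$ inverting $q_1$. For (ii), note $z\in Z(S)$ so $z$ commutes with $x_{123}$ and $w$; since $W\not\le BQ$ by Lemma~\ref{Pfacts2}(iii) I would choose $w\in W\cap N_S(R)$, and then $[w,x_{123}]\in W\cap N_S(R)\cap Q = C_Q(R) = Z$, so after the standard adjustment of $w$ within its $\ov w$-coset one obtains $[w,x_{123}]=1$ using that $\ov W$ is elementary abelian. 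The inversion $w^t=w^{-1}$ and $x_{123}^t=x_{123}^{-1}$ then lifts from the inversion of $\ov W$ by $t$ given in Lemma~\ref{Pfacts2}(vi), since $w$ and $x_{123}$ are $3$-elements. Finally $t$ swaps $R_2\leftrightarrow R_3$ fixing $R_1$; since $x_2x_3^{-1}$ is by construction the element of $\ov B$ acting oppositely on $Q_2/Z$ and $Q_3/Z$, this swap produces $(x_2x_3^{-1})^t = x_2x_3^{-1}$.

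Part (iii) is a permutation calculation: $i$ acts on $\{R_1,R_2,R_3\}$ as the transposition swapping $R_2,R_3$, while $w$ acts as the $3$-cycle $R_1\to R_2\to R_3\to R_1$; conjugating a $3$-cycle by a transposition yields its inverse, giving $w^i=w^{-1}$, and the element $x_2x_3^{-1}$, lying in $J_0$ (which is elementary abelian by Lemma~\ref{J}(ii)) and acting as $x_2$ on $Q_2/Z$ and $x_3^{-1}$ on $Q_3/Z$, is sent by the swap to $x_3x_2^{-1}=(x_2x_3^{-1})^{-1}$. The main obstacle in all of this is the passage from the $\ov S$-level action to an action on the chosen lifts in $S$; this is handled by placing the elements into the elementary abelian subgroups $J$ and $J_0$ of Lemma~\ref{J}, where commutators and orders are automatically controlled, and by using $N_S(R)\cap Q = Z$ to confine all remaining correction terms to $Z$.
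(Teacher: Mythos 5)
Your reduction of the displayed equality to $|N_G(Z):H|=2$, a Frattini argument and $N_Q(R)=C_Q(R)=Z$ is fine, and part (i) is indeed just the computation carried out in the paragraph preceding the lemma. The difficulty is in parts (ii) and (iii), where you derive the action of $t$ and $i$ on $w$ and $x_2x_3^{-1}$ from the induced permutation of $\{Q_1,Q_2,Q_3\}$. That permutation only determines conjugation modulo the kernel of the action on $\{Q_1,Q_2,Q_3\}$, which is $BQ$ --- far larger than the $Z$-sized ambiguity your closing remark about $N_S(R)\cap Q=Z$ can absorb. Concretely, the $3$-cycle/transposition computation gives only $w^i\in w^{-1}BQ$; the correction term lies in $B$, it cannot be removed by multiplying $w$ by an element of $Z$, and the property that $t$ inverts $\langle z,x_{123},w\rangle$ is not preserved under an arbitrary replacement of $w$ inside $wB$. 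The paper deals with this by a further normalisation: it sets $X=\langle z,x_{123},x_2x_3^{-1},w\rangle$, observes that $[X,i]\cap[X,t]$ has order $3$, and then re-chooses $w$ to be a generator of that intersection (and likewise re-chooses $x_2x_3^{-1}$ so that it is inverted by $i$). This ``we may choose notation so that'' step is the actual content of (ii) and (iii) and is missing from your argument.

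A second, more local problem: both $t$ and $i$ induce the transposition $(Q_2\,Q_3)$, yet you conclude from ``the swap'' that $t$ centralises $x_2x_3^{-1}$ while $i$ inverts it. The same premise cannot yield both conclusions; the distinguishing input is that $i$ centralises $Z$ (so preserves the symplectic form on $Q/Z$) while $t$ inverts $Z$ (so negates it), and even with this one only pins down $(x_2x_3^{-1})^t$ and $(x_2x_3^{-1})^i$ modulo $\langle x_{123}\rangle Z$, after which the representative must again be adjusted. Similarly, your claim that $[w,x_{123}]=1$ follows ``after the standard adjustment of $w$ within its coset'' does not work as stated: replacing $w$ by $wz^c$ leaves $[w,x_{123}]$ unchanged; the commutator, which does lie in $Z$, is instead forced to be trivial by conjugating it by $t$, which inverts $w$, $x_{123}$ and $z$. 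The conclusions of the lemma are all correct, but they are normalisations of the generators rather than consequences of their earlier definitions, and your proof does not supply the re-choices that make them hold exactly.
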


\begin{proof}  We have already discussed (i).
By Lemma~\ref{Pfacts2}(iv), $t$ inverts $\ov W = \ov{\langle x_{123},w\rangle }$ and $t$ inverts $Z$. Thus, we
may choose notation so that
 that $t$ inverts  $\langle z,x_{123}, w\rangle$ (i) holds.
 Furthermore, we may suppose
that $t$ centralizes $x_2x_{3}^{-1}$. Now $C_X(i) =\langle Z,x_{123}\rangle$ and $[X,i] $ has order $9$. In
particular, $[X,i]\cap [X,t]$ has order $3$. We choose $w$ such that $[X,i]\cap [X,t]=\langle w \rangle$. Finally
we may suppose that $x_{2}x_3^{-1}$ is chosen so that it is inverted by $i$.
\end{proof}

\section{A signalizer}

 Recall from Lemma~\ref{Pfacts2} (vii) that there is an involution $t \in P$ which inverts both $Z$ and $\ov W$ and that further properties of $t$ are listed in Section 6. We set $$H_0 = QWR\langle t\rangle$$ and note that, as  $t$ inverts $\ov W$,  $H_0$ is a normal subgroup of $N_G(Z)$.

\begin{lemma}\label{u62} The following hold.
\begin{enumerate}
\item $F^*(C_G(q_1)) \cong 3 \times \U_6(2)$; \item $|N_G(\langle q_1\rangle) :C_G(q_1)|=2$; and
 \item $C_G(q_1)/F^*(C_G(q_1)) \cong N_G(Z)/H_0$ and is isomorphic to a subgroup of $\Sym(3)$.
\end{enumerate}
Furthermore $[r_1, E(C_G(q_1))] = 1$.
\end{lemma}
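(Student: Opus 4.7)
The plan is to apply Theorem~\ref{F4U6Thm} to $L := C_G(q_1)/\langle q_1\rangle$, with $\tilde Z := Z\langle q_1\rangle/\langle q_1\rangle$ playing the role of the ``central'' order-$3$ subgroup. First I would identify the structure of $C_H(q_1)$ modulo $\langle q_1\rangle$ as a subgroup similar to a $3$-centralizer of $\U_6(2)$- or $\F_4(2)$-type. The central product decomposition $Q = Q_1 Q_2 Q_3$ gives $C_Q(q_1) = \langle q_1\rangle Q_2 Q_3$ with $Q_2 Q_3 \cong 3^{1+4}_+$. Working in $C_H(q_1)/\langle q_1\rangle$, $R_2 R_3 \cong \Q_8 \times \Q_8$ embeds as a normal subgroup because $R_2, R_3$ centralize $Q_1 \ni q_1$. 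To see $O_3(C_H(q_1)/\langle q_1\rangle) = Q_2Q_3\langle q_1\rangle/\langle q_1\rangle$, I would use that $x_{123}$ acts non-trivially on $Q_2/Z$ and on $R_2R_3$ (via its action on $R_2/\langle r_2\rangle$), so no non-trivial image of $x_{123}$ or $x_2 x_3^{-1}$ lies in the $3$-core of the quotient.

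Next, to invoke Theorem~\ref{F4U6Thm}, I need $\tilde Z$ to be non-weakly-closed in a Sylow $3$-subgroup of $L$. The order-$5$ element $f$ from Lemma~\ref{q1conjs} normalises $J_0$ and centralises $q_1$; since $5 \nmid |H|$ by Lemma~\ref{Horder}, $f \notin H$, so $Z^f \neq Z$. This produces a conjugate of $\tilde Z$ distinct from itself inside $L$, and after Sylow conjugation $\tilde Z^f$ can be assumed to lie in the Sylow $3$-subgroup containing $\tilde Z$. Theorem~\ref{F4U6Thm} then yields $F^*(L) \cong \U_6(2)$ or $\F_4(2)$.

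To rule out $\F_4(2)$, I would verify weak closure of $\tilde Z$ in $O_3(C_L(\tilde Z)) = \langle q_1\rangle Q_2 Q_3/\langle q_1\rangle$. Suppose $Z^g \le \langle q_1\rangle Q_2 Q_3$ with $g \in C_G(q_1)$ and $Z^g \neq Z$. Then $Z^g \le Q$, and the analysis of $P = \langle Q, Q^g\rangle$ from Lemmas~\ref{Pfacts1}--\ref{Pfacts2} shows any such conjugation is realised within the $\SL_2(3)$-quotient of $P$ acting on $V' = ZZ^g$; the ``inverting'' element plays the role of $t$ from Lemma~\ref{Pfacts2}(vi), but by Lemma~\ref{calc1}(i) any such element inverts $q_1$, contradicting $g \in C_G(q_1)$. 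This forces $F^*(L) \cong \U_6(2)$. Splitting of the central extension (i.e.\ $F^*(C_G(q_1)) \cong \langle q_1\rangle \times \U_6(2)$ rather than $3\udot \U_6(2)$) then follows by inspecting the centre of the Sylow $3$-subgroup $C_S(q_1)/\langle q_1\rangle$: it contains the image of $\langle q_1, z\rangle$, so is not cyclic of order $9$, which rules out the triple cover. This proves (i), and also yields $[r_1, E(C_G(q_1))] = 1$: $r_1$ normalises $\langle q_1\rangle$ and hence $E(C_G(q_1))$; it centralises both $Q_2Q_3$ and $R_2R_3$ (being in the ``first factor'' $R_1$ of $R_1 \times R_2 \times R_3$), and these generate a $3$-centralizer in the $\U_6(2)$-component whose normal closure is all of $E(C_G(q_1))$.

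Finally for (ii), the element $t$ inverts $q_1$ by Lemma~\ref{calc1}(i), so $t \in N_G(\langle q_1\rangle) \setminus C_G(q_1)$, and since $|\Aut(\mathbb{Z}/3)| = 2$ we obtain $|N_G(\langle q_1\rangle) : C_G(q_1)| = 2$. For (iii), $C_G(q_1) \cap N_G(Z) = C_H(q_1)$ projects modulo $F^*(C_G(q_1))$ onto the induced outer automorphisms of the $\U_6(2)$-component, and by tracking the images of $i$ and a possible field/graph automorphism this quotient is identified with $N_G(Z)/H_0$, a subgroup of $\Sym(3)$. The main obstacle will be the fusion analysis in the third paragraph: the $G$-class of $Z$ has many representatives inside $\langle q_1\rangle Q_2 Q_3$ (for instance $Y = \langle q_1q_2q_3\rangle$), and the delicate point is to show that none of the $G$-conjugating elements can be chosen inside $C_G(q_1)$, for which Lemma~\ref{calc1} together with the $\SL_2(3)$-structure of $P/C_P(V)$ must be leveraged precisely.
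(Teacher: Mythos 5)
Your overall architecture matches the paper's up to the point of applying Theorem~\ref{F4U6Thm}: the identification of $C_H(q_1)/\langle q_1\rangle$ as a $3$-centralizer of $\U_6(2)$- or $\F_4(2)$-type, and the use of the order-$5$ element $f$ from Lemma~\ref{q1conjs} to get non-weak-closure of $\tilde Z$ in a Sylow $3$-subgroup, are both exactly as in the paper. Where you genuinely diverge is in ruling out $\F_4(2)$, and this is where the gap lies. The paper does this by observing that $N_{C_G(q_1)}(J)$ involves $\Alt(6)$ (Lemma~\ref{alt6}) and appealing to the subgroup structure of $\F_4(2)$. You instead propose to verify the weak-closure criterion in the ``furthermore'' clause of Theorem~\ref{F4U6Thm}, which is a legitimate alternative in principle --- but your argument for it does not work. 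The conjugates of $Z$ inside $O_3(C_L(\tilde Z))$, i.e.\ inside $\langle q_1\rangle Q_2Q_3=C_Q(q_1)$, include all thirteen conjugates of $Z$ in $A$ from Lemma~\ref{Zconjs1} (for instance $\langle q_1q_2q_3\rangle$), so weak closure is genuinely at stake. Your claim that any $g\in G$ realising such a conjugation ``inverts $q_1$ by Lemma~\ref{calc1}(i)'' confuses the single element $t$ with the full coset $N_G(Z)g_0$ of elements sending $Z$ to a fixed such conjugate; $N_G(Z)$ contains plenty of elements centralizing $q_1$ (e.g.\ $r_1t$), so nothing forces every member of that coset to invert $q_1$. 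The fact you need is provable, but by a different mechanism: if $g\in C_G(q_1)$ and $Y=Z^g\le C_Q(q_1)$ with $Y\ne Z$, then $q_1\in C_G(Y)=H^{g}$ but $q_1\notin Q^{g}$ (its image in $A\cap Q^g$ is excluded by Lemma~\ref{QQg}(iv)), so Lemma~\ref{added} bounds the $3$-part of $C_{H^{g}}(q_1)$ by $3^6$, whereas $|C_H(q_1)|_3=3^7$ because $q_1\in Q$; since $C_{C_G(Y)}(q_1)=(C_{C_G(Z)}(q_1))^g$ these must be equal, a contradiction. You correctly flagged this as ``the delicate point,'' but as written it is a gap, not a proof.

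Two smaller problems. First, your splitting argument for $3\times\U_6(2)$ versus $3\udot\U_6(2)$ assumes that in the triple cover the preimage of a $3$-central subgroup is cyclic of order $9$; this is unproved (and the relevant Sylow $3$-subgroup of $F^*(C_G(q_1))$ is not obviously $C_S(q_1)$ anyway). Second, your derivation of $[r_1,E(C_G(q_1))]=1$ from the fact that $r_1$ centralizes a subgroup whose normal closure is all of $E(C_G(q_1))$ is a non sequitur: centralizing $U$ does not imply centralizing $\langle U^K\rangle$. The paper handles both points at once: $r_1$ centralizes $C_H(q_1)/\langle q_1\rangle$, no involutory automorphism of $\U_6(2)$ has centralizer with $3$-part as large as $3^6$ by Proposition~\ref{centralizerinvsU6}, so $[r_1,C_G(q_1)]\le\langle q_1\rangle$, and then the Three Subgroup Lemma applied to the perfect preimage forces $r_1$ to centralize $E(C_G(q_1))$; since $r_1$ inverts $q_1$, this simultaneously shows $q_1\notin E(C_G(q_1))$, giving the direct product. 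I would recommend adopting that argument for the final two claims, and either repairing the weak-closure step as above or switching to the paper's $\Alt(6)$ criterion.
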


\begin{proof}   We have $O^2(C_H(q_1)) = C_Q(q_1) (R_2R_3)B$ which has shape
$(3 \times 3^{1+4}_+).( \Q_8\times \Q_8).3^k$ where $3^k= |\ov B|$ with $k=1,2$.   From Lemma~\ref{calc1} (i),
we have that $t$ inverts $q_1$ and, by definition $t$ inverts $Z$, since $r_1$ inverts $q_1$ and centralizes $Z$,
we have that $r_1 t\in N_{C_H(q_1)}(Z)$.  Thus $$C_{N_G(Z)}(q_1) = \langle q_1\rangle Q_2Q_3R_2R_3J_0\langle i,
r_1t\rangle.$$
Now we see that $O_3(C_{N_G(Z)}(q_1)/\langle q_1\rangle) = Q_2Q_3\langle q_1\rangle /\langle q_1\rangle$  is extraspecial of order $3^5$ and that $$O_2(C_{N_G(Z)}(q_1)/Q_2Q_3\langle q_1\rangle )= R_2R_3Q_2Q_3\langle q_1\rangle /Q_2Q_3\langle q_1\rangle /\langle q_1\rangle \cong \Q_8\times \Q_8.$$ Thus $C_G(q_1)/\langle q_1\rangle$  is similar to a $3$-centralizer in either $\U_6(2)$ or $\F_4(2)$ (see Definition~\ref{U6F4def}).
By Lemma~\ref{q1conjs}, $q_1$ is centralized by an element $f$ of order $5$ in $N_G(J)$. Furthermore, $f$ does not
normalize $Z$ as $5$ does not divide the order of $H$. Since $Z^f \le J$ and $f \in C_G(q_1)$, we see that $Z$ is
not weakly closed in $C_S(q_1)$ and so it follows from Theorem~\ref{F4U6Thm} that $F^*(C_H(q_1))/\langle
q_1\rangle \cong \U_6(2)$ or $\F_4(2)$  and that $C_H(q_1)/F^*(C_H(q_1)) \cong H/H_0$.  Finally, as $N_{C_G(q_1)}(J)$ involves $\Alt(6)$ by Lemma~\ref{alt6}, the subgroup structure of $\F_4(2)$ implies that $$F^*(C_H(q_1)/\langle
q_1\rangle )\cong \U_6(2).$$

Now $\langle q_1 \rangle $ is normalized
by the involution $r_1$ and $r_1$ centralizes $C_H(q_1)/\langle q_1\rangle$. Hence, by Proposition
\ref{centralizerinvsU6},  $r_1$ centralizes $C_G(q_1)/\langle q_1\rangle$. Since $C_{H}(q_1)$ splits over $q_1$,
we now have $F^*(C_G(q_1)) \cong 3 \times \U_6(2)$. This proves (i). Part (ii) follows as $r_1$ (and $t$) invert
$q_1$.

 We also easily have
$C_G(q_1)/F^*(C_G(q_1)) \cong N_G(Z)/H_0$.
\end{proof}

Let $K = E(C_G(q_1))$. Then $K \cong \U_6(2)$ by Lemma~\ref{u62}. Since
$R_2 \le C_G(q_1)$, we have $r_2 \in K$. As $r_2$ centralizes $Q_3\cong 3^{1+2}_+$ in $K$, Proposition \ref{centralizerinvsU6} yields   $$C_K(r_2) \cong 2^{1+8}_+:\U_4(2).$$ Notice
that $r_3$ is also in $K$ and therefore $q_2$ and $q_3 \in K$. From the structure of $C_S(q_1)$ we also have that
$z \in K$.

Furthermore, we have  $|J_0 \cap K|$ is elementary abelian of order $3^4$ and that $A \cap K = \langle
Z,q_2,q_3\rangle= C_A(r_1)$.  Using \cite[Theorem 4.8]{PS1}, we get that $$F=N_{K}(J\cap K) \cong 3^4:\Sym(6).$$
Furthermore \cite[Lemma 4.2]{PS1} indicates that $Z$ has exactly 10 conjugates under the action of $F$. As $A
\cap K = J \cap O_3(C_K(Z))$ we see that $(A\cap K)^F$ has order $10$ and  $F$ acts  $2$-transitively on this
set.

We also have that  $F$ commutes with $\langle q_1, r_1\rangle \le C_G(K)$ and $A \cap K = C_A(r_1)$. Let $f \in F$
be such that $C=(A\cap K) \cap ( A\cap K)^f = \langle q_2, q_3\rangle$.
Then, as $q_1$ and $q_2$ are $G$-conjugate,  we obtain $$L=C_G(C)^\infty \le C_{C_G(q_2)}(q_3)^\infty  \cong \U_4(2)$$ from
Lemma~\ref{u62}. In addition, $C$ commutes with $R_1R_1^f N_J(R_1)N_J(R_2)$ and therefore
$R_1R_1^f \le L  \cong \U_4(2)$. If $R_1  = R_1^f$, then $R_1$ centralizes $J \cap K$. However,
$C_J(R_1) \le Q$ and $J \cap K \not \le Q$. Therefore $R_1 \not= R_1^f$ and this means that $r_1$ is a $2$-central
involution of $L$. Hence $R_1R_1^f \cong 2^{1+4}_+$ and we deduce that $R_1$ and $R_1^f$ commute as $R_1R_1^f$ contains exactly two subgroups isomorphic to $\Q_8$.
As $F$ acts $2$-transitively on the set $(A\cap K)^F$, we deduce that any two $F$-conjugates of $R_1$ commute and so $$E=\langle R_1^F\rangle \cong 2^{1+20}_+$$ and this is a $2$-signalizer for $F$.

\begin{lemma}\label{signal1}  The following hold.
\begin{enumerate}
\item $E$ is extraspecial of order $2^{21}$ and plus type;
\item $C_E(Z)=R_1$;
\item $E$ is the unique maximal $2$-signalizer for $Q_2Q_3$ in $C_G(r_1)$; and
\item  $C_{G}(\langle r_1, q_1\rangle)$ normalizes $E$.
\end{enumerate}
In particular,  $K$ normalizes $E$.
\end{lemma}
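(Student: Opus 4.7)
The plan proceeds in four steps. For (i), the setup immediately preceding the lemma has shown that the $F$-conjugates of $R_1$ pairwise commute (via pairs sitting inside copies of $L \cong \U_4(2)$) and all share the centre $\langle r_1 \rangle$ (since $F \le K$ and $[r_1,K]=1$ by Lemma~\ref{u62}). As the $F$-orbit of $R_1$ has size $10$, $E = \langle R_1^F \rangle$ is the central product of ten copies of $\Q_8$ amalgamated over $\langle r_1 \rangle$. Hence $|E|=2^{21}$ and $E$ is extraspecial. To identify the type, I apply the product rule for central products of extraspecial $2$-groups: starting from $\Q_8 \cong 2^{1+2}_-$ and iterating using $-\ast-=+$, $+\ast-=-$, $+\ast+=+$, one sees that $\Q_8^{\ast k}$ has plus type exactly when $k$ is even; with $k=10$, $E \cong 2^{1+20}_+$.

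For (ii), the containment $R_1 \le C_E(Z)$ is immediate since $R_1 \le H = C_G(Z)$. For the reverse, I analyse the action of $Z$ on the ten $\Q_8$-factors of $E$ induced by its permutation action on the set $R_1^F$. Identifying this with the action on the ten-point generalized quadrangle of order $(3,3)$ on which $F$ acts $2$-transitively inside $N_G(J)/J_0 \cong \Omega_5(3).2$ (Lemma~\ref{N_G(J)}), the point $A \cap K$ is $Z$-fixed since $z \in A \cap K$, and a direct check using that $Z$ has order $3$ and that quadrangles of order $(3,3)$ admit no $Z$-invariant subset of larger cardinality shows this is the unique $Z$-fixed point. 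Therefore $Z$ centralizes precisely one factor, giving $C_E(Z)/\langle r_1 \rangle = R_1/\langle r_1 \rangle$ and hence $C_E(Z)=R_1$.

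The main obstacle is (iii). Given a $Q_2Q_3$-invariant $2$-subgroup $X$ of $C_G(r_1)$, coprime action yields $X = C_X(Q_2Q_3)\cdot[X,Q_2Q_3]$. Since $Z = Z(Q_2Q_3)$, $C_X(Q_2Q_3) \le C_X(Z) \le C_H(r_1)$, and the explicit structure of $C_H(r_1)$ from Lemma~\ref{Horder} combined with (ii) forces $C_X(Q_2Q_3) \le R_1$. The commutator part $[X,Q_2Q_3]$ then lies inside the join of the $F$-translates of this fixed part (using the $2$-transitivity of $F$ on the ten-point orbit together with standard fusion in $N_G(J)$), and so is contained in $E$. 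Altogether $X \le E$, which yields both maximality and uniqueness of $E$. Finally (iv) follows because $Q_2Q_3$ is characteristically recoverable from the $3$-local subgroup $C_G(\langle r_1, q_1 \rangle) \cap C_G(Z)$, so any $g \in C_G(\langle r_1, q_1 \rangle)$ normalizes $Q_2Q_3$, and then the uniqueness in (iii) gives $E^g = E$. Since $K \le C_G(\langle r_1, q_1 \rangle)$, this yields the ``in particular'' statement that $K$ normalizes $E$.
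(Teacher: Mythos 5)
Part (i) of your argument matches the paper's construction and is fine. The trouble starts with (ii): your permutation argument is not valid. The ten points $(A\cap K)^F$ are conjugates of $A\cap K$ under $F=N_K(J\cap K)\cong 3^4{:}\Sym(6)$, and since $Z\le A\cap K\le J\cap K=O_3(F)$ with $J\cap K$ abelian and normal in $F$, the subgroup $Z$ lies in the kernel of the action on these ten points; likewise $J\cap K$ normalizes every $F$-conjugate of $R_1$ (each of $z$, $q_2$, $q_3$, $x_{123}$ normalizes $R_1$). So $Z$ fixes \emph{all ten} factors, not a unique one. Moreover, even if $Z$ did fix exactly one factor and permute the rest, the inference $C_E(Z)=R_1$ would fail: a $3$-cycle on three commuting $\Q_8$-factors leaves a diagonal fixed subgroup of order $2^2$ modulo $\langle r_1\rangle$ in their central product. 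What actually saves (ii) is that $Z$ acts on each of the nine other factors as an order-$3$ automorphism of $\Q_8$, which fixes only the centre; equivalently one can quote $\dim C_{E/\langle r_1\rangle}(Z)=2$ from Lemma~\ref{unique} (as $z$ lies in the class $e_3$ of $K$), or argue as the paper does that $R_1$ is the only $2$-subgroup of $C_{C_G(r_1)}(Z)$ normalized by $Q_2Q_3$. None of this is in your write-up.

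Parts (iii) and (iv) have more serious gaps. For (iii), after the coprime decomposition the entire difficulty is to place $[X,Q_2Q_3]$ inside $E$, and your sentence invoking ``the join of the $F$-translates of the fixed part'' and ``standard fusion in $N_G(J)$'' is an assertion, not an argument. The paper instead generates a signalizer $D$ by the centralizers $C_D(x)$ for $x\in\langle z,q_2\rangle^\#$ and bounds each one: $C_D(z)\le R_1$ from the structure of $C_H(r_1)$, and $C_D(q_2)\le O_2(C_{K_2}(r_1))\cong 2^{1+8}_+=C_E(q_2)$ using that $r_1$ is $2$-central in $K_2=C_G(q_2)^\infty\cong\U_6(2)$ and that an extraspecial group of order $27$ in $\U_4(2)$ normalizes no non-trivial $2$-group. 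The appeal to the structure of $C_G(q_2)$ is essential and is missing from your plan. For (iv), your claim that every $g\in C_G(\langle r_1,q_1\rangle)$ normalizes $Q_2Q_3$ is false: this group contains $K\cong\U_6(2)$, whose elements move $Z$ and hence move $Q_2Q_3=O_3(C_K(Z))$, so uniqueness in (iii) only gives $E^g$ as the maximal signalizer for $(Q_2Q_3)^g$. The correct route is that $E$ is normalized by $F$ (by construction) and by $N_{C_G(\langle q_1,r_1\rangle)}(Z)$ (via (iii)), and that these two subgroups generate $C_G(\langle q_1,r_1\rangle)$ by the maximal subgroup structure of $\U_6(2)$; that generation step is the missing idea.
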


\begin{proof} We have already remarked that (i) is true. Also, we know that $Q_2Q_3\le F$ and so $E$ is a $2$-signalizer for $Q_2Q_3$.
Suppose that $D$ is a $2$-signalizer for $Q_2Q_3$ in $C_G(r_1)$. Then $$D= \langle C_D(x) \mid x \in \langle
z,q_2\rangle^\# \rangle$$ and observe that $\langle z,q_2\rangle$ contains three  $Q_2$-conjugates of $\langle q_2\rangle$.  Now in $C_K(z)$ the only 2-subgroup which is normalized by $Q_2Q_3$ is $R_1$ and this
is contained in $E$. In particular, (ii) holds.  So we consider signalizers for $\langle q_2,Q_3\rangle$ in  $C_{C_G(r_1)}(q_2)$. First we
note that $R_1$ commutes with $q_2$ and so we have that $r_1 \in K_2=C_G(q_2)^\infty \cong \U_6(2)$ and, as
$Q_1Q_3 \le O_3(C_{K_2}(Z))$, we have that $Q_3 \le C_{K_2}(r_1)$ and this means that $r_1$ is a $2$-central
element of $K_2$ by Proposition \ref{centralizerinvsU6}. As an extraspecial group of order 27 in $\U_4(2)$ does
not normalize a non-trivial 2-group, we now have that the  maximal signalizer for $Q_3$ in $C_{C_G(q_2)}(r_1)$ is
$O_2(C_{K_2}(r_1)) \cong 2^{1+8}_+$. We have that $\langle Z, q_2 \rangle$ acts on $E$ and $ C_E(\langle Z,
q_2\rangle) = C_E(Z) = R_1$.  Since $$E =  \langle C_E(x) \mid x \in \langle z,q_2\rangle^\# \rangle,$$ we have
$|C_E(q_2)|= 2^9$ and $C_E(q_2)= O_2(C_{K_2}(r_1))$ . Therefore  $C_D(q_2) \le E$. It now follows that $D \le E$
as claimed in (iii).

From the construction of $E$, we have that $E$ is normalized by $F$ and (ii) implies that $N_{C_G(\langle
q_1,r_1\rangle)}(Q_2Q_3)= N_{C_G(\langle q_1,r_1\rangle)}(Z)$ also normalizes $E$. Now either using \cite{Atlas}
or \cite{PS1} we have that $C_{G}(\langle q_1,r_1\rangle)$ normalizes $E$. This is (iii). Since $K \le
C_{G}(\langle q_1,r_1\rangle)$ by Lemma~\ref{u62}, we have $K \le N_G(E)$ as well.
\end{proof}

\begin{lemma}\label{NE} $F^\ast(N_{G}(E)/E) = KE/E \cong \U_6(2)$.
\end{lemma}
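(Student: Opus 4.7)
The plan is to show that $KE/E\cong\U_6(2)$ is the unique component of $N_G(E)/E$ and that $C_{N_G(E)/E}(KE/E)=1$, from which $F^*(N_G(E)/E)=KE/E$ follows. Since $K\cong\U_6(2)$ is simple and $E$ is a $2$-group, $K\cap E$ is a normal subgroup of $K$ contained in a $2$-group and so is trivial; hence $KE/E\cong K$.

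Next I would analyze the action of $K$ on $E/\langle r_1\rangle$. The kernel of this action is a normal subgroup of $K$ that embeds into the kernel of $\Aut(E)\to\Aut(E/Z(E))$, which is isomorphic to $\Hom(E/Z(E),Z(E))$, an elementary abelian $2$-group of order $2^{20}$; since $K$ is nonabelian simple, this kernel is trivial and the action is faithful. Because $E$ is normal in $N_G(E)$ and $K\le N_G(E)$, every $K$-conjugate of $R_1$ lies in $E$, so $E=\langle R_1^K\rangle$ and the image of $R_1/\langle r_1\rangle$ generates $E/\langle r_1\rangle$ as a $K$-module. Combined with Lemma~\ref{unique}, which identifies the unique $20$-dimensional irreducible $\GF(2)\U_6(2)$-module as the exterior cube (absolutely irreducible over $\GF(4)$), this forces $E/\langle r_1\rangle$ to be irreducible, with $K$ acting absolutely irreducibly.

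Schur's lemma then implies that any element of $N_G(E)/E$ centralizing $KE/E$ acts trivially on $E/\langle r_1\rangle$, and hence lies in $E\cdot C_G(E)/E\cong C_G(E)/\langle r_1\rangle$. In particular, any component of $E(N_G(E)/E)$ other than $KE/E$ (being forced to centralize $KE/E$), and the whole Fitting subgroup $F(N_G(E)/E)$ (centralizing all components), must be contained in this subgroup.

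The main obstacle is therefore to prove $C_G(E)=\langle r_1\rangle$; granted this, $C_{N_G(E)/E}(KE/E)=1$, so $KE/E$ is the unique component with trivial centralizer, $F(N_G(E)/E)=1$, and $F^*(N_G(E)/E)=KE/E$. I would approach this last step by first noting that $C_G(E)\le C_G(R_1)$ and computing $C_H(R_1)$ from the local structure of $H$ developed in Sections~4 and~6; then intersecting the result with the centralizers of sufficiently many $K$-conjugates $R_1^f$ ($f\in F\le K$) whose normal closure in $K$ is $E$, and invoking the fusion control of Lemma~\ref{fusion1} to rule out $3$-elements in $C_G(E)$ (any such element would be $G$-conjugate to $z$, and $z$ cannot centralize a subgroup of shape $2^{1+20}_+$). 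This delicate step pins $C_G(E)$ down to $Z(E)=\langle r_1\rangle$, yielding $F^*(N_G(E)/E)=KE/E\cong\U_6(2)$ as required.
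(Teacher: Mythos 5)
Your route is genuinely different from the paper's, which does not analyse the module at all: it observes that $C_{N_G(E)/E}(ZE/E)=N_K(Z)E/E$ is similar to a $3$-centralizer in a group of type $\U_6(2)$ or $\F_4(2)$, that $Z$ is not weakly closed, and then quotes the identification Theorem~\ref{F4U6Thm} to obtain $F^\ast(N_G(E)/E)=KE/E$ in one stroke. That black box delivers simultaneously the two facts you are trying to establish by hand. Unfortunately your replacement argument has two genuine gaps. First, even granting $C_G(E)=\langle r_1\rangle$ and absolute irreducibility, the Schur argument only yields $C_{N_G(E)/E}(KE/E)=1$, and a subgroup with trivial centralizer need not contain, let alone equal, $F^\ast$ (a simple subgroup of a larger simple group can have trivial centralizer). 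Your claim that any other component ``is forced to centralize $KE/E$'' presupposes that $KE/E$ is itself a component, i.e.\ subnormal in $N_G(E)/E$ -- which is exactly what must be proved. You would still need to show $F^\ast(N_G(E)/E)\le KE/E$, e.g.\ by eliminating odd-order normal subgroups and by controlling irreducible overgroups of $\U_6(2)$ in $\GL_{20}(2)$; none of this is addressed.

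Second, the step you correctly identify as the main obstacle, $C_G(E)=\langle r_1\rangle$, is not resolved by your sketch and appears circular. You propose to bound $C_G(E)$ via $C_H(R_1)$, but $C_G(E)\le C_G(R_1)$ and $C_G(R_1)$ is not contained in $H=C_G(Z)$; indeed $C_G(r_1)$ is precisely the unknown that Sections 7--10 are devoted to determining, and it is pinned down only in Theorem~\ref{NGECGr}, which depends on this lemma. Similarly, the fusion argument ruling out $3$-elements of $C_G(E)$ requires knowing that every such element is $G$-conjugate into $Q$, which has not been established. A smaller issue: generation of $E/\langle r_1\rangle$ by the $K$-orbit of $R_1\langle r_1\rangle/\langle r_1\rangle$ does not by itself give irreducibility (a module generated by one orbit of subspaces can still possess proper nonzero submodules); the paper instead extracts irreducibility from the explicit action of $F$ on the ten pairwise commuting quaternion factors (see Lemma~\ref{syl2}), and only then invokes Lemma~\ref{unique} to identify the module.
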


\begin{proof} Note that $N_G(E) = N_{C_G(r_1)}(E)$. In $N_{C_G(r_1)}(E)/E$ we have that $N_K(Z)E/E$ is a $3$-normalizer  of type $\U_6(2)$.  Therefore, as $Z$ is
not weakly closed in $C_S(r)E/E$ with respect to $N_{C_G(r_1)}(E)/E$, we have that $F^\ast(N_{C_G(r_1)}(E)/E)= EK/E$ from
Theorem~\ref{F4U6Thm}.
\end{proof}

\begin{lemma}\label{syl2} $N_G(E)/E$ acts irreducibly on $E/\langle r_1\rangle$ and $N_G(E)$ contains a Sylow $2$-subgroup of $G$.
\end{lemma}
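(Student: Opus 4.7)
The plan is to handle the two assertions in turn. For the irreducibility of $V := E/\langle r_1\rangle$ as a module for $N_G(E)/E$, I would reduce to showing that $V$ is an irreducible $\GF(2)\U_6(2)$-module, using $F^*(N_G(E)/E) = KE/E \cong \U_6(2)$ from Lemma~\ref{NE}. The central ingredient is Lemma~\ref{unique}, which identifies the unique $20$-dimensional irreducible $\GF(2)\U_6(2)$-module $V$ and records $\dim C_V(e_3) = 2$. By Lemma~\ref{signal1}(ii), $C_E(Z) = R_1$ has order $2^3$, so $\dim C_V(Z) = 2$; since $Z$ centralises the extraspecial group $Q_2Q_3$ of order $3^5$ inside $K$, it acts on $K$ as an element of type $e_3$ in the sense of Lemma~\ref{U62J}(iii). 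Because $\langle Z\rangle$ has order coprime to the characteristic, its action on $V$ is semisimple, so $\dim C_V(Z)$ equals the sum over composition factors $V_i$ of $\dim C_{V_i}(Z)$. I would then argue that the only way for this sum to equal $2$ while $\sum \dim V_i = 20$ is for $V$ itself to be the $20$-dimensional irreducible: a trivial composition factor contributes $1$ to the centraliser and only $1$ to the dimension, so a trivial factor combined with anything else faithful of dimension $19$ would be impossible (no such $\U_6(2)$-irreducible exists by Lemma~\ref{unique} and standard modular data), while two or more faithful composition factors of dimension less than $20$ would force the centraliser sum above $2$.

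For the Sylow containment, let $T\in\Syl_2(N_G(E))$. Since $F^*(N_G(E)/E)\cong \U_6(2)$ is simple, $O_2(N_G(E))=E$; and since $E$ is extraspecial and self-centralising in $N_G(E)$, $C_T(E)=Z(E)=\langle r_1\rangle$, so $Z(T)=\langle r_1\rangle$. Any $2$-group containing $T$ therefore normalises $Z(T)=\langle r_1\rangle$ and lies in $C_G(r_1)$. Suppose, for a contradiction, that $T<T^*$ for some $2$-subgroup $T^*$ of $G$, and pick $g\in N_{T^*}(T)\setminus T$. Then $g\in C_G(r_1)$. I aim to force $g\in N_G(E)$, which would contradict $g\notin T=T\cap N_G(E)$. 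This is achieved by showing that $E$ is characteristic in $T$. My plan is to use the following internal characterisation: any hypothetical $T$-invariant extraspecial subgroup $E'$ of order $2^{21}$ has $Z(E')=\langle r_1\rangle$ (being of order $2$ and normal in $T$), and $[E,E']$ is contained in $E\cap E'$; the combined structure forces $E'$ to induce a large transvection-type subgroup on $E/\langle r_1\rangle$ through conjugation, which cannot be accommodated inside $T/E$, as the latter embeds into a Sylow $2$-subgroup of $\Aut(\U_6(2))$ acting on the just-established irreducible $20$-dimensional module.

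The main obstacle will be establishing that $E$ is characteristic in $T$, for which the transvection-counting argument above must be made precise. As a fallback, I would attempt a signaliser-style approach: extending Lemma~\ref{signal1}(iii), one can hope to show that $E$ is the unique maximal $2$-signaliser in $C_G(r_1)$ for a suitable $3$-subgroup of $N_G(E)$, so that any $2$-subgroup of $C_G(r_1)$ normalised by that $3$-subgroup lies in $E$, which then pins down $T$ as Sylow in $G$. In both routes, the newly-proved irreducibility of $E/\langle r_1\rangle$ is what makes the characterisation possible, so the two parts of the lemma are naturally proved in this order.
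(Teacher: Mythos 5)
Your argument for irreducibility has a hole: the coprime fixed-point count $\dim C_{E/\langle r_1\rangle}(Z)=2$ does not by itself force irreducibility. Since any faithful $\GF(2)$-representation of $O_3(C_K(Z))\langle t\rangle\cong 3^{1+4}_+.2$ has dimension at least $18$, a nontrivial composition factor of dimension $d$ contributes $d-18$ to the fixed space, so a composition series with one faithful $18$-dimensional factor and two trivial factors gives $0+1+1=2$, exactly matching your count; your text only addresses the $19+1$ case and dismisses it by appeal to ``standard modular data'' that the paper does not establish (Lemma~\ref{unique} classifies only the $20$-dimensional module). To close this you would additionally need to exclude trivial sub- and quotient-modules, e.g.\ via $C_E(Z)=R_1$ being quaternion together with self-duality of $E/\langle r_1\rangle$ under the commutator form. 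The paper avoids all of this: irreducibility is read off from the construction $E=\langle R_1^F\rangle$ and the $2$-transitive action of $F$ on the ten commuting conjugates of $R_1$, which rules out any proper nonzero invariant subspace directly.

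The more serious gap is in the Sylow statement, where the step you yourself flag as ``the main obstacle'' --- that $E$ is characteristic in $T$ --- is precisely the content that needs proving, and your transvection-counting sketch is not an argument. The paper's mechanism is the Thompson subgroup: because $E/\langle r_1\rangle$ is irreducible and, by Lemma~\ref{Vfacts}, is \emph{not} a failure of factorization module for $N_G(E)/E$, one gets $Z(T)=\langle r_1\rangle$ and $J(T/\langle r_1\rangle)=E/\langle r_1\rangle$ from \cite[Lemma~26.15]{GLS2}; hence $N_G(T)\le N_G(E)$ and $T\in\Syl_2(G)$. You never invoke Lemma~\ref{Vfacts}, yet it is exactly the tool that converts irreducibility into the characteristic property you are missing; without it neither your direct route nor the proposed signalizer fallback (Lemma~\ref{signal1} only controls signalizers for specific $3$-subgroups, not arbitrary $2$-overgroups of $T$) completes the proof.
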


\begin{proof}  We know that $F^*(N_G(E)/E) \cong \U_6(2)$ and that $|E/\langle r_1\rangle|= 2^{20}$. The action of $F$ and $E$, shows that $E/\langle r_1\rangle$ is irreducible. Thus Lemma~\ref{Vfacts} implies that $E/\langle r_1\rangle$ is not a failure of factorization module for $N_G(E)/E$.  In particular, if $T \in \syl_2(N_G(E))$, we have that $Z(T)=\langle r_1\rangle$ and the Thompson Subgroup of $T/\langle r_1\rangle$ is $E/\langle r_1 \rangle$ by \cite[Lemma~26.15]{GLS2}. Thus $N_G(T) \le N_G(E)$  and so $T \in \syl_2(G)$.\end{proof}

We close this section with a technical detail that we shall need later.
\begin{lemma}\label{CKq2}  We have $C_K(q_2) \cong 3 \times \U_4(2)$. \end{lemma}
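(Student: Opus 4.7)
The classification of order-$3$ elements in $K\cong\U_6(2)$ from Lemma~\ref{U62J} gives three possibilities for $C_K(q_2)$: either $3\times\U_4(2)$ (class $e_1$), $3\times(\Sym(3)\wr 3)$ (class $e_2$), or $3^{1+4}_+.(\Q_8\times\Q_8).3$ (class $e_3$). I plan to eliminate the last two classes and so force $C_K(q_2)$ into class $e_1$.

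First I would rule out the $3$-central class $e_3$. The subgroup $Q_2Q_3=O_3(C_K(\langle z\rangle))$ is extraspecial with centre $\langle z\rangle$ and sits inside a Sylow $3$-subgroup $S_K$ of $K$, since $|C_K(z)|_3=3^6=|K|_3$; moreover $S_K/Q_2Q_3$ acts non-trivially on $Q_2Q_3/\langle z\rangle$, forcing $Z(S_K)=\langle z\rangle$. Thus every $3$-central element of $K$ is $K$-conjugate to $z^{\pm 1}$, whereas $q_2$ is not $G$-conjugate, and hence not $K$-conjugate, to $z$ by Lemma~\ref{fusion1}. So $C_K(q_2)$ is not of type $e_3$.

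Next I would eliminate the class $e_2$ by producing an element of order $4$ in $C_K(q_2)$; this is impossible in class $e_2$, whose Sylow $2$-subgroup is the elementary abelian $C_2^3$ sitting inside $\Sym(3)\wr 3$. The quaternion group $R_3\cong\Q_8$ satisfies $[R_3,Q]=Q_3$ and, together with the central product structure $Q=Q_1\circ Q_2\circ Q_3$, centralises $Q_1$ and $Q_2$ pointwise, so $R_3\le C_G(q_1)\cap C_G(q_2)$. By Lemma~\ref{u62}, $F^*(C_G(q_1))=\langle q_1\rangle\times K$, and since $Z(K)=1$ we have $C_{C_G(q_1)}(K)=\langle q_1\rangle$; therefore $R_3$ embeds into $\Aut(K)$ with image in $\Out(K)\cong\Sym(3)$ of order at most $2$, giving $|R_3\cap K|\ge 4$. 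Because every subgroup of $\Q_8$ of order at least $4$ contains an element of order $4$, we obtain an element $i\in C_K(q_2)$ of order $4$, contradicting the $e_2$-Sylow structure and leaving $C_K(q_2)\cong 3\times\U_4(2)$ as the only possibility. The delicate point is the bookkeeping in the second step, where one must control where $R_3$ sits relative to the generalised Fitting subgroup of $C_G(q_1)$; this is handled by the outer automorphism bound $|\Out(\U_6(2))|_2=2$.
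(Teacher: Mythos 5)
Your proof is correct, but it takes a genuinely different route from the paper. The paper's proof is constructive: it exhibits the explicit subgroup $X=\langle q_2, Q_3, (J\cap K) R_3\rangle \approx 3 \times 3^{1+2}_+.\Q_8.3$ of $C_K(q_2)$, computes $C_{N_K(J\cap K)}(q_2)\approx 3^4{:}\Sym(4)$ from $N_K(J\cap K)/(J\cap K)\cong \mathrm O_4^-(3)$ and $\langle q_2\rangle=[J\cap K,r_2]$, and then identifies the centralizer by consulting the \ATLAS. You instead run the trichotomy of Lemma~\ref{U62J} and eliminate two of the three classes: $e_3$ because $z$ is the $3$-central class of $K$ while $q_2\not\sim_G z$, and $e_2$ because $C_K(q_2)$ contains an element of order $4$ coming from $R_3\cap K$, whereas $C_Y(e_2)\cong 3\times \Sym(3)\wr 3$ has elementary abelian Sylow $2$-subgroups. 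Both arguments lean on the same inputs (Lemma~\ref{U62J}, the fusion control of Lemma~\ref{fusion1}, and the quaternion subgroup $R_3$ centralizing $Q_1Q_2$); yours avoids the \ATLAS\ lookup and the slightly delicate identification of $3^4{:}\Sym(4)$ inside $3\times\U_4(2)$, at the cost of the bookkeeping for $R_3\cap K$ — which is handled correctly, and indeed one can even show $R_3\le K$ outright, since the image of $R_3Q_3\approx \SL_2(3)$ in $C_G(q_1)/F^*(C_G(q_1))\le\Sym(3)$ must kill $R_3=O_2(R_3Q_3)$. The only point worth tightening is the appeal to Lemma~\ref{fusion1} for $q_2\not\sim_G z$: that lemma speaks of $q_1$, so you should add that $\langle q_2\rangle$ is $QW$-conjugate to $\langle q_1\rangle$ (via the wreathing element $w$, as recorded in Lemma~\ref{q1conjs}), or apply directly the second-centre criterion from the proof of Lemma~\ref{fusion1}; this is a one-line fix and the paper itself uses the conjugacy of $q_1$ and $q_2$ freely.
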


\begin{proof} Set $X=\langle q_2, Q_3, (J\cap K) R_3\rangle \approx 3 \times 3^{1+2}_+.\Q_8.3$. Then $X \le C_K(q_2)$. As $\langle q_2\rangle =[J\cap K, r_2]$, we have that $N_{K}(J\cap K)/(J\cap K) \cong \mathrm O_4^-(3)$, we get $C_{N_K(J\cap K)}(q_2) \approx 3^4:\Sym(4)$. Hence $C_K(q_2) \cong 3 \times \U_4(2)$ as is seen in \cite{Atlas}.
\end{proof}

%
%
%\begin{lemma} As a $\GF(2)K$-module,  $E/\langle r_1\rangle\otimes \GF(4)$ is isomorphic to the exterior cube of the natural $\SU_6(2)$.
%\end{lemma}
%
%\begin{proof} This is Lemma~\ref{Vaction}(ii).
%\end{proof}

%We also record the following facts:
%\begin{lemma}\label{facts}
%\begin{enumerate}
%\item If $J_0>J$,  then $x_2x_3^{-1} \in N_G(E)$.
%\item $S_3=C_S(r_1)= \langle q_2,q_3,\wt q_2, \wt q_3,x_{123},x_2x_3^{-1}\rangle$;
% \item  $S_4 = S_3 \cap O^{2,3}(N_G(E)) = \langle q_2,q_3,\wt q_2, \wt q_3,x_{123}\rangle$.
%    \end{enumerate}
%    In particular $x_2x_{3}^{-1}\not \in S_4$.
%\end{lemma}
%
%\begin{proof} The first of these statements follows as $\langle r_1,q_1\rangle$ centralizes $x_2x_{3}^{-1}$.
%
%(ii) is an easy calculation using the action of $r_1$.
%
%(iii) Since $O^{2,3}(N_G(E))= EK$ and $S \cap N_G(E) = C_S(\langle r_1,q_1\rangle)= S_3$ and $N_K(S \cap K)$
%contains $r_1t$ inverting $x_{123}$.
%
%
%\end{proof}
%

\section{The centralizer of an outer involution}

In this section we continue our investigation of the situation when $|R|=2^9$, assume that $H/BRQ\cong \Sym(3)$ and show that $G$ has a subgroup of index $2$. Thus, by Lemma~\ref{Horder}, $$\ov H \approx (\Q_8 \times \Q_8 \times \Q_8).3. \Sym(3)$$
or $$\ov H \approx (\Q_8 \times \Q_8 \times \Q_8).3^{1+2}_+ .2.$$

Since $H/BRQ\cong \Sym(3)$, Lemma~\ref{H/Q struct} implies that the Sylow $2$-subgroup of $H$ is isomorphic to
the Sylow $2$-subgroup of $\Sp_{2}(3)\wr \Sym(3)$ and hence we may select an the involution $d$ which conjugates
$Q_2$ to $Q_3$  and centralizes an extraspecial ``diagonal'' subgroup of $Q_2Q_3$ and in addition  centralizes $Q_1$ and normalizes $S$.

\begin{lemma}\label{F42} We have $C_G(d)/\langle d \rangle \cong \F_4(2)$.
\end{lemma}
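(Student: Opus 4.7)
The plan is to exploit the structure of $d$ within $H$ to recognise $C_H(d)/\langle d\rangle$ as similar to a $3$-centralizer of type $\U_6(2)$ or $\F_4(2)$ (Definition~\ref{U6F4def}), then invoke Theorem~\ref{F4U6Thm} inside $G' := C_G(d)/\langle d\rangle$ and use the weak-closure dichotomy in that theorem to single out $\F_4(2)$.

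First I would compute $C_Q(d)$ and $C_R(d)$. Since $d$ centralizes $Q_1$ and interchanges $Q_2 \leftrightarrow Q_3$, the subgroup $C_Q(d)$ consists of products $q_1 \cdot q_2 \cdot q_2^d$ with $q_1 \in Q_1$, $q_2 \in Q_2$; this is the central product $Q_1 \circ \Delta$, where $\Delta = \{q q^d : q \in Q_2\}$ is a diagonal copy of $3^{1+2}_+$ in $Q_2Q_3$, amalgamated at the common centre $Z$, so $C_Q(d)$ is extraspecial of order $3^5$. Analogously $C_R(d) = R_1 \cdot \Delta' \cong \Q_8 \times \Q_8$, where $\Delta'$ is the corresponding diagonal in $R_2 R_3$. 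Because $R$ is normal in $H$, $C_R(d) = R \cap C_H(d)$ is normal in $C_H(d)$, and its image modulo $C_Q(d)\langle d \rangle$ is still $\Q_8 \times \Q_8$; moreover, the centralizer of $C_Q(d)$ in $C_H(d)$ reduces to $Z\langle d \rangle$, so $F^*(C_H(d)/\langle d \rangle) = C_Q(d)\langle d \rangle /\langle d \rangle$. Hence both clauses of Definition~\ref{U6F4def} hold for $M' := C_H(d)/\langle d \rangle$.

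Setting $G' = C_G(d)/\langle d \rangle$ and $Z' = Z\langle d \rangle/\langle d \rangle$, I would verify $C_{G'}(Z') = M'$: an element of $N_G(Z) \cap C_G(d)$ centralizing $Z'$ must centralize $z$ itself (the other candidate $zd$ has order $6$), so it lies in $H$, and the outer elements of $N_G(Z)$ invert $Z$ by Lemma~\ref{Pfacts1}(v). To verify the weak-closure hypothesis of Theorem~\ref{F4U6Thm}, note that $V = \langle z, q_1q_2q_3\rangle \le C_Q(d)$ by Lemma~\ref{QQg}(i) (since $d$ fixes $q_1$ and centralizes the diagonal $q_2q_3$), and $V$ contains four $G$-conjugates of $Z$ by Lemma~\ref{Pfacts1}(iv). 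For $Z_1 = Z^g \le V$ with $Z_1 \ne Z$, the conjugate $g^{-1}dg$ is an involution of $H = C_G(Z^{g_\cdot})^{g^{-1}} = C_G(Z)$ which, being centralized by $Z$, can be brought back to $d$ by a correction in $H$ (using $H$-conjugacy of involutions in $H\setminus BRQ$ projecting to transpositions of $H/BRQ \cong \Sym(3)$). The corrected element lies in $C_G(d)$ and takes $Z$ to a nontrivial conjugate inside $V$, so $Z'$ is not weakly closed in $C_Q(d)\langle d \rangle/\langle d \rangle$, still less in a Sylow $3$-subgroup of $G'$.

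Theorem~\ref{F4U6Thm} then yields $F^*(G') \cong \U_6(2)$ or $\F_4(2)$, and its second conclusion (that $Z$ is weakly closed in $O_3$ of its centralizer when $F^* \cong \U_6(2)$) excludes the $\U_6(2)$ case, leaving $F^*(G') \cong \F_4(2)$. The main obstacle I expect is the final upgrade from $F^*(G') \cong \F_4(2)$ to $G' \cong \F_4(2)$ rather than $\F_4(2).2$; this will be handled by a direct order count, using the explicit structure of $H$ in Lemma~\ref{Horder} and the known action of $d$ on $\overline H \le \Sp_2(3) \wr \Sym(3)$ to compute $|C_H(d)|$ precisely, and showing this order matches that of the $3$-centralizer in $\F_4(2)$ but leaves no room for the extra factor of $2$ contributed by a graph automorphism centralizing $Z$.
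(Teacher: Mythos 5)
Your overall strategy is the paper's: compute $C_Q(d)$ (extraspecial of order $3^5$) and $C_R(d)\cong \Q_8\times\Q_8$, recognise $C_{C_G(d)}(Z)/\langle d\rangle$ as a $3$-centralizer of type $\U_6(2)$ or $\F_4(2)$, use the non-weak-closure of $Z$ in $O_3$ of that centralizer to force the $\F_4(2)$ branch of Theorem~\ref{F4U6Thm}, and finish with the order count $|C_H(d)|=2^7\cdot 3^6$ to exclude $\Aut(\F_4(2))$. All of that is sound and matches the paper.

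The genuine gap is in your verification that $Z$ is not weakly closed in $C_Q(d)$ with respect to $C_G(d)$. You take $Z_1=Z^g\le V$ with $Z_1\ne Z$, pass to $d^{g^{-1}}\in H$, and propose to correct it back to $d$ by an element of $H$ ``using $H$-conjugacy of involutions in $H\setminus BRQ$ projecting to transpositions of $H/BRQ\cong\Sym(3)$.'' That conjugacy claim is false as stated: the coset $BRQd$ contains involutions lying in distinct $H$-classes. For instance $dr_1$ is an involution in $BRQd$ (as $d$ centralizes $r_1$), but $C_{Q_1}(dr_1)=Z$ while $C_{Q_1}(d)=Q_1$, so $|C_Q(dr_1)|<|C_Q(d)|$ and $dr_1\not\sim_H d$. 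Moreover you have not shown that $d^{g^{-1}}$ lands in the particular $H$-class of $d$ (only that it is a $G$-conjugate of $d$ centralizing $Z$), so the correcting element $h$ with $d^{g^{-1}h}=d$ need not exist inside $H$, and without it you do not obtain an element of $C_G(d)$ moving $Z$ to $Z_1$.

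The paper closes exactly this step by a Frattini argument instead: since $d$ normalizes $S$ it normalizes $V=Z_2(S)$ and in fact centralizes $V$ (it already centralizes $Z$ and cannot invert $V/Z$ because it fixes $q_1$), hence $d$ normalizes $P=\langle Q,Q^g\rangle$ and $C_{P\langle d\rangle}(V)=W\langle d\rangle$. A Frattini argument then gives $P\langle d\rangle=C_{P\langle d\rangle}(d)W$, so $C_P(d)$ covers $P/W\cong\SL_2(3)$ and acts transitively on $V^{\#}$; this produces directly an element of $C_G(d)$ conjugating $Z$ to a different subgroup of $V\le C_Q(d)$. You should replace your fusion argument with this one (or some equally concrete production of an element of $C_G(d)$ moving $Z$ inside $V$); the rest of your outline then goes through.
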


\begin{proof} Since $d$ centralizes $Z$, we have $C_Q(d)$ is extraspecial of order $3^{1+4}$. Furthermore, as $\ov B$
has order $3$ or $3^2$ we have $|C_{\ov B}(d)|=3$. Thus $C_{S}(d)$ has order $3^{6}$. Furthermore, $C_{R}(d)= R_1
\times C_{R_2R_3}(d)$ is a direct product of two quaternion groups. It follows that $C_{C_G(d)}(Z)$ is a
$3$-centralizer in a group of type  $\U_6(2)$ or $\F_4(2)$.  Since $d$ normalizes $S$, $d$ normalizes $Z_2(S) = V$ and, as $V= Z\langle
q_1q_2q_3\rangle$, $d$ centralizes $V$ (see Lemma~\ref{Pfacts1}). From the definition of $P$, we now have that $d$ normalizes $P$.
Since $d$
centralizes $V$, we have that $C_{P\langle d\rangle}(V) = \langle d\rangle W$. A Frattini Argument now shows that
$C_{P\langle d\rangle}(d)W= P\langle d\rangle$. Therefore $C_{P}(d)$ acts transitively on the non-trivial
elements of $V$. Hence $Z$ is not weakly closed in $C_Q(d)$. Now Theorem~\ref{F4U6Thm} implies that $C_G(d)/\langle d\rangle
\cong \F_4(2)$ or $\Aut(\F_4(2))$. Since $|C_H(d)|= 2^7\cdot 3^6 $ it transpires that $C_G(d)/\langle d\rangle \cong
\F_4(2)$ as claimed.
\end{proof}

\begin{theorem}\label{index2}
If $H/BRQ\cong \Sym(3)$, then $G$ has a subgroup $G^\ast$ of index $2$ which satisfies the hypothesis of Theorem~\ref{Main} and in addition has $|H \cap G^\ast/BRQ|=3$.
\end{theorem}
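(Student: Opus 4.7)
The plan is to apply Thompson's transfer lemma to an outer involution and produce $G^\ast$. By Lemma~\ref{syl2} we pick $T\in\Syl_2(G)$ with $T\le N_G(E)$. The involution $d$ of Lemma~\ref{F42} lies in $N_G(E)$: since $d$ centralizes $Q_1$ and $R_1$ maps onto the unique Sylow $2$-subgroup of $\Aut(Q_1)\cong\GL_2(3)$, $d$ centralizes $R_1\ni r_1$, while $d$ normalizes $Q_2Q_3$, so $d$ normalizes the maximal $2$-signalizer $E$ for $Q_2Q_3$ in $C_G(r_1)$ by Lemma~\ref{signal1}(iii). Matching $C_G(d)/\langle d\rangle\cong\F_4(2)$ from Lemma~\ref{F42} (which forces $|C_G(d)|_2=2^{25}$) against Proposition~\ref{Vaction} and Lemma~\ref{centralizerinvs} places $dE$ in the outer class $t_4$ (or $t_5$) of $\U_6(2){:}2$. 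Consequently $N_G(E)/E\cong\U_6(2){:}2$ and $T_0:=T\cap KE$ is an index-$2$ subgroup of $T$ with $d\in T\setminus T_0$.

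To apply Thompson's transfer it suffices to show $d^G\cap T_0=\emptyset$. Suppose for contradiction $d^g\in T_0$. Then $d^gE$ is inner in $N_G(E)/E$ and so is $\U_6(2)$-conjugate to one of $t_1,t_2,t_3$ from Lemma~\ref{centralizerinvsU6}, while $C_G(d^g)\cong 2\udot\F_4(2)$ by Lemma~\ref{F42}. I would then derive the contradiction from a $2$-local structural comparison. By Proposition~\ref{Vaction}, the normal $2$-subgroup $C_E(d^g)$ of $C_{N_G(E)}(d^g)$ has order $2^{15}$, $2^{13}$, $2^{11}$ in the three cases respectively, with shape dictated by the restriction of the symplectic form on $V=E/\langle r_1\rangle$; combined with Lemma~\ref{centralizerinvs} this forces a normal $2$-subgroup of $C_G(d^g)$ whose chief-factor structure is incompatible with the known $2$-local structure of $2\udot\F_4(2)$. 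The main obstacle will be carrying out this comparison uniformly across the three cases; the key observation is that the presence of $E\cong 2^{1+20}_+$ forces large and very specific normal $2$-subgroups into the candidate centralizer that cannot embed into the $\F_4(2)$-local structure.

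Thompson's transfer then produces $G^\ast\trianglelefteq G$ of index $2$ with $T_0\le G^\ast$ and $d\notin G^\ast$. For the remaining assertions: $R\le T_0\le G^\ast$ and every odd-order element of $G$ lies in $G^\ast$, so $BRQ\le G^\ast$; combined with $d\notin G^\ast$ and $|H:BRQ|=6$ this yields $|H\cap G^\ast/BRQ|=3$. The structure of $H\cap G^\ast$ remains similar to a $3$-centralizer of type ${}^2\E_6(2)$: since $H\cap G^\ast\trianglelefteq H$, any normal $2$- or component-subgroup of $H\cap G^\ast$ is normal in $H$, so $F^\ast(H\cap G^\ast)=Q$ is still extraspecial of order $3^{1+6}$ with centre $Z$, and $O_2((H\cap G^\ast)/Q)=RQ/Q\cong\Q_8\times\Q_8\times\Q_8$ is unchanged. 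Finally, $Z$ is not weakly closed in $S$ with respect to $G^\ast$: given $g\in G$ with $Z^g\le Q$, $Z^g\ne Z$ supplied by Lemma~\ref{ZnotweakQ}, Sylow's theorem applied inside $G^\ast$ produces $h\in G^\ast$ with $Q^g=Q^h$, so $gh^{-1}\in N_G(Q)=H$ centralizes $Z$ and hence $Z^h=Z^g\ne Z$, as required.
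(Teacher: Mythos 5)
Your overall strategy -- transfer with respect to the index-$2$ subgroup $T_0 = T\cap KE$ of $T\in\Syl_2(N_G(E))=\Syl_2(G)$, using the outer involution $d$ of Lemma~\ref{F42} -- is the same as the paper's, and your closing paragraph (locating $BRQ$ in $G^\ast$ and checking that $H\cap G^\ast$ is still a $3$-centralizer of type ${}^2\E_6(2)$ in which $Z$ is not weakly closed) is fine and in fact more careful than the paper, which omits that verification. The problem is the central step, which you yourself flag as ``the main obstacle'': you never actually rule out $d^G\cap T_0\neq\emptyset$, and the route you propose does not work as stated. First, $C_E(d^g)$ is normal in $C_{N_G(E)}(d^g)$, but $C_{N_G(E)}(d^g)$ is merely a subgroup of $C_G(d^g)$, so nothing ``forces a normal $2$-subgroup of $C_G(d^g)$''; the structural comparison with the $2$-locals of $\F_4(2)$ has no foothold. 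Second, and more seriously, a pure order/structure comparison cannot eliminate the class (iii) possibility of Lemma~\ref{centralizerinvs}: there $|C_{N_G(E)}(d^g)|_2\le 2^{24}$, which is perfectly compatible with $d^g\in T_0$ and $|C_G(d^g)|_2=2^{25}$ -- it only says $C_T(d^g)$ is not a Sylow $2$-subgroup of $C_G(d^g)$. So the full statement $d^G\cap T_0=\emptyset$ is out of reach by this method, even though it is true a posteriori.

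This is exactly why the paper uses the extremal form of the transfer argument (\cite[Proposition 15.15]{GLS2}) rather than plain Thompson transfer: assuming $G=O^2(G)$, some conjugate $d^\ast$ of $d$ lies in $T_0$ with $C_T(d^\ast)\in\Syl_2(C_G(d^\ast))$, which pins down $|C_{N_G(E)}(d^\ast)|_2=2^{25}$ \emph{exactly}. One then checks that $d^\ast\notin E$ (Proposition~\ref{Vaction}(ii) gives too large a stabilizer), that the centralizer of an inner involution covers the outer half of $N_G(E)/E$ (so $|C_{EK/\langle r_1\rangle}(\bar d^\ast)|$ would have to be $2^{23}$ or $2^{24}$), and that none of the values $2^{29},2^{29},2^{26},2^{26},2^{25},2^{26},2^{22}$ coming from cases (i)--(iii) of Lemma~\ref{centralizerinvs} fits -- case (iii) is now excluded because $2^{22}$ is too \emph{small}. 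If you switch to the extremal version, your argument closes; note also that your earlier claim that the order count alone places $dE$ in the outer class glosses over case (ii)(b), whose $2$-part is also $2^{25}$, and needs the same outer-covering observation.
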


\begin{proof}
Now let
$T \in \syl_2(N_G(E))$ and $T_0= T \cap EK$. By Lemma~\ref{syl2}, $T \in \syl_2(G)$.
Assume that $G$ does not have a subgroup of index $2$. Then by
\cite[Proposition 15.15]{GLS2} we have that there is a conjugate $d^*$ of $d$ in $T_0$ such that $C_T(d^*) \in
\syl_2(C_G(d^*))$.  In particular, we must have $|C_{EK\langle d\rangle}(d^*)| = 2^{25}$. Using
Lemma~\ref{Vaction} (ii) we see that $d^*\not \in E$. Now note that $$C_{EK\langle d\rangle}(d^*)EK = EK\langle
d\rangle$$ by Lemma~\ref{centralizerinvs} and so we require $|C_{EK/\langle r_1\rangle}(d^* \langle r_1\rangle)| =
2^{23}$ or $2^{24}$ where in the latter case, we must have
$$C_{EK/\langle r_1\rangle}(d^* \langle r_1\rangle)> C_{EK}(d^*) \langle r_1\rangle/\langle r_1\rangle.$$
We now apply  Lemma~\ref{centralizerinvs}. As $d^* \in Y^\prime$ in the notation of Lemma \ref{centralizerinvs},
this shows that (iv) and (v) not apply. But then Lemma~\ref{centralizerinvs} provides no possibility for $d^*$.
\end{proof}

\section{Transferring the element of order $3$}

Because of Theorem~\ref{index2}, from here on we suppose that $H/BRQ$ has order 3.
In this section we show that if $S>QW$, then $G$ has a normal subgroup of index $3$ which satisfies the hypothesis of
Theorem~\ref{Main}.  So assume that $S>QW$. Then, by Lemma~\ref{Horder} (ii), $\ov S$ is extraspecial and
$|H|= 2^{9}\cdot  3^{10}$ with $$\ov H \approx (\Q_8 \times \Q_8 \times \Q_8).3^{1+2}_+.$$

\begin{lemma}\label{index3} Suppose that $S>QW$ and $|H| = 2^{9}\cdot 3^{10}$. Then $G$ has a
normal subgroup $G^*$ of index of index $3$ and $C_G(Z) \cap G^* = QWR\langle t\rangle$ is similar to a
$3$-centralizer on type ${}^2\E_6(2)$ and $Z$ is not weakly closed in $S\cap G^*$ with respect to $G^*$.
\end{lemma}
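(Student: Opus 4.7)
The plan is to apply the Thompson-style transfer from Lemma~\ref{ptransfer}(ii) to the Thompson subgroup $J_0 = J(S)$.  Under the hypothesis $S > QW$, Lemma~\ref{J}(iv),(v) shows $J_0$ is elementary abelian of order $3^6$ with $J \le J_0$ of index $3$ and $J_0 = J\langle x_2x_3^{-1}\rangle$.  Let $M = N_G(J)$.  By Lemma~\ref{conjugates in Z}, $N_G(J_0) \le M$, and by Lemma~\ref{N_G(J)}, $M/J_0 \cong \Omega_5(3){.}2$ or $\Omega_5(3){.}2 \times 2$, acting on $J$ as the natural $5$-dimensional orthogonal $\Omega_5(3)$-module.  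It suffices to show $J_0 \not\le M'$, for then $J_0 \not\le N_G(J_0)' \le M'$, and Lemma~\ref{ptransfer}(ii) gives $J_0 \not\le G'$.

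The crux is to verify that $M$ centralizes $J_0/J \cong \mathbb{Z}/3$.  Since $J$ is canonically defined as the subgroup of $J_0$ generated by its $G$-conjugates of $Z$ (Lemma~\ref{conjugates in Z}), $J$ is $M$-invariant, so $M/J_0$ acts on $J_0/J$ through a character $\chi: M/J_0 \to \mathrm{Aut}(J_0/J) \cong \mathbb{Z}/2$ with $\Omega_5(3) \le \ker\chi$.  To conclude $\chi$ is trivial, we check it on generators of $M/J_0$ modulo $\Omega_5(3)$: the involution $t \in N_P(Z) \cap N_G(S)$ of Lemma~\ref{Pfacts2}(vi) lies in $P \le M$ and centralizes $x_2x_3^{-1}$ by Lemma~\ref{calc1}(ii); its image in $M/J_0$ lies outside $\Omega_5(3)$ since $t$ inverts $Z$, acting as an orientation-reversing transformation of the natural module $J$.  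In the case $M/J_0 \cong \Omega_5(3){.}2 \times 2$, the extra $\mathbb{Z}/2$ factor — corresponding to an element acting as $-I$ on $J$ and hence trivially on the quadrangle of Lemma~\ref{N_G(J)} — is similarly shown to centralize the coset $x_2x_3^{-1}J$ by direct computation with the elements from Section 6.

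Granted the centralization, $M/J$ is a central extension of $M/J_0$ by $J_0/J \cong \mathbb{Z}/3$.  The Schur multiplier of $\Omega_5(3){.}2$ (inherited from that of $\PSp_4(3) \cong \Omega_5(3)$, which is $\mathbb{Z}/2$) is a $2$-group, so $H^2(M/J_0, \mathbb{Z}/3) = 0$ and the central extension splits, yielding a surjection $\phi : M \to \mathbb{Z}/3$ with $\phi(x_2x_3^{-1})$ a generator.  Thus $x_2x_3^{-1} \notin M'$, so $J_0 \not\le M'$, and Lemma~\ref{ptransfer}(ii) gives $J_0 \not\le G'$.  The induced surjection $\pi : G \to \mathbb{Z}/3$ has kernel $G^*$, a normal subgroup of $G$ of index $3$.

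Finally we verify the structural claims.  We show $Q, R, W \le G'$: $Q \le (QR)' \le G'$ because $R$ acts irreducibly fixed-point-freely on $Q/Z$ and $Z = Q' \le (QR)'$; $R \le [B,R] \le G'$ via the faithful outer action of $B$ on $R$ permuting $\{i,j,k\}$ in each $\Q_8$-factor; and $W = [W,P](Q\cap Q^g) \le G'$ using the irreducible $\SL_2(3)$-action on $W/(Q\cap Q^g)$ from Lemma~\ref{Pfacts2}(ii).  Since $|QRW| = 2^9 \cdot 3^9 = |H|/3$, we get $H \cap G^* = QRW$; as $t \in P = \langle Q, Q^g\rangle \le G'$, also $N_G(Z) \cap G^* = QRW\langle t\rangle$.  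The subgroup $QRW$ is similar to a $3$-centralizer of type ${}^2\E_6(2)$ since $F^*(QRW) = Q$ is extraspecial of order $3^7$ with $Z(Q) = Z$ and $O_2(QRW/Q) \cong R \cong \Q_8 \times \Q_8 \times \Q_8$.  Non-weak-closure of $Z$ in $S \cap G^* = QW$ follows from Lemma~\ref{ZnotweakQ}: a conjugating element $g$ with $Z^g \le Q$, $Z^g \ne Z$ may be chosen in $G^*$ because $N_G(Z)G^* = G$, since $x_2x_3^{-1} \in N_G(Z) \setminus G^*$.  The main obstacle is the claim that $M$ centralizes $J_0/J$ — particularly in the $\Omega_5(3){.}2 \times 2$ case, which reduces to an explicit element calculation in the concrete realization of the generalized quadrangle.
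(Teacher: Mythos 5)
Your overall strategy is the same as the paper's: transfer out of the Thompson subgroup $J_0=J(S)$ via Lemma~\ref{ptransfer}(ii), then identify $S\cap G^*=QW$ and deduce non-weak-closure; the endgame (showing $Q,R,W,t$ all lie in $G^*$ and checking the $3$-centralizer axioms for $QRW$) is fine and in places more careful than the paper. But there is a genuine gap at the step you yourself flag as the crux, namely that all of $M=N_G(J_0)$ centralizes $J_0/J$. You verify this only for $t$ (via Lemma~\ref{calc1}(ii)), assert without adequate justification that $tM_0J_0$ generates $M/M_0J_0$ in the $\Omega_5(3).2$ case (that $t$ inverts $Z$ does not place its image outside $\Omega_5(3)$ --- many elements of $\Omega_5(3)$ invert a given singular $1$-space; one would need a determinant/spinor-norm computation), and in the $\Omega_5(3).2\times 2$ case you defer to a ``direct computation with the elements from Section 6,'' which cannot be carried out as stated because the hypothetical element acting as $-I$ on $J$ is not among the elements constructed there and is not visibly a word in them. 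The gap is not cosmetic: if some element of $M$ inverted $J_0/J$, then for $j\in J_0\setminus J$ one has $[j,m]\in jJ$, and since $J=[J,M]\le M'$ this forces $J_0\le M'$, so the transfer (and hence the whole lemma) would fail. So the centrality of $J_0/J$ in $M/J$ must actually be proved, not checked on a partial list of cosets.

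The missing idea --- which is also what makes the paper's own terse passage from ``$J_0\not\le N_G(Z)'$'' to ``$J_0\not\le N_G(J_0)'$'' work --- is a Frattini argument: $C_M(J_0/J)$ is normal in $M$ and contains $S\in\Syl_3(M)$, so $M=C_M(J_0/J)\,N_M(S)$, and $N_M(S)\le N_G(Z(S))=N_G(Z)=SR\langle t\rangle$. There one checks every element normalizing $S$ (hence $J_0=J(S)$ and $J=\langle Z^G\cap J_0\rangle$) centralizes $J_0/J$: $S$ does since $[\ov{J_0},\ov W]=Z(\ov S)=\ov J$; elements of $N_R(S)$ do since $[J_0,R]\le J_0\cap QR\le Q\cap J_0=A\le J$; and $t$ does by your calculation. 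Hence $M=C_M(J_0/J)$, after which your Schur-multiplier splitting (or, more directly, the transfer of $M/J$ onto the central direct factor $J_0/J$ of $S/J$) gives $J_0\not\le M'$ as you want. With that replacement for your coset-by-coset check, the rest of your argument goes through.
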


\begin{proof} We know that $S= QJ_0W$ and $N_G(Z)= QRWJ_0\langle t\rangle$ by Lemma~\ref{J}(v). From
Lemma~\ref{Pfacts2}(vi), $t$ inverts $\ov W$ and so, as $\ov S$ is extraspecial, $J_0Q/JQ \cong J_0/J$ is
centralized by $t$. Therefore $J_0\not \le N_G(Z)'$ and $S/J = J_0/J \times QW/J$. Since $J_0/J$ is a normal
subgroup of $N_G(J_0/J)$ we now have that $J_0 \not \le N_G(J_0)'$. As $J_0$ is abelian, we may use Lemma~\ref{ptransfer} (ii)to obtain $J_0 \not \le G'$.
Let $G^*$ be a normal subgroup of $G$ of index $3$. Then,
as $\ov W$ is inverted by $t$ and $Q= [Q,R]$, $S \cap G^* = QW$. It follows that $C_{G^*}(Z)= QWR$ and $M \cap
G^*= N_{G^*}(J) \not \le H$, in particular, $Z$ is not weakly closed in $S\cap G^*$ with respect to $G^*$. This
proves the lemma.
\end{proof}

\section{The centralizer of an involution}

Because of Lemma~\ref{index3}, we may now assume that $G$ satisfies the hypothesis of the Theorem~\ref{Main}
with  $S =QW$ and $H= QRW$. Thus we now have $$S= QW= Q\langle x_{123},w\rangle$$ where $x_{123}$ and $w$ are as introduced just before Lemma~\ref{J}.

\begin{lemma}\label{Sarah} We have $$C_G(q_2q_3^{-1})/\langle q_2q_3^{-1}\rangle \cong \Omega_8^+(2):3.$$
\end{lemma}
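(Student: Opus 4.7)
The plan is to apply Astill's Theorem (Theorem~\ref{Astill}) to $\widetilde G := C_G(q)/\langle q\rangle$, where $q := q_2 q_3^{-1}$, taking $\widetilde Z := Z\langle q\rangle/\langle q\rangle$ as the distinguished cyclic subgroup. For this I need to verify that $\widetilde M := C_H(q)/\langle q\rangle$ is similar to a $3$-centralizer in a group of type $\mathrm{Aut}(\Omega_8^+(2))$ in the sense of Definition~\ref{O8def}, and that $\widetilde Z$ is not weakly closed in $O_3(\widetilde M)$ with respect to $\widetilde G$. The alternative conclusion of Astill's Theorem will then be ruled out using the precise shape of $\widetilde M /O_3(\widetilde M)$.

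For the structure of $\widetilde M$, I would first compute $C_Q(q)$. Since $Q_1$ commutes with $Q_2Q_3$, $Q_1 \le C_Q(q)$, and inside $Q_2Q_3 \cong 3^{1+4}_+$ the centralizer of $q$ has index $3$, so $|C_Q(q)| = 3^6$; a direct check using the symplectic form on $Q/Z$ gives $Z(C_Q(q)) = Z\langle q\rangle$, so $O_3(\widetilde M)$ is extraspecial of order $3^5$ and of plus type. Both $R_1$ and $\overline{x_{123}}$ centralize $q$: the former because $r_1$ centralizes $Q_2Q_3$, the latter because $x_{123}$ centralizes $A$ by construction. Moreover, $\langle R_1, \overline{x_{123}}\rangle \cong \mathrm{SL}_2(3)$ since $\overline{x_{123}}$ acts faithfully on $R_1/\langle r_1\rangle$. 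I would then inspect the wreath-product action of $\overline H \le \mathrm{Sp}_2(3)\wr\mathrm{Sym}(3)$ on $Q/Z$ to rule out any element of $\overline H$ with non-trivial $\overline w$-component centralising $q$: such an element would need to fix $q_2q_3^{-1}$, but its image has non-trivial $Q_1/Z$-component. This gives $C_{\overline H}(q) = \langle R_1,\overline{x_{123}}\rangle \cong \mathrm{SL}_2(3)$. Finally, since $R_1$ acts irreducibly on $Q_1/Z$ we have $[C_Q(q),R_1] = [Q_1,R_1] = Q_1$, and as $Q_1\cap\langle q\rangle = 1$, the image $[O_3(\widetilde M), O_{3,2}(\widetilde M)] = Q_1\langle q\rangle/\langle q\rangle$ has order $27$. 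Together these verify the three clauses of Definition~\ref{O8def}.

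For the non-weak-closure of $\widetilde Z$ in $O_3(\widetilde M)$, I would use Lemma~\ref{QQg}: $q \in A \cap Q^g$ but $q \notin V = ZY$, so $V \le A \le C_Q(q)$ injects into $O_3(\widetilde M)$, and the four $G$-conjugates of $Z$ in $V$ (Lemma~\ref{Zconjs1}) give four distinct order-$3$ subgroups of $O_3(\widetilde M)$. To produce the required $\widetilde G$-conjugate of $\widetilde Z$ distinct from itself, I would note that $Y\not\le Z\langle q\rangle$ (visible from $q_1$-components, since $Y$ has a non-trivial $q_1q_2q_3$-component whereas $Z\langle q\rangle = \langle z, q_2q_3^{-1}\rangle$ has none), so $C_{Q^g}(Z)$ and $C_{Q^g}(q)$ are distinct index-$3$ subgroups of $Q^g$; picking any $q_0 \in C_{Q^g}(q)\setminus C_{Q^g}(Z)$ gives $q_0 \in C_G(q)$, and since $Y = Z(Q^g)$, the commutator $[q_0, z]$ lies in $Y\setminus\{1\}$, so $Z^{q_0}$ is a distinct cyclic subgroup of $V$. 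This yields $\widetilde Z^{\,\overline{q_0}} \ne \widetilde Z$ inside $O_3(\widetilde M)$, as required.

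Astill's Theorem then delivers $\widetilde G \cong \Omega_8^+(2){:}3$ or $F^*(\widetilde G) \cong \mathrm{Aut}(\Omega_8^+(2))$. In the second case the graph automorphism would yield an outer involution in $\widetilde M$ centralising $\widetilde Z$ modulo $O_3(\widetilde M)$, forcing $\widetilde M/O_3(\widetilde M)$ to contain $\mathrm{SL}_2(3) \times 2$; this contradicts our computation $\widetilde M/O_3(\widetilde M) \cong \mathrm{SL}_2(3)$ of order $24$, so only the first possibility remains. The hardest step will be the exact identification $C_{\overline H}(q) = \langle R_1, \overline{x_{123}}\rangle$, which requires case analysis over every coset of $\overline{B}\,\overline R$ in $\overline H$ together with careful bookkeeping of the individual element-level action (not just modulo $Z$) of the wreathing element $\overline w$ on the $Q_i$.
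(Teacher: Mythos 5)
Your proposal is correct and follows essentially the same route as the paper: both verify the hypotheses of Astill's Theorem for $C_G(q_2q_3^{-1})/\langle q_2q_3^{-1}\rangle$ by computing $C_Q(q_2q_3^{-1})$ (order $3^6$ with $C_Q(q_2q_3^{-1})/\langle q_2q_3^{-1}\rangle$ extraspecial of order $3^5$), identifying the quotient $C_H(q_2q_3^{-1})/C_Q(q_2q_3^{-1}) \cong \SL_2(3)$ generated by the images of $R_1$ and $x_{123}$, and checking that $[C_Q(q_2q_3^{-1}),R_1]=Q_1$ has order $27$. The only immaterial difference is the witness for non-weak-closure of $Z$ in $C_Q(q_2q_3^{-1})$: the paper conjugates within the coset $Vq_2q_3^{-1}$ to produce a quaternion subgroup of $P$ centralizing $q_2q_3^{-1}$ and fusing $Z$ with $Z^g$, whereas you take an explicit element of $C_{Q^g}(q_2q_3^{-1})\setminus C_{Q^g}(Z)$; both work, and you are in fact more explicit than the paper in using $|C_H(q_2q_3^{-1})/C_Q(q_2q_3^{-1})|=24$ to exclude the $\Aut(\Omega_8^+(2))$ alternative in Astill's conclusion.
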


\begin{proof} Set $x=q_2q_3^{-1}$. Then $$C_Q(x) = \langle  q_1, \wt q_1, q_2, q_3, \wt q_2 \wt q_3^{-1} \rangle.$$
Furthermore $[x_{123},x] = 1$ and $[w,x] \not\in Z$. Hence we see that $$C_S(x) = C_Q(x)\langle x_{123}\rangle.$$
We also have  $C_R(x) = R_1$.  So we  have  $$C_{H}(x)= \langle q_1,\wt q_1, q_2,q_3, \wt q_2 \wt
q_3^{-1}, x_{123},R_1 \rangle$$ and  $C_{H}(x)/O_3(C_{C_G(Z)}(x))\cong \SL_2(3)$. Furthermore, $[C_{Q}(x),R_1] = Q_1$ has order $27$ and $C_{Q}(x)/\langle x \rangle$ is extraspecial of order $3^5$.

 By Lemma \ref{QQg}  we see that  $x \in Q\cap Q^g$  and  $[P,x] \leq V = ZZ^g$ by Lemma~\ref{Pfacts1}(iii). Since all the elements of the
coset $Vx$ are conjugate in $P$, it follows that we may assume that there is $U \le P$ with $U \cong \Q_8$ with $[U,x]=1$.
 Then
 $Z$ and $Z^g$ are conjugate by an element of $U$. It follows that $Z$ is not weakly closed in $C_Q(x)$ with respect to $C_G(x)$. Now we have $C_G(x)/\langle x \rangle \cong  \POmega_8^+(2):3$
by Astill's Theorem~\ref{Astill}.
 \end{proof}

Recall the subgroup $E= \langle R_1^F\rangle$ from Lemma~\ref{signal1} is normalized by $C_{J}(r_1) = J\cap K$ and that $F= N_K(J\cap K) \approx 3^4:\mathrm O_4^-(3)$. Since $r_1$ centralizes $q_2q_3^{-1}$, we have that $q_2q_3^{-1} \in J\cap K$. Furthermore, we note that $F$ has exactly $3$-orbits on the subgroups of order $3$ in $J \cap K$ representatives being $Z$, $\langle q_2\rangle$ and $\langle q_2q_3^{-1}\rangle$ and that these subgroups are in different $G$-conjugacy classes by Lemma~\ref{fusion1}. The next goal is to show that $N_G(E)$ is strongly $3$-embedded in $C_G(r_1)$. The next lemma facilitates this aim.
\begin{lemma}\label{q23sigs}
The following hold:
\begin{enumerate}
\item $C_E(q_2q_3^{-1}) \cong 2^{1+8}_+$;
\item $r_1$ is a $2$-central involution in  $E(C_G(q_2q_3^{-1}))$;
\item $C_G(r_1) \cap C_G(\langle q_2q_{3}^{-1}\rangle) \le N_G(E)$;
\item $O_2(C_{E(C_G(q_2q_3^{-1}))}(r_1)) =C_E(q_2q_3^{-1})$; and
\item  $r_1^{C_G(q_2q_3^{-1})} \cap E \not=\{r_1\}$.
\end{enumerate}
\end{lemma}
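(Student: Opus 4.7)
The plan is to leverage the module structure of $E/\langle r_1\rangle$ as the $20$-dimensional irreducible $\GF(2)\U_6(2)$-module (from Lemmas~\ref{NE} and~\ref{syl2}) together with the identification $L=E(C_G(q_2q_3^{-1}))\cong \Omega_8^+(2)$ from Lemma~\ref{Sarah}. All five parts flow once the $K$-conjugacy class of $q_2q_3^{-1}$ is located and $r_1$ is recognised as $2$-central in $L$.

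For (i), since $q_2,q_3\in K$ have centralizer $3\times\U_4(2)$ by Lemma~\ref{CKq2}, both lie in the class $e_1$ of Lemma~\ref{U62J}; taking commuting diagonal representatives $q_2\sim\diag(\omega,\omega^{-1},1,1,1,1)$ and $q_3\sim\diag(1,1,\omega,\omega^{-1},1,1)$ places $q_2q_3^{-1}$ in the class $e_2$. Using $V\otimes\GF(4)=\Lambda^3 W$ from Lemma~\ref{unique}, a direct count of basis vectors $w_{i_1}\wedge w_{i_2}\wedge w_{i_3}$ fixed by $e_2$ yields $\dim_{\GF(2)} C_{E/\langle r_1\rangle}(q_2q_3^{-1})=8$, so $|C_E(q_2q_3^{-1})|=2^9$. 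The $K$-invariance of the symplectic commutator form on $E/\langle r_1\rangle$ forces $C_E(q_2q_3^{-1})$ to be extraspecial, and the plus-type assertion is obtained by exhibiting $C_E(q_2q_3^{-1})$ as a central product of $\Q_8$-factors drawn from the generating family $R_1^F$ of $E$ (Lemma~\ref{signal1}), namely those indexed by elements of $F$ that centralize $q_2q_3^{-1}$.

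For (ii) and (iv), since $O_3(\Omega_8^+(2){:}3)=1$ the group $C_G(q_2q_3^{-1})$ splits as $\langle q_2q_3^{-1}\rangle\times L.\langle\sigma\rangle$, and every $2$-subgroup of $C_G(q_2q_3^{-1})$ embeds in $L$; in particular $C_E(q_2q_3^{-1})\le L$. In $\Omega_8^+(2)$, only the $2$-central involution class admits a centralizer whose $O_2$ is extraspecial of shape $2^{1+8}_+$ with the involution as its centre, and this $O_2$ is uniquely determined among the normal subgroups of $C_L(r_1)$ of that shape. Since $C_E(q_2q_3^{-1})$ has exactly this shape and lies in $C_L(r_1)$ with centre $\langle r_1\rangle$, we obtain simultaneously that $r_1$ is $2$-central in $L$, giving (ii), and that $C_E(q_2q_3^{-1})=O_2(C_L(r_1))$, giving (iv).

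For (iii) I compare orders. Part (i), Lemma~\ref{U62J}(ii), and Lemma~\ref{NE} yield $|C_{N_G(E)}(q_2q_3^{-1})|=|C_E(q_2q_3^{-1})|\cdot|C_{\U_6(2)}(e_2)|=2^9\cdot 1944=2^{12}\cdot 3^5$. On the other hand, $C_G(r_1)\cap C_G(q_2q_3^{-1})=\langle q_2q_3^{-1}\rangle\times C_{L.\langle\sigma\rangle}(r_1)$, and the $2$-central centralizer computation in $\Omega_8^+(2)$ combined with the triality-invariance of the $2$-central class --- which allows $\sigma$ to be chosen so as to commute with $r_1$ --- gives again $2^{12}\cdot 3^5$. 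The inclusion $C_{N_G(E)}(q_2q_3^{-1})\le C_G(r_1)\cap C_G(q_2q_3^{-1})$ combined with equality of orders forces $C_G(r_1)\cap C_G(q_2q_3^{-1})\le N_G(E)$. For (v), inside $O_2(C_L(r_1))=C_E(q_2q_3^{-1})\le E$ the action of $C_L(r_1)/O_2$ on the $8$-dimensional orthogonal quotient has several orbits on singular vectors, and those $L$-conjugates of $r_1$ lying in $O_2(C_L(r_1))\setminus\langle r_1\rangle$ supply the required elements of $r_1^{C_G(q_2q_3^{-1})}\cap E$ distinct from $r_1$. The hardest part will be the plus-type verification in (i) and the orbit bookkeeping for (iii) and (v), both of which rest on tracking the triality action of $\sigma$ on the $2$-central class of $\Omega_8^+(2)$ and on the orthogonal geometry of $O_2(C_L(r_1))/\langle r_1\rangle$.
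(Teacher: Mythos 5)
Your outline is the same as the paper's: obtain $E(C_G(q_2q_3^{-1}))\cong\Omega_8^+(2)$ from Lemma~\ref{Sarah}, show $|C_E(q_2q_3^{-1})|=2^9$ and that it is extraspecial of plus type, recognise $r_1$ as $2$-central there, get (iii) by comparing $C_{N_G(E)}(q_2q_3^{-1})$ with the full centralizer, and get (v) from the fact that $r_1$ is not weakly closed in $O_2$ of its centralizer in $\Omega_8^+(2)$. Where you differ is in the computations feeding (i) and (ii): you count fixed weight vectors of an $e_2$-element on $\Lambda^3W$ and realise $C_E(q_2q_3^{-1})$ as a central product of four of the $\Q_8$-factors $R_1^f$, and you detect $2$-centrality from the presence of a $2^{1+8}_+$ with centre $\langle r_1\rangle$; the paper instead decomposes $E$ under the action of $\langle z,q_2q_3^{-1}\rangle$ using $C_E(Z)=R_1$, and detects $2$-centrality from the $3$-part $3^3$ of $C_D(r_1)$ together with the \ATLAS list of involution centralizers. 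Both routes are legitimate and of comparable length; yours makes the plus-type claim in (i) more transparent, while the paper's avoids having to pin down the $K$-class of $q_2q_3^{-1}$ at that stage. Your order count for (iii) is essentially the paper's argument made explicit.

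Two points need tightening. First, your identification of $q_2q_3^{-1}$ with the class $e_2$ of $K$ is asserted via ``commuting diagonal representatives with disjoint support,'' which is not justified: you do not know a priori how $q_3$ sits inside $C_K(q_2)\cong 3\times\U_4(2)$, and the class genuinely matters for the order count in (iii) (an $e_1$-element has centralizer of order $2^6\cdot3^5\cdot5$, which would wreck the comparison). The clean fix is Lemma~\ref{fusion1}: $z$, $q_2$ and $q_2q_3^{-1}$ lie in pairwise distinct $G$-classes, hence in distinct $K$-classes; since $C_K(z)$ contains $Q_2Q_3\cong3^{1+4}_+$ we have $z\sim_K e_3$, and $q_2\sim_K e_1$ by Lemma~\ref{CKq2}, so $q_2q_3^{-1}\sim_K e_2$ by elimination. (For the dimension count in (i) alone this does not matter, since $e_1$ and $e_2$ both fix an $8$-space of $V$ and $e_3$ is excluded because $R_1R_1^f\cong 2^{1+4}_+$ already centralizes $q_2q_3^{-1}$.) Second, your derivation of (iv) appeals to $C_E(q_2q_3^{-1})$ being ``uniquely determined among the normal subgroups of $C_L(r_1)$ of that shape,'' but $C_E(q_2q_3^{-1})$ is not known to be normal in $C_L(r_1)$ at that point. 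Prove (iii) first; then $C_L(r_1)\le N_G(E)$ normalizes $E$ and centralizes $q_2q_3^{-1}$, hence normalizes $C_E(q_2q_3^{-1})$, which is then a normal $2$-subgroup of $C_L(r_1)$ of order $2^9=|O_2(C_L(r_1))|$ and so equals $O_2(C_L(r_1))$. With these repairs the argument is sound.
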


\begin{proof}
Let $D= E(C_G(q_2q_3^{-1}))$. Then $D \cong \Omega_8^+(2)$ by Lemma~\ref{Sarah} as the Schur multiplier of $\Omega_8^+(2)$ is a $2$-group.

We have seen that $R_1$ centralizes $q_2q_{3}^{-1}$ and so $r_1 \in D $. As
$\langle z, q_2q_3^{-1} \rangle \le J\cap K$ acts on $E$ and $C_E(Z) =R_1\cong \Q_8$ by Lemma~\ref{signal1} (ii), by decomposing $E$ under the action of
$\langle z, q_2q_3^{-1} \rangle$
we see that $$C_E(q_2q_3^{-1}) \cong
2^{1+8}_+.$$
Hence (i) holds. Additionally, we have  $S \cap K = C_{S}(\langle r_1,q_1\rangle) = Q_2Q_3\langle q_1, x_{123}\rangle$ and therefore
 $$C_{S \cap EK}(q_2q_3^{-1}) = \langle q_2,q_3, \wt q_2 \wt q_3^{-1},x_{123}\rangle$$  has order $3^4$.
 Using this and  \cite{Atlas} we infer  that $r_1$ is a
$2$-central element of $E(C_G(q_2q_{3}^{-1}))$ which is (ii).

Since  $r_1$ is 2-central in $D$,
$$C_{C_G(q_2q_{3}^{-1})}(r_1) \approx  ((2^{1+8}_+.(\Sym(3)\times \Sym(3)\times \Sym(3)).3) \times 3$$ with
$O_2(C_{C_G(q_2q_{3}^{-1})}(r_1) ) = C_E(q_2q_3{-1})$ normalized by $C_J(r_1)$. It follows  that
 $$C_{C_G(\langle q_2q_{3}^{-1} \rangle)}(r_1) = O_2(C_{C_G(q_2q_{3}^{-1})}(r_1) )N_{C_G(r_1)}(C_J(r_1)) \le N_G(E).$$ Thus (iii) and (iv) hold.

This proves the main part of the lemma and the remaining part
follows as $r_1$ is not weakly closed in $C_E(r_1)$ in $D$.
\end{proof}

\begin{lemma}\label{3-embedded} If $N_G(E) <C_G(r_1)$, then $N_G(E)=KE$ is strongly $3$-embedded in $C_G(r_1)$.
\end{lemma}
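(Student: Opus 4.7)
The plan is to verify that $N_G(E)$ is strongly $3$-embedded in $C_G(r_1)$ via the standard criterion that it suffices to show $N_{C_G(r_1)}(A) \le N_G(E)$ for every non-trivial $3$-subgroup $A \le N_G(E)$, which in turn reduces by Sylow's theorem and Alperin's fusion theorem to two conditions: (a) $N_{C_G(r_1)}(P) \le N_G(E)$ for a fixed Sylow $3$-subgroup $P$, and (b) $C_{C_G(r_1)}(y) \le N_G(E)$ for representatives $y$ of each $K$-conjugacy class of elements of order $3$ in $P$. I would take $P = C_S(r_1) = Q_2Q_3\langle x_{123}\rangle$, which has order $3^6$ and is a Sylow $3$-subgroup of $KE$; note also that $\langle r_1\rangle = Z(E)$ is characteristic in $E$, so the inclusion $N_G(E) \le C_G(r_1)$ comes for free.

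First I would verify (a). A direct inspection shows $Z(P) = Z$, since any element of $P$ centralizing $Q_2Q_3$ must lie in $Z(Q_2Q_3) = Z$, and $x_{123}$ centralizes $Z$. Hence $N_{C_G(r_1)}(P) \le N_G(Z) \cap C_G(r_1) = C_H(r_1) = Q_2Q_3R\langle x_{123}\rangle$, and every factor lies in $N_G(E)$: $Q_2, Q_3, R_2, R_3 \le K \le N_G(E)$ by Lemma~\ref{signal1}, $R_1 \le E \le N_G(E)$, and $x_{123} \in C_G(\langle r_1, q_1\rangle) \le N_G(E)$ by Lemma~\ref{signal1}(iv).

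Next I would treat (b). Because $K \cap E = 1$, the projection $KE \to K$ is a bijection on $3$-elements, so by Lemma~\ref{U62J} every $3$-element of $KE$ is $K$-conjugate to one of $z$, $q_2$, $q_2q_3^{-1}$ in $J \cap K$. The case $y = z$ is already covered by (a), and $y = q_2q_3^{-1}$ is precisely the content of Lemma~\ref{q23sigs}(iii). For $y = q_2$ I would mimic the proof of Lemma~\ref{q23sigs}: in $K_2 := E(C_G(q_2)) \cong \U_6(2)$ the element $z$ is $3$-central, since $C_{K_2}(z)$ has the shape $3^{1+4}_+{:}(\Q_8 \times \Q_8).3$ and contains $R_1$, so Proposition~\ref{centralizerinvsU6} makes $r_1 \in R_1$ a $2$-central involution of $K_2$ with $C_{K_2}(r_1) \approx 2^{1+8}_+{:}\U_4(2)$. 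Using that $C_G(q_2) = \langle q_2\rangle \times K_2$ in our current case where $H = QRW$, the order count $|C_{C_G(r_1)}(q_2)| = 3\cdot 2^9\cdot|\U_4(2)|$ matches $|C_E(q_2)\cdot C_K(q_2)|$ via Lemma~\ref{CKq2} and $E \cap K = 1$, forcing $C_{C_G(r_1)}(q_2) = C_E(q_2)\cdot C_K(q_2) \le KE \le N_G(E)$.

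The remaining equality $N_G(E) = KE$ would follow from the same computations: since $N_G(E)/KE$ embeds in $\Out(\U_6(2)) \cong \Sym(3)$, the match $|C_{N_G(E)}(z)| = |C_H(r_1)| = |C_{KE}(z)|$ rules out outer $3$-elements, while the corresponding count at $q_2$ combined with $C_G(q_2)$ having no outer part in the current reduced case excludes outer $2$-elements. The hard part will be the $q_2$ analysis in (b) — specifically, identifying $O_2(C_{K_2}(r_1))$ with $C_E(q_2) \cong 2^{1+8}_+$ via a uniqueness/signalizer argument parallel to Lemma~\ref{signal1}(iii) applied to the pair $(r_1, q_2)$ in place of $(r_1, q_2q_3^{-1})$.
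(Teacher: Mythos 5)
Your proposal follows essentially the same route as the paper's proof: the same Alperin-type reduction to the normalizer of a Sylow $3$-subgroup together with the centralizers of the three class representatives $z$, $q_2$ and $q_2q_3^{-1}$ of cyclic subgroups of $C_J(r_1)$, handled respectively via Lemma~\ref{signal1}, an order count using $C_K(q_2)\cong 3\times \U_4(2)$ from Lemmas~\ref{CKq2} and~\ref{u62}, and Lemma~\ref{q23sigs}(iii). The only quibble is your identity $N_G(Z)\cap C_G(r_1)=C_H(r_1)$, which overlooks the involution $t$ inverting $Z$; since $t$ normalizes $Q_2Q_3$ and hence normalizes $E$ by the uniqueness statement in Lemma~\ref{signal1}(iii), this is harmless.
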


\begin{proof} Let $d \in N_G(E)$ be a $3$-element. Then $d$ is conjugate in $N_G(E)$ to an element of $C_J(r_1)$ by Lemma \ref{U62J}.
We have $N_{C_G(r_1)}(S\cap KE) = N_{C_G(r_1)}(Z)$ and so to prove the lemma it suffices to show that $$C_{C_G(r_1)}(\langle d \rangle) \le N_G(E)$$ for all $d \in C_J(r_1)^\#$ by \cite[Proposition 17.11]{GLS2}.
By Lemma \ref{q23sigs} (iii) we have that $$C_{C_G(r_1)}(\langle q_2q_3^{-1} \rangle ) \le N_G(E).$$ By Lemma \ref{signal1} we have that
$$C_{C_G(r_1)}(Z) \leq N_G(E).$$

Further we have  that $C_{N_G(E)}(q_2)E/E = C_K(q_2)E/E \cong 3 \times
\U_4(2)$ from Lemma~\ref{CKq2}. Using Lemma \ref{u62} this shows that also $$C_{C_G(r_1)}(\langle q_2 \rangle) \le N_G(E).$$

By Lemma
\ref{fusion1} these subgroups $\langle q_2 \rangle$, $\langle q_2q_3^{-1} \rangle$ and $Z$ are in different conjugacy classes of $G$ and as $N_K(J\cap K)$ has three orbits on the non-trivial cyclic subgroups of $J\cap K$ we have accounted for all conjugacy classes of three elements in $N_G(E)$ and consequently $N_K(E)$ is strongly $3$-embedded in $C_G(r_1)$.
\end{proof}

\begin{theorem}\label{NGECGr} $C_G(r) = N_G(E)=KE\approx 2^{1+20}_+:\U_6(2)$.
\end{theorem}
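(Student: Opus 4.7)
The plan is to prove the identity $C_G(r_1) = N_G(E) = KE$ in two stages: first I would establish that $N_G(E) = KE$ via a $3$-local count ruling out any outer automorphism of $\U_6(2)$, and then deduce $C_G(r_1) = N_G(E)$ by a strong $3$-embedding contradiction based on Theorem~\ref{Not3embedded}. The shape $2^{1+20}_+{:}\U_6(2)$ of $KE$ is automatic from $K \cap E = 1$ (which holds because $K$ is simple), together with Lemma~\ref{signal1}.

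For the shape step, since $F^\ast(N_G(E)/E) = KE/E \cong \U_6(2)$ by Lemma~\ref{NE}, the quotient $N_G(E)/KE$ embeds in $\Out(\U_6(2)) \cong \Sym(3)$. The crucial input is $N_{N_G(E)}(Z)$: since $N_G(E)\le C_G(r_1)$ and $t$ centralises $r_1$ (Section~6), this normaliser equals $C_{N_G(Z)}(r_1) = C_H(r_1)\langle t\rangle$, of order $2^{10}\cdot 3^6$. Combining with $N_E(Z) = C_E(Z) = R_1$ of order $8$ from Lemma~\ref{signal1}(ii) yields $|N_{N_G(E)/E}(ZE/E)| = 2^7\cdot 3^6$, matching $|N_{\U_6(2)}(Z)|$ from Lemma~\ref{U62J} exactly. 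Because $ZE/E$ is the centre of a Sylow $3$-subgroup of $N_G(E)/E$, any outer automorphism of $\U_6(2)$ must normalise it, so an extension $\U_6(2){:}m$ with $m\in \{2,3,6\}$ would multiply the normaliser order by $|m|$, contradicting the computed value. Hence $N_G(E)/E = \U_6(2)$ and $N_G(E) = KE$.

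For the centraliser step, suppose for contradiction that $N_G(E) < C_G(r_1)$. By Lemma~\ref{3-embedded}, $N_G(E) = KE$ is strongly $3$-embedded in $C_G(r_1)$. Set $X = C_G(r_1)/\langle r_1\rangle$, $\tilde E = E/\langle r_1\rangle$, $\tilde N = N_G(E)/\langle r_1\rangle$, and $\tilde K = K\langle r_1\rangle/\langle r_1\rangle$. Then $\tilde N$ is strongly $3$-embedded in $X$; I would then verify the hypotheses of Theorem~\ref{Not3embedded}: $|\tilde E| = 2^{20}$; $\tilde E$ is minimal normal in $\tilde N$ by the irreducibility from Lemma~\ref{syl2}; $\tilde N = \tilde E\tilde K$ with $\tilde K \cong \U_6(2)$ is a split decomposition (combining $K\cap E = 1$ with $N_G(E) = KE$ from the previous step); and $O_{2'}(X) = 1$ because the faithful action of $F^\ast(N_G(E)/E) \cong \U_6(2)$ on $\tilde E$ forces $C_X(\tilde E) = \tilde E$. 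Theorem~\ref{Not3embedded} then contradicts the strong $3$-embedding of $\tilde N$ in $X$, yielding $C_G(r_1) = N_G(E) = KE$. The principal obstacle is the field-automorphism case in the first stage; its resolution relies on the intrinsic characterisation of $ZE/E$ as the centre of a Sylow $3$-subgroup of $N_G(E)/E$, which forces every candidate outer automorphism to inflate the computed normaliser beyond its observed value.
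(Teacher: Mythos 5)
Your proposal is correct and its second stage is precisely the paper's proof, which consists of the single line ``this follows from Lemma~\ref{3-embedded} and Theorem~\ref{Not3embedded}'': one passes to $C_G(r_1)/\langle r_1\rangle$ acting on $E/\langle r_1\rangle$ and lets the strong $3$-embedding of Lemma~\ref{3-embedded} collide with Theorem~\ref{Not3embedded}. Your first stage is a genuine (and welcome) addition: the equality $N_G(E)=KE$ is built into the \emph{statement} of Lemma~\ref{3-embedded} but is never separately argued in the paper, and your Frattini-plus-order count on $N_{N_G(E)}(Z)$ ($=C_{N_G(Z)}(r_1)$ of order $2^{10}\cdot 3^6$, against $|N_{\U_6(2)}(\langle e_3\rangle)|=2^7\cdot 3^6$) closes that off cleanly; note that even if some element of $E$ inverted $Z$ (so $|N_E(Z)|=16$ rather than $|C_E(Z)|=8$), the count would drop to $2^6\cdot 3^6$ and contradict the lower bound from $KE/E$, so that case self-destructs. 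The one soft spot is your justification of $O_{2'}(C_G(r_1)/\langle r_1\rangle)=1$: knowing $C_X(\tilde E)=\tilde E$ does not by itself kill $O_{2'}(X)$, since $\tilde E$ is not normal in $X$; you need an extra step, e.g.\ that $O_{2'}(X)\cap\tilde N=1$ (a normal odd-order subgroup of $\tilde N=\tilde E\tilde K$ centralizes the normal subgroup $\tilde E$, hence is trivial), whence strong $3$-embedding gives $C_{O_{2'}(X)}(d)\le O_{2'}(X)\cap\tilde N=1$ for every nontrivial $d$ in an elementary abelian $3$-subgroup of rank at least $2$ of $\tilde N$, and coprime action forces $O_{2'}(X)=1$. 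The paper does not address this hypothesis of Theorem~\ref{Not3embedded} either, so this is a gap you share with, rather than introduce beyond, the original.
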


\begin{proof}This now follows from Lemma~\ref{3-embedded} and Theorem~\ref{Not3embedded}.
\end{proof}

\section{The identification of $G$}

For the section  we set $r = r_1$, $L = C_G(r)$ and $K= E(C_G(q_1))$. From  Theorem~\ref{NGECGr} we
have $L= N_G(E)$  and from Lemma~\ref{u62} and Lemma~\ref{NE} we have $K \cong \U_6(2)$ with $L=KE \approx 2^{1+20}_+.\U_6(2)$. In particular,  $E$ is extraspecial of order $2^{21}$.

\begin{lemma}\label{controlfusionE} Suppose that $r^g \in E \setminus \langle r \rangle$ for some $g\in G$.
Define $F = \langle C_E(r^g), C_{E^g}(r) \rangle$ and  $X = \langle E, E^g \rangle$.
 Then
\begin{itemize}
\item[(i)]  $E \cap E^g$ is elementary abelian of order $2^{11}$ and is a maximal elementary abelian subgroup of $E$.
\item[(ii)] $C_{E^g}(r) \leq L$ and $C_{E^g}(r)E/E$ is elementary abelian of order $2^9$.
\item[(iii)] $C_{L}(r^g)E/E \cong 2^9.\L_3(4)$ and $O_2(C_{L}(r^g)E)=(E^g\cap L)E$.
\item[(iv)] $F$ is normal in $X$, $X/F \cong \Sym(3)$ and
$[X,E \cap E^g] = \langle r,r^g \rangle$.
\item[(v)] If $h \in G$ and $r^h \in E \setminus \langle r \rangle$, then there is some $k \in EK$ such that $r^{hk} = r^g$.
\end{itemize}
\end{lemma}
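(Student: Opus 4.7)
The plan is to establish the five parts in order, with (i) as the key technical step and (v) as a consequence of (iii). A crucial preliminary observation is that $r \in E^g$: since $r^g \in E$ we have $[r, r^g]=1$, so $r \in C_G(r^g) = L^g = N_G(E^g)$, and I would argue that the image $rE^g \in L^g/E^g \cong \U_6(2)$ is trivial via a fusion analysis. The idea is to apply Proposition~\ref{Vaction} and the centralizer orders from Lemma~\ref{centralizerinvs} to the extraspecial extension $L^g$: any $G$-conjugate of $r^g$ lying in $L^g \setminus E^g$ would correspond to an outer involution whose $L^g$-centralizer is incompatible with $|C_G(r)| = |L|$, ruling it out.

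Once $r \in E^g$ is known, for any $x, y \in E \cap E^g$ the commutator $[x,y]$ lies in $\langle r\rangle \cap \langle r^g\rangle = 1$ and the square $x^2$ lies in the same intersection, so $E \cap E^g$ is elementary abelian. As maximal elementary abelian subgroups of $E = 2^{1+20}_+$ have order $2^{11}$ (corresponding to maximal totally singular subspaces of $V = E/\langle r\rangle$ under the quadratic form $q(xZ) = x^2$), we obtain $|E \cap E^g| \leq 2^{11}$. To pin down the exact order and the stabilizer shape needed for (iii), I would use Proposition~\ref{Vaction} to identify the $K$-orbit of the singular vector $r^gZ$: an order comparison forces $r^g Z$ into the $v_1$-orbit with stabilizer $2^9{:}\L_3(4)$, and the preimage in $E$ of the $O_2$ of this stabilizer then gives $E \cap E^g$ of order exactly $2^{11}$, completing (i).

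Parts (ii) and (iii) are computational consequences: $|C_{E^g}(r)/(E\cap E^g)| = 2^9$ is elementary abelian as a quotient of $E^g/\langle r^g\rangle$, establishing (ii); and (iii) is the identification of $C_L(r^g)E/E$ with $\Stab_K(r^g Z) \approx 2^9{:}\L_3(4)$ together with $(E^g\cap L)E/E$ as its $O_2$. For (iv), $F$ is normal in $X$ because $C_E(r^g)$ is the kernel of the commutator homomorphism $E \to \langle r\rangle$, $x \mapsto [x, r^g]$ (hence normal in $E$, and also normalized by $E^g$ as the intersection $E \cap L^g$), with the analogous argument for $C_{E^g}(r)$; the quotient $X/F$ then acts on the fours group $\langle r, r^g\rangle$, with $E$ swapping $r^g$ and $rr^g$ (since $[E, r^g] = \langle r\rangle$) and $E^g$ swapping $r$ and $rr^g$ symmetrically, yielding the $\Sym(3)$-action and $[X, E \cap E^g] = \langle r, r^g\rangle$. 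Part (v) then follows from (iii): $r^h Z$ and $r^g Z$ both lie in the $v_1$-orbit of $K$ on $V$, so some $k_0 \in K$ satisfies $(r^h Z)^{k_0} = r^g Z$; adjusting by an element of $E$ (to account for the choice of lift modulo $\langle r\rangle$) yields $k \in EK$ with $r^{hk} = r^g$.

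The main obstacle is the preliminary step $r \in E^g$: without it, the elementary abelian argument for $E \cap E^g$ fails (commutators and squares could land in $\langle r\rangle$), so a careful fusion analysis of involutions in $L^g$ is required, tracking the orbits of $K$ on the module $V$ through Proposition~\ref{Vaction}. A secondary technical point in (iii) is identifying $(E^g\cap L)E$ precisely with $O_2(C_L(r^g)E)$, which follows from matching the $\L_3(4)$-module structure on $V$ with the induced structure on $E^g/\langle r^g\rangle$.
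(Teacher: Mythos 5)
Your overall outline matches the paper's, but two steps have genuine gaps. First, the pivotal claim $r\in E^g$: your proposed mechanism (ruling out $rE^g\neq 1$ because an outer involution's $L^g$-centralizer would be ``incompatible with $|C_G(r)|=|L|$'') does not work as stated, since the relevant quantity is $C_{L^g}(r)=L\cap L^g$, not $C_G(r)$, and a bare order comparison fails anyway: the largest centralizer of an involution in $L^g\setminus E^g$ (shape $2^{14}.2^{1+8}_+.\U_4(2)$ from Lemma~\ref{centralizerinvs}) has $2$-part well above $|C_E(r^g)|=2^{20}$. The argument that actually closes this is structural: $C_E(r^g)=\langle r^g\rangle\times D$ with $D\cong 2^{1+18}_+$ and $Z(D)=\langle r\rangle$ sits inside $L^g$; since $|L^g/E^g|_2=2^{15}<2^{19}$, the normal subgroup $D\cap E^g$ of $D$ is nontrivial and hence contains $Z(D)=\langle r\rangle$, forcing $r\in E^g$. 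Without this, your elementary abelian computation for $E\cap E^g$ has no foundation, as you yourself note.

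Second, in (iv) you establish that $X$ induces $\Sym(3)$ on $\langle r,r^g\rangle$, but that only gives $X/C_X(\langle r,r^g\rangle)\cong\Sym(3)$; to conclude $X/F\cong\Sym(3)$ you must control the kernel, i.e.\ show that $C_X(\langle r,r^g\rangle)$ is no larger than a $2$-group sitting over $F$. The paper does this by showing any odd-order element of $C_X(\langle r,r^g\rangle)$ centralizes $(E\cap E^g)/\langle r,r^g\rangle$ and hence, by coprimality, all of $E\cap E^g$; since $E\cap E^g$ is a \emph{maximal} elementary abelian subgroup of $E$ (this is precisely why that maximality is asserted in (i)), such an element centralizes $E$, contradicting Lemma~\ref{NE}. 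This step is entirely absent from your proposal. A smaller point: your reason for $F\trianglelefteq X$ --- that $E^g$ normalizes $C_E(r^g)=E\cap L^g$ --- is unsound, since $E^g$ normalizes $L^g$ but not $E$; the correct route is the pair of commutator containments $[E,C_{E^g}(r)]\le C_E(r^g)$ and $[E^g,C_E(r^g)]\le C_{E^g}(r)$, which again rely on already knowing $r\in E^g$.
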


\begin{proof} Since $E$ is extraspecial of order $2^{1+20}$, $C_E(r^g)$ is a direct product of $\langle r^g \rangle$ with an extraspecial group of order
$2^{1+18}$.
As $|L^g/E^g|$ is not divisible by $2^{19}$, there is no such extraspecial group in $L^g/E^g$ and therefore  $r \in E^g$.\\

Because $\Phi (E \cap E^g) \leq \langle r \rangle \cap \langle r^g \rangle = 1$, $E\cap E^g$ is elementary
abelian. Hence, as $E$ is extraspecial, we have  $|E \cap E^g| \leq 2^{11}$. In particular, as $|C_{E^g}(r)| =
2^{20}$, we have that $C_{E^g}(r)E/E$ is an elementary abelian group of order at least $2^9$. Since the $2$-rank
of $L/E$ is $9$, we deduce that  $|C_{E^g}(r)E/E|= 2^9$ and $|E\cap E^g|= 2^{11}$. Furthermore $(E^g \cap L)E/E$
is uniquely determined. This completes the proof of parts (i) and (ii).

By Lemma \ref{Vfacts}, we have $|C_{E/\langle r \rangle}(C_{E^g}(r))| = 2$ and therefore $$C_{E/\langle r
\rangle}(C_{E^g}(r)) = \langle r, r^g \rangle/\langle r \rangle.$$ Hence we have that $C_{L/\langle r
\rangle}(\langle r, r^g \rangle/\langle r \rangle) = N_L(C_{E^g}(r))E$. This proves  (iii).

As $C_E(r^g)$ and $ C_{E^g}(r)$ normalize each other,
 $F$ is a 2-group and $$[E, C_{E^g}(r)] \leq C_E(r^g) \text{ and }[E^g, C_{E}(r^g)] \leq C_{E^g}(r)$$ which means that
 $F$ is normal in $X$. In addition, $[E, E \cap E^g] \leq \langle r \rangle$ and $[E^g, E \cap E^g] \leq \langle r^g \rangle$. So the
group $(E \cap E^g )/\langle r, r^g \rangle$ is centralized by $X$. Suppose that $f \in C_X(\langle r, r^g
\rangle)$ has odd order.  Then $f$ is in $L$ and centralizes $E \cap E^g$.  As $E \cap E^g$ is a maximal
elementary abelian subgroup of $E$ we now have that $E$ is centralized by $f$ and this contradicts
Lemma~\ref{NE}. Thus $C_X(\langle r, r^g \rangle)$ is a $2$-group. Modulo $F$ the group $X$ is generated by two
conjugate involutions, $X/F$ is dihedral. This shows that $X/F \cong \Sym(3)$, and proves (iv).

Suppose that $r^h \in E \setminus \langle r \rangle$ for some $h \in G$. Then by (iii) $r^h\langle r \rangle$ is centralized by a
maximal parabolic subgroup of $L/E$ of shape $2^9.\L_3(4)$. But this group has a 1-dimensional centralizer in
$E/\langle r \rangle$ and so $r^h$ is conjugate to $r^g$ in $L$ which proves (v).
\end{proof}

We now fix some Sylow 2-subgroup $T$ of $L$. From Lemma \ref{q23sigs} we have that $$r^{C_G(q_2q_3^{-1})}
\cap E \not = \{r\}.$$ Thus there $g \in G$  with $s =r^g \neq r$ and $s\in E$. By Lemma~\ref{controlfusionE} we
may assume that $Z_2(T) =\langle r,s\rangle$. We set $X= \langle E, E^g\rangle$, $$B= N_L(T)$$ and
$$P_1=BX.$$

For $2 \le j \le 4$, we let  $P_j \ge B$ be such that $P_j/E$ is a minimal parabolic subgroups in $L/E$
containing $B/E$ and $L= \langle P_2,P_3,P_4\rangle.$ Set $I=\{1,2,3,4\}$ and for $\mathcal J \subseteq I$ define
$P_{\mathcal J}=\langle P_j \mid j \in \mathcal J\rangle$ and $M=P_I$.

We further choose notation such that

\begin{eqnarray*}P_{34}/O_2( P_{34}  ) &\cong& \L_3(4)\\
 P_{23} /O_2(P_{23}) &\cong& \U_4(2)\text{ and }\\P_{24}/O_2(P_{24}) &\cong& \SL_2(2)\times \SL_2(4).
\end{eqnarray*} Let ${\mathcal{C}} = (M/B, (M/P_k), k \in I)$ be the corresponding chamber system. Thus
$\mathcal C$ is an edge coloured graph  with colours from $I=\{1,2,3,4\}$ and vertex set the right cosets $M/B$.
Furthermore, two cosets $Bg_1$ and $Bg_2$ form a $k$-coloured edge if and only if $Bg_2g_1^{-1} \subseteq P_k$.
Obviously $M$ acts on $\mathcal C$ by multiplying cosets on the right and this action preserves the colours. For
$\mathcal J \subseteq I$, set $M_{\mathcal J} = \langle P_k \mid k \in \mathcal J\rangle$ and $$\mathcal
C_{\mathcal J} = (M_{\mathcal J}/B, (M_{\mathcal J}/P_k), k \in\mathcal  J) \subseteq \mathcal C.$$ Then
$\mathcal C_{\mathcal J}$ is the $\mathcal J$-coloured connected component of $\mathcal C$ containing the vertex
$B$.

\begin{lemma}\label{P1B} The following hold.
\begin{enumerate}
\item $|P_1:B|=3$.
\item $\C_{1,3}=$ and $\C_{1,4}$ are generalized digons.\end{enumerate}
\end{lemma}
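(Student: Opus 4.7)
I would prove (i) and (ii) in order, leveraging Lemma~\ref{controlfusionE} throughout.

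\emph{For (i):} The strategy is to establish $P_1 \cap L = B$, from which $|P_1 : B| = |P_1 : P_1 \cap L| = 3$ follows by the orbit computation below. By Lemma~\ref{controlfusionE}(iv), the action of $X/F \cong \Sym(3)$ on $\langle r, s \rangle = Z_2(T)$ is faithful. Any order-$3$ element $y \in X$ cyclically permutes $\{r, s, rs\}$ and so lies outside $L = C_G(r) \supseteq B$, giving $|P_1 : B| \geq 3$. For the other direction, Dedekind gives $P_1 \cap L = B(X \cap L)$, so I must show $X \cap L \leq B$. Since $r \in Z(E)$ is fixed by $E$ and $[s, E] \leq Z(E) = \langle r \rangle$ (as $s \in E$ is not central), the image of $E$ in $\Sym(\{r, s, rs\})$ is the transposition $(s, rs)$; hence the stabilizer of $r$ in $X/F$ is $EF/F$, so $X \cap L = EF$. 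A direct order count using $|C_E(s)| = |C_{E^g}(r)| = 2^{20}$ and $C_E(s) \cap C_{E^g}(r) = E \cap E^g$ (of order $2^{11}$ by Lemma~\ref{controlfusionE}(i)) yields $|F| = 2^{29}$ and $|EF| = 2^{30}$. Thus $EF$ is a $2$-subgroup of $N_L(\langle r, s \rangle)$, and I may replace $T$ by a Sylow-conjugate in $N_L(\langle r, s \rangle)$---which preserves $Z_2(T) = \langle r, s \rangle$ and hence the entire setup---to arrange $EF \leq T \leq B$. Then $P_1 \cap L = B \cdot EF = B$, as required.

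\emph{For (ii):} The central observation is that $P_{34} = C_L(s)$. By Lemma~\ref{controlfusionE}(iii), $C_L(s)E/E \cong 2^9.\L_3(4)$, which matches the type-$\{3,4\}$ rank-$2$ parabolic of $L/E \cong \U_6(2)$ (since $P_{34}/O_2(P_{34}) \cong \L_3(4)$). Both $P_{34}$ and $C_L(s)$ contain $B$ (the latter because $T \leq C_L(s)$, as $s \in Z_2(T)$), and parabolics of a given type containing the Borel are unique, so $P_{34} = C_L(s)$. Hence $P_{34} \leq C_G(s) = L^g = N_G(E^g)$ normalizes both $E$ and $E^g$, and therefore normalizes $X = \langle E, E^g \rangle$. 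Since $P_{34}$ centralizes $\langle r, s \rangle$, it acts trivially on the faithful quotient $X/F \cong \Sym(3)$, so $[P_{34}, X] \leq F \leq B$. Consequently, for $k \in \{3, 4\}$ and $y \in X$ of order $3$ generating $P_1/B$, $[P_k, y] \leq B$; combined with $P_1 \cap P_k \leq P_1 \cap L = B$ from (i), this yields $P_{1,k} = P_1 P_k = P_k P_1$ with $|P_{1,k}/B| = |P_1/B| \cdot |P_k/B|$, so $\C_{1,k}$ is a generalized digon.

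The most delicate point is the Sylow arrangement in (i), which I handle by working inside $N_L(\langle r, s \rangle)$; everything else is then a straightforward consequence of the parabolic identification $P_{34} = C_L(s)$.
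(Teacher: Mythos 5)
Your part (i) is correct and, apart from supplying extra detail, follows the paper's route: the paper simply observes that $P_{34}$ normalizes $X$, that $P_1=BX$, and that $X/O_2(X)\cong \Sym(3)$, leaving the reader to check that $B$ absorbs a Sylow $2$-subgroup of $X$; your identification $X\cap L=EF$ of order $2^{30}$ together with the orbit $r^X=\{r,s,rs\}$ supplies exactly that check. (The Sylow re-arrangement you flag as delicate is actually unnecessary: since $T\in\Syl_2(L)$ and $T\le B$, the index $|P_1\cap L:B|=|EF:EF\cap B|$ is both a power of $2$ and odd, so $EF\le B$ holds for the original $T$.)

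Part (ii), however, has a genuine error. You assert $T\le C_L(s)$ "as $s\in Z_2(T)$", and hence $P_{34}=C_L(s)$. But $Z(T)=\langle r\rangle$, so $s\in Z_2(T)$ only gives $[T,s]\le \langle r\rangle$; in fact $E\le T$, $s\in E\setminus Z(E)$ and $E$ is extraspecial, so $[E,s]=\langle r\rangle\ne 1$ and $T\not\le C_L(s)$. The correct identification -- and the one the paper extracts from Lemma~\ref{controlfusionE}(iii) -- is $P_{34}=C_L(s)E=N_L(\langle r,s\rangle)$, which is \emph{not} contained in $C_G(s)$. Consequently both of your derived claims fail: $P_{34}$ does not centralize $\langle r,s\rangle$ (the image of $E$ in $\Sym(\{r,s,rs\})$ is the transposition fixing $r$), so $[P_{34},X]\not\le F$ -- indeed $[E,E^g]F/F$ is the $\Alt(3)$ of $X/F$ -- and $[P_k,y]\not\le B$ for $y$ of order $3$ in $X$ (taking $e\in E$ with $e^y\in E^{g}\setminus C_{E^g}(r)$ gives $[e,y]\notin L\supseteq B$), so $y$ does not normalize $P_k$ and your final step collapses. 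What survives is the weaker, and sufficient, statement that $P_{34}$ normalizes $X$: since $P_{34}$ normalizes $\langle r,s\rangle$ it permutes the three groups $O_2(C_G(u))$ for $u$ an involution of $\langle r,s\rangle$, and their join is $X$; this is the paper's argument. From there the digon property follows directly from $P_1P_k=BXP_k=XP_k=P_kX=P_kBX=P_kP_1$, rather than from normalization of $P_k$ by $y$.
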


\begin{proof} By Lemma~\ref{controlfusionE} (iii), $P_{34}$ normalizes $Z_2(T)$.  Hence  $P_{34}$ acts on the set $\{E^h~|~ r^h \in Z_2(T)\}$  and consequently $P_{34}$ normalizes $X=\langle E, E^g
\rangle$. In particular, we have $P_1= BX$ and, as $X/O_2(X) \cong \Sym(3)$, (i) holds.
Now note  that  $$P_1P_3 = XBP_3= XP_3=P_3X= P_3BX= P_3P_1.$$
In particular,  the cosets of $B$ in  $\mathcal C_{1,3}$  correspond to the edges in a generalized digon with one part having valency $3$ and the other $5$. The same is true for $\mathcal C_{1,4}$ and so (ii) holds.
\end{proof}

Because  of Lemma~\ref{P1B},  have that $\mathcal C_1$ and $\mathcal C_2$ have three chambers and
$\mathcal C_3$ and $\mathcal C_4$ each have $5$-chambers. Furthermore, from the choice of notation we also have
that $\C_{3,4}$ is the projective plane $\mathrm{PG}(2,4)$ and that $\C_{2,3}$ is the generalised polygon
associated with $\SU_4(2)$. Furthermore, we have that $\C_{2,3,4}$ is the $\U_6(2)$ polar space.

\begin{lemma}\label{buildingup} We have $P_{12}/O_2(P_{12})
\cong \SL_3(2) \times 3$ and $P_{124} = P_{12}P_4$. In particular, $\C_{12}$ is the projective plane $\mathrm{PG}(2,2)$. \end{lemma}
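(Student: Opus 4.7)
The plan is to realize $P_{12}/O_2(P_{12}) \cong \SL_3(2) \times 3$ by exhibiting a three-dimensional $\mathbb F_2$-subspace of $E$ stabilized by $P_{12}$, on which the quotient acts as $\SL_3(2)$, and then to pin down a commuting Cartan factor of order $3$. First I would locate $P_2$ inside $L/E \cong \U_6(2)$. The three minimal parabolics of $\U_6(2)$ above $B/E$ consist of the two minimal parabolics of the $\L_3(4)$ Levi sitting inside the maximal parabolic $2^9.\L_3(4)$, which are $P_3$ and $P_4$ by the identification of $P_{34}$, together with the remaining ``short-root'' minimal parabolic; the latter must therefore be $P_2$, and a standard analysis of this parabolic of $\U_6(2)$ gives $P_2/O_2(P_2) \cong \Sym(3)\times 3$, with the $\Sym(3)$ being the $\SL_2(2)$ Levi and the factor $3$ being a commuting Cartan torus.

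Next I would locate a $P_{12}$-invariant elementary abelian subgroup $U \le E$ of rank $3$ containing $Z_2(T) = \langle r,s\rangle$. Since $Z_2(T)$ is characteristic in $T$ and $B \le P_1 \cap P_2$ normalizes $T$, both $P_1$ and $P_2$ normalize $\langle r,s\rangle$. By Lemma~\ref{controlfusionE}(v) every $G$-conjugate of $r$ in $E$ is $L$-conjugate to $s$, and by working inside the $P_2$-orbit on the $B/E$-invariant flag of singular vectors in $E/\langle r\rangle$ one produces a third $G$-conjugate $s'$ of $r$ such that $U := \langle r,s,s'\rangle$ is elementary abelian of rank $3$ and normalized by $P_{12}$. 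The key induced action is then: from $P_1 = BX$, Lemma~\ref{controlfusionE}(iv) gives a $\Sym(3)$ on $\langle r,s\rangle$ permuting $r,s,rs$; extending by the analogous $\Sym(3)$ coming from the $\SL_2(2)$ Levi of $P_2$ acting on $\{r,s',rs'\}$, the two $\Sym(3)$'s generate $\GL_3(\mathbb F_2) = \SL_3(2)$ on $U$, while the Cartan factor of order $3$ in $P_2$ centralizes both $U$ and $X$ modulo $O_2(P_{12})$. This yields $P_{12}/O_2(P_{12}) \cong \SL_3(2)\times 3$.

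The assertion that $\C_{12}$ is $\mathrm{PG}(2,2)$ is then immediate: the two minimal parabolics $P_1, P_2$ of $P_{12}$ correspond to the ``point'' and ``line'' stabilizers of the Fano plane $\mathrm{PG}(U)$, with $|P_{12} : P_1| = |P_{12} : P_2| = 7$ and $P_1 \cap P_2 / O_2(P_{12})$ a Borel of $\SL_3(2)$, giving exactly the incidence system of the Fano plane (the commuting $3$ acts trivially on the residue). For the equality $P_{124} = P_{12} P_4$, I would invoke Lemma~\ref{P1B}(ii): since $\C_{14}$ is a generalized digon we have $P_1 P_4 = P_4 P_1$, whence $P_{124} = \langle P_{12}, P_4\rangle = P_{12} P_4$ by the standard decomposition of a group generated by two permutable subgroups.

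The main obstacle will be the explicit construction of $U$ and the verification that the two $\Sym(3)$'s coming from $X$ and from the Levi of $P_2$ fit together to give the full $\SL_3(2)$ rather than some proper subgroup, together with a clean isolation of the commuting Cartan $3$; these steps require delicate computations inside the $20$-dimensional $\GF(2)\U_6(2)$-module $E/\langle r\rangle$ using the results of Section~2, since only abstract group-theoretic information about $G$ is available.
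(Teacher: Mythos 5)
Your overall strategy --- exhibit a $3$-dimensional $P_{12}$-invariant subspace $U$ of $E$ and show $P_{12}$ induces $\SL_3(2)$ on it --- is the paper's strategy, but two steps as written would fail. First, your description of the $P_2$-action is impossible: $P_2\le L=C_G(r)$, so every element of $P_2$ fixes the vector $r$ and no subgroup of $P_2$ can permute $\{r,s',rs'\}$ transitively. The configuration that actually occurs is that $P_2$ maps into the stabilizer of the \emph{vector} $r$ in $\GL(U)$, inducing $\GL_2(2)$ on $U/\langle r\rangle$, while $P_1=BX$ maps into the stabilizer of the \emph{hyperplane} $\langle r,s\rangle$, inducing $\GL_2(2)$ there and acting trivially on $U/\langle r,s\rangle$ (this is exactly what Lemma~\ref{controlfusionE}(iv) gives, since $[X,E\cap E^g]=\langle r,s\rangle$). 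These two subgroups do still generate $\GL_3(2)$ --- they have no common invariant subspace, and the only irreducible subgroups of $\GL_3(2)$ of even order is $\GL_3(2)$ itself --- so the error is repairable, but the picture of two point--plane stabilizing $\Sym(3)$'s meeting in a transitive action on a common plane through $r$ is not the one you must verify.

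Second, the construction of $U$ and the proof that \emph{both} $P_1$ and $P_2$ normalize it is precisely the missing idea; ``working inside the $P_2$-orbit on the $B/E$-invariant flag'' does not supply it, and since $P_1\not\le L$ you cannot argue for $P_1$-invariance inside $\U_6(2)$. The paper takes $U=Z_3(T)$: Smith's lemma shows $C_{E/\langle r\rangle}(O_2(P_2))$ is $2$-dimensional, hence equals $Z_3(T)/\langle r\rangle$, giving $P_2$-invariance; and since $(E\cap E^g)/Z_2(T)$ is a non-trivial normal subgroup of $T/Z_2(T)$ one gets $Z_3(T)\le E\cap E^g$, so Lemma~\ref{controlfusionE}(iv) applies and yields the $P_1$-action above. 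Two further points are left open in your sketch: for $P_{124}=P_{12}P_4$ you need $P_4$ to permute with $P_2$ as well as with $P_1$ (true, since $P_{24}/O_2(P_{24})\cong\SL_2(2)\times\SL_2(4)$ is a digon, but it must be said); and the isolation of the kernel as $O_2(P_{12})$ times a torus of order $3$ is obtained in the paper by noting that $O^2(P_4)$ centralizes $Z_3(T)$ (because $P_4$ normalizes $X$ and $\SL_2(4)$ is not a section of $\SL_3(2)$), an argument your appeal to ``a commuting Cartan factor'' does not replace.
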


\begin{proof}
We have that $C_{E/\langle r \rangle}(O_2(P_2))$  is 2-dimensional by Smith's Lemma \cite{Smith} and additionally  $P_2/C_{P_2}(C_{E/\langle r \rangle}(O_2(P_2)))\cong \SL_2(2)$. It follows that  $$C_{E/\langle r \rangle}(O_2(P_2))=Z_3(T)/\langle r \rangle.$$ Hence $P_2$ acts on $Z_3(T)$ and $O^3(P_2)$ induces $\Sym(4)$ on $Z_3(T)$
with the normal fours group inducing all transvections to $\langle r \rangle$. As $(E \cap E^g)/Z_2(T)$ is non-trivial  and  normal
in $T$, we have that $Z_3(T) \leq E \cap E^g$. Thus Lemma \ref{controlfusionE}(iv)  yields that $P_1$
normalizes and induces $\Sym(4)$ on $Z_3(T)$  where now the normal fours group induces all transvections to
$Z_2(T)$. Hence $\langle O^3(P_1), O^3(P_2) \rangle$ induces $\SL_3(2)$ on $Z_3(T)$. Furthermore, we have that
$P_{12}=\langle O^3(P_1), O^3(P_2) \rangle C_G(Z_3(T))$.

We now see that $$X=\langle O^3(P_1), O^3(P_2) \rangle = \langle E^h ~|~ r^h \in Z_3(T) \rangle.$$
Since, by Lemma~\ref{P1B} (ii) and choice of notation,  $X$ is normalized by $P_4$ and $\SL_2(4)$ is not isomorphic to a section of $\SL_3(2)$ we infer that
$O^2(P_4) \leq C_L(Z_3(T))$ and normalizes $\langle P_1, O^3(P_2) \rangle$. This shows that $C_{\langle P_1, O^3(P_2)
\rangle}(Z_3(T)) = O_2(\langle P_1, O^3(P_2) \rangle)$ as well as $P_{124}= P_{14}P_4$. Recall that $P_2 =
O^3(P_2)N_G(T)$ and $P_1 = O^3(P_1)N_G(T)$. So $P_{12} = \langle O^3(P_1), O^3(P_2) \rangle N_G(T)$ and this
completes the proof.
\end{proof}

\begin{lemma}\label{ominus} We have that $  P_{123} /O_2(\langle P_{123} \rangle) \cong
\Omega^-_8(2)$. \end{lemma}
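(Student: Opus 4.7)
The plan is to identify the rank-$3$ sub-chamber system $\C_{1,2,3}$ of $\C$ as the polar building of $\Omega_8^-(2)$, and thereby read off the isomorphism type of $P_{123}/O_2(P_{123})$.

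First, I would collect the rank-$2$ residues of $\C_{1,2,3}$ from what is already established: by Lemma~\ref{buildingup}, $\C_{1,2}$ is the projective plane $\mathrm{PG}(2,2)$; by Lemma~\ref{P1B}(ii), $\C_{1,3}$ is a generalised digon; and by the initial construction of $P_2,P_3$ inside $L/E$, the residue $\C_{2,3}$ is the symplectic quadrangle of $\U_4(2)\cong\PSp_4(3)$. Combined with the rank-$1$ valencies $|P_1/B|=|P_2/B|=3$ and $|P_3/B|=5$ coming from Lemma~\ref{P1B} and Lemma~\ref{buildingup}, this shows that $\C_{1,2,3}$ is a chamber system with Coxeter diagram of type $C_3$ and thickness parameters $(2,2,4)$ --- precisely those of the polar building of $\Omega_8^-(2)={}^2D_4(2)$.

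Second, residual connectedness of $\C_{1,2,3}$ is immediate from the rank-$2$ identifications, since each rank-$2$ residue is a building. By the classification of rank-$3$ spherical chamber systems of type $C_3$ together with Tits' local-to-global theorem, the local data with thickness $(2,2,4)$ forces $\C_{1,2,3}$ to be the polar building of $\Omega_8^-(2)$. The Neumaier-type exotic $C_3$-geometries in characteristic~$2$ have thickness $(2,2,2)$, so do not intervene here.

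Third, $P_{123}$ then induces a flag-transitive group on this building, so $P_{123}/K$ embeds in the automorphism group, where $K$ denotes the kernel of the action. The subgroup $K$ is normal in $P_{123}$ and contained in $B$, so it is a $2$-group, and since it acts trivially on each rank-$2$ residue it is contained in $\bigcap_{i<j}O_2(P_{ij})\le O_2(P_{123})$; conversely $O_2(P_{123})$ clearly acts trivially on $\C_{1,2,3}$. Hence $K=O_2(P_{123})$, and an order comparison with $\Omega_8^-(2)$ using the already-identified parabolic quotients forces $P_{123}/O_2(P_{123})\cong\Omega_8^-(2)$.

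The main obstacle is the second step --- invoking the classification of rank-$3$ chamber systems of type $C_3$ with the given parameters to rule out any alternative --- but this is standard material for spherical buildings of rank at least three and avoids the subtler issues of simple connectivity alluded to in the introduction for the full rank-$4$ geometry.
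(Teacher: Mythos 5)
There is a genuine gap at the central step of your argument. You reduce the identification of $\C_{1,2,3}$ to ``the classification of rank-$3$ spherical chamber systems of type $C_3$ together with Tits' local-to-global theorem,'' but this is exactly where the reasoning fails. Tits' local approach theorem takes the $2$-coverability of rank-$3$ residues of type $C_3$ (and $H_3$) as a \emph{hypothesis}; it gives no information about a rank-$3$ chamber system of type $C_3$ itself, and there is no general classification of such chamber systems --- knowing that every rank-$2$ residue is a building is notoriously insufficient in exactly this case. The only result that could substitute is the Aschbacher--Yoshiara classification of finite \emph{flag-transitive} $C_3$ geometries (building or the $A_7$-geometry), and to invoke it you would first have to show that $\C_{1,2,3}$ is a residually connected geometry rather than merely a chamber system, and that $P_{123}$ is flag-transitive on it; none of this is ``standard material for spherical buildings of rank at least three,'' and your dismissal of the obstacle as routine is precisely where the proof is missing. (Your first and third steps are essentially fine: the rank-$2$ residues and parameters are as you say, and once one knows $\C_{1,2,3}$ is the $\Omega_8^-(2)$ polar building the determination of $P_{123}/O_2(P_{123})$ follows.)

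The paper avoids this trap by a different and more concrete route: it constructs an elementary abelian $2$-group $U_4=U_{23}U_{23}^g$ normalized by $P_{123}$, where $U_{23}$ is the preimage in $E$ of $C_E(O_2(P_{23}))$, takes $\mathcal P=\langle r\rangle^{P_{123}}$ and $\mathcal L=\langle r,s\rangle^{P_{123}}$ inside $U_4$, and verifies directly the ``one or all'' axiom, so that $(\mathcal P,\mathcal L)$ is a polar space of rank at least $3$. It then applies Tits' classification of polar spaces of rank $\ge 3$ (not the local approach theorem) together with the fact that $P_{123}$ induces $\Omega_6^-(2)$ on the lines through $\langle r\rangle$ to pin down the $\Omega_8^-(2)$ polar space. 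This hands-on construction is what certifies that $\C_{1,2,3}$ is a building, which is then fed as a hypothesis into the local approach theorem only at the rank-$4$ stage in Proposition~\ref{2E62}. If you wish to salvage your approach you must either carry out an analogous direct construction or carefully establish the hypotheses of the flag-transitive $C_3$ classification; as written, the key identification is asserted rather than proved.
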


\begin{proof}  Let $U_{23} $ be the preimage in $E$ of $C_{E}(O_2( P_{23}))$. Then, by Lemma~\ref{Vaction},
$U_{23}= [E,Et_1]$ where $Et_1$ is centralized by $P_{23}/E$. In particular, we have that $U_{23}/Z(E)$ is  an
orthogonal module for $P_{23}/O_2(P_3) \cong \U_4(2)$ and, furthermore, $U_{23}/Z(E)$ is totally singular which
means that $U_{23}$ is elementary abelian.  Since $U_{23}$ is normal in $T$,  $Z_2(T) \le U_{23}\le E\cap L^g$
which is the unique $T$-invariant subgroup of $E$ of index $2$. Now $P_3/O_2(P_3) \cong \SL_2(4) \cong
\Omega_4^-(2)$ and
$$(E^g \cap L)O_{2}(P_{23})/O_2(P_{23}) = O_2(P_3)/O_2(P_{23}).$$ As $P_3$ normalizes a hyperplane in $U_{23}/Z(E)$, we have
$[U_{23},E^g\cap L]$ has order $2^6$ and $[U_{23},E^g\cap L]$. In particular, $U_{23} \not \le E^g$ and, in fact,
$|U_{23}E^g/E^g|=2$ and is centralized by $O_{2}(P_1)E^g/E^g \in \syl_2(L^g/E^g)$. Thus $$[U_{23},E^g]=
U_{23}^g\text{ and } [U_{23}^g,E] =U_{23}.$$ Set $U_4 = U_{23}U_{23}^g$. Then, as $[U_{23},U_{23}^g]\le Z(E) \cap
Z(E^g)=1$, we have $U_4$ is elementary abelian. Furthermore, $[U_4,E^g] = U_{23}^g \le U_4$ and $[U_4,E]\le
U_{23}\le U_4$ and consequently $U_4$ is normalized by $X$. Since $X$ normalizes $P_3$ by Lemma~\ref{buildingup}
(i), we now have $\langle X, P_3\rangle =P_1P_3$ normalizes $U_4$. Note that $U_4E= U_{23}^gE= E\langle t_1\rangle$ and so
$C_{E}(U_4)$ has order $2^{15}$ by Lemma~\ref{Vaction}. Because $U_4$ is elementary abelian, we have $ U_4 \le
C_E(U_4)U_4$ and, as a $P_{23}/O_2(P_{23})$-module, $C_E(U_4)U_4/U_{23}$ has a  natural $8$-dimensional
composition factor and a trivial factor. Since $U_4/U_{23}$ is stabilized by $P_3$ and the composition factors of
$P_3$ on $C_E(U_4)/U_{23}$ are both non-trivial, we find that $U_4$ is normalized by $P_{123}$.

 Let $$\mathcal P=
\langle r \rangle^{P_{123}}\text { and }\mathcal L = \langle r, s \rangle ^{P_{123}}$$ and define incidence
between elements $x \in \mathcal P$ and $y \in \mathcal L$ if and only if $x \le y$. Of course all the points and
lines are contained in $U_4$. We claim that $(\mathcal P, \mathcal L)$ is a polar space. Because of the
transitivity of $P_{123}$ on $\mathcal P$, we only need to examine the relationship between $\langle r \rangle$
and an arbitrary member of $\mathcal L$. So let $l \in \mathcal L$. Then every involution of $l$ is $G$-conjugate
to $r$. Hence if $r^* \in l \cap E \;(= l\cap U_{23})$, then, by Lemma~\ref{controlfusionE} (v), $r^*$ is
$L$-conjugate to $r^g$. In particular, we have that $r^*$ is a vector of type $v_1$ in the notation of
Lemma~\ref{Vaction}.  Since $P_{23}$ has $3$-orbits on its $6$-dimensional module and since $U_{23}/\langle
r\rangle$ contains representatives of the three classes of singular vectors in $E/\langle r\rangle$, we infer
that $r^*$ is $P_{123}$-conjugate to an element of $\langle r,r^g\rangle$ .Thus $\langle r, r^*\rangle \in
\mathcal L$. Since $|U_4:U_{23}|=2$, we have that $\langle r \rangle$ is incident to at least one point of $l$.
Assume that $\langle r \rangle$ is incident to at least two points, $p_1, p_2$  of $l$. Then $\langle r,
p_1\rangle \le E$ and $\langle r, p_2 \rangle\le E$. Hence $l \le E$. But then $r$ is incident to every point on
$l$. Thus we have shown that $(\mathcal P, \mathcal L)$ is a polar space. Since $Z_3(T) \le U_{23}$, we have that
$(\mathcal P, \mathcal L)$ has rank either $3$ or $4$.   As the $ P_{123}$ induces $\Omega^-_6(2)$ on the lines
through $\langle r \rangle$, we get with \cite[Theorem on page 176]{Tits} that $(\mathcal P, \mathcal L)$ is the
polar space associated to $\Omega^-_8(2)$, the assertion.
\end{proof}

Combining Lemmas~\ref{P1B} and \ref{buildingup} we now have that $\mathcal C$ is a chamber system of type $\F_4$
with local parameters in which the panels of type $1$ and $2$ have three chambers and the panels of type $3$ and
$4$ have five chambers.

\begin{proposition}\label{2E62} We have  $\mathcal C$ is a building of type $\F_4$
with automorphism group $\Aut({}^2\E_6(2))$. In particular, $M \cong {}^2\E_6(2)$. \end{proposition}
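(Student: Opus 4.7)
The plan is to invoke Tits' Local Approach Theorem to recognise $\mathcal C$ as a thick spherical building of type $\F_4$ with thickness parameters $(2,2,4,4)$, and then to identify $M$ by combining the resulting BN-pair structure with the classification of $\F_4$-buildings.

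First I would verify that every rank-$3$ residue of $\mathcal C$ is a spherical building. Four types arise. The residue $\mathcal C_{234}$ is built from the three standard parabolics of $L/E\cong \U_6(2)$, so it is the $C_3$-polar space of $\U_6(2)$. The residue $\mathcal C_{123}$ is built from the parabolics of $P_{123}/O_2(P_{123})\cong \Omega_8^-(2)$ by Lemma~\ref{ominus}, so it is the $C_3$-polar space of $\Omega_8^-(2)$. The remaining residues decompose as direct products: by Lemma~\ref{P1B} the residue $\mathcal C_{14}$ is a generalised digon, whence $\mathcal C_{134}=\mathcal C_1\times \mathcal C_{34}$ where $\mathcal C_{34}=\mathrm{PG}(2,4)$, and symmetrically $\mathcal C_{124}=\mathcal C_{12}\times \mathcal C_4$ with $\mathcal C_{12}=\mathrm{PG}(2,2)$ by Lemma~\ref{buildingup}. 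Each of these four rank-$3$ residues is a classical spherical building, hence simply connected, and the Coxeter diagram pieced together from $\mathcal C_{12}$, $\mathcal C_{23}$, $\mathcal C_{34}$ (single-bond, double-bond, single-bond of orders $2,2,4,4$) is exactly the $\F_4$ diagram.

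With the local hypothesis in place, Tits' Local Approach Theorem identifies $\mathcal C$ as a thick spherical building of type $\F_4$ with parameters $(2,2,4,4)$. The classification of $\F_4$-buildings (Tits) yields a unique such building, namely the $\F_4$-building of ${}^2\E_6(2)$, whose full group of type-preserving automorphisms is $\Aut({}^2\E_6(2))$. Now $M$ acts on $\mathcal C$ by right multiplication, type-preservingly and chamber-transitively, with chamber stabiliser $B$. Faithfulness of this action follows from $\bigcap_{g\in M}B^g=1$, which I would verify directly from the fact that $O_2(B)=E$ is not normalised by $P_1$ (since $X=\langle E,E^g\rangle$ moves $E$). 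Hence $M$ embeds in $\Aut({}^2\E_6(2))$; comparing $|M|=|B|\cdot|M/B|$ (where $|M/B|$ is the number of chambers, computed from the $\F_4$ Weyl group and the parameters) with $|{}^2\E_6(2)|$ gives equality, and chamber-transitivity together with the simplicity of ${}^2\E_6(2)$ forces $M\cong {}^2\E_6(2)$.

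The main obstacle is the application of Tits' Local Approach Theorem itself: one must ensure not merely that each rank-$3$ residue is a building, but that it is the \emph{universal} cover of the corresponding local chamber system inside $\mathcal C$ (equivalently, that the rank-$3$ residues are simply connected). For the two product residues this is immediate, and for the two $C_3$-residues it follows from the general simple connectedness of thick spherical buildings of rank $\ge 3$; however, transcribing this correctly into the hypothesis of Tits' theorem as applied to the chamber system $\mathcal C$ (rather than to an abstract geometry) is the delicate step on which the whole identification rests.
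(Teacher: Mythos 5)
Your setup of the local data agrees with the paper's: the same four rank-$3$ residues, the same identifications ($\U_6(2)$ polar space, $\Omega_8^-(2)$ polar space via Lemma~\ref{ominus}, and the two reducible residues via Lemmas~\ref{P1B} and~\ref{buildingup}), and the same appeal to Tits' Local Approach Theorem together with the uniqueness of the thick $\F_4$-building with parameters $(2,2,4,4)$. But there is a genuine gap at the decisive step. The Local Approach Theorem (\cite[Corollary 3]{local}) does not conclude that $\mathcal C$ is a building; it concludes that the \emph{universal $2$-cover} $\mathcal C'$ of $\mathcal C$ is a building. Your worry about whether the rank-$3$ residues are ``universal covers of the local chamber systems'' is misplaced --- they are honest residues of $\mathcal C$ and, being spherical buildings, are $2$-simply connected, so nothing goes wrong there. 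The real issue is global: a priori $\mathcal C$ could be a proper quotient of $\mathcal C'$, and nothing in your argument rules this out. Your subsequent order count is then circular: $|M/B|$ is the number of chambers of $\mathcal C$ \emph{by construction} of the coset chamber system, so comparing it with the chamber count of the ${}^2\E_6(2)$-building presupposes exactly the isomorphism $\mathcal C\cong\mathcal C'$ that is in question. If $\mathcal C$ were a proper quotient, $M$ would act on $\mathcal C$ but you would have no embedding of $M$ into $\Aut(\mathcal C')$ and no control on $|M|$.

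The paper closes this gap group-theoretically rather than topologically. Lifting the $M$-action through the covering $\mathcal C'\to\mathcal C$ produces $U\le\Aut(\mathcal C')$ with a normal (deck) subgroup $D$ and $U/D\cong M$, and $U\ge L$. Since $L=L'$ is perfect, $L\le F^*(\Aut(\mathcal C'))\cong{}^2\E_6(2)$, where $L\approx 2^{1+20}_+.\U_6(2)$ is a \emph{maximal} parabolic; as $U\cap F^*(\Aut(\mathcal C'))>L$, maximality forces $F^*(\Aut(\mathcal C'))\le U$, and simplicity of ${}^2\E_6(2)$ then kills $D$ and yields $U=M\cong{}^2\E_6(2)$ (so in particular $\mathcal C=\mathcal C'$ is the building after all). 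Some such argument --- or an explicit proof that $\mathcal C$ is $2$-simply connected --- is indispensable; without it the identification of $M$ does not go through.
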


\begin{proof}  The chamber systems $\mathcal C_{1,2}$ , $\mathcal C_{3,4}$ are projective planes with  parameters $3,3$ and $5,5$ and $\mathcal C_{2,3}$ is a generalized quadrangle  with parameters $3,5$. The remaining  $\mathcal  C_{\mathcal J}$ with $|J| = 2$ are all complete bipartite graph. Thus, using the language of Tits in \cite{local},  $\mathcal C$ is a chamber system of type $\F_4$.  Now suppose that $J$ of $\{1,2,3,4\}$ has cardinality three.
 Then $\mathcal C_{1,2,3}$ is the  $\mathrm O^-_8(2)$-building by Lemma~\ref{ominus} and, as $L/E \cong \U_6(2)$, we
 have $\mathcal C_{2,3,4}$ is a building of type $\U_6(2)$. Finally, Lemma~\ref{buildingup} implies that
$\mathcal C_{1,3,4}$ and $\mathcal C_{1,2,4}$ are both buildings. Since each rank $3$-residue is a building,  if
$\pi : {\mathcal{C}}^\prime \longrightarrow {\mathcal{C}}$  is the universal 2-covering of $\mathcal C$, then
$\mathcal{C}^\prime$ is a building of type $\F_4$ by \cite[Corollary 3]{local}. By \cite[Proof of Theorem 10.2
on page 214]{Tits} this building is uniquely determined by the two residues of rank three with connected
 diagram (i.e. $\U_6(2)$, $\Omega^-_8(2)$) and so  $F^*(\Aut(\mathcal{ C}^\prime))\cong {}^2\E_6(2)$.  Now we have that there is a subgroup $U$ of
 $\Aut(\mathcal {C}^\prime)$ such that $U$ contains $L$ and $U/D \cong M$ for a suitable normal subgroup
 $D$ of $U$. As $L = L^\prime$, we have that $L \leq F^*(\Aut(\mathcal {C}^\prime))$ and so $L$ is a maximal parabolic of $F^*(\Aut(\mathcal {C}^\prime))$. As
 $U \cap F^*(\Aut(\mathcal {C}^\prime)) > L$, we get
 $F^*(\Aut(\mathcal {C}^\prime)) \leq U$. As $F^*(\Aut(\mathcal {C}^\prime))$ is simple this implies that $U = M$ and
 therefore
$M\cong {}^2\E_6(2)$.  \end{proof}

\begin{theorem}\label{G=M} The group $G$ is isomorphic to ${}^2\E_6(2)$.
\end{theorem}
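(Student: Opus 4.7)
The plan is to conclude $G\cong{}^2\E_6(2)$ from the subgroup $M$ constructed in Proposition~\ref{2E62} by invoking Holt's Theorem (Lemma~\ref{Holt}) together with a Sylow order argument. The inputs in hand are: $M\le G$ with $M\cong{}^2\E_6(2)$; by Theorem~\ref{NGECGr}, $C_G(r_1)=L\le M$, since $L=P_{234}$ is one of the parabolics used to generate $M$; and by Lemma~\ref{syl2} a Sylow $2$-subgroup $T$ of $L$ is Sylow in $G$. Because $E=O_2(L)$ is characteristic in $T$ (as the preimage of $J(T/\langle r_1\rangle)$, via Lemma~\ref{Vfacts}) with $Z(E)=\langle r_1\rangle$, we get $Z(T)=\langle r_1\rangle$, so $r_1$ is $2$-central in $G$.

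The first substantive step is to reduce to $M=F^*(G)$. Standard coprime-action arguments applied to the action of $r_1$ on $O(G)$ and on $O_2(G)$, combined with the inclusion $C_G(M)\le C_G(r_1)\le M$ and the simplicity of $M$, yield $O(G)=O_2(G)=1$; hence $F^*(G)=E(G)$ is a central product of nonabelian simple components. Since $C_G(M)\le M\cap C_G(M)=Z(M)=1$, any component distinct from the one containing $M$ would lie in $C_G(M)=1$, so $M$ is contained in a unique component $E_1$. Apply Holt's Theorem to $E_1$ with proper subgroup $M$ and $2$-central involution $r_1$: the centralizer condition $C_{E_1}(r_1)\le M$ is immediate from $C_G(r_1)\le M$, and the fusion condition $r_1^{E_1}\cap M=r_1^M$ follows from the fact that any $G$-conjugate of $r_1$ lying in $M$ is necessarily $2$-central in $M$ (its centralizer in $M$ contains a Sylow $2$-subgroup of order $|T|$) and $M\cong{}^2\E_6(2)$ has a single conjugacy class of $2$-central involutions. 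As no group in Holt's list contains ${}^2\E_6(2)$, the only surviving possibility is $E_1=M$.

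Hence $M=F^*(G)\trianglelefteq G$ and $G/M$ embeds into $\Out(M)\cong\Sym(3)$, so $|G:M|$ divides $6$. The Sylow $2$-subgroup $T$ of $G$ is contained in $M$, so $2\nmid|G:M|$; and by hypothesis $S\in\Syl_3(G)$ has $|S|=3^9=|M|_3$, so $3\nmid|G:M|$. Therefore $|G:M|=1$ and $G\cong{}^2\E_6(2)$. The main technical obstacle is the verification of the fusion hypothesis of Holt's Theorem, which I would handle by a careful comparison of Sylow $2$-orders of centralizers of $G$-conjugates of $r_1$ lying in $M$, together with the uniqueness of the $M$-conjugacy class of $2$-central involutions in ${}^2\E_6(2)$.
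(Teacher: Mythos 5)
Your overall strategy coincides with the paper's: reduce to a situation where Holt's Theorem (Lemma~\ref{Holt}) can be applied with point stabilizer $M$ and $2$-central involution $r_1$, then rule out the groups on Holt's list. The endgame differs only slightly (the paper first shows $G$ itself is simple, using that $M$ contains the normalizer of a Sylow $3$-subgroup, and applies Holt directly to $G$; you apply Holt to a component and then control $G/F^*(G)$ via $\Out(M)$ and Sylow orders), and both routes are workable.

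However, there is a genuine gap at exactly the point you flag as ``the main technical obstacle'': the fusion condition $r_1^G\cap M=r_1^M$. Your proposed justification --- that any $G$-conjugate $y=r_1^x$ lying in $M$ is $2$-central in $M$ because its centralizer in $M$ contains a Sylow $2$-subgroup of order $|T|$ --- does not follow from what is known. One knows that $C_G(y)=L^x$ has $2$-part $|T|$, but $C_M(y)=M\cap L^x$ could a priori have much smaller $2$-part; showing that a full Sylow $2$-subgroup of $C_G(y)$ lies inside $M$ is essentially equivalent to the fusion control one is trying to establish. Moreover, even granting that $y$ is $2$-central in $M$, one must still produce an element of $M$ (not merely of $G$) conjugating $y$ to $r_1$. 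The paper's proof spends its entire first paragraph on precisely this: it applies Lemma~\ref{Fusion} to the extraspecial group $E$ (using Lemma~\ref{controlfusionE}(iv),(v) and the fact, from Proposition~\ref{Vaction}, that $E/\langle r_1\rangle$ admits no transvections from $L$) to obtain $x^G\cap E=x^L$ for all $x\in E\setminus\{r_1\}$, and then uses the count of involution classes in ${}^2\E_6(2)$ (exactly three, all with representatives in $E$) to conjugate any $r_1^x\in M$ into $E$ by an element of $M$ and hence into $r_1^L$. Without an argument of this kind your application of Holt's Theorem is not justified. A secondary slip: the exclusion of the groups on Holt's list should rest on the structure forced on the point stabilizer (a Borel subgroup, hence soluble, or $\Alt(n-1)$), not on the claim that no listed group contains ${}^2\E_6(2)$ as a subgroup --- $\Alt(n)$ for large $n$ certainly does.
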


\begin{proof} By \cite{AschSe} we have that $M$ has exactly three conjugacy classes of involutions. In
$E \setminus \langle r \rangle$ we also have three classes $C_M(r)$-classes by Lemma \ref{Vaction}. Using
Lemmas~\ref{controlfusionE} (iv) and (v) and the fact that $E/\langle r\rangle$ does not admit transvections from
$L$, we may apply Lemma~\ref{Fusion} to see that $x^G \cap E = x^L$ for all $x \in E\setminus \{z\}$. In
particular, the three conjugacy classes of involutions in $M$ all have representatives in $E$. Further, if $x \in
G$ with $r^x \in M$, then there is $h \in M$ such that $r^{xh} \in E$. But now by Lemma \ref{controlfusionE} we
may assume that $r^{xh} = r$. Then $xh \in L \leq M$ and so $x \in M$. Hence $M$ controls fusion of 2-central
elements in $M$.

 If $Y$ is a normal subgroup of $G$, then, as $M$ contains the normalizer of a Sylow $3$-subgroup of $G$ and is
simple, we either have $M\le Y$ which means that $Y= G$ or $Y$ is a $3'$-group. Suppose the latter.  Since $r_1$
is in $M$ and is non-central, we   have $C_Y(r_1) \neq 1$. But then $C_Y(r_1) \le M$ a contradiction. Thus $Y=1$
and $G$ is a simple group. As $C_G(r_1) < M$ and $r_1^G \cap M= r_1^M$ we get with Lemma \ref{Holt}
 that   $G$ is isomorphic to one of
the following groups $\PSL_2(2^n)$, $\mathrm{PSU}_3(2^n)$, ${}^2\B_2(2^n)$ ($n\ge 3$ and odd) or $\Alt(\Omega)$.
In the first three classes of groups the point stabiliser in question is soluble and in the latter case it is
$\Alt(n-1)$. Since $M$ is neither soluble nor isomorphic to $\Alt(\Omega\setminus\{M\})$, we have a
contradiction. Hence $M=G$ and the proof of Theorem~\ref{G=M} is complete. \end{proof}

\section{The proof of Theorem~\ref{Main}}

Here we assemble  the mosaic which proves Theorem~\ref{Main}. Thus here we have $C_G(Z)$ is a centralizer of type ${}^2\E_6(2)$ and so $|R|= 2^9$. Lemma~\ref{Horder} (i) and (ii) gives the possibilities for the structure of $\ov H= H/Q$. If $|H|_2 = 2^{10}$, then Theorem~\ref{index2} implies that $G$ has a subgroup  of index $2$ which satisfies the hypothesis of Theorem~\ref{Main} . Thus it suffices to prove the result for groups in which $|H|_2= 2^9$. This means that $S = QW$ or $S> QW$ and $\ov =S/Q \cong 3^{1+2}_+$. The latter situation is addressed in Lemma~\ref{index3} where is shown that if $S> QW$ then $G$ has a normal subgroup of index $3$ which also satisfies the hypothesis of Theorem~\ref{Main}. Thus we may assume that $S= QW$. Under this hypothesis in Section~10 we prove Theorem~\ref{NGECGr} which asserts that $C_G(r_1)= N_G(E)= KE\approx 2^{1+20}_+:\U_6(2)$.  Finally, in Section 11, we prove Theorem~\ref{G=M} which shows that under the hypothesis that $C_G(r_1)=N_G(E) = KE$, $G \cong {}^2\E_6(2)$. Thus we have $F^*(G) \cong {}^2\E_6(2)$ and the theorem is validated.


\begin{thebibliography}{AB}



\bibitem{Asch} Michael Aschbacher,   3-transposition groups, Cambridge University Press 1997.

\bibitem{Aschbacher2E6} Michael Aschbacher, A characterization of $\sp 2\!E\sb 6(2)$.
 Groups and combinatorics---in memory of Michio Suzuki,
 209--244, Adv. Stud. Pure Math., 32, Math. Soc. Japan, Tokyo,  2001.

\bibitem{AschSe} Michael Aschbacher and   Gary Seitz,  Involutions in Chevalley groups over fields of even order, Nagoya Math. J. 63 (1976), 1 - 91; Correction, Nagoya Math. J. 72 (1978), 135 - 136.

\bibitem{Astill} Sarah Astill, A 3-local identification of two almost simple extensions of
$\mathrm{ PO}^+_8(2)$.
 J. Algebra  328  (2011), 330--354.

\bibitem{Atlas} John  H. Conway, Robert T. Curtis, Simon  P. Norton, Richard A. Parker and Robert  A. Wilson, Atlas of finite groups.
Oxford University Press, 1985.
\bibitem{Goldschmidt}  David Goldschmidt, $2$-fusion in finite groups. Ann. of Math. 99 (1974), 70--117.
\bibitem{Gorenstein}   Daniel Gorenstein, Finite groups.
Harper \& Row, Publishers, New York-London  1968.
\bibitem{GLS2}Daniel  Gorenstein, Richard   Lyons and  Ronald
Solomon, The classification of the finite simple groups. Number 2.
American Mathematical Society, Providence, RI,  1996.
\bibitem{GLS3}Daniel  Gorenstein, Richard   Lyons and  Ronald
Solomon, The classification of the finite simple groups. Number 3. American Mathematical Society, Providence,
RI,  1998.
\bibitem{Ho} Derek  Holt,Transitive permutation groups in which an involution central in a Sylow 2-subgroup fixes a unique point, Proc. LMS 37 (1978), 165 - 192.
\bibitem{Hu} Bertram  Huppert, Endliche Gruppen I, Springer 1967.
\bibitem{Lun} Heinz  L\"uneburg, Transitive Erweiterungen endlicher Permutationsgruppen, Springer Lecture Notes 84 (1969).
\bibitem{Parker1}
Christopher  Parker,  A 3-local characterization of $\rm U\sb 6(2)$ and $\rm Fi\sb
 {22}$.
 J. Algebra  300  (2006),  no. 2, 707--728.
\bibitem {SymplecticAmalgams}Christopher Parker and  Peter Rowley,  Symplectic amalgams. Springer Monographs in
Mathematics. Springer-Verlag London, Ltd., London,  2002.
 \bibitem{ParkerRowley2} Christopher Parker and Peter  Rowley,
 A $3$-local characterization of $\Co_2$, J. Algebra 323 (2010), 601-621.
\bibitem{PS1} Christopher Parker and Gernot  Stroth, An identification theorem for
 $\mathrm{PSU}_6(2)$ and its automorphism groups, http://arxiv.org/abs/1102.5392.
\bibitem{F4} Christopher Parker and Gernot  Stroth, ${\mathbf \F_4(2)}$ and its automorphism group,
http://arxiv.org/abs/1108.1661.
\bibitem{almostlie} Christopher Parker and Gernot  Stroth, Groups which are almost groups of Lie type in characteristic
$p$, in preparation.
 \bibitem{Payne1}  Stanley E.  I. Payne,  All
generalized quadrangles of order 3 are known, J. Combinatorial Theory Ser. A  18  (1975), 203--206.
\bibitem{Smith}  Steven Smith, Irreducible modules and parabolic subgroups, J. Algebra 75, 286 - 289, 1982.
\bibitem{SoWo} Ron  Solomon, S.K. Wong, On $\L_2(2^n)$-blocks, Proc. LMS 43 (1981), 499 - 519.
\bibitem{Tits}  Jaques  Tits, Buildings of spherical type and finite BN-pairs. LN in Math. 386. Springer Berlin 1974.
\bibitem{local} Jaques Tits,  A local approach to Buildings. In: The geometric vein (Coxeter Festschrift), 519 - 547, Springer New
York 1981.
\end{thebibliography}
\end{document}